\numberwithin{equation}{section}
\newtheorem{proposition}{Proposition}
\newtheorem{theorem}{Theorem}
\newtheorem{lemma}{Lemma}
\newtheorem{cor}{Corollary}
\newcommand{\dd}{\text{d}}
\newcommand{\cc}{\text{C}}
\newcommand{\bb}{\text{B}}
\newcommand{\LL}{\text{L}}
\newcommand{\HH}{\text{H}}
\theoremstyle{definition}
\newtheorem{rem}{Remark}
\title{\textbf{Optimal decay of the parabolic semigroup in stochastic homogenization for correlated coefficient fields}}
\author{Nicolas CLOZEAU\thanks{Sorbonne Université, CNRS, Université de Paris, Laboratoire Jacques-Louis Lions (LJLL), F-75005, France}}
\begin{document}
\maketitle

\begin{abstract}
We study the large scale behavior of elliptic systems with stationary random coefficient that have only slowly decaying correlations. To this aim we analyze the so-called corrector equation, a degenerate elliptic equation posed in the probability space. In this contribution, we use a parabolic approach and optimally quantify the time decay of the semigroup. For the theoretical point of view, we prove an optimal decay estimate of the gradient and flux of the corrector when spatially averaged over a scale $R\geq 1$. For the numerical point of view, our results provide convenient tools for the analysis of various numerical methods.
\end{abstract}
\begin{center}
\textbf{Keywords:} Stochastic homogenization $\cdot$ Correlated coefficient field $\cdot$ Quantitative estimates.
\end{center}

\tableofcontents

\section{Introduction}

Elliptic systems with random coefficients were first considered in the $1970'$s by Kozlov in \cite{kozlov1979averaging} and by Papanicolaou and Varadhan in \cite{papanicolaou1979boundary} in the context of qualitative stochastic homogenization. They proved that if the law of the coefficient field is stationary and ergodic, then for all $f\in \LL^2(\mathbb{R}^d)^d$, the solution $(u_{\varepsilon})_{\varepsilon>0}$ of 
$$-\nabla\cdot a(\tfrac{\cdot}{\varepsilon})\nabla u_{\varepsilon}=\nabla\cdot f \text{ in $\mathbb{R}^d$},$$
converges, as $\varepsilon$ tends to $0$,  weakly in $\dot{\HH}^1(\mathbb{R}^d):=\{v\in \HH^1_{\text{loc}}(\mathbb{R}^d)|\nabla v\in \LL^2(\mathbb{R}^d)\}$ to the solution $u_{\text{hom}}$ of the homogenized problem 
$$-\nabla\cdot a_{\text{hom}}\nabla u_{\text{hom}}=\nabla\cdot f \text{ in $\mathbb{R}^d$}.$$
The homogenized coefficients $a_{\text{hom}}$ are characterized in the direction $e$ by the corrector $\phi_e$ defined as the unique (up to an additive constant) sub-linear distributional solution of 
\begin{equation}
-\nabla\cdot a(\nabla\phi_e+e)=0 \text{ in $\mathbb{R}^d$,}
\label{correctorequation}
\end{equation}
via the formula
\begin{equation}
a_{\text{hom}}e=\left\langle a(\nabla\phi_e+e)\right\rangle,
\label{homogematrixintro}
\end{equation}
where $\left\langle \cdot\right\rangle$ denotes the expectation. The corrector equation \eqref{correctorequation} is a key object for the homogenization theory of elliptic systems since its solution captures the spatial oscillations of $(u_{\varepsilon})_{\varepsilon>0}$ induced by the heterogeneity of the coefficient field $a$. This can be expressed in terms of a two-scale expansion
\begin{equation}
u^{\text{2sc}}_{\varepsilon}:=u_{\text{hom}}+\varepsilon \sum_{i=1}^d \phi_{e_i}(\tfrac{\cdot}{\varepsilon})\partial_{i}u_{\text{hom}},
\label{two-scaleintro}
\end{equation}
which allows to reconstruct the oscillation (at first order in $\varepsilon$) of $(\nabla u_{\varepsilon})_{\varepsilon>0}$ in the sense that $(\nabla u_{\varepsilon}-\nabla u^{\text{2sc}}_{\varepsilon})_{\varepsilon>0}$ tends to $0$ strongly in $\LL^2(\mathbb{R}^d)$ as $\varepsilon$ tends to $0$. This estimate is a drastic reduction of complexity since $u_{\text{hom}}$ is the solution of a constant-coefficient equation and $(\phi_{e_i})_{i\in\llbracket 1,d\rrbracket}$ does not depend on $f$.\\

The optimal quantification of those qualitative results is a much more recent and active research field. The first non-perturbative results were obtained in the case of discrete equations with independent and identically distributed coefficient, by Gloria and Otto in \cite{gloria2012optimal,gloria2011optimal} and by Gloria, Neukamm and Otto in \cite{gloria2014optimal,gloria2015quantification}, who studied the corrector equation \eqref{correctorequation} and proved optimal error estimates (in the approximation of the homogenized matrix $a_{\text{hom}}$ by the representative volume element method) as well as optimal variance estimates on the corrector and optimal estimate in $\varepsilon$ of the two-scale expansion \eqref{two-scaleintro}. Second, the continuum case has been studied by Gloria and Otto in \cite{gloria2014regularity,gloria2019quantitative,gloria2017quantitative} for more general model of coefficient fields which satisfy concentration of measure properties via \textbf{functional inequalities} including coefficients with fat tails. The continuum case has also been studied by Armstrong and Smart in \cite{armstrong2016lipschitz}, Armstrong, Mourrat and Kuusi in \cite{armstrong2016mesoscopic,armstrong2017additive,armstrong2019quantitative}, Gloria and Otto in \cite{gloria2015corrector}, in the case where the coefficient fields are randomly distributed according to a stationary ensemble of \textbf{a finite range of dependance}. In \cite{armstrong2016lipschitz}, the notion of large-scale regularity for the random operator $-\nabla\cdot a\nabla$ is introduced (this notion of regularity first started with the work of Avellaneda and Lin in \cite{avellanedaLp} for periodic coefficients). It constitutes by now a very powerful tool to the study of linear elliptic system and can be summarized by
saying that on large-scales (say, scale much larger than the correlation length), the heterogeneous operator $-\nabla\cdot a\nabla$ "inherits" (a suitable version of) the regularity theory for the homogenized operator $-\nabla\cdot a_{\text{hom}}\nabla$. The "large-scale" is characterized by a random minimal scale, for which an optimal moment bound is established, using a sensitivity calculus of Malliavin type in \cite{gloria2014regularity,gloria2019quantitative} or the assumption of a finite range of dependence in \cite{armstrong2016mesoscopic,armstrong2017additive,armstrong2019quantitative,armstrong2016quantitative,armstrong2016lipschitz,gloria2015corrector}.
\\

In the present work, we derive optimal estimates by following the ideas of the papers \cite{gloria2014optimal,gloria2015quantification,gloria2015corrector} and \cite[Sec.9]{armstrong2019quantitative} where the authors considered the semigroup associated with the corrector problem \eqref{correctorequation}, namely the solution $u_e$ of the parabolic system
\begin{equation}
\left\{
    \begin{array}{ll}
        \partial_{\tau}u_e-\nabla\cdot a\nabla u_e=0 & \text{ in $(0,+\infty)\times \mathbb{R}^d$},\\
        u_e(0)=\nabla\cdot a(\cdot)e, & 
    \end{array}
\right.
\label{introequationu}
\end{equation}
for a given unit vector $e\in\mathbb{R}^d$. The relationship between the semigroup $u_e$ and the corrector $\phi_e$ is the following formal integral formula
\begin{equation}
\phi_e=\int_{0}^{+\infty} u_e(t,\cdot)\dd t.
\label{introlinkuphi}
\end{equation}
Indeed, provided we have $\displaystyle\lim_{t\rightarrow +\infty}u_e(t,\cdot)=0$, integrating the equation \eqref{introequationu} in time yields
$$-\nabla\cdot ae-\nabla\cdot a\nabla \int_{0}^{+\infty} u_e(t,\cdot)\dd t=0,$$
and implies that $\int_0^{+\infty}u_e(t,\cdot)\dd t$ is a solution of the corrector equation \eqref{correctorequation} so that \eqref{introlinkuphi} follows by uniqueness. This theoretical relationship allow us to transfer optimal estimates on $u_e$ into optimal estimates on the corrector $\phi_e$. The semigroup $u_e$ is also of numerical interest and can be used as a very convenient tool for the study of numerical method for approximating the correctors and the homogenized matrix $a_{\text{hom}}$. As examples, we present three situations where optimal estimates of $u_e$ are used in the context of numerical methods. 
\begin{itemize}
\item[(i)]First, the study of the representative volume element method, where the method consists of replacing the corrector equation \eqref{correctorequation} by an equation posed in a large box $\text{Q}_L:=[-\frac{L}{2},\frac{L}{2})^d$, for $L\gg 1$,
$$-\nabla\cdot a_L(\nabla\phi^L_e+e)=0 \text{ in $\text{Q}_L$,}$$
with periodic boundary conditions, for a good choice of periodic realization $a_L$. We refer to \cite{gloria2015quantification} for an analysis via a semigroup approach. 
\item[(ii)]Second, the semigroup $u_e$ has been used more recently in \cite{abdulle2020analytical} for approximating $\phi_e$ via exponential regularization, that is we replace the corrector equation \eqref{correctorequation} by 
$$
\left\{
    \begin{array}{ll}
        u_e(T)-\nabla\cdot a(\nabla\phi_{e,T,R}+e)=0 & \text{ in $\text{Q}_R$}, \\
        \phi_{e,T,R}\equiv 0 & \text{ on $\partial \text{Q}_R$}, 
    \end{array}
\right.
$$
for $R\gg 1$ and $T\gg 1$. Optimal estimates on $u_e$ are used to control the bias (or the systematic error). 
\item[(iii)]Finally, we can derive the optimal convergence rate in the massive term approximation as in \cite{gloria2015quantification,gloria2015corrector}, and we propose a proof in Corollary \ref{approxcorrector} of the present contribution. Such optimal estimates have been recently used in \cite{lu2021optimal}. The authors proposed an algorithm for computing the solution of $-\nabla\cdot a\nabla u=\nabla\cdot g$, with a compactly supported dipole density $g$, knowing only the medium $a$ in a box $\text{Q}_{L}$. Despite those results are obtained for a stationary ensemble of a finite range of dependence, an extension in the case of correlated medium is left for a future investigation.
\end{itemize}

The first result in the vein of this article is the optimal decay estimate in time of $u_e$ and of its gradient in the case of discrete elliptic equation satisfying a spectral gap inequality proved by Gloria, Neukamm and Otto in \cite{gloria2015quantification}. In the case of finite range of dependance, similar results are obtained by Gloria and Otto in \cite{gloria2015corrector} and Armstrong, Kuusi and Mourrat in \cite{armstrong2019quantitative}. Their analysis strongly relies on the fast decay of correlations, and does not treat coefficients with fat tails. The aim of this contribution is to provide an extension of those results to more correlated coefficient fields such as Gaussian correlated coefficient fields with fat tails. Our quantitative analysis is based on functional inequalities: We assume that the ensemble $\left\langle\cdot\right\rangle$ satisfies a multi-scale logarithmic Sobolev inequality (as introduced in \cite{duerinckx2017weighted2,duerinckx2017weighted}), that is, there exist $\rho>0$ and an integrable weight $\pi$ such that for all random variables $F$ with finite second moment, the following inequality holds
\begin{equation}
\left\langle F^2(a)\log F^2(a)\right\rangle-\left\langle F^2(a)\right\rangle\left\langle \log F^2(a)\right\rangle\leq \frac{1}{\rho}\left\langle \int_{1}^{+\infty}\ell^{-d}\pi(\ell)\int_{\mathbb{R}^d}\vert\partial^{\text{fct}}_{x,\ell}F(a)\vert^2\dd x\, \dd \ell\right\rangle.
\label{LSI}
\end{equation}
In the inequality above, $\partial^{\text{fct}}_{x,\ell}F$ denotes the functional derivative of $F$ with respect to changes of the coefficient field $a$ localized in the ball of radius $\ell \geq 1$ centered at $x\in\mathbb{R}^d$, which corresponds to the $\LL^1(\bb_{\ell}(x))$ norm of the G\^ateau derivative of $F$ with respect to the coefficient field $a$. Loosely speaking, it measures how sensitively $F$ depends on the coefficient field. As an example of a class of coefficient field, the assumption \eqref{LSI} allow us to consider Gaussian type coefficient fields with slowly decaying correlations: In the case when $a=A(g)$, with $A$ Lipschitz, $g$ a vector-valued centered Gaussian whose covariance function $c : x\in\mathbb{R}^d\mapsto \left\langle g(x)\otimes g(0)\right\rangle$ satisfies $\vert c(x)\vert\leq \gamma(\vert x\vert)$ for some non-increasing Lipschitz function $\gamma: \mathbb{R}^+\rightarrow \mathbb{R}^+$, then \eqref{LSI} holds with the weight $\pi(\ell)=\vert\gamma'(\ell)\vert$ (see \cite[Theorem 3.1 (ii)]{duerinckx2017weighted}). This class includes for instance log-normal random coefficients, that is of the type
$$a(x)= \frac{b+e^{-\tilde{\kappa} (g(x)-m)}}{c+e^{-\kappa(g(x)-m)}}\text{Id},$$
where $b,c>0$ and $\tilde{\kappa},\kappa,m\in\mathbb{R}$. We point out that this contribution is not restricted to the Gaussian setting, and other type of coefficients can be considered as soon as such an estimate of the type \eqref{LSI} is satisfied (see for instance Section \ref{extensionsection} for possible extensions).
\\

Our approach is more in the vein of the series of work \cite{gloria2014regularity,gloria2015quantification,gloria2019quantitative,gloria2015corrector} than in \cite{armstrong2016mesoscopic,armstrong2017additive,armstrong2019quantitative,armstrong2016quantitative,armstrong2016lipschitz}. More precisely, as in \cite{gloria2015quantification}, we use a sensitivity calculus and functional inequalities, albeit in the much weaker form of multiscale logarithmic Sobolev inequality of \cite{duerinckx2017weighted2,duerinckx2017weighted} as in \cite{gloria2014regularity,gloria2019quantitative}. As in \cite{gloria2019quantitative,gloria2015corrector} however, our starting point is to prove fluctuation bounds on the time dependent flux $q(t,\cdot):=a(\cdot)\left(\int_{0}^t\nabla u_e(s,\cdot)\dd s+e\right)$ after averaging in scales $r\leq \sqrt{t}$. Yet, since functional inequalities cannot be easily iterated, one cannot rely on the same approach as for coefficients with a finite range of dependence. To this aim, as in \cite{gloria2019quantitative}, we rely on large-scale regularity, this time in the parabolic setting (in a few words, on large-scales, say scale much larger than the correlation length, the heterogeneous linear parabolic operator $\partial_{\tau}-\nabla\cdot a\nabla$ "inherits" a suitable version of the regularity theory for the homogenized linear parabolic operator $\partial_{\tau}-\nabla\cdot a_{\text{hom}}\nabla$), capitalizing on the bounds on the minimal radius proved in \cite{gloria2014regularity}. We obtain optimal decay estimates in time of the semigroup $u_e$, in terms of scaling, both for mildly and strongly correlated coefficient fields, with good stochastic integrability (stretched exponential moments). The decay of the semigroup gives an alternative proof of the bounds on the correctors recently obtained in \cite[Theorem 1]{gloria2019quantitative} and yields other results of interest in stochastic homogenization, extending the results of \cite{gloria2015quantification,gloria2015corrector} for this setting.\\

The paper is organized as follows: In Section \ref{assumptionssec} we introduce notations and make the assumptions on the coefficient field precise. In Section \ref{resultssec} we state our main result and its consequences. Section \ref{structureproof} describes the strategy of the proof and list the auxiliary results needed in the proof of the main theorem. The results are proved in Section \ref{proofsec}.
\section{Assumptions, notations and main results}
We decide in this paper to use scalar notations but the analysis remains true for systems. Also, we use the abbreviations "r.h.s" and "l.h.s" for right hand side and left hand side respectively.
\subsection{Assumptions and notations}\label{assumptionssec} 
\textbf{Assumptions on the coefficient field. }We fix the dimension $d\geq 2$ and we consider a coefficient field $a : \mathbb{R}^d\rightarrow \mathbb{R}^{d\times d}$ of the form,
\begin{equation}
a(x):=A(g(x)),
\label{coefa}
\end{equation}
for a given Gaussian field $g$ and a given Lipschitz map $A : \mathbb{R}^k\rightarrow \mathbb{R}^{d\times d}$, for some $k\geq 1$, which takes values in the set of uniformly elliptic and bounded matrices. More precisely we assume that there exists $0<\lambda\leq 1$ (fixed once for all) such that for all $s\in\mathbb{R}^k$ and $\xi\in\mathbb{R}^d$
\begin{equation}
\lambda\vert \xi\vert^2\leq \xi\cdot A(s)\xi\quad \text{and}\quad \vert A(s)\xi\vert\leq \vert\xi\vert,
\label{elliptic}
\end{equation}
and that $g : \mathbb{R}^d\rightarrow \mathbb{R}^k$ is a stationary Gaussian field on $\mathbb{R}^d$ of zero mean on some probability space $(\Omega, \mathcal{A},\left\langle\cdot \right\rangle)$, characterized by its covariance function $c : x\in\mathbb{R}^d\mapsto \left\langle g(0)\otimes g(x)\right\rangle$. The ensemble $\left\langle \cdot\right\rangle$ satisfies the standard stationarity and ergodicity assumptions, namely 
\begin{itemize}
\item[(i)]$\left\langle\cdot\right\rangle$ is invariant by the action of $(\mathbb{R}^d,+)$: for all $B\in \mathcal{A}$ and for all $z\in\mathbb{R}^d$, $\left\langle \mathds{1}_{B+z}\right\rangle=\left\langle \mathds{1}_B\right\rangle$ where $B+z:=\{g(\cdot+z)|g\in B\}$,
\item[(ii)]$\left\langle\cdot\right\rangle$ is ergodic for the action of $(\mathbb{R}^d,+)$: for all $B\in\mathcal{A}$ which satisfies $B+z=B$ for all $z\in\mathbb{R}^d$, then $\left\langle \mathds{1}_B\right\rangle\in \{0,1\}$.
\end{itemize}
We assume that there exists a smooth non-increasing function $\gamma$ such that for all $x\in\mathbb{R}^d$
\begin{equation}
\vert c(x)\vert\leq \gamma(\vert x\vert).
\label{correlation}
\end{equation}
 In this context, the following multiscale logarithmic Sobolev inequality holds for all square integrable functional $F$ of $a$ (see \cite[Theorem 3.1,(ii)]{duerinckx2017weighted}): there exists $\rho>0$ such that
\begin{equation}
\text{Ent}(F(a)):=\left\langle F^2(a)\log (F^2(a))\right\rangle-\left\langle F^2(a)\right\rangle\left\langle \log (F^2(a))\right\rangle\leq \frac{1}{\rho} \left\langle\int_{1}^{+\infty}\ell^{-d}\pi(\ell)\int_{\mathbb{R}^d}\vert\partial^{\text{fct}}_{x,\ell} F(a)\vert^2 \dd x\, \dd \ell\right\rangle,
\label{SGinegp}
\end{equation}
where for all $x\in\mathbb{R}^d$ and $\ell\in [1,\infty)$
\begin{equation}
\partial^{\text{fct}}_{x,\ell} F:=\sup\left\{\limsup_{h\rightarrow 0}\frac{F(a+h\delta a)-F(a)}{h},\,\sup_{\bb_{\ell}(x)}\vert \delta a\vert\leq 1,\, \delta a=0 \text{ outside $\bb_{\ell}(x)$}\right\},
\label{functioderidef}
\end{equation}
and where the weight $\pi$ satisfies
$$ \pi(\ell)=\vert\gamma'(\ell)\vert.$$
In this contribution, we specialize to algebraic decay and assume that there exists $\beta>0$ such that for all $\ell \in [1,\infty)$
\begin{equation}
\gamma(\ell)=(\ell+1)^{-\beta}.
\label{assumeMSPC}
\end{equation}
In the particular case where $\beta>d$, which implies that $c$ is integrable, $\left\langle\cdot\right\rangle$ satisfies the standard logarithmic Sobolev inequality (see \cite[Theorem 3.1,(i)]{duerinckx2017weighted}), namely 
$$\text{Ent}(F(a))\leq \frac{1}{\rho}\left\langle\int_{\mathbb{R}^d}\vert\partial_{x,1}^{\text{fct}}F(a)\vert^2\dd x\right\rangle.$$
Let us briefly comment on our quantitative assumption \eqref{SGinegp}. Functional inequalities allow to quantify the ergodicity assumption, in the sense that we obtain a rate of convergence in the Birkhoff ergodic theorem, which ensures that for all stationary random variables $F$ (meaning for all $x\in\mathbb{R}^d$, $F(a,\cdot+x)=F(a(\cdot+x),\cdot)$) with finite second moment, we have
\begin{equation}
\lim_{r\rightarrow +\infty}\fint_{\bb_r}F(a,x)\dd x=\langle F(a,0)\rangle\quad \text{almost surely}.
\label{birkhoff}
\end{equation}
In order to understand the effect of \eqref{SGinegp}, it is instructive to apply it to the spatial average $F_r(a)=\fint_{\bb_r} a(x)\dd x$ of the coefficient field itself. Indeed, for this particular choice, the functional derivative is given by $\partial^{\text{fct}}_{x,\ell}F(a)=\int_{\bb_{\ell}(x)}r^{-d}\mathds{1}_{\bb_r}(y)\dd y$ and by plugging the derivative into \eqref{SGinegp} we obtain the following rate of convergence for \eqref{birkhoff}:
\begin{align*}
\text{Ent}(F_r(a))\leq \frac{1}{\rho}\int_{1}^{+\infty}\ell^{-d}\pi(\ell)\int_{\mathbb{R}^d}\left(\int_{\bb_{\ell}(x)}r^{-d}\mathds{1}_{\bb_r}(y)\dd y\right)^2\dd x\, \dd \ell&\stackrel{\eqref{assumeMSPC}}{\lesssim}r^{-d}\int_{1}^r \ell^{d-\beta-1}\dd \ell+\int_{r}^{+\infty}\ell^{-1-\beta}\dd \ell\\
&\lesssim r^{-\beta}\mathds{1}_{\beta<d}+r^{-d}\log(r)\mathds{1}_{\beta=d}+r^{-d}\mathds{1}_{\beta>d}.\\ 
\end{align*}
Note that when $\beta>d$, we recover the central limit theorem scaling $r^{-\frac{d}{2}}$. The interest of multi-scale logarithmic Sobolev inequalities is that they entail fluctuation bounds for nonlinear functionals
of $a$, and therefore constitute a powerful tool for establishing quantitative estimates on the semigroup $u_e$ defined in \eqref{introequationu}. \newline
\newline
\textbf{General notations.}
For $\mathcal{U}\subset \mathbb{R}^d$ open and $p\in [1,+\infty]$, we denote by $\LL^p(\mathcal{U})$ the Lebesgue space on $\mathcal{U}$ with exponent $p$, that is, the set of measurable functions $f: \mathcal{U}\rightarrow \mathbb{R}^d$ satisfying 
$$\|f\|_{\LL^p(\mathcal{U})}:=\left(\int_{\mathcal{U}}\vert f(x)\vert^p\dd x\right)^{\frac{1}{p}}<+\infty,$$
and where for $p=+\infty$
$$\|f\|_{\LL^{\infty}(\mathcal{U})}:=\inf\{C>0|\vert f(x)\vert\leq C \text{ for almost all $x\in\mathcal{U}$}\}.$$
The vector space of functions on $\mathbb{R}^d$ which belongs to $\LL^p(\mathcal{U})$ whenever $\mathcal{U}$ is bounded is denoted by $\LL^p_{\text{loc}}(\mathbb{R}^d)$. If $\vert\mathcal U\vert<+\infty$ and $f\in \LL^1(\mathcal{U})$, then we write 
$$\fint_{\mathcal{U}} f(x)\dd x:=\frac{1}{\vert \mathcal{U}\vert}\int_{\mathcal{U}}f(x)\dd x.$$
For all $\mathcal{U}$, we denote by $\HH^1(\mathcal{U})$ the space of all measurable functions $f:\mathcal{U}\rightarrow \mathbb{R}^d$ in $\LL^2(\mathcal{U})$ such that $\nabla f$ is in $\LL^2(\mathcal{U})$. We also define $\HH^1_{\text{loc}}(\mathbb{R}^d)$ the space of functions which belongs to $\HH^1(\mathcal{U})$ whenever $\mathcal{U}$ is bounded.\newline
\newline
For all $p\in [1,\infty)$, we denote by $\LL^p_{\left\langle\cdot\right\rangle}(\Omega)$ the space of random variables $X: \Omega\rightarrow \mathbb{R}^d$ satisfying
$$\left\langle X^p\right\rangle^{\frac{1}{p}}<+\infty.$$
If $\mathcal{B}$ is a Banach space, then for all $p\in [1,\infty)$, we denote by $\LL^p(\mathbb{R}^d,\mathcal{B})$ (resp. $\LL^p_{\text{loc}}(\mathbb{R}^d,\mathcal{B})$) the space of measurable functions $f: \mathbb{R}^d\rightarrow \mathcal{B}$ such that $\|f(\cdot)\|_{\mathcal{B}}\in \LL^p(\mathbb{R}^d)$ (resp. $\|f(\cdot)\|_{\mathcal{B}}\in \LL^p_{\text{loc}}(\mathbb{R}^d)$).\newline
\newline
For all time interval $\text{I}:=[t_1,t_2)$ and open subset $\mathcal{U}\subset \mathbb{R}^d$, we define the function space
$$\text{H}^1_{\text{par}}(\text{I}\times \mathcal{U}):=\{u\in \LL^2(\text{I},\HH^1(\mathcal{U}))\vert \partial_{\tau} u\in \LL^2(\text{I},\HH^{-1}(\mathcal{U})\}.$$
We say that $u\in\HH^1_{\text{par}}(\text{I}\times\mathcal{U})$ is a weak solution of 
$$
\left\{
    \begin{array}{ll}
        \partial_{\tau} u-\nabla\cdot a\nabla u=\nabla\cdot f & \text{ in $\text{I}\times \mathcal{U}$}, \\
        u(t_1)=\nabla\cdot g, & 
    \end{array}
\right.
$$
for r.h.s $f\in \LL^2(\text{I}\times \mathcal{U})^d$ and initial data $g\in \LL^q(\mathcal{U})^d$ (for some $q\in [1,+\infty]$) if for all $\psi \in \text{C}^{\infty}_c(\text{I}\times \mathcal{U})$
\begin{align*}
-\int_{\mathcal{U}}\int_{t_1}^{t_2} u(t,x)\partial_{\tau}\psi(t,x)\dd t\, \dd x+\int_{\mathcal{U}}\int_{t_1}^{t_2}\nabla u(t,x)\cdot a(x)\nabla\psi(t,x)\dd t\, \dd x+\int_{\mathcal{U}}g\cdot \nabla\psi(t_1,x)\dd t\, \dd x=-\int_{\mathcal{U}}\int_{t_1}^{t_2} f(t,x)\nabla\psi(t,x)\dd t\, \dd x.
\end{align*}
\newline
\newline
For all $R\geq 1$, we define the exponential kernel $\eta_R$ by
$$\eta_R:=\frac{1}{R^d}e^{-\frac{\vert\cdot\vert}{R}},$$
and the Gaussian kernel $g_R$ by
$$g_R:=\frac{1}{R^d}e^{-\frac{\vert \cdot\vert^2}{R^2}}.$$
For all measurable functions $f$ and all $r>0$, we denote by $f_r$ the convolution with the Gaussian kernel $g_r$, namely
$$f_r:=f\star g_r=\int_{\mathbb{R}^d}f(y)g_r(\cdot-y)\dd y.$$
We say that a random field $X : \Omega\times \mathbb{R}^d\rightarrow \mathbb{R}$ is stationary if we have for all $x\in\mathbb{R}^d$
\begin{equation}
X(a,\cdot+x)=X(a(\cdot+x),\cdot)\text{ almost surely}.
\label{stationarityfield}
\end{equation}
For all $R>0$ and $(s,x)\in\mathbb{R}^{d+1}$, we write $\bb_R(x):=\{y\in\mathbb{R}^d|\vert x-y\vert<R\}$ for the ball of radius $R$ centered at $x$ and $\cc_R(s,x):=(s-R^2,s)\times \bb_R(x)$ for the parabolic cylinder centered at $(s,x)$ and of radius $R$ (for $(s,x)=(0,0)$, we do not write the dependance on $(s,x)$). We use the short-hand notation $\lesssim_{\alpha_1,...,\alpha_n}$ for $\leq C$ for a constant $C$ which depends only on the parameters $(\alpha_i)_{i\in \llbracket 1,n\rrbracket}$.\newline
\newline
We write for all $(a,b)\in\mathbb{R}^2$, $a\vee b=\max\{a,b\}$ and $a\wedge b=\min\{a,b\}$.\newline
\newline
\textbf{Homogenization theory. }We denote by $\phi_e(\cdot,a)\in \HH^{1}_{\text{loc}}(\mathbb{R}^d)$ the corrector, in the direction of a unit vector $e$ of $\mathbb{R}^d$, as the unique distributional solution in $\mathbb{R}^d$ of, for almost all realization of $a$
\begin{equation}
        -\nabla\cdot a(\nabla\phi_e+e)=0 \quad \text{with}\quad \displaystyle\limsup_{R\rightarrow +\infty}\frac{1}{R}\left(\fint_{\bb_R}\vert\phi_e(x)\vert^2\dd x\right)^{\frac{1}{2}}=0 \quad \text{and}\quad\int_{\bb_1}\phi_e(x)\dd x=0.
\label{correctorequation2}
\end{equation}
For the existence of correctors, we refer to \cite[Lemma 1]{gloria2014regularity}.\newline
\newline
For all $T\geq 1$, we denote by $\phi_{e,T}$ the massive corrector, defined as the Lax-Milgram solution in $\HH^1_{\text{uloc}}(\mathbb{R}^d):=\{\psi\in \HH^1_{\text{loc}}(\mathbb{R}^d)|\sup_{x\in\mathbb{R}^d}\int_{\bb_1(x)}\vert\psi\vert^2+\vert\nabla\psi\vert^2<+\infty\}$, to
\begin{equation}
\frac{1}{T}\phi_{e,T}-\nabla\cdot a(\nabla \phi_{e,T}+e)=0 \quad \text{in $\mathbb{R}^d$.}
\label{massivecorrector}
\end{equation}
For the existence and uniqueness of the massive correctors, we refer to \cite{gloria2017quantitative}. Likewise, we denote by $\phi^*_e$ and $\phi^*_{e,T}$ the solutions of \eqref{correctorequation2} and \eqref{massivecorrector} with $a$ replaced by $a^*$, the transposed field of $a$. We denote by $u_e\in \HH^{1,\text{par}}_{\text{uloc}}$ the semigroup associated with the corrector problem \eqref{correctorequation2}, defined as the weak solution of
\begin{equation}
\left\{
    \begin{array}{ll}
        \partial_{\tau}u_e-\nabla\cdot a\nabla u_e = 0 & \text{ in $ (0,+\infty)\times \mathbb{R}^d$}, \\
        u_e(0)=\nabla\cdot a(\cdot)e, &
    \end{array}
\right.
\label{equationu}
\end{equation}
with
$$\HH^{1,\text{par}}_{\text{uloc}}:=\left\{u\in \cc^{0}(\mathbb{R}^+_*,\HH^1_{\text{loc}}(\mathbb{R}^d))\bigg\vert \sup_{T>0}\sup_{R\geq \sqrt{T}}\fint_{\bb_R}\vert(T\nabla u(T,x),\sqrt{T}u(T,x))\vert^2\dd x+\left\vert\int_0^T(\nabla u(s,x),\frac{1}{\sqrt{T}}u(s,x))\dd s\right\vert^2\dd x<+\infty\right\}.$$
For existence and uniqueness of $u_e$, we refer to \cite[Lemma 1]{gloria2015corrector}.\newline
\newline
We also introduce the associated fluxes
\begin{equation}
q_e:= a(\nabla \phi_e+e),
\label{qelliptic}
\end{equation}
for all $T\geq 1$
\begin{equation}
q_{e,T}:= a(\nabla \phi_{e,T}+e),
\label{qellipticmassive}
\end{equation}
and for all $t\geq 0$
\begin{equation}
q_e(t,\cdot):=a(\cdot)\left(\int_{0}^t \nabla u_e(s,\cdot)\dd s+e\right),
\label{defq}
\end{equation}
as well as the associated time dependent corrector, for all $t\geq 0$
\begin{equation}
\phi_e(t,\cdot):=\int_{0}^t u_e(s,\cdot)\dd s.
\label{defphitime}
\end{equation}
We introduce the flux corrector $\sigma=(\sigma_{i,j,k})_{(i,j,k)\in \llbracket 1,d\rrbracket^3}$ as the unique distributional solution in $\mathbb{R}^d$ of, for almost all realization of $a$
\begin{equation}
\nabla\cdot \sigma_i= q_{e_i}\quad\text{and}\quad -\Delta\sigma_{i,j,k}=\partial_j (e_k\cdot q_{e_i})-\partial_k (e_j\cdot q_{e_i}) ,
\label{correctorequationsigma2}
\end{equation}
with
$$\displaystyle\limsup_{R\rightarrow +\infty}\frac{1}{R}\left(\fint_{\bb_R}\vert\sigma_i(x)\vert^2\dd x\right)^{\frac{1}{2}}=0 \quad\text{and}\quad \int_{\bb_1}\sigma_i(x)\dd x=0,$$
where $(\nabla\cdot \sigma_i)_j=\sum_{k=0}^d\partial_k\sigma_{i,j,k}$ and $\partial_i$ denotes the partial derivative with respect to the single coordinate $x_i$. For existence and uniqueness, we refer to \cite[Lemma 1]{gloria2014regularity}.\newline
\newline
Finally, for all $T\geq 1$, we denote by $\sigma_T=(\sigma_{T,i,j,k})_{(i,j,k)\in\llbracket 1,d\rrbracket^3}$ the massive flux corrector, defined as the Lax-Milgram solution in $\HH^1_{\text{uloc}}(\mathbb{R}^d)$ to
\begin{equation}
\frac{1}{T}\sigma_{T,i,j,k}-\Delta\sigma_{T,i,j,k}=\partial_j(e_k\cdot q_{e_i,T})-\partial_k(e_j\cdot q_{e_i,T}).
\label{correctorequationsigmamassive}
\end{equation}
For the existence and uniqueness of the massive flux corrector, we refer to \cite{gloria2017quantitative}.\newline
\newline
The quantities $u_e, \nabla \phi_e, \nabla\sigma, q_e, \phi_{e,T}$ and $q_{e,T}$ are stationary in the sense of \eqref{stationarityfield}, which implies that the distribution of their convolution with some smooth function $f$, under the stationary ensemble $\left\langle\cdot\right\rangle$, does not depend on the space variable. Thus, in the following, we do not distinguish between $F\star f(0)$ and $F\star f$ in our notation, for all stationary random fields $F$.
\subsection{Quantitative results}\label{resultssec}
Our first main result is split in two quantitative estimates on averages of the time dependent flux \eqref{defq}. First, we show that the fluctuations of $(q_e)_r(T)$ on scale $r\in [1,\sqrt{T}]$ decays as the central limit theorem scaling $r^{-\frac{d}{2}}$ times some growth in time which depends on the correlation (in particular, in the case $\beta>d$, we get exactly the central limit theorem scaling). Second, we show that the fluctuations of particular averages $q_e(r^2)\star f_r$, for all $r\geq 1$ and $f_r$ which behaves like $\int_{1}^{r^2}\nabla g_{\sqrt{s}}\dd s$, has some growth in $r$ depending on the correlation. The first result is a key estimate to obtain the optimal decay in time of the semigroup $u_e$, whereas the second is needed to get the optimal growth of the correctors stated in Corollary \ref{boundextendedcorrector}. We prove those estimates for stretched exponential moments. 
\begin{theorem}[Fluctuations of averages of the time dependent flux]\label{semigroup} Let $T\geq 1$ and $e$ be a unit vector of $\mathbb{R}^d$.
\begin{itemize}
\item For all $1\leq r\leq \sqrt{T}$, we have
\begin{equation}
\vert ((q_e)_r(T),\nabla(\phi_e)_r(T))-\left\langle ((q_e)_r(T),\nabla(\phi_e)_r(T))\right\rangle\vert\leq \mathcal{C}_{\star, d, \lambda, \beta}(r)r^{-\frac{d}{2}}\mu_\beta(T)(1+\log^2(\tfrac{\sqrt{T}}{r})),
\label{Sensitilem3}
\end{equation}
with 
\begin{equation}
\mu_{\beta}(T) := \left\{
    \begin{array}{ll}
        T^{\frac{d}{4}-\frac{\beta}{4}} & \text{ if $\beta<d$}, \\
        \log^{\frac{1}{2}}(T) &\text{ if $\beta=d$}, \\
				1     &\text{ if $\beta>d$}.
    \end{array}
\right.
\label{defmubeta}
\end{equation}
\item For all $r\geq 1$ and function $f_r\in \text{C}^{1}_b(\mathbb{R}^d)$ which satisfies for all $x\in\mathbb{R}^d$
\begin{equation}
\vert f_r(x)\vert\lesssim \vert x\vert\int_{1}^{r^2} s^{-1} g_{\sqrt{s}}(x)\dd s \quad\text{and}\quad \vert \nabla f_r(x)\vert \lesssim \vert x\vert^2\int_{1}^{r^2}s^{-2} g_{\sqrt{s}}(x)\dd s,
\label{assumesensiothertest}
\end{equation}
we have 
\begin{equation}
\vert (q_e(r^2),\nabla(\phi_e)(r^2))\star f_r-\left\langle (q_e(r^2),\nabla(\phi_e)(r^2))\star f_r\right\rangle\vert\leq \mathcal{C}_{\star,d,\lambda,\beta}(r)\chi_{d,\beta}(r),
\label{sensiothertest}
\end{equation}
with 
\begin{equation}
\chi_{d,\beta}(r) := \left\{
    \begin{array}{ll}
        (r+1)^{1-\frac{\beta}{2}} & \text{ for $\beta<2$ \text and $d> 2$}, \\
				(r+1)^{1-\frac{\beta}{2}}\log(r+2) & \text{ for $\beta\leq 2$ and
$d=2$},\\
        \log^{\frac{1}{2}}(r+2) &\text{ for $\beta=2$ and $d>2$ or $\beta>2$ and $d=2$}, \\
				1 & \text{ for $\beta>2$ and $d>2$}. 
    \end{array}
\right.
\label{defchibeta}
\end{equation}
The random variable $\mathcal{C}_{\star, d, \lambda, \beta}(r)$ depends on $d,\, \lambda,\, \beta$ and satisfies: for all $\alpha<\frac{1}{\frac{1}{2}+2\frac{d+1}{\beta\wedge d}}$ there exists some constant $C<\infty$ depending on $d$, $\lambda$, $\beta$ and $\alpha$ such that
\begin{equation}
\sup_{r\geq 0}\left\langle \exp(\tfrac{1}{C}\mathcal{C}^{\alpha}_{\star,d, \lambda}(r))\right\rangle\leq 2.
\label{momentboundlem3}
\end{equation}
\end{itemize}
\end{theorem}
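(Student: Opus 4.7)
The plan is to derive the two fluctuation bounds by applying the multiscale logarithmic Sobolev inequality \eqref{SGinegp} to the random variables $F_1:=\xi\cdot((q_e)_r(T),\nabla(\phi_e)_r(T))$ and $F_2:=\xi\cdot(q_e(r^2),\nabla(\phi_e)(r^2))\star f_r$ for an arbitrary unit vector $\xi$, and then to upgrade the resulting variance estimates to stretched exponential moments by the standard Herbst argument (iterating \eqref{SGinegp} on $e^{sF}$ and tracking the exponent through the weight $\pi$). The core of the proof is therefore to produce a pointwise bound on the functional derivative $\partial^{\text{fct}}_{x,\ell}F_i$ that decays appropriately in $|x|$, $\ell$ and $T$ (resp.\ $r$).

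To compute $\partial^{\text{fct}}_{x,\ell}F_1$, I would linearize \eqref{equationu}: a perturbation $\delta a$ supported in $\bb_\ell(x)$ produces $\delta u_e$ satisfying $\partial_\tau\delta u_e-\nabla\cdot a\nabla\delta u_e=\nabla\cdot(\delta a\nabla u_e)$ with $\delta u_e(0)=\nabla\cdot(\delta a\, e)$, hence
\[
\delta F_1=\int_{\mathbb{R}^d}\delta a\Bigl(\int_0^T\!\nabla u_e+e\Bigr)\cdot(\xi\, g_r)-\int_0^T\!\!\int_{\mathbb{R}^d}\delta a\,\nabla u_e\cdot\nabla v,
\]
where $v$ is the \emph{dual} parabolic field, i.e.\ the solution of the backward-in-time equation $-\partial_\tau v-\nabla\cdot a^*\nabla v=0$ on $(0,T)\times\mathbb{R}^d$ with terminal data obtained by a Helmholtz-type decomposition of $\xi\, g_r$ (one also needs an auxiliary field to handle the gradient part and to route the time-integrated source through Duhamel). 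This gives the pointwise bound
\[
|\partial^{\text{fct}}_{x,\ell}F_1|\lesssim \int_{\bb_\ell(x)}\Bigl(|\nabla u_e(T,y)+e|\,|g_r(y)|+\int_0^T|\nabla u_e(t,y)||\nabla v(t,y)|\,\dd t\Bigr)\dd y.
\]
The analogous derivative for $F_2$ uses the test field $f_r$ of \eqref{assumesensiothertest} in place of $g_r$.

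The main obstacle is to estimate these space-time averages of $|\nabla u_e||\nabla v|$ optimally. For this I would invoke the parabolic large-scale regularity theory (the heat-kernel-type decay of $\nabla u_e$ and $\nabla v$ on scales above the minimal radius $r_*$ built in \cite{gloria2014regularity}), which provides, for $\ell\ll\sqrt T$, decay estimates of the form $\fint_{\bb_\ell(x)}|\nabla v(t,\cdot)|^2\lesssim \mathcal{C}_\star g_{\sqrt{T-t}}(x)^2$ and a similar estimate for $\nabla u_e$, together with the basic energy bound $\int_0^T\!\fint|\nabla u_e|^2\lesssim 1$. Plugging these into the integral and integrating in time yields, after some care with the singular time layer $t\simeq T$, a bound of the form $|\partial^{\text{fct}}_{x,\ell}F_1|\lesssim \mathcal{C}_\star\,\ell^{d}\bigl(g_{\sqrt T}(x)+\int_0^T g_{\sqrt t}(x)^2\dd t\bigr)$ up to logarithmic factors.

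Finally, I plug this bound into \eqref{SGinegp}: the square and the spatial integral produce a factor $\int_{\mathbb{R}^d}g_{\sqrt t}(x)g_{\sqrt s}(x)\dd x\simeq (t+s)^{-d/2}$, so after $x$-integration there remains an integral of the form $\int_1^{+\infty}\ell^{d}\pi(\ell)\bigl(\cdots\bigr)\dd\ell$. Splitting this $\ell$-integral at the natural scales $r$ and $\sqrt T$ and using $\pi(\ell)\simeq(\ell+1)^{-\beta-1}$ produces exactly the three regimes $\beta<d$, $\beta=d$, $\beta>d$ and yields the factor $r^{-d/2}\mu_\beta(T)(1+\log^2(\sqrt T/r))$ in \eqref{Sensitilem3}; for the second statement, the test field $f_r$ shifts the relevant time scale and leads to the regimes $\chi_{d,\beta}(r)$ in \eqref{defchibeta}. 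Upgrading variance to stretched exponentials then follows by iterating \eqref{SGinegp} applied to $e^{sF_i}$; the loss of $1/2$ in the exponent of \eqref{momentboundlem3} reflects the square in the LSI while the factor $2(d+1)/(\beta\wedge d)$ encodes the polynomial multiplication by the minimal-scale moments from \cite{gloria2014regularity}.
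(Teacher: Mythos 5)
Your proposal correctly identifies the overall route: compute the Malliavin-type derivative of $q_r(T)$ by duality against a backward parabolic field $v^T$, control space-time integrals of $|\nabla u_e||\nabla v^T|$ via parabolic large-scale regularity, and feed the result into the multiscale logarithmic Sobolev inequality \eqref{SGinegp}. This is indeed the skeleton of the paper's argument, and your heuristic for the $\ell$-integration producing the three regimes of $\mu_\beta$ and $\chi_{d,\beta}$ is also right.

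However, there is a genuine gap that your phrase ``up to logarithmic factors'' conceals. If you run your derivative bound through \eqref{SGinegp} using only the deterministic parabolic decay of $\nabla u_e$ (the $t^{-1}$ pointwise bound in the scalar case, or the $t^{-2}$ bound on local $\LL^2$-averages in the vectorial case), the time integral $\int_1^T t^{-1}\| \nabla v^T(t,\cdot)\|_{\LL^2}\,\dd t$ produces a factor $\log(T)$, \emph{not} $\log(\sqrt T/r)$. These differ substantively: $\log^2(T)$ does not vanish when $r\simeq\sqrt T$, but $\log^2(\sqrt T/r)$ does, and it is precisely the regime $T\simeq r^2$ that is needed for all the applications (Corollaries \ref{boundphi}--\ref{boundextendedcorrector}). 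The paper removes this extra logarithm by a bootstrap/buckling step that is absent from your proposal: one first proves a sub-optimal moment bound with the $\log(T)$ correction (Proposition \ref{Sensitivitysubop}), then feeds it through the $\LL^2$--$\LL^1$ estimate of Lemma \ref{cacciopo} to upgrade the deterministic $t^{-2}$ decay of averages of $|\nabla u_e|^2$ to a stochastic decay $\mathcal D_\star\,\eta_\beta^2(t)\log^2(t)$, interpolates this with the deterministic bound, and only then reruns the sensitivity estimate. This step also costs stochastic integrability, which is exactly why the exponent in \eqref{momentboundlem3} has the factor $2\frac{d+1}{\beta\wedge d}$ rather than $\frac{d+1}{\beta\wedge d}$; your explanation of the exponent attributes everything to the minimal-scale moments and the LSI square, and misses this doubling.

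A secondary issue: your claimed decay $\fint_{\bb_\ell(x)}|\nabla v^T(t,\cdot)|^2\lesssim\mathcal C_\star g_{\sqrt{T-t}}(x)^2$ for the dual field is too strong. What large-scale regularity actually gives (Lemma \ref{ctrlav2}) is a polynomial decay $\lesssim r_*^d(0)\log^2(1+|x|/r)(|x|+r)^{-2d}$, i.e.\ Green-function type in space, not Gaussian, together with a heat-kernel tail coming from the right-hand side. More importantly, the argument must treat the regimes $\ell<\sqrt T$ and $\ell\geq\sqrt T$ differently: for $\ell<\sqrt T$ one uses $\LL^2$-type estimates, while for $\ell\geq\sqrt T$ the $\LL^2$ bound scales as $\ell^{2d}$ and is not integrable against $\pi(\ell)\sim\ell^{-\beta-1}$ at large $\ell$ when $\beta$ is small, so one must pass to $\LL^1$-type estimates and the pointwise bounds of Lemma \ref{ctrlav2}. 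Your uniform $\ell^d$ bound is the $\LL^1$-regime estimate, but you have not explained how it would be obtained without the sharp pointwise control of $\nabla v^T$, which is a nontrivial input.
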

Theorem \ref{semigroup} implies the following optimal decay in time of the semigroup $u_e$ (defined in \eqref{equationu}) and of its gradient. This result is in the spirit of \cite[Theorem 1]{gloria2015quantification} established in the discrete setting and extends \cite[Corollary 4]{gloria2015corrector} and \cite[Theorem 9.1]{armstrong2019quantitative} established in the case where the coefficients are randomly distributed according to a stationary ensemble of finite range of dependence to the Gaussian setting.
\begin{cor}[Decay of the semigroup]\label{decayu}There exists a constant $c<\infty$ depending on $\lambda$ and $d$ such that for all $T\geq 1$, $R\geq \sqrt{T}$ and unit vector $e\in\mathbb{R}^d$
\begin{equation}
\left(\int_{\mathbb{R}^d}\eta_{R}(\tfrac{y}{c})\vert (u_e(T,y),\sqrt{T}\nabla u_e(T,y))\vert^2\dd y\right)^{\frac{1}{2}}\leq \mathcal{C}_{\star, d, \lambda,\beta}(T)\eta_{\beta}(T),
\label{estisemigroup}
\end{equation}
with for all $T\geq 1$
\begin{equation}
 \eta_\beta(T)=\left\{
    \begin{array}{ll}
        T^{-\frac{1}{2}-\frac{\beta}{4}} &  \text{ if $\beta<d$}, \\
        \log^{\frac{1}{2}}(T)T^{-\frac{1}{2}-\frac{d}{4}} & \text{ if $\beta=d$},\\
				T^{-\frac{1}{2}-\frac{d}{4}} & \text{ if $\beta>d$},
    \end{array}
\right.
\label{defetabeta}
\end{equation}
and for some random variable $\mathcal{C}_{\star,d,\lambda,\beta}(T)$ which depends on $d,\lambda, \beta$ and satisfies: for all $\alpha< \frac{1}{\frac{1}{2}+
2\frac{d+1}{\beta\wedge d}}$ there exists some constant $C<\infty$ depending on $d,\,\lambda,\, \beta$ and $\alpha$ such that 
$$\displaystyle\sup_{T>0}\left\langle \exp(\tfrac{1}{C}\mathcal{C}^{\alpha}_{\star,d,\lambda,\beta}(T))\right\rangle\leq 2.$$
In particular for all $x\in\mathbb{R}^d$
\begin{equation}
\left\langle\vert \nabla u_e(T,x)\vert^2\right\rangle^{\frac{1}{2}}\lesssim_{d,\lambda}T^{-\frac{1}{2}}\eta_{\beta}(T).
\label{estip2}
\end{equation}
\end{cor}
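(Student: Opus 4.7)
The plan is to derive Corollary \ref{decayu} from Theorem \ref{semigroup} via a simple algebraic identity linking the semigroup $u_e$ to the time dependent flux $q_e$, combined with parabolic regularity and a dyadic decomposition of the exponential weight.

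The key identity is $u_e(t)=\nabla\cdot q_e(t)$ for all $t\geq 0$. This follows by differentiating $q_e(t)=a\bigl(\int_0^t\nabla u_e\,ds+e\bigr)$ in time: $\partial_t q_e=a\nabla u_e$, whence $\partial_t(u_e-\nabla\cdot q_e)=\partial_t u_e - \nabla\cdot(a\nabla u_e)=0$ in view of the equation for $u_e$, and the initial condition $u_e(0)=\nabla\cdot(ae)=\nabla\cdot q_e(0)$ closes the argument. Stationarity then yields $\langle u_e(t)\rangle=0$, so $u_e(t)$ is pure fluctuation and Theorem \ref{semigroup} can be fed in directly.

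For the pointwise Gaussian average at the parabolic scale $\sqrt{s}$, I would split $g_{\sqrt s}= c\, g_{\sqrt{s}/\sqrt 2}\star g_{\sqrt{s}/\sqrt 2}$ and integrate by parts using the above identity:
\begin{equation*}
u_e(s)\star g_{\sqrt s}(x) = -\nabla g_{\sqrt{s}/\sqrt 2}\star\bigl[(q_e(s)-\langle q_e(s)\rangle)\star g_{\sqrt{s}/\sqrt 2}\bigr](x),
\end{equation*}
where $\langle q_e(s)\rangle$ can be subtracted for free since $\int\nabla g_{\sqrt{s}/\sqrt 2}=0$. Combining $\|\nabla g_{\sqrt{s}/\sqrt 2}\|_{L^1}\lesssim s^{-1/2}$ with the pointwise bound on $(q_e(s)-\langle q_e(s)\rangle)\star g_{\sqrt{s}/\sqrt 2}(y)$ supplied by the first part of Theorem \ref{semigroup} with $r=\sqrt{s}/\sqrt 2\leq\sqrt{s}$ (this gives the scaling $s^{-d/4}\mu_\beta(s)$ times a random prefactor $\mathcal{C}_\star(y)$ which, by stationarity, has stretched exponential moments uniform in $y$), yields the pointwise bound $|u_e(s)\star g_{\sqrt s}(x)|\lesssim \mathcal{C}'_\star(x)\eta_\beta(s)$ with the correct scaling \eqref{defetabeta}.

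Finally, to upgrade this pointwise Gaussian-averaged estimate into the exponentially weighted $L^2$ estimate \eqref{estisemigroup} at the larger scale $R\geq\sqrt{T}$, I would invoke standard parabolic regularity for $\partial_\tau-\nabla\cdot a\nabla$ (parabolic Caccioppoli combined with De Giorgi-Nash-Moser $L^\infty_{\mathrm{loc}}$ bounds), which gives
\begin{equation*}
|u_e(T,x)|^2 + T|\nabla u_e(T,x)|^2 \lesssim \fint_{T/2}^{T}\int g_{c\sqrt s}(y-x)\,|u_e(s,y)|^2\,\dd y\,\dd s.
\end{equation*}
Integrating this pointwise estimate against $\eta_R(\cdot/c)$ and plugging in the pointwise bound established above leads to \eqref{estisemigroup}. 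The main obstacle is technical rather than conceptual: it is the conversion from pointwise Gaussian-averaged bounds with random, stretched-exponentially-integrable prefactors to an $L^2$ estimate on a region of exponential scale $R\geq\sqrt T$. This is handled via a dyadic decomposition of $\eta_R$ into annuli of radii $2^kR$ and a careful supremum-over-a-grid argument for the random prefactors $\mathcal{C}'_\star$, which loses only a logarithmic factor that can be reabsorbed into the stretched exponential moment bound \eqref{momentboundlem3} (at the price of a slightly smaller exponent $\alpha$).
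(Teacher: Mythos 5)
Your algebraic identity $u_e(t)=\nabla\cdot q_e(t)$ and the consequence $\langle u_e(t)\rangle=0$ by stationarity are correct, and the integration-by-parts formula $u_e(s)\star g_{\sqrt s}=-\nabla g_{\sqrt s/\sqrt 2}\star\bigl[(q_e(s)-\langle q_e(s)\rangle)\star g_{\sqrt s/\sqrt 2}\bigr]$ is a legitimate way to feed in Theorem~\ref{semigroup} with the fixed scale $r=\sqrt s/\sqrt 2$ (this would also pleasantly sidestep the small-$r$ regime and hence Lemma~\ref{near0estiq}). The breakdown is in the passage from this pointwise Gaussian-averaged bound to the weighted $\LL^2$ estimate \eqref{estisemigroup}, and it is not merely a technical nuisance about supremum-over-grids.

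The regularity estimate you invoke,
\begin{equation*}
|u_e(T,x)|^2 + T|\nabla u_e(T,x)|^2 \lesssim \fint_{T/2}^{T}\int g_{c\sqrt s}(y-x)\,|u_e(s,y)|^2\,\dd y\,\dd s,
\end{equation*}
has the $\LL^2$ Gaussian average of $u_e(s,\cdot)$ on its right-hand side, whereas Step~1 controls $|u_e(s)\star g_{\sqrt s}(x)|$. By Jensen's inequality (applied to the probability density $g_{\sqrt s}(x-\cdot)$), $|u_e(s)\star g_{\sqrt s}(x)|^2 \le (|u_e(s,\cdot)|^2)\star g_{\sqrt s}(x)$; that is, the quantity you bound in Step~1 is \emph{smaller} than what the right-hand side of Step~2 requires. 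The two steps therefore do not compose: you cannot feed a bound on a convolution-then-square into an estimate that asks for a square-then-convolution. This circularity is fatal. A secondary problem is that the pointwise bound on $T|\nabla u_e(T,x)|^2$ is not provided by De Giorgi--Nash--Moser (which gives only $\LL^\infty$ and H\"older control of $u_e$, not a Lipschitz estimate), fails in general for systems, and even in the scalar case holds only under the additional symmetry assumption behind \eqref{strategyunifboundnablau}; yet the paper's analysis is meant to apply to systems.

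The paper circumvents both obstructions at once via Lemma~\ref{cacciopo}, the $\LL^2$--$\LL^1$ estimate from~\cite{gloria2015corrector}, which directly bounds $\bigl(\int\eta_R(y/c)\,|(u_e(T,y),\sqrt T\nabla u_e(T,y))|^2\dd y\bigr)^{1/2}$ by an integral in $(t,r)$ of the $\LL^1$-norm in space of $q_r(t,\cdot)-\langle q_r(t,\cdot)\rangle$. Because the right-hand side is an $\LL^1$-norm of $q_r-\langle q_r\rangle$ rather than an $\LL^2$-norm of $u_e$, no scalar $\LL^\infty$ theory is needed, no pointwise gradient bound is needed, and Minkowski's inequality together with stationarity immediately reduces the estimate to $\langle|q_r(t)-\langle q_r(t)\rangle|^p\rangle^{1/p}$, which is then split into $r\le1$ (Lemma~\ref{near0estiq}) and $1\le r\le\sqrt t$ (Theorem~\ref{semigroup}). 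If you want to salvage your route, the right move is to encapsulate the pointwise-to-$\LL^2$ conversion inside a Caccioppoli-type lemma whose right-hand side stays \emph{linear} in $q_r-\langle q_r\rangle$ --- which is precisely what Lemma~\ref{cacciopo} accomplishes.
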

\begin{rem}\label{laplaciancase}
We comment on the scalings in $T$ in the results of Theorem \ref{semigroup} and Corollary \ref{decayu}. 

\medskip

\noindent\textbf{Scalings in Corollary \ref{decayu}. }The time decay $\eta_{\beta}(T)$ of $u_e$, respectively $T^{-\frac{1}{2}}\eta_{\beta}(T)$ of $\nabla u_e$, is optimal and can be easily inferred in the case of small ellipticity contrast. Indeed, let us consider a coefficient field $a^{\delta}$ with small ellipticity contrast, namely
\begin{equation}\label{SmallContrastEqRemark}
a^{\delta}=\text{Id}+\delta a\quad \text{for} \quad \delta\ll 1.
\end{equation}
The first order approximation in the regime $\delta\downarrow 0$ of $u_e$ is given by $u_e=\delta \overline{u}+\text{o($\delta$)}$ where 
\begin{equation}
\left\{
    \begin{array}{ll}
        \partial_{\tau}\overline{u}-\Delta \overline{u} = 0 & \text{ in $ (0,+\infty)\times \mathbb{R}^d$}, \\
        \overline{u}(0)=\nabla\cdot a(\cdot)e.&
    \end{array}
\right.
\label{firstorderapproximation}
\end{equation}
Using the heat kernel $\Gamma : (T,x)\in\mathbb{R}^+\times \mathbb{R}^d\mapsto \frac{1}{(4\pi T)^{\frac{d}{2}}}e^{\frac{-\vert x\vert^2}{4T}}$, we have the explicit formula
\begin{equation}
\overline{u}(T,x)=\int_{\mathbb{R}^d}\nabla \Gamma(T,x-y)\cdot a(y)e\,\dd y.
\label{equationapproxu}
\end{equation}
It follows from \eqref{functioderidef} that for all $z\in\mathbb{R}^d$ and $\ell\in [1,\infty)$
\begin{equation}
\partial^{\text{fct}}_{z,\ell} \overline{u}(T,x)=\int_{\bb_{\ell}(z)}\vert \nabla \Gamma(T,x-y)\otimes e\vert \,\dd y.
\label{partialderiv}
\end{equation}
Hence, by the multiscale logarithmic Sobolev inequality in form of \eqref{SGinegp1}, we have the following control of the moments of $\overline{u}$: 
$$\left<\vert \overline{u}(T,x)\vert^p\right>^{\frac{1}{p}}\lesssim \sqrt{p}\left(\int_{1}^{+\infty}\ell^{-d}\pi(\ell)\int_{\mathbb{R}^d}\bigg(\int_{\bb_\ell(z)}\vert\nabla \Gamma(T,x-y)\vert \dd y\bigg)^2\dd z\,\dd \ell\right)^{\frac{1}{2}} \quad \text{for all}\quad p\geq 1.$$
We then obtain the decay by splitting the integral into two parts: 
\begin{itemize}
\item[(i)]For $\ell\leq \sqrt{T}$, we use the Jensen inequality and the identity $\int_{\mathbb{R}^d}=\int_{\mathbb{R}^d}\fint_{\bb_{\ell}(x)}\dd x$:
\begin{align*}
\left(\int_{1}^{\sqrt{T}}\ell^{-d}\pi(\ell)\int_{\mathbb{R}^d}\bigg(\int_{\bb_\ell(z)}\vert\nabla \Gamma(T,x-y)\vert \dd y\bigg)^2\dd z\,\dd \ell\right)^{\frac{1}{2}}&\lesssim_d \left(\int_{1}^{\sqrt{T}}\ell^{d}\pi(\ell)\,\dd \ell\right)^{\frac{1}{2}}\left(\int_{\mathbb{R}^d}\vert\nabla \Gamma(T,x-y)\vert^2 \dd y\right)^{\frac{1}{2}}\\
&\lesssim_{d,\beta}\eta_\beta(T).
\end{align*}
\item[(ii)]For $\ell\geq \sqrt{T}$, we use the Minkowski inequality in $\LL^{2}(\mathbb{R}^d)$ (exchanging the order of intagration in the $z$ and $y$ variables) combined with Fubini's theorem:
\begin{align*}
\left(\int_{\sqrt{T}}^{+\infty}\ell^{-d}\pi(\ell)\int_{\mathbb{R}^d}\bigg(\int_{\bb_\ell(z)}\vert\nabla \Gamma(T,x-y)\vert \dd y\bigg)^2\dd z\,\dd \ell\right)^{\frac{1}{2}}&\leq\int_{\mathbb{R}^d}\left(\int_{\sqrt{T}}^{+\infty}\ell^{-d}\pi(\ell)\int_{\mathbb{R}^d}\mathds{1}_{\bb_{\ell}(z)}(y)\vert \nabla \Gamma(T,x-y)\vert^2\dd z\, \dd\ell\right)^{\frac{1}{2}}\dd y\\
&=\vert \bb_1\vert\int_{\mathbb{R}^d}\vert \nabla \Gamma(T,x-y)\vert\dd y\left(\int_{\sqrt{T}}^{+\infty}\pi(\ell)\,\dd \ell\right)^{\frac{1}{2}}\\
&\lesssim_{d,\beta}\eta_\beta(T).
\end{align*}
\end{itemize}
A similar computation gives 
\begin{equation}
\left<\vert \nabla \overline{u}(T,x)\vert^p\right>^{\frac{1}{p}}\lesssim_{d,\beta} \sqrt{p}\,T^{-\frac{1}{2}}\eta_{\beta}(T).\label{boundsmallellipticitycontrastnablau}
\end{equation}
\textbf{Scalings in Theorem \ref{semigroup}. }First, the decay $r^{-\frac{d}{2}}\mu_d(T)$ in \eqref{Sensitilem3} is optimal and can be  inferred as well in the case of small ellipticity contrast. Indeed, we verify this by an explicit computation as previously, considering a coefficient field in the form of \eqref{SmallContrastEqRemark}. We fix $1\leq r\leq \sqrt{T}$. The first order approximation in the regime $\delta\downarrow 0$ of the time depend flux is given by $q_e=\delta \overline{q}+\text{o($\delta$)}$ with
\begin{equation}
\overline{q}(T,\cdot):=\int_{0}^T\nabla\overline{u}(s,\cdot)\dd s+e.
\label{equationfluxapprox}
\end{equation}
Using $\Gamma(T,\cdot)=(4\pi)^{-\frac{d}{2}}g_{\sqrt{4T}}$ and the semigroup property $g_{\sqrt{s}}\star g_r=g_{\sqrt{s+r^2}}$ as well as \eqref{equationapproxu} and \eqref{equationfluxapprox}, we have 
\begin{align}
\overline{q}_r(T,\cdot)&\stackrel{\eqref{equationapproxu}, \eqref{equationfluxapprox}}{=}(4\pi)^{-\frac{d}{2}}\int_{0}^{T}\nabla(\nabla g_{\sqrt{4s}}\star a(\cdot)e\star g_r)\dd s+e\star g_r\nonumber\\
&=(4\pi)^{-\frac{d}{2}}\int_{0}^T\nabla(\nabla g_{\sqrt{4s+r^2}}\star a(\cdot)e)\dd s+e\star g_r\nonumber\\
&\stackrel{\eqref{equationapproxu}}{=}\int_{0}^T\nabla\overline{u}(4s+r^2,\cdot)\dd s+e\star g_r.\label{smallcontrastregime1}
\end{align}
Consequently, using \eqref{boundsmallellipticitycontrastnablau} and $r\leq \sqrt{T}$, we get for all $p\geq 1$
\begin{align*}
\left\langle \vert\overline{q}_r(T)-\left\langle \overline{q}_r(T)\right\rangle\vert^p\right\rangle^{\frac{1}{p}}=\left\langle\left\vert\int_{0}^{T}\nabla\overline{u}(4s+r^2,0)-\left\langle\nabla\overline{u}(4s+r^2,0)\right\rangle\dd s\right\vert^p\right\rangle^{\frac{1}{p}}&\leq \int_{0}^{T}\left\langle\vert\nabla\overline{u}(4s+r^2,0)-\left\langle\nabla\overline{u}(4s+r^2,0)\right\rangle\vert^p\right\rangle^{\frac{1}{p}}\dd s\\
&\lesssim_{d,\beta}\sqrt{p}\int_{0}^{T} (s+r^2)^{-\frac{1}{2}}\eta_{\beta}(s+r^2)\dd s\\
&\lesssim_{d,\beta}\sqrt{p}\,r^{-\frac{d}{2}}\mu_{\beta}(T),
\end{align*}
where $\mu_{\beta}(T)$ is defined in \eqref{defmubeta}.\newline
\newline
Second, the scaling $\chi_{d,\beta}(r)$ in \eqref{sensiothertest} is optimal except for $\beta<2$ and $d=2$. Indeed, we verify this by an explicit computation as previously. We assume for simplicity the more particular form of the averaging function
$$f_r=\int_{1}^{r^2}\nabla g_{\sqrt{\tau}}\,\dd \tau,$$
which satisfies \eqref{assumesensiothertest}. In the regimes $\beta>2,\, d>2$ and $\beta<2,\,d\geq 2$, we argue as in \eqref{smallcontrastregime1} to get $\overline{q}(r^2)\star f_r-\left\langle \overline{q}(r^2)\star f_r\right\rangle=\int_{1}^{r^2}\int_{0}^{r^2}\nabla^2\overline{u}(s+\tau,\cdot)\dd s\, \dd \tau$, and so for all $p\geq 1$
$$\left\langle\vert \overline{q}(r^2)\star f_r-\left\langle \overline{q}(r^2)\star f_r\right\rangle\vert^p\right\rangle^{\frac{1}{p}}=\left\langle\left\vert\int_{1}^{r^2}\int_{0}^{r^2}\nabla^2\overline{u}(4s+\tau,0)\dd s\, \dd\tau\right\vert^p\right\rangle^{\frac{1}{p}}\leq \int_{1}^{r^2}\int_{0}^{r^2}\left\langle \nabla^2\overline{u}(4s+\tau,0)\vert^p\right\rangle^{\frac{1}{p}}\dd s\, \dd \tau,$$
and we then conclude using $\left\langle \vert\nabla^2\overline{u}(s+\tau,0)\vert^p\right\rangle^{\frac{1}{p}}\lesssim_{d,\beta}\sqrt{p}\,(s+\tau)^{-1}\eta_{\beta}(s+\tau)$ (which is obtained with similar arguments than the ones for \eqref{boundsmallellipticitycontrastnablau}). Note that, for $\beta<2$ and $d=2$, we obtain $r^{1-\frac{\beta}{2}}$ and thus the logarithmic contribution in \eqref{sensiothertest} in this case is not optimal. For the regimes $\beta>2,\, d=2$, $\beta=2,\, d>2$ and $\beta=d=2$, this is more subtle even in the small ellipticity contrast regime. We have to bound the fluctuations more carefully using the logarithmic Sobolev inequality, noticing that we have for all $(x,z)\in\mathbb{R}^d\times \mathbb{R}^d$
$$\partial^{\text{fct}}_{z,\ell}(\overline{q}(r^2)\star f_r-\left\langle \overline{q}(r^2)\star f_r\right\rangle)=\int_{\bb_{\ell}(z)}\left\vert \int_{1}^{r^2}\int_{0}^{r^2}\nabla^3\Gamma(s+\tau,y)\otimes e\,\dd y\right\vert.$$
For more details, we refer to the estimates of the first l.h.s term of \eqref{othertestesti2} and \eqref{proofth1othertest10}, which are exactly the ones needed since we may check that $y\in\mathbb{R}^d\mapsto \int_{1}^{r^2}\int_{0}^{r^2}\nabla^3\Gamma(s+\tau,y)\otimes e\,\dd y$ satisfies \eqref{derivothertestesti1primeprime}.

\medskip

Due to the computations done above, the logarithmic correction in \eqref{Sensitilem3} is not optimal. In fact, this correction is here for technical reasons and mostly a consequence of the logarithm contribution in \eqref{lem2}. However, in practice, it has no consequences in the proof of the optimal decay in time of $\nabla u_e$ and also in the proof to obtain the optimal growth of the correctors and its gradient (see Corollary \ref{boundphi} and \ref{boundextendedcorrector}) for which only the regime $T\sim r^2$ is needed.
\end{rem}
Theorem \ref{semigroup} and Corollary \ref{decayu} imply the following four results that are of interest in stochastic homogenization. The first one yields bounds on the gradient and flux of the extended corrector $(\phi_e,\sigma)$, as well as the massive correctors $(\phi_{e,T},\sigma_T)$, which gives an alternative proof of \cite[Theorem 1]{gloria2019quantitative}. Thanks to the decay \eqref{estisemigroup}, the idea of the proof is clear: Since $u_e(T,\cdot)\underset{T\uparrow\infty}{\rightarrow} 0$ and $\int_{0}^{+\infty} \nabla u_e(t,\cdot)\,\dd t$ is well defined in $\LL^2_{\text{loc}}(\mathbb{R}^d,\LL^2_{\left\langle\cdot\right\rangle}(\Omega))$, we have by integrating the equation \eqref{equationu} in time
$$-\nabla\cdot ae-\nabla\cdot a \int_{0}^{+\infty}\nabla u_e(t,\cdot)\dd t=0,$$
and we then recognize the corrector equation \eqref{correctorequation2}. By uniqueness, we then conclude that 
\begin{equation}
\nabla \phi_e=\int_{0}^{+\infty} \nabla u_e(t,\cdot)\,\dd t.
\label{expliequationcor}
\end{equation}
Formula \eqref{expliequationcor} combined with \eqref{estisemigroup} then allow us to prove bounds on the gradient of correctors as well as on the flux.
\begin{cor}[Bounds on the flux and the gradient of correctors]\label{boundphi} We have for all $r\geq 1$, $T\geq 1$ and unit vector $e\in\mathbb{R}^d$
\begin{equation}
\vert ((q_e)_r,(q_{e,T})_r)-\left\langle (q_e)_r,(q_{e,T})_r)\right\rangle\vert+\vert(\nabla (\phi_e)_r,\nabla\sigma_r)\vert+\vert(\nabla(\phi_{e,T})_r,\nabla(\sigma_T)_r)\vert\leq\mathcal{C}_{\star,d,\lambda,\beta}(r)\pi^{-\frac{1}{2}}_\star(r),
\label{correctorbound2}
\end{equation}
with some random variable $\mathcal{C}_{\star,d,\lambda,\beta}(r)$ which depends on $d,\,\lambda,\, \beta$ and satisfies: for all $\alpha<\frac{1}{\frac{1}{2}+\frac{\frac{5}{2}d+2}{\beta\wedge d}}$ there exists some constant $C<\infty$ depending on $d,\,\lambda,\, \beta$ and $\alpha$ such that
$$\displaystyle\sup_{r>0}\left\langle \exp(\tfrac{1}{C}\mathcal{C}^{\alpha}_{\star,d,\lambda,\beta}(r))\right\rangle\leq 2,$$ 
and 
$$
\pi_\star(r)=\left\{
    \begin{array}{ll}
        r^{\beta} & \text{ if $\beta<d$}, \\
        r^d\log^{-1}(r) & \text{ if $\beta=d$},\\
				r^d & \text{ if $\beta>d$} .
    \end{array}
\right.
$$
In particular, for $\beta>d$, the quantities decay as the central limit theorem scaling $r^{-\frac{d}{2}}$.
\end{cor}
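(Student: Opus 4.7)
The plan is to exploit the formula \eqref{expliequationcor} rigorously and then turn the time decay of Corollary \ref{decayu} and the fluctuation estimate of Theorem \ref{semigroup} into the target bound by splitting the time integral at the natural scale $T=r^2$. First I would justify \eqref{expliequationcor} by integrating \eqref{equationu} on $(0,+\infty)$: the decay $u_e(T,\cdot)\to 0$ in $\LL^2_{\mathrm{loc}}(\mathbb{R}^d)$ coming from \eqref{estisemigroup} makes the boundary term at infinity vanish, and uniqueness of the sub-linear solution of \eqref{correctorequation2} identifies $\int_0^{+\infty}\nabla u_e(t,\cdot)\,\dd t$ with $\nabla\phi_e$. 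The same argument gives $q_e=\lim_{T\to +\infty}q_e(T,\cdot)$ in $\LL^2_{\langle\cdot\rangle}(\Omega,\LL^2_{\mathrm{loc}}(\mathbb{R}^d))$, so that
\begin{equation*}
q_e-q_e(r^2)=a\int_{r^2}^{+\infty}\nabla u_e(s,\cdot)\,\dd s,\qquad \nabla\phi_e-\nabla\phi_e(r^2)=\int_{r^2}^{+\infty}\nabla u_e(s,\cdot)\,\dd s.
\end{equation*}

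For the short-time contributions $(q_e(r^2))_r$ and $\nabla(\phi_e(r^2))_r$, I apply Theorem \ref{semigroup} at the boundary case $T=r^2$, for which $\sqrt{T}/r=1$ and the logarithmic correction is trivial. This yields exactly $\mathcal{C}_{\star,d,\lambda,\beta}(r)\,r^{-d/2}\mu_\beta(r^2)=\mathcal{C}_{\star,d,\lambda,\beta}(r)\,\pi_\star^{-1/2}(r)$, case by case in $\beta$. For the long-time tails, I estimate $\vert(\nabla u_e(s,\cdot))_r(0)\vert$ for each $s\geq r^2$ using \eqref{estisemigroup} with $R\simeq\sqrt{s}$: Cauchy--Schwarz against the weight $\eta_{\sqrt{s}/c}$, combined with the elementary computation $\int g_r^2/\eta_{\sqrt{s}/c}\lesssim r^{-d}s^{d/2}$ (by completing the square) and the parabolic large-scale regularity implicit in the decay estimate, gives $\vert(\nabla u_e(s))_r(0)\vert\lesssim \mathcal{C}_{\star,d,\lambda,\beta}(s)\,s^{-1/2}\eta_\beta(s)$. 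Minkowski's inequality in the Orlicz norm associated with the stretched exponential $\psi_\alpha(x)=\exp(x^\alpha)-1$ then preserves stochastic integrability when integrating in $s$ and produces $\int_{r^2}^{+\infty}s^{-1/2}\eta_\beta(s)\,\dd s\simeq \pi_\star^{-1/2}(r)$ in each regime of $\beta$.

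For the flux corrector $\sigma$, the defining Poisson-type problem $-\Delta\sigma_{i,j,k}=\partial_j(e_k\cdot q_{e_i})-\partial_k(e_j\cdot q_{e_i})$ from \eqref{correctorequationsigma2} expresses $(\nabla\sigma_{i,j,k})_r$ as a Calder\'on--Zygmund type transform of $(q_{e_i}-\langle q_{e_i}\rangle)$ smoothed at scale $r$, so the estimate already obtained for $(q_e)_r-\langle(q_e)_r\rangle$ transfers directly via boundedness of the Riesz transform on the Gaussian-smoothed symbol. For the massive quantities $(\phi_{e,T},\sigma_T)$, I would use the Laplace representation $\phi_{e,T}=\int_0^{+\infty}e^{-s/T}u_e(s,\cdot)\,\dd s$, which satisfies \eqref{massivecorrector} by integration by parts in $s$; the exponential weight $e^{-s/T}$ only improves the convergence of the tail, so the same decomposition at time $T\wedge r^2$ and the same decay estimates yield the bound.

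The main obstacle is the long-time tail: the naive pointwise Cauchy--Schwarz bound on $\vert(\nabla u_e(s))_r(0)\vert$ with the $\eta_{\sqrt{s}/c}$ weight produces a prefactor $r^{-d/2}s^{d/4-1/2}\eta_\beta(s)$, whose integration in $s$ on $[r^2,+\infty)$ diverges in every regime of $\beta$. To recover the integrable decay $s^{-1/2}\eta_\beta(s)$, one must upgrade the $\LL^2$-averaged decay of $\nabla u_e$ to a bound at smaller scales $r\leq\sqrt{s}$, which requires the parabolic large-scale regularity alluded to in the introduction. The price is reflected in the statement: the admissible exponent $\alpha<\frac{1}{\frac{1}{2}+\frac{\frac{5}{2}d+2}{\beta\wedge d}}$ is strictly smaller than the one in Corollary \ref{decayu}, a loss incurred when integrating over $s$ in an Orlicz norm.
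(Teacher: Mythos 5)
Your plan captures the paper's strategy for $q_e$ and $\nabla\phi_e$ quite faithfully: justify $\nabla\phi_e=\int_0^{+\infty}\nabla u_e\,\dd t$ via \eqref{estisemigroup}, split the time integral at $r^2$, apply Theorem~\ref{semigroup} at $T=r^2$ for the near part, and upgrade the decay \eqref{estisemigroup} from scale $\sqrt{s}$ to scale $r$ for the tail. You also correctly diagnose that the naive Cauchy--Schwarz against $\eta_{\sqrt{s}/c}$ gives the non-integrable prefactor $r^{-d/2}s^{d/4-1/2}\eta_\beta(s)$ and that a small-scale bound $\left(\fint_{\bb_r}|\nabla u_e(s)|^2\right)^{1/2}\lesssim \mathcal{C}\,s^{-1/2}\eta_\beta(s)$ is required. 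The paper gets precisely this from Lemma~\ref{ctrlav} applied to \eqref{estisemigroup}, at the cost of a factor $r_*(\cdot)^{d/2}$; note that this is where the deterioration of the stochastic integrability exponent to $\alpha<\frac{1}{\frac{1}{2}+\frac{(5/2)d+2}{\beta\wedge d}}$ comes from (the moment of $r_*^{d/2}$, via \eqref{momentr*}), not from integrating over $s$ in an Orlicz norm as you suggest --- Minkowski in $\LL^p_{\langle\cdot\rangle}(\Omega)$ preserves the Orlicz exponent. One minor deviation: for $\nabla(\phi_e(r^2))_r$ you would invoke the $\phi$-component of Theorem~\ref{semigroup} directly, whereas the paper avoids this by rewriting $u(t,\cdot)=\nabla\cdot(q(t,\cdot)-\langle q(t,\cdot)\rangle)$, whence $(\nabla\phi(r^2))_r$ is a second derivative of $\int_0^{r^2}(q(s)-\langle q(s)\rangle)\dd s$, and the smoothing identity $|(\nabla^2 f)_r|\lesssim r^{-2}|f_{r/\sqrt{2}}|_{r/\sqrt{2}}$ reduces it to bounds on $q_{r/\sqrt 2}(s)$ for $s\in[0,r^2]$. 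Both routes are legitimate.

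The real gap is in the treatment of $\sigma$. Asserting that $(\nabla\sigma_{i,j,k})_r$ is controlled because it is a ``Calder\'on--Zygmund transform of $(q_{e_i}-\langle q_{e_i}\rangle)$ smoothed at scale $r$'' and that ``the estimate transfers directly via boundedness of the Riesz transform on the Gaussian-smoothed symbol'' does not close the argument. Two issues: first, $q_{e_i}-\langle q_{e_i}\rangle$ is stationary, not in $\LL^2(\mathbb{R}^d)$, so the Riesz transform is not a priori well defined pointwise and the classical $\LL^2(\mathbb{R}^d)$-boundedness theorem gives nothing at a fixed point; second, what you need is a \emph{pointwise} stochastic moment bound on $(\nabla\sigma)_r(0)$, not a spatial norm estimate. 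The paper resolves both issues by starting from the massive flux corrector $\sigma_T$, whose Lax--Milgram well-posedness is known, expanding the massive Green function as $G_T=C\int_0^{+\infty}e^{-s/T}g_{\sqrt{s}}\,\dd s$, commuting the $s$-integral with the convolution (justified by the explicit integrability of the kernels after smoothing), and again invoking the $\nabla^2$-smoothing identity to reduce each $s$-slice to the already-proved bound on $(q_{e_i,T})_{\sqrt{s+r^2}/\sqrt 2}-\langle\cdot\rangle$; the bound on $\nabla\sigma$ itself is then recovered by letting $T\uparrow\infty$ along an a.s.\ convergent subsequence and applying Fatou. Without this regularization-and-limit structure, your Riesz-transform step is not a proof.
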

Corollary \ref{boundphi} combined with Theorem \ref{semigroup} implies the following growth on the extended corrector $(\phi_e,\sigma)$. 
\begin{cor}\label{boundextendedcorrector}
We have for all unit vector $e\in\mathbb{R}^d$ and $x\in\mathbb{R}^d$
\begin{equation}
(\vert(\phi_e,\sigma)-(\phi_e,\sigma)_1(0)\vert^2)^{\frac{1}{2}}_1(x)\leq \mathcal{C}_{\star,d,\lambda,\beta}(x)\xi_{d,\beta}(\vert x\vert),
\label{growthcorrectormainresult}
\end{equation}
with 
\begin{equation}
\xi_{d,\beta}(\vert x\vert) := \left\{
    \begin{array}{ll}
       (\vert x\vert+1)^{1-\frac{\beta}{2}} &\text{ for $\beta<2$,}\\
        \log^{\frac{1}{2}}(\vert x\vert+2) &\text{ for $\beta=2$, $d> 2$ or $\beta>2$, $d=2$,}\\
				\log(\vert x\vert+2) &\text{ for $\beta=d=2$}, \\
		1  &\text{for $\beta>2$, $d>2$},
    \end{array}
\right.
\label{defxidbeta}
\end{equation}
and some random variable $ \mathcal{C}_{\star,d,\lambda,\beta}(x)$ which depends on d,$\lambda$,$ \beta$ and satisfies: for all $\alpha<\frac{1}{\frac{1}{2}+\frac{\frac{5}{2}d+2}{\beta\wedge d}}$ there exists some constant $C<\infty$ depending on $d,\,\lambda,\, \beta$ and $\alpha$ such that
$$\sup_{x\in\mathbb{R}^d}\left\langle \exp(\tfrac{1}{C}\mathcal{C}^{\alpha}_{\star,d,\lambda,\beta}(x))\right\rangle\leq 2.$$ 
\end{cor}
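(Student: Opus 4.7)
The plan is to reduce the statement to bounding the pointwise spatial increment $\vert(\phi_e,\sigma)_1(x)-(\phi_e,\sigma)_1(0)\vert$, and then to control this increment via a multiscale decomposition that capitalizes on Corollary \ref{boundphi} together with the sharp fluctuation estimate \eqref{sensiothertest} of Theorem \ref{semigroup} in the critical regimes. First, by the triangle inequality
$$\big(\vert(\phi_e,\sigma)-(\phi_e,\sigma)_1(0)\vert^2\big)_1^{\frac{1}{2}}(x)\leq\big(\vert(\phi_e,\sigma)-(\phi_e,\sigma)_1(x)\vert^2\big)_1^{\frac{1}{2}}(x)+\vert(\phi_e,\sigma)_1(x)-(\phi_e,\sigma)_1(0)\vert,$$
the first r.h.s.~term is a local oscillation on the unit ball around $x$, which by a Poincar\'e-type inequality is controlled by the local $\LL^2$-norm of $\nabla(\phi_e,\sigma)$ at scale $1$; combining a Caccioppoli estimate for \eqref{correctorequation2}--\eqref{correctorequationsigma2} with \eqref{correctorbound2} at $r=1$ then yields a $\mathcal{C}_\star(x)$-contribution with the required stretched exponential moments.

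Second, setting $R:=\vert x\vert\vee 1$, I split the pointwise increment as
$$(\phi_e)_1(x)-(\phi_e)_1(0)=\big[(\phi_e)_R(x)-(\phi_e)_R(0)\big]+\big\{[(\phi_e)_1-(\phi_e)_R](x)-[(\phi_e)_1-(\phi_e)_R](0)\big\}.$$
For the large-scale spatial shift I use $(\phi_e)_R(x)-(\phi_e)_R(0)=\int_0^1 x\cdot(\nabla\phi_e)_R(sx)\,\dd s$ and the pointwise bound $\vert(\nabla\phi_e)_R(y)\vert\leq\mathcal{C}_\star(y)\pi_\star^{-\frac{1}{2}}(R)$ from \eqref{correctorbound2}; the sup of $\mathcal{C}_\star(\cdot)$ along the segment $[0,x]$ is absorbed into a single random constant with the same Orlicz-moment bounds by covering the segment with unit balls and invoking \eqref{momentboundlem3}. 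This already yields the claimed scaling $\xi_{d,\beta}(\vert x\vert)$ in the range $\beta<2$ and produces something smaller elsewhere.

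For the scale-change terms I use the Gaussian semigroup identity $\partial_\tau g_{\sqrt\tau}=\tfrac14\Delta g_{\sqrt\tau}$ together with an integration by parts in the convolution to rewrite
$$(\phi_e)_1-(\phi_e)_R=-\tfrac14\int_1^{R^2}\nabla\phi_e\star\nabla g_{\sqrt\tau}\,\dd\tau.$$
The increment between $x$ and $0$ of this quantity equals the convolution of $\nabla\phi_e$ with the mean-zero kernel $\int_1^{R^2}[\nabla g_{\sqrt\tau}(\cdot-x)-\nabla g_{\sqrt\tau}(\cdot)]\,\dd\tau$; after approximating $\nabla\phi_e$ by $\nabla\phi_e(R^2)$ (the error being harmless thanks to the semigroup decay \eqref{estisemigroup}), this kernel fits the template \eqref{assumesensiothertest} at scale $r\sim\vert x\vert$, so that \eqref{sensiothertest} applies and reproduces the sharp scaling $\xi_{d,\beta}(\vert x\vert)$. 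The bound on $\sigma$ is obtained analogously by applying the same decomposition to the Poisson-type system \eqref{correctorequationsigma2}, in which the role of $(\nabla\phi_e)_r$ is played by $(q_e)_r$, whose fluctuations are also controlled by \eqref{correctorbound2}.

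The main obstacle is the doubly critical regime $\beta=d=2$: a naive scale-by-scale use of Corollary \ref{boundphi} in the scale-change integral would produce a spurious $\log^{\frac{3}{2}}$-factor in place of the claimed $\log$. The sharp logarithm is recovered only by using the multiscale-cancellation structure encoded in \eqref{assumesensiothertest}--\eqref{sensiothertest}, whose template test functions are tailored precisely to avoid logarithmic losses when summed across scales. The final stochastic integrability of $\mathcal{C}_{\star,d,\lambda,\beta}(x)$ in \eqref{growthcorrectormainresult} is inherited from that of the $\mathcal{C}_\star$'s in \eqref{correctorbound2} and \eqref{momentboundlem3} via standard manipulations of stretched-exponential Orlicz norms, which also dictate the admissible range of the exponent $\alpha$.
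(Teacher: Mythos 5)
Your proposal follows essentially the same strategy as the paper: reduce to $\vert(\phi_e,\sigma)_1(x)-(\phi_e,\sigma)_1(0)\vert$ by Poincar\'e, decompose across scales $1$ and $R=\vert x\vert$, treat the spatial shift at scale $R$ by the fundamental theorem of calculus plus stationarity and Corollary \ref{boundphi}, and treat the scale-change terms by the Gaussian semigroup identity, switching in the critical regimes to the sharp fluctuation estimate \eqref{sensiothertest}. Two points deserve attention.

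First, a small gap: you assert that the kernel $\int_1^{R^2}[\nabla g_{\sqrt\tau}(\cdot-x)-\nabla g_{\sqrt\tau}(\cdot)]\,\dd\tau$ "fits the template \eqref{assumesensiothertest}". This is not literally true, since \eqref{assumesensiothertest} encodes a kernel concentrated around the origin, whereas your difference kernel has two concentration centers, at $0$ and at $x$. The clean way out, and what the paper does, is not to take the increment of the scale-change map at all: bound $(\phi_e)_R(y)-(\phi_e)_1(y)$ for a single $y$, observe by stationarity that the law is independent of $y$, and add the two contributions from $y=0$ and $y=x$. Each piece is then a convolution against a single kernel $\nabla H$ of the form $\int_1^{R^2}\nabla g_{\sqrt s}\,\dd s$, to which \eqref{assumesensiothertest}--\eqref{sensiothertest} do apply. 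Your argument goes through unchanged after this minor reorganization.

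Second, and more importantly, your observation on $\beta=d=2$ is correct and is in fact sharper than what the paper's own proof delivers. The crude scale-by-scale estimate $\int_1^R \pi_\star^{-1/2}(\tau)\,\dd\tau$ with $\pi_\star(\tau)=\tau^2\log^{-1}(\tau)$ produces $\log^{3/2}(R)$, not the claimed $\log(R)$. The paper's proof places $\beta=d=2$ in the crude regime (its case (i)), where this $\log^{3/2}$ loss is unavoidable, while the statement \eqref{defxidbeta} asserts $\log(\vert x\vert+2)$; these do not match as written. Your route -- applying \eqref{sensiothertest}, which gives $\chi_{d,\beta}(r)=\log(r+2)$ at $\beta=d=2$, combined with the tail estimate on $\int_{R^2}^{\infty}\nabla u$ contributing only $\log^{1/2}(R)$ -- is precisely what is needed to obtain the claimed $\log$, and is the approach the paper uses for the neighboring critical cases $\beta=2,\,d>2$ and $\beta>2,\,d=2$. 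So your proposal is not only a valid alternative, it also repairs an apparent inconsistency between the paper's statement and proof in the doubly critical case.
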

\begin{rem}
The choice of the convolution with the Gaussian in \eqref{correctorbound2} and \eqref{growthcorrectormainresult} is not crucial. Indeed, for all $r\geq 1$ and $f_r:=r^{-d}f(\frac{\cdot}{r})$ with $f\in \text{W}^{\alpha,1}(\mathbb{R}^d)$ (for some $\alpha>0$), we may deduce from \eqref{correctorbound2} and \eqref{growthcorrectormainresult} that
$$\left\vert \int_{\mathbb{R}^d}\psi(y) f_r(y)\dd y\right\vert\leq \mathcal{C}_{1,\star,d,\lambda,\beta}(r)\pi^{-\frac{1}{2}}_{\star}(r),$$
where $\psi$ can be replaced by one of the quantities which appears in \eqref{correctorbound2}, as well as for all $x\in\mathbb{R}^d$
$$(\vert(\phi_e,\sigma)-(\phi_e\star f,\sigma\star f)(0)\vert^2\star f)^{\frac{1}{2}}(x)\leq \mathcal{C}_{2,\star,d,\lambda,\beta}(x)\xi_{d,\beta}(\vert x\vert).$$
The two random variables $\mathcal{C}_{1,\star,d,\lambda,\beta}(r)$ and $\mathcal{C}_{2,\star,d,\lambda,\beta}(x)$ depend on $f$ and have the same stochastic integrability as in Corollary \ref{boundphi} and Corollary \ref{boundextendedcorrector}. For more details, we refer to \cite[Remark 4.28]{armstrong2019quantitative}.
\end{rem}
From Corollary \ref{boundextendedcorrector}, we obtain the following quantitative convergence of the two-scale expansion.
\begin{cor}[Convergence rate of the two-scale expansion] \label{2scaleexp}
Let $g\in \HH^1(\mathbb{R}^d)$ such that $\xi_{d,\beta}(\vert\cdot\vert)\nabla g\in \LL^{2}(\mathbb{R}^d)$, and for all $\varepsilon>0$ let $v_{\varepsilon}$ and $v_{\text{hom}}$ be the Lax-Milgram solutions, in $\dot{H}^1(\mathbb{R}^d):=\{v\in H^1_{\text{loc}}(\mathbb{R}^d)|\nabla v\in L^2(\mathbb{R}^d)\}/\mathbb{R}$, of 
\begin{equation*}
-\nabla\cdot a(\tfrac{\cdot}{\varepsilon})\nabla v_{\varepsilon}=\nabla\cdot g\quad \text{and}\quad -\nabla\cdot a_{\text{hom}}\nabla v_{\text{hom}}=\nabla\cdot g,
\label{twoscaleexpmainresult}
\end{equation*}
with $a_{\text{hom}}$ defined in \eqref{homogematrixintro}. Consider the two-scale expansion error 
$$z_{\varepsilon}:=v_{\varepsilon}-(v_{\text{hom},\varepsilon}+\varepsilon \sum_{i=1}^d \phi_{e_i}(\tfrac{\cdot}{\varepsilon})\partial_i v_{\text{hom},\varepsilon}),$$
where $v_{\text{hom},\varepsilon}$ is a simple moving average of $v_{\text{hom}}$ at scale $\varepsilon$, that is $v_{\text{hom},\varepsilon}=(v_{\text{hom}})_{\varepsilon}(0)$. Then
$$\left(\int_{\mathbb{R}^d}\vert\nabla z_{\varepsilon}(x)\vert^2\dd x\right)^{\frac{1}{2}}\leq \mathcal{C}_{\star,d,\lambda,\beta,g}(\varepsilon)\,\varepsilon \xi_{d,\beta}(\varepsilon^{-1})\left(\int_{\mathbb{R}^d}\xi^2_{d,\beta}(\vert x\vert)\vert\nabla g(x)\vert^2\dd x\right)^{\frac{1}{2}},$$
where $\xi_{d,\beta}$ is defined in \eqref{defxidbeta} and for some random variable $\mathcal{C}_{\star,d,\lambda,\beta,g}(\varepsilon)$ which depends on $d$, $\lambda$, $\beta$, $g$ and satisfies: for all $\alpha<\frac{1}{\frac{1}{2}+\frac{\frac{5}{2}d+2}{\beta\wedge d}}$, there exists some constant $C$ depending on $d$, $\lambda$, $\beta$, $g$ and $\alpha$ such that
$$\sup_{\varepsilon>0}\left\langle\exp(\tfrac{1}{C}\mathcal{C}^{\alpha}_{\star,d,\lambda,\beta,g}(\varepsilon))\right\rangle\leq 2.$$
\end{cor}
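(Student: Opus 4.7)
\medskip

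\noindent\emph{Proof proposal for Corollary \ref{2scaleexp}.} My plan is the classical two-scale expansion energy argument in the form developed in \cite{gloria2014optimal}, adapted to handle the weighted norm that the slow correlation decay forces upon us. The first step is to derive the PDE satisfied by $z_\varepsilon$. Writing $w_\varepsilon:=v_{\text{hom},\varepsilon}+\varepsilon\sum_i\phi_{e_i}(\tfrac{\cdot}{\varepsilon})\partial_iv_{\text{hom},\varepsilon}$, a direct computation of $a(\tfrac{\cdot}{\varepsilon})\nabla w_\varepsilon$ combined with the identity $q_{e_i}-a_{\text{hom}}e_i=\nabla_y\cdot\sigma_i$ and with the skew-symmetry of $\sigma_{i,j,k}$ in $(j,k)$ yields the standard decomposition
$$-\nabla\cdot a(\tfrac{\cdot}{\varepsilon})\nabla z_\varepsilon=\nabla\cdot F^{(1)}_\varepsilon+\nabla\cdot F^{(2)}_\varepsilon+\nabla\cdot h_\varepsilon,$$
where $F^{(1)}_\varepsilon$ is a pure divergence whose contribution vanishes in the energy identity (by the $(j,k)$-antisymmetry of $\sigma$), $h_\varepsilon:=a_{\text{hom}}\nabla(v_{\text{hom},\varepsilon}-v_{\text{hom}})$ is the smoothing error, and the essential term is
$$F^{(2)}_{\varepsilon,j}=\varepsilon\sum_{i,k}\bigl(\phi_{e_i}(\tfrac{\cdot}{\varepsilon})a_{jk}(\tfrac{\cdot}{\varepsilon})-\sigma_{i,j,k}(\tfrac{\cdot}{\varepsilon})\bigr)\partial_k\partial_iv_{\text{hom},\varepsilon}.$$
Testing with $z_\varepsilon$ and using uniform ellipticity of $a$, together with the cancellation $\int F^{(1)}_\varepsilon\cdot\nabla z_\varepsilon=0$, gives the energy estimate
$$\|\nabla z_\varepsilon\|_{\LL^2(\mathbb{R}^d)}\lesssim_{\lambda}\|F^{(2)}_\varepsilon\|_{\LL^2(\mathbb{R}^d)}+\|h_\varepsilon\|_{\LL^2(\mathbb{R}^d)}.$$

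The second step is to control $F^{(2)}_\varepsilon$ via Corollary \ref{boundextendedcorrector}. The delicate point is that Corollary \ref{boundextendedcorrector} only gives a unit-scale averaged bound $(|(\phi_e,\sigma)-(\phi_e,\sigma)_1(0)|^2)_1^{1/2}(x)\leq\mathcal{C}_{\star,d,\lambda,\beta}(x)\xi_{d,\beta}(|x|)$. However, since $\nabla^2v_{\text{hom},\varepsilon}=\nabla^2 v_{\text{hom}}\star g_\varepsilon$ is Lipschitz at scale $\varepsilon$, the substitution $y=\varepsilon z$ converts an $\varepsilon$-scale average of $(\phi,\sigma)(\tfrac{\cdot}{\varepsilon})$ into a unit-scale average of $(\phi,\sigma)$, so that after an application of Cauchy--Schwarz one obtains the pointwise replacement
$$|F^{(2)}_\varepsilon(x)|\lesssim\varepsilon\,\widetilde{\mathcal{C}}_{\star}(x/\varepsilon)\,\xi_{d,\beta}(|x|/\varepsilon)\,|\nabla^2v_{\text{hom},\varepsilon}(x)|,$$
with $\widetilde{\mathcal{C}}_\star$ inheriting the stretched exponential moments of Corollary \ref{boundextendedcorrector}. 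I then exploit the quasi-submultiplicativity of the growth functions $\xi_{d,\beta}$ (case by case: powers satisfy $r^\alpha s^\alpha=(rs)^\alpha$; the log case satisfies $\log(rs)\leq\log r+\log s$ for $r,s\geq 1$; bounded case trivial) to factor $\xi_{d,\beta}(|x|/\varepsilon)\lesssim\xi_{d,\beta}(\varepsilon^{-1})\xi_{d,\beta}(|x|\vee 1)$. Combining with constant-coefficient weighted elliptic regularity $\|\xi_{d,\beta}(|\cdot|)\nabla^2v_{\text{hom}}\|_{\LL^2}\lesssim_{a_{\text{hom}}}\|\xi_{d,\beta}(|\cdot|)\nabla g\|_{\LL^2}$ (which holds because $\xi_{d,\beta}$ belongs to the Muckenhoupt $A_2$ class on $\mathbb{R}^d$) and the trivial bound $\|h_\varepsilon\|_{\LL^2}\lesssim\varepsilon\|\nabla^2v_{\text{hom}}\|_{\LL^2}\lesssim\varepsilon\|\nabla g\|_{\LL^2}$ from the smoothing of $v_{\text{hom}}$ at scale $\varepsilon$, one concludes
$$\|\nabla z_\varepsilon\|_{\LL^2}\lesssim\widetilde{\mathcal{C}}_\star\,\varepsilon\,\xi_{d,\beta}(\varepsilon^{-1})\Bigl(\int_{\mathbb{R}^d}\xi_{d,\beta}^2(|x|)|\nabla g(x)|^2\dd x\Bigr)^{\frac{1}{2}}$$
with a random prefactor built out of a weighted $\LL^2$ norm of $\mathcal{C}_{\star,d,\lambda,\beta}(\cdot/\varepsilon)$.

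The main obstacle, and the only place where care beyond the pattern of \cite{gloria2014optimal} is needed, is the final stochastic integrability statement. Since $\widetilde{\mathcal{C}}_\star$ arises as a weighted $\LL^2$ average of a field with uniform stretched exponential moments of exponent $\alpha$, I will apply the standard Orlicz-norm convexity argument (cf.\ the passage used in the proof of Corollary \ref{boundphi} or in \cite[Rem.\,4.28]{armstrong2019quantitative}): using the sub-additivity $(x+y)^\alpha\leq x^\alpha+y^\alpha$ for $\alpha<1$ together with a standard dyadic decomposition of the weight $\xi_{d,\beta}^2(|x|)|\nabla g(x)|^2/\|\xi_{d,\beta}\nabla g\|_{\LL^2}^2$, one obtains the same stretched exponential range $\alpha<(\tfrac{1}{2}+\tfrac{5d/2+2}{\beta\wedge d})^{-1}$ as in Corollary \ref{boundextendedcorrector}, at the cost of an $\|g\|$-dependent constant absorbed into $\mathcal{C}_{\star,d,\lambda,\beta,g}(\varepsilon)$. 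This closes the argument.
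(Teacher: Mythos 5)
Your proposal is correct and follows the standard two-scale expansion energy argument that the paper itself delegates to \cite{gloria2019quantitative}: the $(\phi,\sigma)$-decomposition with the divergence-form term killed by the skew-symmetry of $\sigma$, the energy estimate, the conversion of $\varepsilon$-scale averages into unit-scale averages of $(\phi,\sigma)$ via the smoothness of $\nabla^2 v_{\text{hom},\varepsilon}$, the quasi-submultiplicativity of $\xi_{d,\beta}$, and the $A_2$-weighted Calder\'on--Zygmund estimate for the homogenized equation. Since the paper gives no proof of its own, there is nothing to compare beyond noting that your route is the expected one; the only cosmetic remark is that the closing stochastic-integrability step is more cleanly dispatched by Minkowski's inequality in $\LL^p_{\langle\cdot\rangle}(\Omega)$ applied directly to the weighted spatial average (as in the proof of Corollary~\ref{boundphi}), rather than by subadditivity of $t\mapsto t^\alpha$ combined with a dyadic decomposition, though both lead to the same exponent.
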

We emphasize that the results of Corollary \ref{boundextendedcorrector} and Corollary \ref{2scaleexp} are already contained in \cite{gloria2019quantitative}. The main differences are in the way of averaging and the stochastic integrability, slightly better in \cite{gloria2019quantitative} but still sub-optimal.
\begin{rem}
The need for local averages at scale $\varepsilon$ of $v_{\text{hom}}$ is due to the fact that the corrector estimate \eqref{defxidbeta} only holds for averages of $(\phi_e,\sigma)$ under minimal regularity assumption on $a$. However, from De Giorgi-Nash-Moser theory in the case of scalar equations and from the classical Schauder theory in the case of systems with Hölder continuous realization of the coefficient field $a$ (which can be ensured by additional assumptions on the covariance function $c$, see for instance \cite[Lemma 3.1]{josien2020annealed}), we may improve the estimate \eqref{defxidbeta} into a pointwise estimate. Therefore, in both cases, there is no need to consider local averages of $v_{\text{hom}}$ at scales $\varepsilon$.
\end{rem}
For a proof of Corollary \ref{2scaleexp} based on the results of Corollary \ref{boundextendedcorrector}, we refer the reader to \cite{gloria2019quantitative}. The second consequence of Corollary \ref{decayu} is a new optimal control of the sub-systematic error, extending the bound obtained in \cite[Lemma 8]{gloria2015quantification} in the case of discrete elliptic equations and the one in \cite[Theorem 3]{gloria2015corrector} for a finite range of dependence. This corollary is of numerical interest for approximating the homogenized matrix $a_{\text{hom}}$ defined in \eqref{homogematrixintro}.
\begin{cor}[Sub-systematic error]\label{approxcorrector}Let $(\phi_{e_i,T})_{i\in\llbracket 1,d\rrbracket}$ be defined in \eqref{massivecorrector}. For all $(i,n)\in\llbracket 1,d\rrbracket\times \mathbb{N}$, we define the Richardson extrapolation of $\phi_{e_i,T}$ with respect to $T$ by
$$
 \left\{
    \begin{array}{ll}
         \phi^{n+1}_{e_i,T}=\frac{1}{2^n-1}(2^n\phi^n_{e_i,2T}-\phi^n_{e_i,T})& \text{for all $n\geq 1$}, \\
        \phi^1_{e_i,T}=\phi_{e_i,T}, &
    \end{array}
\right.
$$
and likewise for $\phi^*_{e_i,T}$. We define the approximation $(\overline{a}^n_T)_{n\in\mathbb{N}}$ of the homogenized coefficients $a_{\text{hom}}$ by: for all $(i,j,n)\in\llbracket 1,d\rrbracket^2\times \mathbb{N}$
\begin{equation}
e_j\cdot \overline{a}^n_{T}e_i=\left\langle (\nabla \phi^{*,n}_{e_j,T}+e_j)\cdot a(\nabla \phi^{n}_{e_i,T}+e_i)\right\rangle.
\label{defabarnT}
\end{equation}
We have the following estimates of the sub-systematic errors: for all $d\geq 2$ and $n>\frac{\beta\vee d}{4}$
\begin{equation}
\left\langle \vert\nabla \phi^n_{e_i,T}-\nabla\phi_{e_i}\vert^2\right\rangle^{\frac{1}{2}}\lesssim T^{\frac{1}{2}}\eta_{\beta}(T),
\label{subsysteT}
\end{equation}
and 
\begin{equation}
\vert \overline{a}^n_T-a_{\text{hom}}\vert\lesssim T\eta^2_{\beta}(T),
\label{subsystehom}
\end{equation}
where $\eta_{\beta}$ is as in \eqref{defetabeta}.
\end{cor}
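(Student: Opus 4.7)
The plan is to express the massive correctors and their Richardson extrapolates through the parabolic semigroup $u_e$ via a Laplace-transform identity, and then apply the pointwise decay estimate \eqref{estip2}. First I would establish
\begin{equation*}
\phi_{e,T} = \int_0^{+\infty} e^{-t/T}\, u_e(t,\cdot)\,\mathrm{d}t
\end{equation*}
by setting $v$ equal to the right-hand side, integrating by parts in $t$ using $\partial_\tau u_e=\nabla\cdot a\nabla u_e$ and $u_e(0)=\nabla\cdot ae$ to check that $\tfrac{1}{T}v-\nabla\cdot a(\nabla v+e)=0$, and invoking uniqueness for \eqref{massivecorrector}. By linearity of the Richardson scheme, this lifts to $\phi^n_{e,T}=\int_0^{+\infty} K_n(t/T)\,u_e(t,\cdot)\,\mathrm{d}t$, with $K_1(s)=e^{-s}$ and $K_{n+1}(s)=\tfrac{1}{2^n-1}(2^nK_n(s/2)-K_n(s))$.

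The key analytic point is the behavior of the remainder kernel $R_n:=1-K_n$. A direct induction using $R_{n+1}(s)=\tfrac{1}{2^n-1}(2^n R_n(s/2)-R_n(s))$ shows that $R_n$ is bounded on $[0,\infty)$ with $R_n(s)=O(s^n)$ as $s\downarrow 0$, the Taylor-cancellation mechanism underlying Richardson acceleration. Combined with \eqref{expliequationcor} this gives
\begin{equation*}
\nabla\phi_e-\nabla\phi^n_{e,T}=\int_0^{+\infty} R_n(t/T)\,\nabla u_e(t,\cdot)\,\mathrm{d}t.
\end{equation*}
Applying Minkowski in $\LL^2_{\langle\cdot\rangle}(\Omega)$ together with \eqref{estip2} and $|R_n(s)|\lesssim_n\min(s^n,1)$, the bound \eqref{subsysteT} reduces to the deterministic estimate
\begin{equation*}
\int_0^T (t/T)^n\, t^{-1/2}\eta_\beta(t)\,\mathrm{d}t+\int_T^{+\infty} t^{-1/2}\eta_\beta(t)\,\mathrm{d}t \lesssim T^{1/2}\eta_\beta(T),
\end{equation*}
which is a direct integration once the three regimes of $\eta_\beta$ are distinguished; the threshold $n>(\beta\vee d)/4$ guarantees integrability of the first integrand near $t=0$ in every regime and uniformly for the adjoint problem, while the contribution from $t\leq 1$ (where \eqref{estip2} does not apply) is absorbed into a negligible $T^{-n}$ remainder via the basic energy estimate on $u_e$.

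For \eqref{subsystehom}, I would exploit the standard corrector orthogonality: the stationary, mean-zero random fields $\nabla(\phi^n_{e_i,T}-\phi_{e_i})$ and $\nabla(\phi^{*,n}_{e_j,T}-\phi^*_{e_j})$ are admissible test fields against the stationary fluxes $a(\nabla\phi_{e_i}+e_i)$ and $a^*(\nabla\phi^*_{e_j}+e_j)$ respectively. Expanding the definition \eqref{defabarnT} together with the identity $e_j\cdot a_{\text{hom}}e_i=\langle(\nabla\phi^*_{e_j}+e_j)\cdot a(\nabla\phi_{e_i}+e_i)\rangle$, the two cross-terms vanish by this orthogonality and one is left with the symmetric bilinear remainder
\begin{equation*}
e_j\cdot(\overline{a}^n_T-a_{\text{hom}})e_i=\bigl\langle\nabla(\phi^{*,n}_{e_j,T}-\phi^*_{e_j})\cdot a\,\nabla(\phi^n_{e_i,T}-\phi_{e_i})\bigr\rangle.
\end{equation*}
Cauchy--Schwarz, the bound $|a|\leq 1$, and \eqref{subsysteT} applied to both the direct and adjoint fields then yield \eqref{subsystehom}.

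The main obstacle I anticipate is the kernel analysis: one must propagate the exact vanishing order $R_n(s)=O(s^n)$ through the Richardson recursion while keeping a uniform bound on $R_n$ over the whole half-line, since otherwise the small-time contribution cannot be tamed. Once that is in place, the rest of the argument is a Minkowski sweep combined with the already-established decay of $\nabla u_e$ from Corollary \ref{decayu}.
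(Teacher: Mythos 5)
Your proposal follows the paper's proof essentially step for step: the Laplace-transform identity $\nabla\phi^n_{e,T}=\int_0^\infty K_n(t/T)\nabla u_e(t,\cdot)\,\dd t$ (the paper's \eqref{linkphiumassive} extended by linearity to the Richardson scheme), the remainder bound $|1-K_n(s)|\lesssim_n s^n\wedge 1$ fed into \eqref{estip2} via Minkowski, and the identical corrector-orthogonality computation reducing \eqref{subsystehom} to \eqref{subsysteT} by Cauchy--Schwarz. The one point you flag but do not actually supply is the mechanism for the $t\le 1$ singularity of $\nabla u_e$: the paper integrates by parts in $\tau$, trading $\nabla u(\tau)$ for $\int_0^\tau\nabla u(t)\,\dd t$ (controlled by the energy estimate \eqref{LemE1}) at the cost of a $\partial_\tau\text{exp}_n$ factor, which is precisely why the second bound $|\partial_\tau\text{exp}_n(\tau,T)|\lesssim_n T^{-1}(\tau/T)^{n-1}$ in \eqref{richardson} is needed alongside the $O(s^n)$ vanishing you record.
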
 
Finally, Corollary \ref{approxcorrector} implies the following bound on the bottom of the spectrum of $-\nabla\cdot a\nabla$ projected on $\nabla\cdot a(0)e$ and extends \cite[Corollary 5]{gloria2015corrector}, \cite[Corollary 1]{gloria2015quantification} to correlated fields. Let us recall that stationarity allows us to define a differential calculus in probability through the correspondence for stationary fields: for all stationary fields $\psi: \Omega\times \mathbb{R}^d\rightarrow \mathbb{R}$ we define for all $i\in\llbracket 1,d\rrbracket$: 
$$D_i\psi(0)=\lim_{h\rightarrow 0}\frac{\psi(a(\cdot +h e_i),0)-\psi(a,0)}{h}=\lim_{h\rightarrow 0}\frac{\psi(a,he_i)-\psi(a,0)}{h}=\nabla_i \psi(a,0),$$
and we set $D\psi:=(D_i\psi(0))_{i\in\llbracket 1,d\rrbracket}$. We define the Hilbert space $\mathcal{H}^1:=\{\psi\in \LL_{\left\langle\cdot\right\rangle}^{2}(\Omega)|\left\langle \vert D\psi\vert^2\right\rangle<+\infty\}$. In the case when the coefficients $a$ are symmetric, the operator $L:=-D\cdot a(0)D$ defines a quadratic form on $\mathcal{H}^1$. We denote by $\mathcal{L}$ its Friedrichs extension on $\LL^2_{\langle\cdot\rangle}(\Omega)$. Since $\mathcal{L}$ is a self-adjoint non-negative operator, by the spectral theorem it admits a spectral resolution: for all $\Theta\in \LL^2_{\left\langle\cdot\right\rangle}(\Omega)$, there exists a unique measure $\nu_{\Theta}$ such that for all $g\in \LL^{\infty}(\mathbb{R}^+)$
\begin{equation}
\langle g(\mathcal{L})\Theta,\Theta\rangle=\int_{0}^{+\infty}g(\lambda)\dd \nu_{\Theta}(\lambda).
\label{spectralresolution}
\end{equation}
\begin{cor}[Spectral resolution]\label{spectralreso}Let assume that the map $A$ defined in \eqref{coefa} takes values in the set of symmetric matrices and assume that $\Theta:=D\cdot a(0)e$ is in $\LL^2_{\left\langle\cdot\right\rangle}(\Omega)$ for some unit vector $e\in\mathbb{R}^d$. We denote by $\nu_{\Theta}$ the spectral measure, defined in \eqref{spectralresolution}, of the operator $-D\cdot a(0)\cdot D$ associated to the vector $\Theta$. We have
$$\left\langle\int_{0}^{\mu} \dd\nu_{\Theta}(\zeta)\right\rangle\lesssim \eta^2_{\beta}(\mu^{-1}) \quad\text{for all}\quad 0<\mu\leq 1,$$
where $\eta_\beta$ is as in \eqref{defetabeta}.
\end{cor}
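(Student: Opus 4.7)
The proof goes through the standard identification of the parabolic semigroup $u_e$ on the physical side with the operator semigroup $e^{-\tau\mathcal{L}}$ on the probabilistic side via the stationary calculus $D = \nabla$. Indeed, since $\Theta = D\cdot a(0)e \in \LL^2_{\left\langle\cdot\right\rangle}(\Omega)$ and the initial datum of \eqref{equationu} is precisely the stationary extension of $\Theta$, one verifies that the unique solution of \eqref{equationu} in $\HH^{1,\text{par}}_{\text{uloc}}$ is the stationary realization of $U(\tau) := e^{-\tau \mathcal{L}}\Theta$, in particular
$$u_e(T,0) = (e^{-T\mathcal{L}}\Theta)(a)\quad \text{almost surely}.$$
The spectral theorem \eqref{spectralresolution} then turns the squared $\LL^2_{\langle\cdot\rangle}$-norm of $u_e(T,0)$ into the Laplace transform of the spectral measure:
$$\left\langle u_e(T,0)^2\right\rangle = \left\langle \Theta,\, e^{-2T\mathcal{L}}\Theta\right\rangle = \int_0^{+\infty} e^{-2T\zeta}\,\dd\nu_\Theta(\zeta).$$

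Next I would exploit this Laplace transform via the usual Chebyshev-type truncation: for all $\mu\in (0,1]$ and $T\geq 1$,
$$\int_0^{\mu}\dd\nu_\Theta(\zeta) \leq e^{2T\mu}\int_0^{\mu} e^{-2T\zeta}\,\dd\nu_\Theta(\zeta)\leq e^{2T\mu}\int_0^{+\infty} e^{-2T\zeta}\,\dd\nu_\Theta(\zeta) = e^{2T\mu}\left\langle u_e(T,0)^2\right\rangle,$$
and the optimal choice is $T = \tfrac{1}{2\mu}$, which produces the universal factor $e$ and reduces the problem to bounding $\left\langle u_e(T,0)^2\right\rangle$ by $\eta_\beta^2(T)$ up to constants.

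This last bound I would extract from Corollary \ref{decayu}: squaring \eqref{estisemigroup}, taking the expectation, using the stretched exponential moment \eqref{momentboundlem3} to absorb the random prefactor, and invoking the stationarity of $u_e(T,\cdot)$ to pull $\left\langle u_e(T,0)^2\right\rangle$ out of the integral against $\eta_R(\cdot/c)$ (whose mass depends only on $c$ and $d$), I obtain $\left\langle u_e(T,0)^2\right\rangle \lesssim \eta_\beta^2(T)$. Combining with the previous step and observing that $\eta_\beta$ satisfies $\eta_\beta(\tfrac{1}{2\mu})\lesssim \eta_\beta(\tfrac{1}{\mu})$ (a power-type behavior up to the logarithm at $\beta = d$) yields the claim.

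The only genuinely non-routine point is the identification $u_e(T,0) = e^{-T\mathcal{L}}\Theta$: it requires checking that the stationary realization of $e^{-\tau\mathcal{L}}\Theta$ lies in the uniqueness class $\HH^{1,\text{par}}_{\text{uloc}}$ of \eqref{equationu} so that the two solutions can be matched. Once this correspondence is in place the remainder of the argument is a clean Laplace-transform exercise, and all the analytic work has already been done in Theorem \ref{semigroup} and Corollary \ref{decayu}.
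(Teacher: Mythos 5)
Your proof is correct, but it takes a genuinely different route from the paper's. The paper proves Corollary~\ref{spectralreso} via Corollary~\ref{approxcorrector}: writing $\phi^n_{e,\mu^{-1}} = g_n(\mathcal{L},\mu^{-1})\Theta$ with $(g_n)_n$ the Richardson iterates of $\zeta\mapsto(\zeta+\mu)^{-1}$, it uses the spectral theorem to express the Dirichlet energy $\left\langle\nabla(\phi^n_{e,\mu^{-1}}-\phi_e)\cdot a\nabla(\phi^n_{e,\mu^{-1}}-\phi_e)\right\rangle$ as $\int_0^\infty \zeta(g_n(\zeta,\mu^{-1})-g_0(\zeta))^2\,\dd\nu_\Theta(\zeta)$, applies the sub-systematic-error estimate \eqref{subsysteT} to bound it by $\mu^{-1}\eta^2_\beta(\mu^{-1})$, and then exploits a pointwise lower bound $\vert g_n(\zeta,\mu^{-1})-g_0(\zeta)\vert\gtrsim \mu^n/(\zeta(\zeta+\mu)^n)$ on $[0,\mu]$ to recover the spectral mass near the origin. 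You instead use the Laplace transform: $\left\langle u_e(T,0)^2\right\rangle=\int_0^\infty e^{-2T\zeta}\dd\nu_\Theta(\zeta)$ via the identification $u_e(T,0)=e^{-T\mathcal{L}}\Theta$, a Chebyshev cutoff at $T\sim\mu^{-1}$, and the decay $\left\langle u_e(T,0)^2\right\rangle\lesssim\eta^2_\beta(T)$ extracted from Corollary~\ref{decayu} by squaring, taking expectation and using stationarity. Your argument is shorter and only requires Corollary~\ref{decayu}, bypassing the massive-corrector/Richardson machinery entirely; the paper's route is natural given that it explicitly presents Corollary~\ref{spectralreso} as a consequence of Corollary~\ref{approxcorrector}. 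Two cosmetic remarks: for $\mu\in(\tfrac12,1]$ your choice $T=\tfrac{1}{2\mu}<1$ falls outside the range $T\geq 1$ of Corollary~\ref{decayu}, but this regime is trivial since $\int_0^\mu\dd\nu_\Theta\leq\langle\Theta^2\rangle$ and $\eta_\beta(\mu^{-1})\gtrsim 1$ there; and the identification $u_e(T,0)=e^{-T\mathcal{L}}\Theta$, which you rightly flag as the one non-routine step, is the exact parabolic analogue of the elliptic correspondence $\phi_{e,T}=g_1(\mathcal{L},T)\Theta$ the paper itself invokes, so it is no more delicate than what the paper already relies on.
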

\subsection{Extension to other model of coefficient field}\label{extensionsection}
The approach we develop here is not limited to the Gaussian setting. For coefficient field $a$ for which the law satisfies multiscale functional inequalities with oscillation, similar result to the ones presented in this paper hold. More precisely, assume that there exists $\rho>0$ such that for all square integrable functional $F$ of $a$, we have 
\begin{equation}
\text{Ent}(F(a))\leq \frac{1}{\rho}\left\langle \int_{1}^{+\infty}\ell^d\pi(\ell)\int_{\mathbb{R}^d}\vert\partial^{\text{osc}}_{x,\ell} F(a)\vert^2\dd x\, \dd\ell\right\rangle,
\label{LSIoscillation}
\end{equation}
with, for some $C>0$ and $\beta>0$
\begin{equation}
\pi(\ell)=e^{-\frac{1}{C}\ell^{\beta}},
\label{LSiweightoscillation}
\end{equation}
and for all $(x,\ell)\in\mathbb{R}^d\times [1,\infty)$ 
\begin{equation}
\partial^{\text{osc}}_{x,\ell}F(a):=\sup\{F(a')-F(a'')|a'=a''=a \text{ on }\mathbb{R}^d\backslash \bb_{\ell}(x)\}.
\label{derivativeoscillation}
\end{equation}
Then, with the notations $\mu_{\beta}(T)=1$, $\eta_{\beta}(T)=T^{-\frac{1}{2}-\frac{d}{4}}$, $\pi_{\star}(r)=r^d$ and $\xi_{d,\beta}(\vert x\vert)=\log^{\frac{1}{2}}(\vert x\vert+2)$ if $d=2$ and $\xi_{d,\beta}(\vert x\vert)=1$ if $d\geq 3$, the results of section \ref{resultssec} hold with a random variable $\mathcal{C}_\star$ (possibly depending on $d$, $\lambda$, $\beta$, $g$, $x$, $r$ and $T$) with stretched exponential moments for some exponent $\alpha$ (depending on $d$ and $\beta$) uniform in $x$, $r$ and $T$ when it depends on this parameters.\newline
\newline
Multiscale logarithm Sobolev inequality of type \eqref{LSIoscillation} are satisfied, for instance, by random inclusions with random radii and random tessellations of Poisson points or the random parking measure. For more precise details, we refer to \cite{duerinckx2017weighted2,duerinckx2017weighted}. For completeness and to see the differences compared to the Gaussian setting, we provide in Appendix \ref{proofoscillation}, a proof of Theorem \ref{semigroup} under the assumption \eqref{LSIoscillation}, when $u_e$ is real valued and $a$ satisfy a regularity assumption. The proofs of the general case may be extended by following the arguments of Appendix \ref{proofoscillation} and Section \ref{proofsec}.
\section{Structure of the proof}\label{structureproof}
Let us now describe the strategy of the proof of Theorem \ref{semigroup}, together with a flow of auxiliary results. In the rest of the paper, for notational convenience, we do not write the dependence of all quantities on the unit vector $e$, fixed once for all. 
\subsection{Main steps and heuristic arguments}\label{stratsec}
\textbf{General strategy of the proof. }The proof uses two important quantities: for all $t\in \mathbb{R}^+_*$
\begin{equation}
\mathcal{Q}^r_1:=\sqrt{t}\left(\int_{\mathbb{R}^d}\eta_{\sqrt{2}r}(y)\vert\nabla u(t,y)\vert^2\dd y\right)^{\frac{1}{2}}\quad  \text{for any $r\geq\sqrt{t}$},
\label{strategyproof_quanti1}
\end{equation}
and 
\begin{equation}
\mathcal{Q}^{r}_2(y):= q_r(t,y)-\left\langle q_r(t,y)\right\rangle \quad \text{for any $r\leq \sqrt{t}$ and $y\in\mathbb{R}^d$,}
\label{strategyproof_quanti2}
\end{equation}
and their relationship. On the one hand, using the estimate \cite[Lemma 6]{gloria2015corrector}, we have a deterministic relationship between \eqref{strategyproof_quanti1} and averages in space and in $r$ of \eqref{strategyproof_quanti2}, recalled in Lemma \ref{cacciopo}. On the other hand, using sensitivity estimates (see Lemma \ref{functioderiv} and Proposition \ref{Sensitivitysubop}) and the multiscale logarithmic Sobolev inequality \eqref{SGinegp}, we can control moments of \eqref{strategyproof_quanti2} by moments of \eqref{strategyproof_quanti1}. The main difficulty is that the estimates are coupled in an intricate way, which does not allow to buckle easily. We overcome this difficulty by, first deriving nearly-optimal estimates in scaling in $r,\, t$ of moments of \eqref{strategyproof_quanti2} from a sub-optimal deterministic bound in $t$ of \eqref{strategyproof_quanti1}, which is itself based on deterministic energy estimates (see Lemma \ref{nergyestideter}). Second, from the nearly-optimal moment bounds of \eqref{strategyproof_quanti2}, we improve the decay in $t$ of the moments of \eqref{strategyproof_quanti1}, which allow us to deduce the optimal scaling in $r,\, t$ of \eqref{strategyproof_quanti2}, which leads to Theorem \ref{semigroup}. We then finally obtain from Theorem \ref{semigroup} and Lemma \ref{cacciopo} the optimal decay in time of the moments of \eqref{strategyproof_quanti1}, which leads to Corollary \ref{decayu}. In this contribution, our main effort is to derive the sensitivity estimates and the control of moments  of \eqref{strategyproof_quanti2}. In the following, we focus on the main ideas of the proof of \eqref{Sensitilem3}. For \eqref{sensiothertest}, the ideas are similar and a few words on the differences are given at the end of this section.\newline
\newline
\textbf{Sensitivity estimates. }The proof of the sensitivity estimates combined two different types of arguments.
\begin{enumerate}
\item \textbf{Deterministic arguments. }There are two main ingredients. The first ingredient is the classical $\LL^2$ theory of parabolic systems in form of localized energy type estimates, see Lemmas \ref{nergyestideter} and \ref{scott}. The second ingredient is the large-scale regularity theory for parabolic systems developed in \cite{bella2017liouville} that we recall  and extend in Appendix \ref{reggech}. This provides, in particular, a large-scale $\cc^{0,1}$ estimate: for all $x\in\mathbb{R}^d$, there exists a stationary random variable $r_*(x)\geq 1$ such that for all $t\in\mathbb{R}$ and weak solution $v$ of, for $R\geq r_*(x)$, $\partial_{\tau} v-\nabla\cdot a\nabla v=\nabla\cdot g \text{ in $\cc_R(t,x)$}$, we have 
\begin{equation}\label{stratmeanvalue}
\begin{aligned}
\fint_{\cc_{r_*(x)}(t,x)}\vert\nabla v(s,y)\vert^2\dd s\, \dd y\lesssim_{d,\lambda}&\fint_{\cc_{R}(t,x)}\vert\nabla v(s,y)\vert^2\dd s\, \dd y\\
&+\sup_{\rho\in [r_*(x),R]}\left(\frac{R}{\rho}\right)^{2\alpha}\fint_{\cc_{\rho}}\left\vert g(s,y)-\fint_{\cc_{\rho}(t,x)} g(s',z)\dd s'\, \dd z\right\vert^2\dd s\, \dd y .
\end{aligned}
\end{equation}
These properties can be used provided $r_*$ has good moment bounds, which have already been established in \cite{gloria2019quantitative} in our context.
\item \textbf{Stochastic arguments. }Moment bounds on $q_r(T)-\left\langle q_r(T)\right\rangle$ will be obtained from the multiscale logarithmic Sobolev inequality \eqref{SGinegp}, and more precisely in its version of Proposition \ref{SGp} allowing a control of high moments: for all $T\geq 1$, $r\leq \sqrt{T}$ and $p\geq 1$,
\begin{equation}
\left\langle \vert q_r(T)-\left\langle q_r(T)\right\rangle\vert^p\right\rangle^{\frac{1}{p}}\lesssim \sqrt{p}\left\langle \left(\int_{1}^{+\infty}\ell^{-d}\pi(\ell)\int_{\mathbb{R}^d}\vert\partial^{\text{fct}}_{x,\ell} q_r(T)\vert^2\dd x\, \dd\ell\right)^{\frac{p}{2}}\right\rangle^{\frac{1}{p}}.
\label{SGpstrategy}
\end{equation}
To use \eqref{SGpstrategy}, we have to estimate the sensitivity of $q_r(T)$ with respect to the coefficient field $a$, namely the quantity $\int_{\mathbb{R}^d}\vert\partial^{\text{fct}}_{x,\ell} q_r(T)\vert^2\dd x$ for any $\ell\geq 1$. The method used here is inspired by the series of articles \cite{fischer2017sublinear,gloria2014optimal,gloria2014regularity,gloria2019quantitative} which treats the case of elliptic systems and proceeds by duality. The results are summarized in Lemma \ref{functioderiv} for the computation of $\partial^{\text{fct}}_{x,\ell} q_r(T)$ and in Proposition \ref{Sensitivitysubop} for the sensitivity calculus and the control of moments.
\end{enumerate}
The localized energy estimates of the deterministic part are classical and rely only on $\LL^2$ theory for parabolic systems. The contribution of this paper is more on the stochastic part.  We now describe the main ideas and we perform heuristic computations leading to \eqref{Sensitilem3}. For simplicity, we do this in a simpler case with two additional assumptions: 
\begin{itemize}
\item[(i)]We assume that the solution $u$ of \eqref{equationu} is real-valued and $a$ is symmetric. In that case, we get the uniform bound of $\nabla u$: 
\begin{equation}
\|\nabla u(t,\cdot)\|_{\LL^{\infty}(\mathbb{R}^d)}\lesssim_{d,\lambda}t^{-1}\quad \text{for all $t>0$},
\label{strategyunifboundnablau}
\end{equation}
see \cite[Lemma 9.2]{armstrong2019quantitative} for a proof. The (sub-optimal) deterministic bound \eqref{strategyunifboundnablau} is our starting point to prove the sensitivity estimate. 
\item[(ii)]We assume that $\nabla\cdot ae\in \LL^{\infty}(\mathbb{R}^d)$ and 
\begin{equation}
\|\nabla\cdot ae\|_{\LL^{\infty}(\mathbb{R}^d)}\leq 1.
\label{strat_boundLinftyavoidsing}
\end{equation}
This assumption allow us to avoid the singularity at $t=0$ and to use the localized energy estimate in the form: for all $T\geq 1$, $R\geq \sqrt{T}$ and $z\in\mathbb{R}^d$
\begin{equation}
\int_{0}^T\fint_{\bb_R(z)}\vert\nabla u(t,x)\vert^2\dd x\, \dd t\lesssim_{d,\lambda} 1,
\label{strat_energyestiavoidsing}
\end{equation}
see for instance \cite[Lemma 2]{gloria2015corrector} for a proof.
\end{itemize}
We proceed in three steps.
\begin{itemize}
\item[I)] The first step identifies the functional derivative of each components $\partial^{\text{fct}}_{x,\ell}q_r(T)\cdot e_k$, defined in \eqref{functioderidef} for $T\geq 1$ and $r\leq \sqrt{T}$. Formally, we have for all $(x,\ell)\in\mathbb{R}^d\times [1,\infty)$, 
\begin{equation}\label{ReviewEquation1}
\partial^{\text{fct}}_{x,\ell}q_r(T)\cdot e_k=\int_{\bb_{\ell}(x)}\left\vert \frac{\partial }{\partial a(y)}q_r(T)\cdot e_k\right\vert\dd y.
\end{equation}
Using $\frac{\partial}{\partial a(y)} a=\delta_y$ and the chain rule, we compute
\begin{equation}
\frac{\partial }{\partial a(y)}q_r(T)\cdot e_k=g_r(y)e_k\cdot e+g_r(y)e_k\cdot \int_{0}^T\nabla u(t,y)\dd t+\int_{\mathbb{R}^d}g_r(z)e_k\cdot a(z)\left(\int_{0}^T\nabla \frac{\partial}{\partial a(y)}u(t,z)\dd t\right)\dd z,
\label{partialformalderivative}
\end{equation}
with from \eqref{equationu}
\begin{equation}
\left\{
    \begin{array}{ll}
        \partial_\tau\frac{\partial}{\partial a(y)}u-\nabla\cdot a\nabla\frac{\partial}{\partial a(y)}u=\nabla\cdot\delta_y\nabla u & \text{ in $(0,+\infty)\times \mathbb{R}^d$}, \\
        \frac{\partial}{\partial a(y)}u(0)=\nabla\cdot \delta_y\,e. & 
    \end{array}
\right.
\label{equationformalpartialu}
\end{equation}
The first two r.h.s terms of \eqref{partialformalderivative} are directly controlled, in $\LL^2(\mathbb{R}^d)$, via \eqref{strategyunifboundnablau} and \eqref{strat_energyestiavoidsing}, whereas the control of the last term is more technical. The idea is to rewrite this term by duality. Introducing the solution $v^T=(v^T_k)_{k\in\llbracket 1,d\rrbracket}$ of the corresponding dual problem of \eqref{equationu} with final time $T$, that is the backward parabolic system
\begin{equation}
\left\{
    \begin{array}{ll}
        \partial_{\tau}v_k^T+\nabla\cdot a^*\nabla v_k^T=\nabla\cdot ag_re_k & \text{ on $(-\infty,T)\times \mathbb{R}^d$}, \\
        v_k^T(T)= 0,& 
    \end{array}
\right.
\label{strategydualpb}
\end{equation}
we rewrite 
\begin{equation}
\int_{\mathbb{R}^d} g_r(z)e_k\cdot a(z)\left(\int_{0}^T\nabla\frac{\partial}{\partial a(y)}u(t,z)\dd t\right)\dd z=\nabla v^T_k(0,y)\otimes e+\int_{0}^T\nabla u(t,y)\otimes \nabla v^T_k(t,y)\dd t.
\label{dualecriture2}
\end{equation}
Consequently, recalling \eqref{ReviewEquation1}, the crucial terms of $\partial^{\text{fct}}_{x,\ell}q_r(T)\cdot e_k$ are
\begin{equation}
\mathcal{M}_k(T,x,\ell):=\int_{\bb_{\ell}(x)}\left\vert\nabla v^T_k(0,y)\otimes e\right\vert\dd y+\int_{\bb_{\ell}(x)}\left\vert\int_{0}^T\nabla u(t,y)\otimes \nabla v^T_k(t,y)\dd t\right\vert\dd y.
\label{maintermsensi}
\end{equation}
The rigorous computations are given in Lemma \ref{functioderiv}. 
\item[II)]In the second step, we deduce the control of moments of $q_r(T)-\left\langle q_r(T)\right\rangle$ from the formula of $\partial^{\text{fct}}_{x,\ell} q_r(T)$, for $T\geq 1$ and $1\leq r\leq \sqrt{T}$ combined with the estimate \eqref{SGpstrategy}. More precisely, at this stage, we are only able to reach a sub-optimal bound, with a $\log(T)$ correction in \eqref{Sensitilem3}. This additional contribution is due to the, purely deterministic, sub-optimal bound \eqref{strategyunifboundnablau}, as this will clearly appear in the computations below. In the following, we provide the idea of the control of the main term \eqref{maintermsensi}, for all $\ell\geq 1$ and $k=1$ and we write $\mathcal{M}$ and $v^T$ for $\mathcal{M}_1$ and $v^T_1$, respectively. As in Remark \ref{laplaciancase}, we have to distinguish between the two regimes $\ell< \sqrt{T}$ and $\ell\geq \sqrt{T}$.\newline
\newline
\textbf{Regime $\ell<\sqrt{T}$.} In this regime, we make use of $\LL^2$-type estimates. We start with the first r.h.s term of \eqref{maintermsensi}. We use the plain energy estimate: 
\begin{equation}
\int_{\mathbb{R}^d}\vert\nabla v^T(t,y)\vert^2\dd y\lesssim_{d,\lambda}\int_{\mathbb{R}^d}g^2_r(y)\dd y\lesssim r^{-d}\quad \text{for all $t\leq T$},
\label{stratproofesti1}
\end{equation}
applied for $t=0$ to get, using in addition Jensen's inequality and $\int_{\mathbb{R}^d}\int_{\bb_{\ell}(x)}\dd x\lesssim_d \ell^d\int_{\mathbb{R}^d}$,
\begin{equation}
\int_{\mathbb{R}^d}\left(\int_{\bb_{\ell}(x)}\vert\nabla v^T(0,y)\otimes e\vert\dd y\right)^2\dd x\lesssim_{d,\lambda} \ell^{2d}r^{-d}.
\label{stratproofesti2}
\end{equation}
We now turn to the estimate of the second r.h.s term of \eqref{maintermsensi}. We start by splitting the time integral into the contributions in $(0,1)$ and $[1,T]$. In $(0,1)$, we make use of Cauchy-Schwarz's inequality, the energy estimate \eqref{strat_energyestiavoidsing},
the estimate $\int_{\mathbb{R}^d}\int_{\bb_{\ell}(x)}\dd x\lesssim_d \ell^{d}\int_{\mathbb{R}^d}$ and the plain energy estimate \eqref{stratproofesti1} to obtain 
\begin{align}
\int_{\mathbb{R}^d}\left(\int_{\bb_{\ell}(x)}\left\vert\int_{0}^1\nabla u(t,y)\otimes \nabla v^T(t,y)\dd t\right\vert\dd y\right)^2\dd x&\leq \int_{\mathbb{R}^d}\int_{0}^1\int_{\bb_{\ell}(x)}\vert\nabla u(t,y)\vert^2\dd y\, \dd t\int_{0}^1\int_{\bb_{\ell}(x)}\vert\nabla v^T(t,y)\vert^2\dd y\, \dd t\, \dd x\nonumber\\
&\stackrel{\eqref{strat_energyestiavoidsing}}{\lesssim_{d,\lambda}}\ell^d\int_{\mathbb{R}^d} \int_{0}^1\int_{\bb_{\ell}(x)}\vert\nabla v^T(t,y)\vert^2\dd y\, \dd t\, \dd x\nonumber\\
&\stackrel{\eqref{stratproofesti1}}{\lesssim_{d,\lambda}}\ell^{2d}r^{-d}.\label{heuristic1}
\end{align}
In $[1,T]$, we make use of Jensen's inequality, the estimate $\int_{\mathbb{R}^d}\int_{\bb_{\ell}(x)}\dd x\lesssim_d \ell^{d}\int_{\mathbb{R}^d}$, the deterministic bound \eqref{strategyunifboundnablau} and Minkowski's inequality in $\LL^{2}(\mathbb{R}^d)$ as well as the plain energy estimate \eqref{stratproofesti1} to obtain, for all $\ell\geq 1$,
\begin{align}
\int_{\mathbb{R}^d}\left(\int_{\bb_{\ell}(x)}\left\vert\int_{1}^T\nabla u(t,y)\otimes \nabla v^T(t,y)\dd t\right\vert\dd y\right)^2\dd x&\stackrel{\eqref{strategyunifboundnablau}}{\lesssim_{d,\lambda}}\ell^{2d}\int_{\mathbb{R}^d}\left(\int_{1}^{T} t^{-1}\vert \nabla v^T(t,x)\vert\dd t\right)^2\dd x\nonumber\\
&\leq \ell^{2d}\left(\int_{1}^T t^{-1}\left(\int_{\mathbb{R}^d}\vert\nabla v^T(t,x)\vert^2\dd x\right)^{\frac{1}{2}}\dd t\right)^2\nonumber\\
&\lesssim_{d,\lambda} \ell^{2d}\log^2(T)r^{-d}.\label{heuristic2}
\end{align}
Therefore, the combination of \eqref{stratproofesti2}, \eqref{heuristic1} and \eqref{heuristic2} yields 
\begin{equation}
\int_{\mathbb{R}^d}\mathcal{M}^2(T,x,\ell)\dd x\lesssim_{d,\lambda} \ell^{2d}r^{-d}(1+\log^2(T)),\label{stratllesst}
\end{equation}
and gives the contribution in \eqref{SGpstrategy}
\begin{equation}
\int_{1}^{\sqrt{T}}\ell^{-d}\pi(\ell)\int_{\mathbb{R}^d}\mathcal{M}^2(T,x,\ell)\dd x\, \dd \ell\stackrel{\eqref{assumeMSPC},\eqref{stratllesst}}{\lesssim_{d,\lambda}}(1+\log^2(T))r^{-d}\int_{1}^{\sqrt{T}}\ell^{d-1-\beta}\dd\ell\lesssim_{\beta}(1+\log^2(T))r^{-d}\mu^2_{\beta}(T),\label{RefHeuristicLater:Eq3}
\end{equation}
where $\mu_{\beta}(T)$ is defined in \eqref{defmubeta}.\newline
\newline
\textbf{Regime $\ell\geq \sqrt{T}$.} Here, the bound \eqref{stratllesst} is of no use since $\ell\mapsto \ell^{-d}\pi(\ell)\int_{\mathbb{R}^d}\mathcal{M}^2(T,x,\ell)\dd x$ needs to be integrable at infinity. This is why we treat this regime a different way and we rather use $\LL^1$-type estimates. We start with the first r.h.s term of \eqref{maintermsensi}. This term is more subtle to control in this regime, even with the two additional assumptions \eqref{strategyunifboundnablau} and \eqref{strat_boundLinftyavoidsing}. We present here the argument in the homogeneous case $a=\text{Id}$. In that case, we may express $\nabla v^T$ in terms of the Duhamel formula: 
\begin{equation}\label{DuhamelHeuristic1}
\nabla v^T(t,x)=-\int_{0}^{T-t}\nabla^2\Gamma(s,\cdot)\star g_r(x)\dd s\quad \text{for all $(t,x)\in (-\infty,T)\times \mathbb{R}^d$},
\end{equation}
where we recall that $\Gamma$ denotes the heat kernel. Then, using Minkowski's inequality in $\LL^{2}(\mathbb{R}^d)$, we obtain
$$\int_{\mathbb{R}^d}\left(\int_{\bb_{\ell}(x)}\vert\nabla v^T(0,y)\otimes e\vert\dd y\right)^2\dd x\lesssim_d \ell^d\left(\int_{\mathbb{R}^d}\vert \nabla v^T(0,y)\vert\dd y\right)^2.$$
Finally, noticing that $\Gamma(s,\cdot)=(4\pi)^{-\frac{d}{2}} g_{\sqrt{4s}}$, using the semigroup property $g_{\sqrt{4s}}\star g_r=g_{\sqrt{4s+r^2}}$ and $\|g_{\sqrt{4s+r^2}}\|_{\LL^1(\mathbb{R}^d)}\lesssim_d (4s+r^2)^{-1}$, we get from \eqref{DuhamelHeuristic1}
\begin{align*}
\int_{\mathbb{R}^d}\vert\nabla v^T(0,y)\vert\dd y=(4\pi)^{-\frac{d}{2}}\int_{\mathbb{R}^d}\left\vert\int_{0}^T \nabla^2 g_{\sqrt{4s+r^2}}(y)\dd s\right\vert\dd y\lesssim_d\int_{0}^{T} (4s+r^2)^{-1}\dd s\lesssim_d \log(1+\tfrac{T}{r^2}),
\end{align*}
and we conclude that
\begin{equation}\label{UseRefHeuristicLater:Eq2}
\int_{\mathbb{R}^d}\left(\int_{\bb_{\ell}(x)}\vert\nabla v^T(0,y)\otimes e\vert\dd y\right)^2\dd x\lesssim_d \ell^d\log^2(1+\tfrac{T}{r^2}).
\end{equation}
In the heterogeneous case, we replace the use of the heat kernel by appealing to large-scale regularity, in form of estimate \eqref{stratmeanvalue}, to get a pointwise bound of local averages of $\nabla v^T$, see Lemma \ref{ctrlav2}, and we get, 
\begin{equation}
\int_{\mathbb{R}^d}\left(\int_{\bb_{\ell}(x)}\vert\nabla v^T(t,y)\vert\,\dd y\right)^2\dd x \leq \mathcal{C}\ell^d\log^2(1+\tfrac{\ell}{r})\quad \text{for all $t\in (-\infty,T]$},
\label{stratproofesti3}
\end{equation}
with $\mathcal{C}$ satisfying stretched exponential moments. We refer to the estimate of the first l.h.s term of \eqref{substep22} for more details. We now turn to the estimate of the second r.h.s term of \eqref{maintermsensi}. As before, we split the time integral into the two contributions in $(0,1)$ and $[1,T]$. In $(0,1)$, we make use of Minkowski's inequality in $\LL^2(\mathbb{R}^d)$ combined with the identity $\int_{\mathbb{R}^d}=\int_{\mathbb{R}^d}\fint_{\bb_{r}(z)}\dd z$, Cauchy-Schwarz's inequality, \eqref{strat_energyestiavoidsing} applied with $R=r$ and $T=1$ and \cite[Lemma 2]{gloria2015corrector} applied to the equation \eqref{strategydualpb} in form  of
\begin{equation}
\int_0^1\fint_{\bb_r(z)}\vert \nabla v^T(t,y)\vert^2\dd y\, \dd t\lesssim \int_{\mathbb{R}^d}  \eta_{r}(y-z)g^2_r(y)\dd y,
\label{strat_energyvT}
\end{equation}
to get
\begin{align}
&\int_{\mathbb{R}^d}\left(\int_{\bb_{\ell}(x)}\left\vert\int_{0}^1\nabla u(t,y)\otimes \nabla v^T(t,y)\dd t\right\vert\dd y\right)^2\dd x\nonumber\\
&\lesssim_d\ell^{d}\left(\int_{\mathbb{R}^d}\int_{0}^1\vert\nabla u(t,y)\vert\vert\nabla v^T(t,y)\vert\dd t\, \dd y\right)^2\nonumber\\
&\leq\ell^d\left(\int_{\mathbb{R}^d}\left(\int_{0}^1\fint_{\bb_r(z)}\vert\nabla u(t,y)\vert^2\dd y\, \dd t\right)^{\frac{1}{2}}\left(\int_{0}^1\fint_{\bb_r(z)}\vert\nabla v^T(t,y)\vert^2\dd y\, \dd t\right)^{\frac{1}{2}}\dd z\right)^2\nonumber\\
&\stackrel{\eqref{strat_energyestiavoidsing},\eqref{strat_energyvT}}{\lesssim}\ell^d\int_{\mathbb{R}^d}\left(\int_{\mathbb{R}^d}\eta_r(y-z)g^2_r(y)\dd y\right)^{\frac{1}{2}}\dd z\lesssim \ell^d.\label{stratnera0lgtr}
\end{align}
In $[1,T]$, we exchange the order of integration in the $x$ and $t$ variables, using Minkowski's inequality in $\LL^{2}(\mathbb{R}^d)$, which we combine with \eqref{strategyunifboundnablau} and \eqref{stratproofesti3} to get for all $\ell\geq 1$
\begin{align}
\int_{\mathbb{R}^d}\left(\int_{\bb_{\ell}(x)}\left\vert \int_{1}^T \nabla u(t,y)\otimes \nabla v^T(t,y)\dd t\right\vert\dd y\right)^2\dd x &\lesssim \left(\int_{1}^T\,t^{-1}\left(\int_{\mathbb{R}^d}\left(\int_{\bb_{\ell}(x)}\vert\nabla v^T(t,y)\vert\dd y\right)^2\dd x\right)^{\frac{1}{2}}\dd t\right)^2\nonumber\\
&\stackrel{\eqref{stratproofesti3}}{\lesssim}\mathcal{C}\ell^d\log^2(T)\log^2(1+\tfrac{\ell}{r}).\label{strataway0lgtr}
\end{align}
The combination of \eqref{stratproofesti3} applied with $t=0$, \eqref{stratnera0lgtr} and \eqref{strataway0lgtr} yields
\begin{equation}\label{UseRefHeuristicLater}
\int_{\mathbb{R}^d}\mathcal{M}^2(T,x,\ell)\dd x\lesssim_{d,\lambda}\ell^d(1+\mathcal{C}\log^2(T)\log^2(1+\tfrac{\ell}{r})),
\end{equation}
and gives the contribution in \eqref{SGpstrategy}, using that $r\leq \sqrt{T}$ in the last line
\begin{align}
\int_{\sqrt{T}}^{+\infty}\ell^{-d}\pi(\ell)\int_{\mathbb{R}^d}\mathcal{M}^2(T,x,\ell)\dd x\, \dd \ell&\lesssim\log^2(T)\int_{\sqrt{T}}^{+\infty}\ell^{-1-\beta}(1+\mathcal{C}\log^2(\tfrac{\ell}{r}))\dd \ell\nonumber\\
&\lesssim_{\beta}\log^2(T)T^{-\frac{\beta}{2}}(1+\mathcal{C}\log^2(\tfrac{\sqrt{T}}{r}))\nonumber\\
&\lesssim_{\beta}\log^2(T)r^{-d}\mu^2_{\beta}(T)(1+\mathcal{C}\log^2(\tfrac{\sqrt{T}}{r})).\label{RefHeuristicLater:Eq4}
\end{align}
Let us now talk about the main difficulties and changes which occur in the general case, that is when we do not assume \eqref{strategyunifboundnablau} and \eqref{strat_boundLinftyavoidsing}.
\begin{itemize}
\item[(i)] When the assumption \eqref{strat_boundLinftyavoidsing} is not satisfied, $u$ is now singular at $t=0$ and thus the second r.h.s term of \eqref{maintermsensi} is not well defined in the Lebesgue sense. In order to handle this singular part, we have to treat a different way the contribution in $(0,1)$ of the time integral of the third r.h.s term of \eqref{partialformalderivative}. This is done by using the localized energy estimates directly on the equation \eqref{equationformalpartialu}. As a consequence, we do not obtain an explicit formula for $\partial^{\text{fct}}_{x,\ell} q_r(T)$ but rather a bound, see Lemmas \ref{functioderiv} and \ref{functioderiv2}.
\item[(ii)] When $u$ is vector-valued, \eqref{strategyunifboundnablau} fails in general and has to be replaced by: 
\begin{equation}
\fint_{\bb_R}\vert\nabla u(t,y)\vert^2\dd y\lesssim t^{-2}\quad \text{for all $R\geq \sqrt{t}$}.
\label{stratboudnablaulargescale}
\end{equation}
This estimate is however not sufficient for our propose since we see in \eqref{maintermsensi} that we need to bound the average of $\nabla u$ over all balls $\bb_{\ell}$ of radius $\ell\geq 1$. We have to appeal to large-scale regularity theory in form of estimate \eqref{stratmeanvalue} to obtain the improvement
\begin{equation}
\fint_{\bb_{\ell}(x)}\vert\nabla u(t,y)\vert^2\dd y\lesssim \left(\left(\frac{r_*(x)}{\ell}\vee 1\right)^d\mathds{1}_{\ell<\sqrt{t}}+\mathds{1}_{\ell\geq \sqrt{t}}\right) t^{-2},
\label{stratimprovedecay}
\end{equation}
see Lemma \ref{ctrlav}. Equipped with \eqref{stratimprovedecay}, we may control the second r.h.s term of \eqref{maintermsensi} as in the scalar case. The only main change is that we cannot use the plain energy estimate \eqref{stratproofesti1} for the defining equation \eqref{strategydualpb} as we did in \eqref{heuristic2}. Instead, we prove a new lemma which states a pointwise bound (depending on the form of the r.h.s of \eqref{strategydualpb}) on $\fint_{\bb_{r_*(x)}}\vert\nabla v^T(t,y)\vert^2\dd y$ for all $x\in\mathbb{R}^d$ and $\sqrt{T-t}\geq r_*(x)$, see Lemma \ref{ctrlav2}. The sub-optimal estimate of moments of $q_r(T)-\left\langle q_r(T)\right\rangle$ is summarized in Proposition \ref{Sensitivitysubop}.
\end{itemize}
\item[III)] In the final step, we remove the $\log(T)$ contribution which appears in the previous step. To this aim, we need a little more decay in time of the averages of $\nabla u$ than the one obtained in \eqref{stratimprovedecay}, since the $\log(T)$ contribution clearly comes from this deterministic sub-optimal bound. The idea is to use the $\LL^2$-$\LL^1$ estimate of Lemma \ref{cacciopo} which essentially says, by stationarity, that for all $R\geq \sqrt{t}$, $x\in\mathbb{R}^d$ and $p\geq 1$
$$\left\langle \left(\fint_{\bb_R(x)}\vert\nabla u(t,y)\vert^2\dd y\right)^{\frac{p}{2}}\right\rangle^{\frac{1}{p}}\lesssim_{d,\lambda}t^{-1-\frac{d}{4}}\fint_{\frac{t}{4}}^{\frac{t}{2}}\fint_{0}^{\sqrt{s}}r^{\frac{d}{2}}\left\langle \vert q_r(s)-\left\langle q_r(s)\right\rangle\vert^p\right\rangle^{\frac{1}{p}}\dd r\, \dd s.$$
This gives, using the sub-optimal moment bounds of $q_r(T)-\left\langle q_r(T)\right\rangle$ of the previous step that, 
\begin{equation}
\fint_{\bb_R(x)}\vert\nabla u(t,y)\vert^2\dd y\lesssim \log^2(t)t^{-1}\eta^2_{\beta}(t)\mathcal{D}_\star(x)\quad \text{for all $R\geq \sqrt{t}$},
\label{interpol}
\end{equation}
with $\eta_{\beta}$ defined in \eqref{defetabeta} and where $\mathcal{D}_{\star}(x)$ is a random variable with stretched exponential moment. By interpolating \eqref{stratimprovedecay} and \eqref{interpol} we deduce for all $\varepsilon>0$ and $\ell\geq 1$
\begin{equation}
\fint_{\bb_{\ell}(x)}\vert\nabla u(t,y)\vert^2\dd y\lesssim \left(\left(\frac{r_*(x)}{\ell}\vee 1\right)^d\mathds{1}_{\ell<\sqrt{t}}+\mathds{1}_{\ell\geq \sqrt{t}}\right)\log^{2\varepsilon}(t)t^{-2+\varepsilon}\eta^{2\varepsilon}_{\beta}(t)\mathcal{D}^{2\varepsilon}_{\star}(x).
\label{proofstratesti78}
\end{equation}
This improvement allow us to prove the optimal estimates of Theorem \ref{semigroup}. The price to pay in this step is a small loss of stochastic integrability due to the random variable $\mathcal{D}^{2\varepsilon}_{\star}$. Note that, the exponent $\alpha$ that we get in \eqref{momentboundlem3}, is neither optimal for $\beta>d$ (since \cite{gloria2015corrector} indicates that we expect nearly-Gaussian moments), nor for $0<\beta\ll 1$ (since by \cite{fischer2017sublinear} we can obtain nearly Gaussian moments).
\item[IV)]The proof of \eqref{sensiothertest} follows the same ideas and is even easier since we do not need to begin with a sub-optimal estimate as we did for \eqref{Sensitilem3}. We use Lemma \ref{functioderiv2} for the estimate of $\partial^{\text{fct}}_{x,\ell} q(r^2)\star f_r$, the pointwise bound \eqref{dualboundd2} for local average of $\nabla v^{r^2}$ and the decay \eqref{proofstratesti78} of averages of $\nabla u$.
\end{itemize}
We finally mention that in the case of fast decay of correlations, that is for $\beta>d$, the proof is much simpler and only the regime $\ell<\sqrt{T}$ has to be considered.
We now state the lemmas needed in the proof of Theorem \ref{semigroup}. The first section lists the deterministic PDE ingredients, the second section the results derived from the large-scale regularity theory, and finally the third section the sub-optimal control of the fluctuations of the time dependent flux $q_r(\cdot,\cdot)$.
\subsection{Deterministic results}\label{lemsec3}
This section displays the deterministic PDE ingredients needed in the proof of Theorem \ref{semigroup} and Corollary \ref{decayu}. We start with two classical results from standard $\LL^2$ regularity theory of parabolic systems. The first one is the localized energy type estimates for parabolic systems.
\begin{lemma}[Localized energy estimates]\label{nergyestideter}
Let $v$ be the weak solution of the parabolic system
$$
\left\{
    \begin{array}{ll}
        \partial_{\tau}v-\nabla\cdot a\nabla v=\nabla\cdot f &\text{ in $(0,+\infty)\times \mathbb{R}^d$}, \\
        v(0)=\nabla\cdot q, & 
    \end{array}
\right.
$$
with $q\in \LL^2_{\text{loc}}(\mathbb{R}^d)$ and $f:\mathbb{R}^+\times\mathbb{R}^d\rightarrow \mathbb{R}^d$ such that for all $(x,\ell)\in\mathbb{R}^d\times [1,\infty)$
$$T\in \mathbb{R}^+\mapsto \int_{0}^T f(s,\cdot)\dd s \text{ is in $\LL^2_{\text{loc}}(\mathbb{R}^d)$} \quad \text{and}\quad\left(\int_{\bb_{\ell}(x)}\vert f(\cdot,y)\vert^2\dd y\right)^{\frac{1}{2}} \text{ is in $\LL^1_{\text{loc}}(\mathbb{R}^+)$}.$$ 
There exists a universal constant $c<\infty$ such that the three following estimates hold, recalling that $\eta_R:=R^{-d}e^{-\frac{\vert\cdot\vert}{R}}$;
\begin{itemize}
\item[(i)] Assume that $f\equiv 0$. We have for all $T>0$, $R\geq \sqrt{T}$ and $x\in\mathbb{R}^d$
\begin{equation}
\left(\int_{\mathbb{R}^d}\eta_{R}(\tfrac{y-x}{c})\left\vert\Big(T\nabla v(T,y),\,\int_{0}^T\nabla v(s,y)\dd s\Big)\right\vert^2\dd y\right)^{\frac{1}{2}}\lesssim_{d,\lambda} \left(\int_{\mathbb{R}^d}\eta_{R}(\tfrac{y-x}{c})\vert q(y)\vert^2\dd y\right)^{\frac{1}{2}}.
\label{LemE1}
\end{equation}
\item[(ii)] Assume that $q\equiv 0$. We have for all $R\geq 1$ and $x\in\mathbb{R}^d$
\begin{equation}\label{LemE2}
\begin{aligned}
\left(\int_{\mathbb{R}^d}\eta_{R}(\tfrac{y-x}{c})\left\vert\int_{0}^1 \nabla v(s,y)\dd s\right\vert^2\dd y\right)^{\frac{1}{2}}\lesssim_{d,\lambda} & \left(\int_{\mathbb{R}^d}\eta_{R}(\tfrac{y-x}{c})\left\vert \int_{0}^1 f(s,y)\dd s\right\vert^2\dd y\right)^{\frac{1}{2}}\\
&+\int_{0}^1\frac{1}{1-t}\int_{t}^1\left(\int_{\mathbb{R}^d}\eta_{R}(\tfrac{y-x}{c})\vert f(s,y)\vert^2\dd y\right)^{\frac{1}{2}}\dd s\,\dd t.
\end{aligned}
\end{equation}
\item[(iii)]Assume that $q$ and $f$ are supported in $B_{\ell}(x)$ for some $x\in\mathbb{R}^d$ and $\ell\in [1,\infty)$. Then we have
\begin{equation}\label{LemE3}
\begin{aligned}
&\left(\int_{\mathbb{R}^d\backslash B_{\ell}(x)}e^{\frac{\vert x-z\vert}{2c\ell}}\fint_{B_{\ell}(z)}\left\vert\int_{0}^1\nabla v(s,y)\dd s\right\vert^2\dd y\,\dd z\right)^{\frac{1}{2}}\\
&\lesssim_{d,\lambda} \left(\int_{B_{\ell}(x)}\vert q(y)\vert^2\dd y\right)^{\frac{1}{2}}+\left(\int_{B_{\ell}(x)}\left\vert \int_{0}^1 f(s,y)\dd s\right\vert^2\dd y\right)^{\frac{1}{2}}+\int_{0}^1\frac{1}{1-t}\int_{t}^1\left(\int_{B_{\ell}(x)}\vert f(s,y)\vert^2\dd y\right)^{\frac{1}{2}}\dd s\,\dd t.
\end{aligned}
\end{equation}
\end{itemize}
\end{lemma}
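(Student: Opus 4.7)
All three parts proceed by testing the weak form of the parabolic equation against $\chi^2 v$ (or against $\chi^2 V$ where $V(t,\cdot):=\int_0^t v(s,\cdot)\,\dd s$), for a suitable exponential cutoff $\chi$. For parts (i) and (ii) I would take $\chi(y)=e^{-|y-x|/(2cR)}$, so that $|\nabla\chi|\le \chi/(2cR)$ and $\chi^2$ is pointwise comparable (up to $R^d$) to $\eta_R((y-x)/c)$ for $c$ large enough. The point of working with $V$ rather than $v$ is that it converts the distributional initial datum into a standard source term: integrating $\partial_\tau v-\nabla\cdot a\nabla v=\nabla\cdot f$ in time shows that $V$ solves
\begin{equation*}
\partial_\tau V-\nabla\cdot a\nabla V=\nabla\cdot\Big(q+\int_0^\tau f(s,\cdot)\,\dd s\Big),\qquad V(0)=0,
\end{equation*}
with purely zero initial data, which can be tested against $\chi^2 V$ without any regularisation.

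\textbf{Part (i).} With $f\equiv 0$, I would test the equation for $V$ against $\chi^2 V$ and use uniform ellipticity together with Young's inequality to absorb the cross-term $2\int\chi V\nabla\chi\cdot a\nabla V$ into $\int\chi^2|\nabla V|^2$. This yields
\begin{equation*}
\tfrac{d}{d\tau}\tfrac{1}{2}\!\int\!\chi^2 V^2+\tfrac{\lambda}{2}\!\int\!\chi^2|\nabla V|^2\lesssim_{d,\lambda}\tfrac{1}{R^2}\!\int\!\chi^2 V^2+\int\!\chi^2|q|^2.
\end{equation*}
Since $R\ge\sqrt{T}$, the prefactor satisfies $T/R^2\le 1$, so Grönwall gives the bound on $\int\chi^2|V(T)|^2=\int\chi^2|\int_0^T v|^2$ (whose gradient recovers $\int_0^T\nabla v$). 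The bound on $T\nabla v(T)$ follows by applying a weighted Caccioppoli inequality on the parabolic slab $\mathbb R^d\times[T/2,T]$: $v$ itself solves the homogeneous equation there, so testing $\partial_\tau v-\nabla\cdot a\nabla v=0$ with $\chi^2 (\tau-T/2) v$ and integrating yields $T\int\chi^2|\nabla v(T)|^2\lesssim\fint_{T/2}^T\!\!\int\chi^2 v^2\,\dd\tau$, and the right-hand side is controlled by iterating the same energy identity down to the initial data.

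\textbf{Part (ii).} With $q\equiv 0$, I would again test $V$'s equation (whose source is now $F(\tau,\cdot):=\int_0^\tau f(s,\cdot)\,\dd s$) against $\chi^2 V$ on $[0,1]$, getting
\begin{equation*}
\int\chi^2 V(1)^2+\int_0^1\!\!\int\chi^2|\nabla V|^2\lesssim_{d,\lambda}\int_0^1\!\!\int\chi^2 |F(\tau,\cdot)|^2\,\dd\tau.
\end{equation*}
To recover the precise right-hand side of \eqref{LemE2}, decompose $F(\tau)=F(1)-\int_\tau^1 f(s)\,\dd s$. The contribution of $F(1)=\int_0^1 f$ gives the first r.h.s.\ term directly. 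The contribution of $\int_\tau^1 f(s)\,\dd s$ is handled by Minkowski's inequality in $L^2(\chi^2\,\dd y)$ followed by Cauchy--Schwarz in $\tau$; exchanging the orders of integration produces precisely the Hardy-type weight $\frac{1}{1-\tau}\int_\tau^1\|f(s)\|_{L^2(\chi^2)}\,\dd s$ after one bounds $\int_0^1|\int_\tau^1 g(s)\,\dd s|^2\dd\tau$ by $\int_0^1(1-\tau)\int_\tau^1 g(s)^2\dd s\,\dd\tau$ and then rewrites the integrand.

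\textbf{Part (iii) and main difficulty.} Here $q$ and $f$ are supported in $B_\ell(x)$, and I would replace $\chi$ by a cutoff $\widetilde\chi_z(y)=e^{|y-z|/(4c\ell)}\mathbf 1_{|y-x|>\ell/2}$ (so that $|\nabla\widetilde\chi_z|\le\widetilde\chi_z/(4c\ell)$), test as before, and use that the source terms on the right of the energy identity vanish outside $B_\ell(x)$: the only contributions are the trace-type boundary terms over $B_\ell(x)$, which are bounded by $\|(q,\int_0^1 f)\|_{L^2(B_\ell(x))}$ and by the same Hardy-weighted $L^1$-in-time norm of $\|f\|_{L^2(B_\ell(x))}$ as in (ii). Integrating the resulting pointwise bound against $e^{|x-z|/(2c\ell)}\dd z$ gives \eqref{LemE3}. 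The main technical obstacle throughout is the derivation of the singular time weight $\frac{1}{1-\tau}\int_\tau^1\|f(s)\|\,\dd s$: it is sharper than what a direct Minkowski estimate produces and requires the Hardy-type rearrangement above, which must be carried out while simultaneously maintaining the exponential spatial weight.
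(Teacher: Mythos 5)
Your general strategy (introduce $V(t,\cdot)=\int_0^t v(s,\cdot)\dd s$ to smooth out the singular initial datum) is the same as the paper's, where $w(t,\cdot)=\int_0^t v(s,\cdot)\dd s$ plays the identical role. For part (i) the paper simply cites \cite[Lemma 1]{gloria2015corrector}, and your plan (weighted energy estimate plus Caccioppoli) is a reasonable route; note, however, that the raw weighted energy estimate controls $\int\chi^2V(T)^2$ and the space–time quantity $\int_0^T\int\chi^2|\nabla V|^2$, not $\int\chi^2|\nabla V(T)|^2$ at the final time, so a separate Caccioppoli step is needed for the $\int_0^T\nabla v=\nabla V(T)$ component as well as for $T\nabla v(T)$; you only say this for the latter.

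The genuine gap is in part (ii). A weighted energy test of $V$'s equation gives a bound of the form $\big(\int_0^1\|F(\tau)\|_{L^2(\chi^2)}^2\dd\tau\big)^{1/2}$ with $F(\tau)=\int_0^\tau f(s,\cdot)\dd s$, i.e.\ an $L^2$-in-time quantity. The right-hand side of \eqref{LemE2} is an $L^1$-in-time quantity: after interchanging the order of integration the Hardy term reads $\int_0^1\|f(s)\|_{L^2(\eta_R)}\log\frac{1}{1-s}\dd s$, while the claimed rearrangement ``$\int_0^1|\int_\tau^1 g|^2\dd\tau\leq\int_0^1(1-\tau)\int_\tau^1 g^2\dd\tau$'' only produces $\int_0^1 g(s)^2 s(2-s)/2\,\dd s$, an $L^2$-weighted norm. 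These are incomparable: take $\|f(s)\|=\mathbf 1_{[0,\varepsilon]}(s)$ with a sign pattern in $s$ making $\int_0^1 f=0$; then $\int_0^1\|F(\tau)\|^2\dd\tau\sim\varepsilon^3$ (so the $L^2$-energy side scales as $\varepsilon^{3/2}$), while the target right-hand side of \eqref{LemE2} scales as $\varepsilon^2$. Thus the energy-method estimate is not bounded by the statement's right-hand side, and no rewriting of the integrand can fix this. The paper instead proves (ii) by writing $\nabla w(1,\cdot)=\int_0^1\nabla\big(\text{S}(1-t)\nabla\cdot\int_0^t f(s,\cdot)\dd s\big)\dd t$, splitting $\int_0^t f=\int_0^1 f-\int_t^1 f$, and applying (i) pointwise in $t$ with $T=1-t$ to the second piece; the factor $\frac{1}{1-t}$ is exactly the $T^{-1}$ decay of the semigroup from (i), and Minkowski in the spatial $L^2(\eta_R)$ norm then produces the displayed Hardy term. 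The same issue propagates into your treatment of (iii), which in the paper is obtained simply by applying (i) and (ii) with $R=\ell$ and a pointwise exponential comparison $e^{-|y-z|/(c\ell)}\leq e^{1/c}e^{-|z|/(c\ell)}$ for $y\in\bb_\ell$, $z\in\mathbb{R}^d\backslash\bb_\ell$, followed by integration in $z$.
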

We then state a technical lemma needed in order to obtain pointwise estimates in time. For a proof, we refer to \cite[Lemma 8.2]{armstrong2019quantitative}.
\begin{lemma}\label{scott} Fix $r>0$, $(s,x)\in\mathbb{R}^{d+1}$ and $g\in L^{2}(\mathbb{R}^d)^d$. Assume that $v$ is a weak solution of 
$$\partial_{\tau}v-\nabla\cdot a\nabla v=\nabla\cdot g \quad \text{in $(s-4r^2,s)\times \bb_{r}(x)$,}$$
then we have
\begin{equation*}
\sup_{t\in (s-r^2,s)}\fint_{\bb_r(x)}\vert \nabla v(t,y)\vert^2\dd t\, \dd y \lesssim_{d,\lambda} \fint_{s-4r^2}^{s}\fint_{\bb_{2r}(x)}\vert \nabla v(s',y)\vert^2\dd s'\, \dd y+\fint_{\bb_{2r}(x)}\vert g(y)\vert^2\dd y.
\end{equation*}
The same holds for the operator $\partial_\tau+\nabla\cdot a\nabla$ on $(s,s+4r^2)\times B_{2r}(x)$ using the time reflexion $t\mapsto -t$.
\end{lemma}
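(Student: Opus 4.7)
My plan is to reduce the claim to a normalized parabolic cylinder by scaling, and then exploit the time-independence of both $a$ and $g$ to derive an $\LL^\infty_t\LL^2_x$ energy identity via testing with $\partial_\tau v$. The change of variables $(t,y)\mapsto(s+r^2 t, x+ry)$ reduces the statement to the case $r=1$, $(s,x)=(0,0)$: the rescaled equation $\partial_\tau \tilde v-\nabla\cdot \tilde a\nabla \tilde v=\nabla\cdot \tilde g$ holds on $(-4,0)\times \bb_1$ (with the implicit understanding that the equation is valid on a slightly larger spatial domain containing $\bb_2$, which is what appears on the right-hand side) with the same ellipticity constants, and both sides of the inequality transform covariantly under the rescaling. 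Fix cutoffs $\zeta\in \cc^{\infty}_c(\bb_2)$ with $\zeta\equiv 1$ on $\bb_1$, $|\nabla\zeta|\lesssim 1$, and $\chi\in \cc^{\infty}(\mathbb{R})$ with $\chi\equiv 0$ on $(-\infty,-4]$, $\chi\equiv 1$ on $[-1,0]$, $\chi'\geq 0$.

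The core step is to use $\zeta^2\chi\partial_\tau v$ as a test function. Since $a$ and $g$ are time-independent, the Steklov average $v^h(t,\cdot):=h^{-1}\int_t^{t+h}v(s,\cdot)\,\dd s$ again satisfies $\partial_\tau v^h-\nabla\cdot a\nabla v^h=\nabla\cdot g$ weakly in space, with $\partial_\tau v^h\in \LL^2_{\mathrm{loc}}$, so it is a legitimate test. Plugging in and using the identity
$$2\int\zeta^2 a\nabla v^h\cdot\nabla\partial_\tau v^h=\frac{\dd}{\dd t}\int\zeta^2(a+a^*)\nabla v^h\cdot\nabla v^h-2\int\zeta^2 a_a\nabla v^h\cdot\nabla\partial_\tau v^h,$$
where $a_a:=(a-a^*)/2$ has norm bounded by the ellipticity contrast, together with $\int \chi\zeta^2\partial_\tau v^h\,\nabla\cdot g=-\frac{\dd}{\dd t}\int\chi\zeta^2 g\cdot\nabla v^h+\text{(remainder in }\chi',\nabla\zeta)$, I integrate on $[-4,t_0]$ for $t_0\in[-1,0]$. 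The antisymmetric contribution and the cross terms involving $\nabla\zeta$ or $g$ are absorbed into the positive quantity $\int\zeta^2(\partial_\tau v^h)^2$ by Young's inequality, and the remainders are controlled by $\int\int\zeta^2|\nabla v^h|^2$ and $\int\zeta^2|g|^2$. Passing to the limit $h\to 0^+$ (using weak compactness in $\LL^2$ of $\nabla v^h(t_0,\cdot)$ together with the parabolic regularity $v\in \cc([-4,0];\LL^2_{\mathrm{loc}})$ to identify the limit), and invoking the ellipticity lower bound $\lambda|\nabla v|^2\leq \tfrac{1}{2}(a+a^*)\nabla v\cdot\nabla v$, I would obtain
$$\int_{\bb_1}|\nabla v(t_0,\cdot)|^2\,\dd y\lesssim_{d,\lambda}\int_{-4}^{0}\int_{\bb_2}|\nabla v|^2+\int_{\bb_2}|g|^2.$$
Taking the supremum over $t_0\in[-1,0]$ and undoing the scaling yields the claim; the reverse-time statement follows by applying the same argument to the adjoint equation $\partial_\tau w+\nabla\cdot a^*\nabla w=\nabla\cdot g$ under the reflection $t\mapsto -t$.

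The main technical hurdle is the rigorous use of $\partial_\tau v$ as a test function, which is exactly what the Steklov averaging handles; the regularity input needed is the standard parabolic theory that gives $v\in \cc([-4,0];\LL^2_{\mathrm{loc}})$ and $\nabla v\in \LL^2((-4,0)\times \bb_2)$. A secondary difficulty is the antisymmetric part of $a$: without symmetry of $a$, the naive identity $2\int a\nabla v\cdot\nabla\partial_\tau v=\frac{\dd}{\dd t}\int a\nabla v\cdot\nabla v$ fails, but the antisymmetric remainder $\int\zeta^2 a_a\nabla v\cdot\nabla\partial_\tau v$ is bounded by $\tfrac{1}{2}\int\zeta^2(\partial_\tau v)^2+C\int\zeta^2|\nabla v|^2$, and the first term is absorbed by the left-hand side of the energy identity, so ellipticity bounds on $a$ suffice without any symmetry assumption.
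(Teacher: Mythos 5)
Your approach — rescaling to the unit cylinder and testing with $\zeta^2\chi\,\partial_\tau v$ via Steklov averages — is a clean energy argument that works when $a$ is symmetric, but it has a genuine gap in the non-symmetric case that the lemma (and its downstream uses, e.g.\ for the transposed field $a^*$) requires. The paper's own proof, which follows \cite[Lemma~8.2]{armstrong2019quantitative}, avoids the issue by a different route and I describe the difference below.

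The problem is your treatment of the antisymmetric remainder. After splitting $a=a_s+a_a$ with $a_s=\tfrac12(a+a^*)$, $a_a=\tfrac12(a-a^*)$, the term $\int\zeta^2\chi\,a_s\nabla v\cdot\nabla\partial_\tau v$ is indeed a total time derivative up to $\chi'$ terms, but the leftover piece is
$$\int\!\!\int\zeta^2\chi\,a_a\nabla v\cdot\nabla\partial_\tau v,$$
and you claim this is bounded by $\tfrac12\int\zeta^2(\partial_\tau v)^2+C\int\zeta^2|\nabla v|^2$. This inequality does not hold: the integrand contains $\nabla\partial_\tau v$, not $\partial_\tau v$, and neither Cauchy–Schwarz nor Young can trade a spatial derivative of $\partial_\tau v$ for $\partial_\tau v$ itself. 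Note that the usual trick of writing the term as a time derivative also fails here: since $a_a$ is antisymmetric, $a_a\nabla v\cdot\nabla v\equiv 0$, so differentiating in time yields the tautology $0=0$ and gives no cancellation. One could try to integrate by parts in space and use that $\nabla\cdot(a_a\nabla v)=(\nabla\cdot a_a)\cdot\nabla v$ is genuinely first order for antisymmetric $a_a$, but this requires $a\in W^{1,\infty}$, which is not available; the lemma assumes only $a\in L^\infty$ with the ellipticity bounds \eqref{elliptic}.

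The proof the paper relies on (\cite[Lemma~8.2]{armstrong2019quantitative}) sidesteps this entirely and does not require symmetry of $a$. It proceeds in three stages: (i) since $a$ and $g$ are time-independent, $\partial_\tau v$ is itself a weak solution of the \emph{homogeneous} equation, so the Caccioppoli inequality (Lemma~\ref{caccioppotech}) gives $\fint_{\cc_\rho}|\nabla\partial_\tau v|^2\lesssim(R-\rho)^{-2}\fint_{\cc_R}|\partial_\tau v|^2$; (ii) the pointwise identity $\partial_\tau\fint_{\bb_r}|\nabla v(t,\cdot)|^2=2\fint_{\bb_r}\nabla v\cdot\nabla\partial_\tau v$, integrated in time and combined with Young's inequality and (i), reduces the claim to controlling $\fint_{\cc_{2r}}|\partial_\tau v|^2$; and (iii) testing the equation for $v$ with $\eta\,\partial_\tau v$ (just a spatial cutoff, no time cutoff, and crucially \emph{not} trying to turn $\int\eta\, a\nabla v\cdot\nabla\partial_\tau v$ into a time derivative — instead bounding it via Cauchy–Schwarz and feeding $\nabla\partial_\tau v$ back into the Caccioppoli bound of step (i)), followed by a covering and buckling argument to close the estimate. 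The key structural difference is that in (iii) the bilinear term $a\nabla v\cdot\nabla\partial_\tau v$ is handled by Cauchy–Schwarz together with the Caccioppoli estimate for $\partial_\tau v$, so no symmetry of $a$ is ever used. To repair your argument along its own lines you would need to import step (i) anyway, at which point you are essentially reproducing the AKM proof; the cleaner fix is simply to adopt that route.
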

We conclude this section by the relationship between spatial averages of $u(T,\cdot)$ and averages of $q_r-\left\langle q_r\right\rangle$ over scales $r\leq \sqrt{T}$. This lemma allow us to deduce Corollary \ref{decayu} from Theorem \ref{semigroup}. We refer the reader to \cite[Lemma 6]{gloria2015corrector} for the original proof of this result.
\begin{lemma}[$\LL^2$-$\LL^1$ estimate]\label{cacciopo}Let $u$ defined in \eqref{equationu}. There exists a universal constant $c<\infty$ such that for all $T>0$ and $R\geq \sqrt{T}$
\begin{align*}
\left(\int_{\mathbb{R}^d}\eta_{R}(\tfrac{y}{c})\vert (u(T,y),\sqrt{T}\nabla u(T,y))\vert^2\dd y\right)^{\frac{1}{2}}\lesssim_{d,\lambda} \frac{1}{\sqrt{T}}\fint_{\frac{T}{4}}^{\frac{T}{2}}\fint_{0}^{\sqrt{t}}\left(\frac{r}{\sqrt{t}}\right)^{\frac{d}{2}}\int_{\mathbb{R}^d}\eta_{\sqrt{2}R}(\tfrac{y}{c})\vert q_r(t,y)-\left\langle q_r(t,y)\right\rangle\vert\dd y\, \dd r\,\dd t,
\end{align*}
where $\eta_{R}:=R^{-d}e^{-\frac{\vert\cdot\vert}{R}}$ and $q(\cdot,\cdot)$ is defined in \eqref{defq}.
\end{lemma}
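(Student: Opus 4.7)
The strategy is to reduce the left-hand side to a weighted $\LL^2$ norm of the flux fluctuation $q(t,\cdot)-\langle q(t,\cdot)\rangle$ at an intermediate time $t\in[T/4,T/2]$ via the localized energy estimate of Lemma \ref{nergyestideter}(i), and then to convert this $\LL^2$ norm into the scale-averaged $\LL^1$ form of the right-hand side by a purely deterministic scale decomposition. The crucial preliminary is the identity
$$u(t,\cdot)=\nabla\cdot q(t,\cdot)=\nabla\cdot\bigl(q(t,\cdot)-\langle q(t,\cdot)\rangle\bigr),$$
obtained by integrating $\partial_s u=\nabla\cdot a\nabla u$ on $(0,t)$ with initial condition $u(0)=\nabla\cdot a(\cdot)e$, and using that the spatially constant field $\langle q(t,\cdot)\rangle$ has vanishing distributional divergence. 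This identity converts the decay of $u$ into fluctuations of $q$.

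\textbf{Step 1 (energy estimate).} For each $t\in[T/4,T/2]$, I apply Lemma \ref{nergyestideter}(i) to the equation on the shifted interval $[t,T]$ with initial data $u(t,\cdot)=\nabla\cdot(q(t,\cdot)-\langle q(t,\cdot)\rangle)$; the hypothesis $R\geq\sqrt{T-t}$ is ensured by $R\geq\sqrt{T}$, and the conclusion yields
$$\bigl\|(T-t)\nabla u(T,\cdot)\bigr\|_{\LL^2_{\eta_R(\cdot/c)}}+\Bigl\|\textstyle\int_t^T\nabla u(s,\cdot)\,\dd s\Bigr\|_{\LL^2_{\eta_R(\cdot/c)}}\lesssim_{d,\lambda}\bigl\|q(t,\cdot)-\langle q(t,\cdot)\rangle\bigr\|_{\LL^2_{\eta_R(\cdot/c)}}.$$
Since $T-t\sim T$, the first term directly controls the $\sqrt{T}\nabla u(T,\cdot)$ contribution to the LHS. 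For the $u(T,\cdot)$ contribution, I test $u(T,\cdot)=\nabla\cdot(q(T,\cdot)-\langle q(T,\cdot)\rangle)$ against $\eta_R(\cdot/c)u(T,\cdot)$ and integrate by parts; the transversal term involving $\nabla\eta_R$ is bounded using $|\nabla\eta_R(\cdot/c)|\leq\eta_R(\cdot/c)/(cR)\leq\eta_R(\cdot/c)/(c\sqrt{T})$, and the bilinear term $\int\eta_R\nabla u(T,\cdot)\cdot(q(T,\cdot)-\langle q(T,\cdot)\rangle)$ is handled by combining the gradient bound with the passage $\|q(T,\cdot)-\langle q(T,\cdot)\rangle\|_{\LL^2_{\eta_R}}\lesssim\|q(t,\cdot)-\langle q(t,\cdot)\rangle\|_{\LL^2_{\eta_R}}$ (deduced from $q(T,\cdot)-q(t,\cdot)=\int_t^T a\nabla u\,\dd s$ and the second half of the above energy estimate). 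A Young-type absorption then produces $\|u(T,\cdot)\|_{\LL^2_{\eta_R(\cdot/c)}}\lesssim_{d,\lambda}\tfrac{1}{\sqrt{T}}\|q(t,\cdot)-\langle q(t,\cdot)\rangle\|_{\LL^2_{\eta_R(\cdot/c)}}$. Averaging both bounds over $t\in[T/4,T/2]$ reproduces the time-averaging appearing in the RHS.

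\textbf{Step 2 ($\LL^2$--$\LL^1$ scale decomposition).} It remains to establish the purely deterministic inequality
$$\bigl\|q(t,\cdot)-\langle q(t,\cdot)\rangle\bigr\|_{\LL^2_{\eta_R(\cdot/c)}}\lesssim_{d,\lambda}\fint_0^{\sqrt{t}}\bigl(\tfrac{r}{\sqrt{t}}\bigr)^{d/2}\int_{\mathbb{R}^d}\eta_{\sqrt{2}R}(\tfrac{y}{c})\,|q_r(t,y)-\langle q_r(t,y)\rangle|\,\dd y\,\dd r.$$
Using the Gaussian semigroup identity $\partial_{r^2}g_r=\tfrac14\Delta g_r$, one represents $q(t,\cdot)-\langle q(t,\cdot)\rangle$ as an integral on $r\in[0,\sqrt{t}]$ involving derivatives of its Gaussian averages $q_r(t,\cdot)-\langle q_r(t,\cdot)\rangle$. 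A weighted Young inequality then transfers Gaussian kernels from the $\LL^2_{\eta_R}$ side (where $\|g_r\|_{\LL^2}\sim r^{-d/2}$) to the $\LL^1_{\eta_{\sqrt{2}R}}$ side (where $\|g_r\|_{\LL^1}\sim 1$); dimensional balance then produces exactly the weight $(r/\sqrt{t})^{d/2}$, while the Gaussian convolutions dilate the exponential envelope from $\eta_R$ to $\eta_{\sqrt{2}R}$.

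\textbf{Main obstacle.} The delicate point is Step 2: the scale decomposition must be arranged so that both the exponent $d/2$ of the weight and the upper scale $\sqrt{t}$ emerge correctly from the interplay between the Gaussian semigroup identity and the weighted Young inequality; a naive decomposition produces a very different (singular) scaling. This deterministic PDE computation is carried out in \cite[Lemma 6]{gloria2015corrector}, to which the author refers for the details.
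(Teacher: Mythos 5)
Your Step~1 --- applying Lemma \ref{nergyestideter}(i) on the shifted interval $[t,T]$ with initial data $u(t,\cdot)=\nabla\cdot(q(t,\cdot)-\langle q(t,\cdot)\rangle)$, handling the $u(T,\cdot)$ component by testing this identity against $\eta_R(\cdot/c)\,u(T,\cdot)$, and then averaging over $t\in[T/4,T/2]$ --- is sound and does produce the prefactor $1/\sqrt{T}$. The gap is in Step~2, which you formulate as the inequality, to hold pathwise for a fixed realization,
\[
\left(\int_{\mathbb{R}^d}\eta_R(\tfrac{y}{c})\,|q(t,y)-\langle q(t,y)\rangle|^2\,\dd y\right)^{\frac12}
\lesssim_{d,\lambda}
\fint_0^{\sqrt{t}}\Bigl(\tfrac{r}{\sqrt{t}}\Bigr)^{\frac d2}\int_{\mathbb{R}^d}\eta_{\sqrt{2}R}(\tfrac{y}{c})\,|q_r(t,y)-\langle q_r(t,y)\rangle|\,\dd y\,\dd r.
\]
Read as a statement about a fixed field $f:=q(t,\cdot)-\langle q(t,\cdot)\rangle$ and its Gaussian regularizations $f_r$, this cannot hold. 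The flux fluctuation $f$ is a bounded, mean-zero, stationary field whose weighted $\LL^2$ norm on the left saturates at order one for all $t$, while under the CLT-type decorrelation which the quantitative assumptions of the paper guarantee, $\int\eta_{\sqrt{2}R}|f_r|$ is of order $(\max(r,1))^{-d/2}$. Inserting this into the right-hand side yields a quantity of order $t^{-d/4}\to 0$ as $t\to\infty$, so the inequality fails by a factor $t^{d/4}$ for large $t$. More elementarily, for any fixed $f$ oscillating on a scale $N^{-1}\downarrow 0$ (admissible, since $a$ is only bounded measurable and $q(t,\cdot)$ has no a~priori regularity), the left side is of order one while $f_r$ is exponentially suppressed for $r\gg N^{-1}$, so the right side tends to $0$ as $N\to\infty$. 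The direction ``$\LL^2$ controlled by scale-averaged $\LL^1$ of mollifications'' is a reverse-H\"older-type gain and requires structure that your deterministic formulation throws away.

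The underlying misdiagnosis is that the lemma cannot be factored into ``parabolic energy estimate'' (your Step~1) followed by ``$\LL^2$-to-$\LL^1$ trade at the frozen time $t$'' (your Step~2). In \cite[Lemma 6]{gloria2015corrector} the evolution of $u$ on $[t,T]$ under $\partial_\tau-\nabla\cdot a\nabla$ and the Gaussian regularization at scale $r$ are intertwined: the $\LL^1$ norm of $q_r-\langle q_r\rangle$ emerges from the spatial smoothing of the parabolic semigroup acting on the divergence-form initial data $u(t,\cdot)=\nabla\cdot(q(t,\cdot)-\langle q(t,\cdot)\rangle)$, with $r$ tracking the natural Gaussian scale of that propagation, and not from an a~posteriori scale decomposition of the frozen field $q(t,\cdot)-\langle q(t,\cdot)\rangle$. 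Consequently your citation of \cite[Lemma 6]{gloria2015corrector} for Step~2 is circular: that lemma \emph{is} the statement being proved (the paper itself cites it for the whole result, not a substep), so it cannot serve as a separate $\LL^2$--$\LL^1$ black box inserted after Step~1. You have not reduced the problem; you have restated it in a form that, as posed, is false. The singular weight $r^{-d-1}$ you observe from the naive use of $\partial_{r^2}g_r=\tfrac14\Delta g_r$ was the warning sign: no manipulation of Gaussian averages of a frozen $f$ produces the weight $(r/\sqrt{t})^{d/2}$, because the missing information lives in the parabolic flow, not in the scale variable alone.
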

\subsection{Large-scale regularity results}\label{lemsec1}
We state in this section two estimates, needed in the proof of Theorem \ref{semigroup}, which are obtained from the large-scale regularity theory recalled in the appendix \ref{reggech}. We start with a lemma which gives a pointwise bound on a local average of the solution of the dual problem \eqref{strategydualpb}, depending on the behavior of the r.h.s. This constitutes the parabolic version of Lemmas $2$, $3$ and $4$ of \cite{gloria2019quantitative} established for elliptic systems.
\begin{lemma}[Pointwise estimates on the dual problem]\label{ctrlav2}Let $f_1\in C_b^1(\mathbb{R}^d)$, $\overline{e}$ be a unit vector of $\mathbb{R}^d$ and $v_r$ satisfy, in the weak sense, for some $r\geq 1$, the parabolic backward system\footnote{for the well-posedness, we refer to \cite{friedman2008partial}}
\begin{equation}
\left\{
    \begin{array}{ll}
        \partial_{\tau}v_r+\nabla\cdot a\nabla v_r= \nabla\cdot a f_r\overline{e}&\text{in $(-\infty,0)\times \mathbb{R}^d$}, \\
        v_r(0)=0 ,  & 
    \end{array}
\right.
\label{equationvlem2}
\end{equation}
with $f_r$ which satisfies one of the two following assumptions:
\begin{itemize}
\item 
$f_r:=r^{-d}f_1(\frac{\cdot}{r})$ such that for all $x\in\mathbb{R}^{d}$
\begin{equation}
\vert f_1(x)\vert\lesssim \frac{1}{(\vert x\vert+1)^d} \quad\text{and}\quad \vert \nabla f_1(x)\vert\lesssim \frac{1}{(\vert x\vert+1)^{d+1}}.
\label{assumerhsdual}
\end{equation}
\item For all $x\in\mathbb{R}^d$
\begin{equation}
\vert f_r(x)\vert\lesssim \frac{r}{(\vert x\vert+1)^{d}}\wedge \frac{1}{(\vert x\vert+1)^{d-1}} \quad\text{and}\quad\vert\nabla f_r(x)\vert\lesssim  \frac{r}{(\vert x\vert+1)^{d+1}}\wedge \frac{1}{(\vert x\vert+1)^{d}}.
\label{assumerhsdual2}
\end{equation}
\end{itemize}
We have, for all $x\in\mathbb{R}^d$
\begin{enumerate}
\item If \eqref{assumerhsdual} holds, then
\begin{equation}
\left(\fint_{\bb_{r_*(x)}(x)}\vert \nabla v_r(t,y)\vert^2\dd y\right)^{\frac{1}{2}}\lesssim \left(\frac{r_*(0)}{r}\vee 1\right)^{\frac{d}{2}}
\frac{\log(1+\frac{\vert x\vert}{r})}{(\vert x\vert+r)^{d}}\quad\text{for all $\sqrt{-t}\geq 2r_*(x)$}.
\label{lem2}
\end{equation}
\item If \eqref{assumerhsdual2} holds, then 
\begin{equation}
\left(\fint_{\bb_{r_*(x)}(x)}\vert \nabla v_r(t,y)\vert^2\dd y\right)^{\frac{1}{2}}\lesssim r^{\frac{d}{2}}_*(0)\left(\frac{r\log(1+\vert x\vert)}{(\vert x\vert+1)^d}\wedge \frac{1}{(\vert x\vert+1)^{d-1}}\right)\quad\text{for all $\sqrt{-t}\geq 2r_*(x)$}.
\label{dualboundd2}
\end{equation}
\end{enumerate}
\end{lemma}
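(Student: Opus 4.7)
The plan is to adapt Lemmas 2--4 of \cite{gloria2019quantitative} to the backward parabolic setting. The two workhorses are the parabolic large-scale $C^{0,1}$ (mean-value) estimate \eqref{stratmeanvalue} of Appendix \ref{reggech} and the Caccioppoli-type Lemma \ref{scott}; these are combined with a duality argument against an auxiliary forward problem with the adjoint coefficient.

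I focus on case (1); case (2) will follow by the same scheme with minor modifications. Since $\sqrt{-t}\geq 2r_*(x)$, the time-reflected version of Lemma \ref{scott} at $(t,x)$ converts the spatial $L^2$ average $\fint_{\bb_{r_*(x)}(x)}|\nabla v_r(t,\cdot)|^2$ into a forward parabolic average of $|\nabla v_r|^2$ near $(t,x)$, up to a term $\fint_{\bb_{2r_*(x)}(x)}|af_r|^2$ that is dominated by the RHS of \eqref{lem2} thanks to \eqref{assumerhsdual}. Applying next the parabolic $C^{0,1}$ estimate, I enlarge the averaging radius from $r_*(x)$ to $R:=\sqrt{-t}/2\geq r_*(x)$, picking up an oscillation term that is absorbed using the pointwise bound $|\nabla f_r|\lesssim (|y|+1)^{-d-1}$. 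It then suffices to bound the parabolic average of $|\nabla v_r|^2$ on a cylinder of scale $R$ around $(t,x)$. For this I introduce an auxiliary forward problem with coefficient $a^*$ and $L^2$-normalised data $h$ supported in the same cylinder, and test \eqref{equationvlem2} against its solution $w$, integrating by parts in space and time. The boundary terms vanish and the energy identity reduces the estimate to the space--time pairing
\[
\int_t^0\int af_r\bar{e}\cdot\nabla w\,\dd y\,\dd s,
\]
with $w$ the auxiliary solution.

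Since $f_r$ is concentrated near the origin while $w$ is driven near $(t,x)$, only the behaviour of $\nabla w$ around the origin matters. Applying the parabolic $C^{0,1}$ estimate to $w$ at the origin followed by a Lemma \ref{nergyestideter}-type energy bound yields $\fint_{\bb_{r\vee r_*(0)}(0)}|\nabla w(s,\cdot)|^2\lesssim (|x|+\sqrt{-s})^{-2(d+1)}$ for $-s\gtrsim |x|^2$. Inserting the decay \eqref{assumerhsdual} of $f_r$ and integrating in time produces
\[
\int_0^{-t}\bigl(\sigma+r^2+|x|^2\bigr)^{-\tfrac{d}{2}-1}\,\dd\sigma,
\]
which saturates at $(|x|+r)^{-d}$ and is responsible for the factor $\log(1+|x|/r)$ in the regime $r\ll |x|\ll \sqrt{-t}$. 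The prefactor $(r_*(0)/r\vee 1)^{d/2}$ is exactly the loss incurred when $r<r_*(0)$, where the $L^2$-average of $\nabla w$ has to be taken on $\bb_{r_*(0)}$ rather than on $\bb_r$. For case (2), the same scheme applies; the weaker decay \eqref{assumerhsdual2} of $f_r$ is compensated either by a direct $\int|f_r|\,\|\nabla w\|_{L^\infty}$ bound (producing the second term of the minimum in \eqref{dualboundd2}), or by a further integration by parts using $\nabla f_r$ (producing the first term).

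The main obstacle is the joint bookkeeping of the two random minimal scales $r_*(x)$ and $r_*(0)$ through the Campanato-type iterations of the mean-value estimate, ensuring that the oscillation terms in \eqref{stratmeanvalue} decay sufficiently fast in the intermediate scale $\rho\in[r_*(\cdot),R]$, and verifying that the logarithmic factor in \eqref{lem2} arises solely from the time integration of the auxiliary kernel and not from the stacking of mean-value iterations, as the latter would contaminate the sensitivity estimates of Section \ref{stratsec} and break the decay $\mu_\beta(T)$ in Theorem \ref{semigroup}.
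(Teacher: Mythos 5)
Your plan correctly identifies the two pillars of the argument — the representation/duality against an auxiliary solution and the large-scale $\mathrm{C}^{0,1}$ estimates — but there is a genuine gap in the mechanism you invoke for the logarithm, and a second problem with the scale you enlarge to.

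First, the logarithm in \eqref{lem2} does \emph{not} come from the time integration you write down. The integral
\[
\int_0^{-t}\bigl(\sigma+r^2+|x|^2\bigr)^{-\frac d2-1}\,\dd\sigma
=\frac{2}{d}\Bigl((r^2+|x|^2)^{-\frac d2}-(-t+r^2+|x|^2)^{-\frac d2}\Bigr)\lesssim (|x|+r)^{-d}
\]
is bounded without any logarithm; the exponent $-\tfrac d2-1$ is strictly below $-1$, so the integral converges. What actually produces the $\log(1+|x|/r)$ is the spatial tail of $f_r$: under \eqref{assumerhsdual} one only has $|f_r(y)|\lesssim r^{-d}(1+|y|/r)^{-d}=(|y|+r)^{-d}$, which is not compactly supported. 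The paper therefore first proves the estimate \emph{without} a log for $f_r$ compactly supported in $\bb_r$ (your compactly-supported computation is essentially this step), and then, for a general $f_r$, performs a dyadic annular decomposition $f_r=\sum_k f_r\chi_k$ over $\mathcal B_k=\bb_{2^k r}\setminus\bb_{2^{k-1}r}$ and sums the compactly-supported bounds. Summing $(|x|+2^k r)^{-d}$ over the $\sim\log_2(1+|x|/r)$ indices with $2^k r\lesssim|x|$ is precisely where the logarithm appears. Your proposal never decomposes $f_r$, so it cannot account for this factor; conversely, your worry that the log could arise from Campanato iteration is unfounded — that iteration contributes no logarithm here.

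Second, enlarging the averaging scale all the way to $R=\sqrt{-t}/2$ destroys spatial localization in the regime $r\ll|x|\lesssim\sqrt{-t}$. The cylinder $\cc_R(t,x)$ then contains the support of $f_r$, and the best available bound is the plain energy estimate $\fint_{\cc_R}|\nabla v_r|^2\lesssim R^{-d}r^{-d}$, which for $|x|\sim\sqrt{-t}\gg r$ is much larger than the target $(|x|+r)^{-2d}$. The paper instead distinguishes the near- and far-field according to $|x|$ versus $r_*(x)\vee r$, and in the far field uses $R=|x|/2$ as the enlargement scale, keeping the cylinder away from the support of $f_r$; only then does it apply the duality argument (against a backward dual solution supported in $\mathbb{R}^d\setminus\bb_R$) together with the plain energy bound and the representation $\nabla v_r(t,\cdot)=\int_0^{-t}\nabla w_r(s,\cdot)\,\dd s$. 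You should re-anchor the mean-value enlargement at scale $|x|$, treat the near-field separately via the energy estimate, and add the dyadic decomposition of $f_r$ before the logarithmic factor can legitimately appear.
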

Let us briefly comment on Lemma \ref{ctrlav2}.
\begin{enumerate}
\item The bound \eqref{lem2} is needed to replace the plain energy estimate for the solution $v^T$ of \eqref{strategydualpb}, in form of \eqref{stratproofesti1}, that we used in the heuristic argument to obtain \eqref{stratllesst}. In the homogeneous case, i.e $a=\text{Id}$, and in the case where $f_1=g_1$, the bound \eqref{lem2} takes the more natural form: 
\begin{equation}
\vert\nabla v_r(t,x)\vert\lesssim (\vert x\vert+r)^{-d}\quad \text{for all $(t,x)\in\mathbb{R}_-\times\mathbb{R}^{d}$}.
\label{morenaturalctrlav2}
\end{equation}
Indeed, \eqref{morenaturalctrlav2} is easy to see using the explicit formula involving the heat kernel $\Gamma$:
$$\nabla v_r(t,x)=-\int_{t}^{0}\nabla^2\Gamma(s-t,\cdot)\star g_r(x)\dd s\quad \text{for all $(t,x)\in \mathbb{R}_-\times\mathbb{R}^{d}$}.$$
Thus, using that for all $s\geq t$, $\Gamma(s-t,\cdot)=(4\pi)^{-\frac{d}{2}}g_{\sqrt{4(s-t)}}$, the semigroup property $g_{\sqrt{4(s-t)}}\star g_r=g_{\sqrt{r^2+4(s-t)}}$ and the estimate, for all $x\in\mathbb{R}^d$, $e^{-\frac{\vert x\vert^2}{r^2+s-t}}\lesssim (1+\frac{\vert x\vert^2}{r^2+s-t})^{-\frac{d}{2}-2}$, we have for all $(t,x)\in \mathbb{R}_-\times\mathbb{R}^d$
\begin{align*}
\vert \nabla v_r(t,x)\vert\lesssim  \int_{t}^{0}\vert\nabla^2 g_{\sqrt{r^2+4(s-t)}}(x)\vert\dd s &\lesssim \vert x\vert^2\int_{t}^{0}(r^2+s-t)^{-\frac{d}{2}-2}e^{-\frac{\vert x\vert^2}{r^2+s-t}}\dd s\\
&\lesssim \vert x\vert^2\int_{t}^{0}(\vert x \vert^2+s-t+r^2)^{-\frac{d}{2}-2}\dd s\\
&\lesssim (\vert x\vert+r)^{-d}.
\end{align*}
The same way, if we have the more precise structure $f_r=\int_{1}^{r^2}\nabla g_{\sqrt{s}}(\cdot)\dd s$ (which satisfies the assumption \eqref{assumerhsdual2}), the bound \eqref{dualboundd2} takes the more natural form: 
$$\vert\nabla v_r(t,x)\vert\lesssim \frac{r}{(\vert x\vert+1)^d}\wedge \frac{1}{(\vert x\vert+1)^{d-1}}\quad \text{for all $(t,x)\in\mathbb{R}_-\times\mathbb{R}^d$}.$$
Therefore, since the bounds \eqref{lem2} and \eqref{dualboundd2} are natural in the homogeneous case and we know from homogenization theory that on large-scales the heterogeneous parabolic operator $\partial_{\tau}-\nabla\cdot a\nabla$ inherits (in form on the $\cc^{0,1}$ estimate \eqref{meanvalue2}) the regularity theory of the homogenized operator $\partial_{\tau}-\nabla\cdot a_{\text{hom}}\nabla$, it is natural to expect that the two estimates \eqref{lem2} and \eqref{dualboundd2} hold in the heterogeneous case once we fix the scale (characterized by the minimal radius $r_*$). Note that the logarithmic contributions in \eqref{lem2} and \eqref{dualboundd2} are due to the fact that we have less structure on the r.h.s of \eqref{equationvlem2} than the two we took above. We also point out that the logarithmic contribution in \eqref{dualboundd2} may be removed (see for instance Lemmas 3 and 4 of \cite{gloria2019quantitative} for elliptic systems). However, since it is enough for its application in this article, we prefer to keep it this way and provide simple arguments for \eqref{dualboundd2} rather than going trough additional technical difficulties.
\item We may deduce the results of Lemmas 3 and 4 of \cite{gloria2019quantitative}  from \eqref{lem2} by sending $t\downarrow -\infty$. Indeed, one may prove, from the localized energy of Lemma \ref{nergyestideter} that $v_r(t,\cdot)\underset{t\downarrow -\infty}{\rightarrow} \tilde{v}_r$ in $\LL^2_{\text{loc}}(\mathbb{R}^d)$ with $\nabla\cdot a\nabla\tilde{v}_r=\nabla\cdot a f_r\overline{e}$ and then pass to the limit in \eqref{lem2}.
\end{enumerate}
The next lemma allows us to control spatial averages of $\nabla u$ at scale $R<\sqrt{T}$, and is a consequence of Corollary \ref{schauder}. Combined with the energy estimate \eqref{LemE1}, it implies in particular the estimate \eqref{stratimprovedecay} needed in the proof of Theorem \ref{semigroup}. 
\begin{lemma}[Control of averages]\label{ctrlav} Let $u$ be defined in \eqref{equationu}. Assume that there exists an increasing function $f$ and a decreasing function $g$ such that for all $T\geq 1$ and for all $x\in\mathbb{R}^d$ there exists a constant $C(x,T)<+\infty$ for which
\begin{equation}
\fint_{B_{\sqrt{T}}(x)}\vert \nabla u(T,y)\vert^2\dd y\leq C(x,T)f(T)g(T).
\label{assumelem5}
\end{equation}
Then we have for all $T\geq 1$, $x\in\mathbb{R}^d$ and $R<\sqrt{T}$
$$\fint_{B_R(x)}\vert \nabla u(T,y)\vert^2\dd y\lesssim \left(\frac{r_*(x)}{R}\vee 1\right)^d \tilde{C}(x,T)f(T)g(\tfrac{T}{2}),$$
with $\tilde{C}(x,T):=\max\left\{C(x,T),\displaystyle\fint_{\frac{T}{2}}^{T} C(x,s)\dd s\right\}$.
\end{lemma}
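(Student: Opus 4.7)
The strategy is to split along whether $R\geq r_*(x)$ or $R<r_*(x)$. In the first regime the parabolic large-scale $C^{0,1}$ mean-value inequality (Corollary~\ref{schauder}) applies directly and lets us reduce averages at scale $R$ to averages at the macroscopic scale $\sqrt{T}$, where the hypothesis~\eqref{assumelem5} takes over. In the second regime we pay the factor $(r_*(x)/R)^d$ by trivial volume comparison between the concentric balls $B_R(x)\subset B_{r_*(x)}(x)$ and reduce to the first case. Throughout, $u$ solves the homogeneous equation $\partial_\tau u-\nabla\cdot a\nabla u=0$ on $(0,\infty)\times\mathbb{R}^d$, so no right-hand-side terms appear when we invoke the regularity estimates.

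\textbf{Case $R\geq r_*(x)$ (and $2R\leq \sqrt{T/2}$).} First I would apply Lemma~\ref{scott} to $u$ on the cylinder $\cc_{2R}(T+R^2,x)$ (or use continuity in time) to get
$$\fint_{B_R(x)}|\nabla u(T,y)|^2\,\dd y \lesssim_{d,\lambda} \fint_{\cc_{2R}(T,x)}|\nabla u(s,y)|^2\,\dd s\,\dd y.$$
Next, since $2R\geq r_*(x)$ and the equation is homogeneous, the parabolic large-scale $C^{0,1}$ inequality gives
$$\fint_{\cc_{2R}(T,x)}|\nabla u|^2 \lesssim \fint_{\cc_{\sqrt{T/2}}(T,x)}|\nabla u|^2 = \fint_{T/2}^{T}\fint_{B_{\sqrt{T/2}}(x)}|\nabla u(s,y)|^2\,\dd y\,\dd s.$$
For each $s\in[T/2,T]$, the inclusion $B_{\sqrt{T/2}}(x)\subset B_{\sqrt{s}}(x)$ with comparable volumes and the hypothesis~\eqref{assumelem5} yield
$$\fint_{B_{\sqrt{T/2}}(x)}|\nabla u(s,y)|^2\,\dd y \lesssim_d C(x,s)f(s)g(s) \leq C(x,s)f(T)g(T/2),$$
by monotonicity of $f$ and $g$. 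Averaging over $s\in[T/2,T]$ produces $f(T)g(T/2)\,\fint_{T/2}^T C(x,s)\,\dd s \leq f(T)g(T/2)\,\tilde C(x,T)$, giving the claim with $(r_*(x)/R\vee 1)^d=1$.

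\textbf{Case $R<r_*(x)$ (or the edge regime $2R>\sqrt{T/2}$).} The volume comparison between concentric balls gives
$$\fint_{B_R(x)}|\nabla u(T,y)|^2\,\dd y \leq \Bigl(\tfrac{R_0}{R}\Bigr)^d \fint_{B_{R_0}(x)}|\nabla u(T,y)|^2\,\dd y,\qquad R_0:=r_*(x)\wedge \sqrt{T/2}\vee R.$$
If $r_*(x)\leq \sqrt{T/2}$ this reduces to the previous case applied with $R$ replaced by $R_0=r_*(x)$, and $R_0/R=r_*(x)/R$. If instead $r_*(x)>\sqrt{T/2}$, I would use the direct inclusion $B_R(x)\subset B_{\sqrt{T}}(x)$, combine it with~\eqref{assumelem5} at time $T$ and $g(T)\leq g(T/2)$, and absorb $\sqrt{T}/R\lesssim r_*(x)/R$ into the prefactor.

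\textbf{Main obstacle.} The delicate point is the choice of cylinder when applying the large-scale regularity: we must take a parabolic cylinder that contains $\cc_{2R}(T,x)$, is compatible with the large-scale scale $r_*(x)$, and stays away from the singular initial time $t=0$ (where the initial datum $\nabla\cdot ae$ makes $\nabla u$ blow up). The cylinder $\cc_{\sqrt{T/2}}(T,x)=(T/2,T)\times B_{\sqrt{T/2}}(x)$ is the natural choice: it keeps us in the regime where~\eqref{assumelem5} is available, its time interval $[T/2,T]$ is precisely the one producing the average $\tilde C(x,T)$, and the monotonicity of $f$ and $g$ collapses $f(s)g(s)$ to $f(T)g(T/2)$. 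Boundary regimes ($2R$ comparable to $\sqrt{T/2}$, or $r_*(x)$ exceeding $\sqrt{T/2}$) are not truly problematic because the cylinders involved then differ only by a universal factor, so the mean-value inequality is replaced by a trivial constant.
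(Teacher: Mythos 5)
Your proof is correct and follows essentially the same route as the paper: split on $R\gtrless r_*(x)$, use Lemma~\ref{scott} to pass from the time slice $\{T\}$ to a parabolic cylinder at scale $R$ (resp.\ $r_*(x)$), enlarge to $(T/2,T)\times \bb_{\sqrt{T/2}}(x)$ via the large-scale $\cc^{0,1}$ mean value inequality \eqref{meanvalue}, and close using \eqref{assumelem5} for each $s\in[T/2,T]$ together with the monotonicity of $f$ and $g$, which is precisely what produces the two alternatives in $\tilde C(x,T)$. The one place to be slightly more careful is the sub-case $R<r_*(x)\leq\sqrt{T/2}$: reducing to your main case at scale $R_0=r_*(x)$ requires $2r_*(x)\leq\sqrt{T/2}$ rather than merely $r_*(x)\leq\sqrt{T/2}$, but this is a harmless shift of the splitting threshold (the paper splits on $\sqrt{T/2}\gtrless 2r_*(x)$).
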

\subsection{Suboptimal control of the fluctuations of the time dependent flux}\label{lemsec2}
In this section, we state the suboptimal moment bounds of $q_r(T)-\left\langle q_r(T)\right\rangle$ with the auxiliary lemmas needed in the proof. We prove that it displays the central limit theorem scaling $r^{-\frac{d}{2}}$, a growth in $T$ which depends on the parameter $\beta$ defined in \eqref{assumeMSPC} and a $\log(T)$ correction (which makes it suboptimal and will be removed later). We first state the main result of this section.
\begin{proposition}[Sub-optimal fluctuation estimates]\label{Sensitivitysubop}Let $q(\cdot,\cdot)$ defined in \eqref{defq} and fix $\gamma<1$.  For all $T\geq 1$, $1\leq r\leq \sqrt{T}$ and $p\in [1,\infty)$
\begin{equation}
\left\langle \vert q_r(T)-\left\langle q_r(T)\right\rangle\vert^p\right\rangle^{\frac{1}{p}}\lesssim_{d,\lambda,\beta,\gamma} p^{\alpha_\gamma}r^{-\frac{d}{2}}\log(T)\log^2(\tfrac{\sqrt{T}}{r})\mu_{\beta}(T),
\label{subopsensiesti}
\end{equation}
with for any $\gamma>0$
$$
\mu_{\beta}(T) := \left\{
    \begin{array}{ll}
        T^{\frac{d}{4}-\frac{\beta}{4}} & \text{ if $\beta<d$}, \\
        \log^{\frac{1}{2}}(T) &\text{ if $\beta=d$}, \\
				1      &\text{ if $\beta>d$}.
    \end{array}
\right.\quad \text{and}\quad \alpha_\gamma:=\bigg(\frac{1}{2}+\frac{d+1}{\beta\wedge d}\Big(1+\gamma\mathds{1}_{\beta= d}\Big)\bigg).
$$
\end{proposition}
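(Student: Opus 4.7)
The plan is to follow the heuristic of Section \ref{stratsec} (Steps I and II), promoting the sketch there into a rigorous $p$-moment estimate. The argument has three layers: the $p$-version of the multiscale logarithmic Sobolev inequality, the explicit form of the functional derivative provided by Lemma \ref{functioderiv} (and its variant Lemma \ref{functioderiv2} needed to absorb the singularity at $t=0$), and two regime-dependent deterministic estimates combining the localized energy estimates of Lemma \ref{nergyestideter} with the large-scale tools of Lemmas \ref{ctrlav} and \ref{ctrlav2}.

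First, I would apply Proposition \ref{SGp} (the $p$-version of \eqref{SGinegp}) to obtain, for every $p\geq 1$,
\begin{equation*}
\langle |q_r(T)-\langle q_r(T)\rangle|^p\rangle^{1/p}\lesssim \sqrt{p}\,\Big\langle \Big(\int_1^{+\infty}\ell^{-d}\pi(\ell)\int_{\mathbb{R}^d}|\partial^{\text{fct}}_{x,\ell}q_r(T)|^2\,\dd x\,\dd\ell\Big)^{p/2}\Big\rangle^{1/p}.
\end{equation*}
Then Lemmas \ref{functioderiv}--\ref{functioderiv2} reduce the task to controlling, for each $k\in\llbracket 1,d\rrbracket$, the quantity $\mathcal{M}_k(T,x,\ell)$ from \eqref{maintermsensi}, where $v^T$ solves the backward dual problem \eqref{strategydualpb} with right-hand side $\nabla\cdot(ag_re_k)$, together with an extra term coming from the singular contribution on $(0,1)$ handled by the localized energy estimate \eqref{LemE3}.

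The heart of the argument is then to split the $\ell$-integration at $\ell=\sqrt{T}$ and estimate $\int_{\mathbb{R}^d}\mathcal{M}_k^2(T,x,\ell)\,\dd x$ in each regime, in parallel with \eqref{RefHeuristicLater:Eq3}--\eqref{RefHeuristicLater:Eq4}. In the regime $\ell<\sqrt{T}$ I would rely on $L^2$-estimates: the plain energy identity for \eqref{strategydualpb} gives $\int_{\mathbb{R}^d}|\nabla v^T(t,\cdot)|^2\lesssim r^{-d}$ uniformly in $t\leq T$, while the vector-valued replacement of the pointwise bound \eqref{strategyunifboundnablau} is provided by Lemma \ref{ctrlav} applied with $f(T)=T^{-2}$ and $g(T)\equiv 1$, producing the averaged decay \eqref{stratimprovedecay}. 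Combining these via Minkowski's inequality in $L^2(\mathbb{R}^d)$ in the $t$-variable yields $\int_{\mathbb{R}^d}\mathcal{M}_k^2(T,x,\ell)\,\dd x\lesssim \mathcal{C}_\star(0)\,\ell^{2d}r^{-d}(1+\log^2 T)$, with $\mathcal{C}_\star(0)$ a random variable depending polynomially on $r_\ast(0)$. Integrating against $\ell^{-d}\pi(\ell)=\ell^{-d-1-\beta}$ on $[1,\sqrt{T}]$ produces precisely the scaling $r^{-d}\mu_\beta^2(T)\log^2(T)$ with possibly an extra $\log$ factor when $\beta=d$.

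In the regime $\ell\geq \sqrt{T}$ I would instead use $L^1$-estimates, appealing to Lemma \ref{ctrlav2} (estimate \eqref{lem2}) to replace the plain energy bound on $v^T$ by the pointwise bound on averages $\fint_{\bb_{r_\ast(x)}(x)}|\nabla v^T(t,y)|^2\dd y\lesssim(r_\ast(0)/r\vee 1)^d\log^2(1+|x|/r)/(|x|+r)^{2d}$, which after integration against $\mathds 1_{B_\ell(x)}$ produces $\int_{\mathbb{R}^d}(\fint_{B_\ell(x)}|\nabla v^T(t,\cdot)|)^2\lesssim \mathcal{C}_\star\,\ell^d\log^2(1+\ell/r)$. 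Combining this with Lemma \ref{ctrlav} for $\nabla u$ on large scales (where the $\sqrt{T}\leq \ell$ trivializes the $r_\ast$-factor), and again splitting the $t$-integral at $t=1$, yields the analogue of \eqref{UseRefHeuristicLater}. Integration against $\ell^{-d-1-\beta}$ over $[\sqrt{T},+\infty)$ contributes $T^{-\beta/2}\log^2(T)(1+\mathcal{C}_\star\log^2(\sqrt{T}/r))\lesssim r^{-d}\mu_\beta^2(T)\log^2(T)\log^2(\sqrt{T}/r)$, which dominates the first regime and matches the stated bound.

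The main obstacle is tracking stochastic integrability and producing the exponent $\alpha_\gamma$. The $\sqrt{p}$ factor comes from Proposition \ref{SGp}, but the bound on $\int\mathcal{M}_k^2$ involves the random quantity $\mathcal{C}_\star$ built out of $(r_\ast(0))^d$ and (in the $L^1$ regime) the global minimal scale entering Lemma \ref{ctrlav2}, both of which have stretched exponential moments with exponents controlled in \cite{gloria2019quantitative}. Taking $p/2$-moments of the $\ell$-integral, applying Minkowski in $\ell$, and converting $L^p$-norms of $r_\ast^d$ to $p^{d/(\beta\wedge d)}$ up to the $\gamma$-loss at the critical case $\beta=d$ produces the exponent $\alpha_\gamma=\tfrac12+\tfrac{d+1}{\beta\wedge d}(1+\gamma\mathds 1_{\beta=d})$. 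The $d+1$ (rather than $d$) accounts for one extra power absorbed from the $\log(1+\ell/r)$ correction and from the fact that we need $L^{p}$-norms of $\mathcal{C}_\star(x)$ controlled uniformly in $x$, which costs one additional polynomial factor through a stationary supremum argument as in \cite[Sec.~8]{gloria2019quantitative}.
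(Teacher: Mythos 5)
Your proposal follows essentially the same route as the paper: apply Proposition \ref{SGp}, reduce to the functional derivative via Lemma \ref{functioderiv}, split the $\ell$-integration at $\ell=\sqrt{T}$ with $\LL^2$-type estimates (plain energy for $v^T$ plus the averaged decay \eqref{decayuproofsubop}) in the first regime and $\LL^1$-type estimates (Lemma \ref{ctrlav2} via \eqref{proofsubopesti1}) in the second, then apply Minkowski in $\LL^p_{\langle\cdot\rangle}(\Omega)$ inside the $\ell$-integral. Three small corrections to align with the actual proof.

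First, Lemma \ref{functioderiv2} plays no role in Proposition \ref{Sensitivitysubop}; it is only needed for the second part of Theorem \ref{semigroup} (the $q(r^2)\star f_r$ estimate). Lemma \ref{functioderiv} alone provides the needed control. Second, in the application of Lemma \ref{ctrlav} you swapped the roles of $f$ and $g$: the paper takes $f\equiv 1$ (increasing) and $g(t)=t^{-2}$ (decreasing), not the other way round; as stated, your choice violates the monotonicity hypotheses of the lemma. Third, and most substantively, the explanation of the exponent $\alpha_\gamma$ (the appearance of $d+1$ rather than $d$) is not what the paper does and would not produce this quantity. The paper never invokes a stationary supremum argument costing an extra polynomial factor; instead it controls the random prefactors by pushing Minkowski's inequality in $\LL^p_{\langle\cdot\rangle}(\Omega)$ inside the spatial integrals, using the stationarity of $r_*$ and the moment bound \eqref{equivmomentr*} on $r_*^d$, together with Cauchy--Schwarz in probability for product terms of the form $r_*^d(0)\,r_*^d(x)$ and integrals carrying a factor $r_*^{\frac{d}{2}+1}$ coming from the bound $\eta_{4r_*(x)}(x)\lesssim r_*^{\frac{d}{2}+1}(x)(|x|+1)^{-d-1}$; this produces the moment bound \eqref{Step2MomentBoundOfTheRC} on $\mathcal{D}_{\star,1}$, and the chosen uniform bound $p^{2\frac{d+1}{\beta\wedge d}}$ is what gives $\alpha_\gamma=\frac12+\frac{d+1}{\beta\wedge d}(1+\gamma\mathds{1}_{\beta=d})$ after taking the square root inside LSI. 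The $\log(1+\ell/r)$ factor contributes to the final $\log^2(\sqrt{T}/r)$ term in \eqref{subopsensiesti}, not to the powers of $p$.
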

The proof of Proposition \ref{Sensitivitysubop} follows the strategy presented in Section \ref{stratsec} and falls by combining the following lemma (which states an estimate on the functional derivative of averages of the flux $q_r(T)$ for $T\geq 1$ and $r\leq \sqrt{T}$) with the logarithmic Sobolev inequality \eqref{SGinegp}. This makes rigorous the computation done in \eqref{partialformalderivative}.
\begin{lemma}[Functional derivative]\label{functioderiv}Let $q(\cdot,\cdot)$ be defined in \eqref{defq}. There exists a universal constant $c<\infty$ such that for all $T\geq 1$, $r> 0$, $x\in\mathbb{R}^d$ and $\ell\in [1,\infty)$, we have
\begin{equation}
\begin{aligned}
\vert\partial_{x,\ell}^{\text{fct}}q_r(T)\vert\lesssim_{d,\lambda}&\int_{\bb_{\ell}(x)}g_r(y)\dd y+\int_{\bb_{\ell}(x)} \left\vert\int_{0}^T\nabla u(s,y)\dd s\right\vert g_r(y)\,\dd y+\int_{B_{\ell}(x)}\vert \nabla v^T(1,y)\vert\left(1+\left\vert\int_{0}^{1}\nabla u(s,y)\dd s\right\vert\right)\dd y\\
&+\mathcal{F}_{r,\ell}(x)\mathds{1}_{\ell< \sqrt{T}}+\mathcal{G}_{r,\ell}(x)\mathds{1}_{\ell\geq\sqrt{T}}+\int_{\bb_{\ell}(x)} \int_{1}^T\vert\nabla u(t,y)\vert \vert\nabla v^T(t,y)\vert\dd t\dd y,\label{lem6}
\end{aligned}
\end{equation}
where $v^T=(v^T_k)_{k\in\llbracket 1,d\rrbracket}$ is a weak solution of the backward parabolic system
\begin{equation}
\left\{
    \begin{array}{ll}
        \partial_{\tau}v^T_k+\nabla\cdot a^*\nabla v^T_k=\nabla\cdot a g_r e_k & \text{ on $(-\infty,T)\times \mathbb{R}^d$}, \\
        v^T_k(T)= 0, & 
    \end{array}
\right.
\label{dualequationlem7}
\end{equation}
with
\begin{equation}
\mathcal{F}_{r,\ell}(x)=\ell^{\frac{d}{2}}\left(\int_{\mathbb{R}^d}e^{-\frac{\vert x-z\vert}{2c\ell}}\fint_{\bb_{\ell}(z)}\vert (g_r(y),\nabla v^T(1,y))\vert^2\dd y\,\dd z\right)^{\frac{1}{2}},
\label{Frllem6}
\end{equation}
and
\begin{equation}\label{Grllem6}
\begin{aligned}
\mathcal{G}_{r,\ell}(x)=&\mathcal{T}_{x,\ell}(\eta_r)(0)+\mathds{1}_{\ell\geq r_*(0)}\bigg(\int_{\mathbb{R}^d\backslash \bb_{4\ell}}\Big(\fint_{\bb_{\ell}(y)}\vert\nabla v^T(1,z)\vert^2\dd z\Big)^{\frac{1}{2}}\mathcal{T}_{x,\ell}(\eta_{\ell})(y)\dd y\\
&+\int_{\bb_{7\ell}}\Big(\fint_{\bb_{r_*(y)}(y)}\vert\nabla v^T(1,z)\vert^2\dd z\Big)^{\frac{1}{2}}\mathcal{T}_{x,\ell}(\eta_{r_*(y)})(y)\dd y\bigg)+\mathds{1}_{\ell<r_*(0)}r^{\frac{d}{2}}_*(0)\left(\int_{\mathbb{R}^d}e^{-\frac{\vert x-z\vert}{2c\ell}}\fint_{\bb_{\ell}(z)}\vert\nabla v^T(1,y)\vert^2\dd y\,\dd z\right)^{\frac{1}{2}}.
\end{aligned}
\end{equation}
as well as for all $y\in\mathbb{R}^d$ and $\rho>0$
\begin{equation}
\mathcal{T}_{x,\ell}(\eta_{\rho})(y)=\left(\int_{\bb_{\ell}(x)}\eta_{\rho}(z-y)\left(1+\left\vert\int_{0}^1\nabla u(t,z)\dd t\right\vert^2\right)\dd z\right)^{\frac{1}{2}}+\int_{0}^1\frac{1}{1-t}\int_{t}^1\left(\int_{\bb_{\ell}(x)}\eta_{\rho}(z-y)\vert\nabla u(s,z)\vert^2\dd z\right)^{\frac{1}{2}}\dd s\, \dd t.
\label{functionallem6}
\end{equation}
\end{lemma}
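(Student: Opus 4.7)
The plan is to make rigorous the formal computation sketched between \eqref{partialformalderivative} and \eqref{maintermsensi}, while handling the singularity of $\nabla u$ at $t=0$ through localized energy estimates rather than by duality down to $t=0$. By \eqref{functioderidef}, $\partial^{\text{fct}}_{x,\ell} q_r(T) \cdot e_k$ equals the supremum, over perturbations $\delta a$ supported in $\bb_{\ell}(x)$ with $\sup_{\bb_{\ell}(x)} |\delta a| \leq 1$, of the G\^ateaux derivative $\frac{d}{dh}\big|_{h=0} q_r(T)(a+h\delta a) \cdot e_k$. The chain rule produces a direct piece coming from $\delta a$ acting on $\nabla \phi(T) + e$, and a second piece $J := \int g_r(z)\, e_k \cdot a(z) \int_0^T \nabla w(s,z)\,\dd s\,\dd z$, where $w = \frac{d u}{dh}\big|_{h=0}$ solves $\partial_\tau w - \nabla \cdot a \nabla w = \nabla \cdot(\delta a\, \nabla u)$ with $w(0) = \nabla \cdot (\delta a\, e)$. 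Passing to $\sup_{\delta a}$ in the direct piece recovers the first two terms of \eqref{lem6}.

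Since $\nabla u$ is only a distribution at $t=0$, I split $J = J_{01}+J_{1T}$. On $[1,T]$ both $u$ and $w$ are regular, so I apply duality by testing the $w$-equation against $v^T_k$ and integrating by parts in space and time, using $v^T_k(T)=0$ together with \eqref{dualequationlem7}. This gives
\[
J_{1T} = \int v^T_k(1,z)\, w(1,z)\,\dd z + \int_1^T\!\!\int \nabla v^T_k(s,z) \cdot \delta a(z)\, \nabla u(s,z)\,\dd z\,\dd s.
\]
The volume term is supported in $\bb_{\ell}(x)$ and, after $\sup_{\delta a}$, recovers the last term of \eqref{lem6}. For the boundary-in-time contribution at $t=1$, I integrate the $w$-equation in time to write $w(1,\cdot) = \nabla \cdot\big(\delta a\, e + \delta a\int_0^1 \nabla u\,\dd s + a\, \nabla W\big)$, where $W := \int_0^1 w\,\dd s$; integrating by parts against $v^T_k(1,\cdot)$, the contributions supported in $\bb_{\ell}(x)$ yield the third term of \eqref{lem6}, namely $\int_{\bb_{\ell}(x)} |\nabla v^T(1,y)|\,(1+|\int_0^1 \nabla u|)\,\dd y$, while the non-local remainder $-\int \nabla v^T_k(1,z) \cdot a\, \nabla W\,\dd z$ together with the piece $J_{01} = \int g_r\, e_k \cdot a\, \nabla W\,\dd z$ both require energy bounds on $W$. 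This symmetric role of $g_r$ and $\nabla v^T(1,\cdot)$ is precisely what produces the paired quantity $|(g_r, \nabla v^T(1,\cdot))|^2$ inside both $\mathcal{F}_{r,\ell}$ and $\mathcal{G}_{r,\ell}$.

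To close the estimate, I decompose $w = w_1+w_2$ with $w_1(0)=\nabla\cdot(\delta a\, e)$ and zero forcing, and $w_2(0)=0$ with forcing $\nabla\cdot(\delta a\,\nabla u)$, whose initial and source data are both supported in $\bb_{\ell}(x)$. In the regime $\ell < \sqrt T$, where $\bb_{\ell}(x)$ fits below the diffusive scale, Lemma~\ref{nergyestideter}(iii) provides an exponentially weighted $L^2$ bound for $\nabla W$ decaying away from $\bb_{\ell}(x)$; by Cauchy--Schwarz against $g_r$ and $\nabla v^T(1,\cdot)$ this delivers exactly $\mathcal{F}_{r,\ell}(x)$. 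In the regime $\ell \geq \sqrt T$, where the perturbation scale exceeds the diffusive scale, I use parts (i)--(ii) of Lemma~\ref{nergyestideter} with the weight $\eta_{\ell}$ (or $\eta_{r_*(y)}$ after invoking Lemma~\ref{scott}) combined with pointwise averages of $|\int_0^1 \nabla u|$ on balls centered at points near $\bb_{\ell}(x)$; these averages are exactly the quantity $\mathcal{T}_{x,\ell}$ of \eqref{functionallem6}, and summing the corresponding contributions of $\nabla v^T(1,\cdot)$ yields $\mathcal{G}_{r,\ell}(x)$.

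The main obstacle is the boundary term $\int v^T_k(1,z)\, w(1,z)\,\dd z$ in the regime $\ell \geq \sqrt T$: neither the plain energy estimate (too lossy in $r$) nor the exponentially weighted estimate (too expensive against the Gaussian $g_r$) is sharp enough on its own. The remedy is to split according to whether $\ell < r_*(0)$ or $\ell \geq r_*(0)$ and, in the latter case, to exploit the large-scale $\cc^{0,1}$ regularity encoded by $r_*$ to convert averages of $\nabla v^T(1,\cdot)$ on the diffusive scale into averages on balls of radius $r_*(y)$. Careful bookkeeping of the resulting weights then produces the explicit expression in \eqref{Grllem6}.
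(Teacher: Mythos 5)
Your proposal follows essentially the same approach as the paper's proof: same differentiation of $q_r(T)$ giving the direct terms plus an indirect piece $J$; same split of $J$ at $t=1$; same duality pairing against $v^T_k$ on $[1,T]$; same trick of integrating the $w$-equation in time from $0$ to $1$ and testing against $v^T_k(1,\cdot)$ to handle the boundary-in-time term, which is precisely what makes $g_r$ and $\nabla v^T(1,\cdot)$ enter symmetrically; and the same regime splits $\ell<\sqrt{T}$ vs.\ $\ell\geq\sqrt{T}$ and $\ell<r_*(0)$ vs.\ $\ell\geq r_*(0)$ with Lemma~\ref{nergyestideter}(iii) and (i)--(ii) respectively. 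The only presentational difference is that the paper rigorously works with the finite difference quotient $\delta^h u$ and justifies passing $h\downarrow0$ via the localized energy bound \eqref{lem651}, while you reason formally with the G\^ateaux derivative $w$ and with the explicit split $w=w_1+w_2$; this is immaterial since \eqref{functioderidef} is defined via a $\limsup$ and the energy estimates you invoke furnish the uniform-in-$h$ control needed to justify the limit.
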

The estimate \eqref{lem6} has to be compared with the heuristic computations in \eqref{partialformalderivative} and \eqref{maintermsensi} done in Section \ref{stratsec}. Note that the splitting in \eqref{lem6} between $\ell<\sqrt{T}$ and $\ell\geq \sqrt{T}$ reflects the two different strategies done in Section \ref{stratsec} in this two regimes, using $\LL^2$-type estimates in the first case rather than $\LL^1$-type estimates in the second case.

\medskip

In prevision of the proof of \eqref{sensiothertest}, we estimate in the following lemma the functional derivative of averages of the flux $q(T)\star f_r$ for $T\geq 1$ and $r\leq \sqrt{T}$, where $f_r$ satisfies \eqref{assumesensiothertest}.

\begin{lemma}\label{functioderiv2}
Let $q(\cdot,\cdot)$ be defined in \eqref{defq} and for all $r>0$ we consider $f_r\in C^1_b(\mathbb{R}^d)$ satisfying, for all $y\in\mathbb{R}^d$
\begin{equation}
 f_r(y)=\int_{1}^{r^2}\tilde{f}_r(s,y)\dd s\quad \text{with}\quad\vert \tilde{f}_r(s,y)\vert \lesssim s^{-1}\vert y\vert g_{\sqrt{s}}(y).
\label{assumeothertest}
\end{equation}
There exists a universal constant $c<\infty$ such that for all $r\geq 2$, $x\in\mathbb{R}^d$ and $\ell\in [1,\infty)$, we have
\begin{align}
\vert\partial^{\text{fct}}_{x,\ell}q(r^2)\star f_r\vert\lesssim & \int_{\bb_{\ell}(x)}\vert f_r(y)\vert\dd y+\int_{\bb_{\ell}(x)} \left\vert\int_{0}^{r^2}\nabla u(s,y)\dd s\right\vert \vert f_r(y)\vert\,\dd y+\int_{\bb_{\ell}(x)}\vert\nabla v^{r^2}(1,y)\vert\left(1+\left\vert\int_{0}^1\nabla u(s,y)\dd s\right\vert\right) \dd y\nonumber\\
&+\mathcal{K}_{r,\ell}(x)+\mathcal{G}_{r,\ell}(x)
+\int_{\bb_{\ell}(x)} \int_{1}^{r^2}\vert\nabla u(t,y)\vert \vert\nabla v^{r^2}(t,y)\vert\dd t\dd y,
\label{functioderivothertest}
\end{align}
where $v^{r^2}=(v^{r^2}_k)_{k\in\llbracket 1,d\rrbracket}$ is the weak solution of the backward parabolic system
\begin{equation}
\left\{
    \begin{array}{ll}
        \partial_{\tau}v^{r^2}_k+\nabla\cdot a^*\nabla v^{r^2}_k=\nabla\cdot a f_r e_k & \text{ on $(-\infty,r^2)\times \mathbb{R}^d$}, \\
        v^{r^2}_k(r^2)= 0, & 
    \end{array}
\right.
\label{dualequationlem7othertest}
\end{equation}
with 
\begin{equation}\label{Kothertest}
\begin{aligned}
\mathcal{K}_{r,\ell}(x):=& \mathds{1}_{\ell\geq r}\int_{1}^{r^2} s^{-\frac{1}{2}}\mathcal{T}_{x,\ell}(\eta_{\sqrt{s}})(0)\dd s\\
&+\mathds{1}_{\ell <r}\left(
     \mathcal{T}_{x,\ell}(\eta_1)(0)+\sum_{n=0}^{\left\lceil \log_2(3\ell)\right\rceil}2^n\mathcal{T}_{x,\ell}(\eta_{2^{n+1}})(0)+\int_{\mathbb{R}^d\backslash \bb_{2\ell}}\left(\fint_{\bb_{\ell}(y)}\vert f_r(z)\vert^2\dd z\right)^{\frac{1}{2}}\mathcal{T}_{x,\ell}(\eta_{\ell})(y)\dd y\right).
\end{aligned}
\end{equation}
 and $\mathcal{G}_{r,\ell}$ as well as $\mathcal{T}_{x,\ell}$ are defined in \eqref{Grllem6} (for $T=r^2$) and \eqref{functionallem6} respectively.
\end{lemma}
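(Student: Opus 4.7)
The plan is to reproduce the chain of arguments leading to Lemma \ref{functioderiv}, replacing the Gaussian $g_r$ by $f_r$ throughout, and controlling the new contributions via the multiscale decomposition \eqref{assumeothertest}. First, I would compute the functional derivative formally by the chain rule (as in \eqref{partialformalderivative}): the direct terms $f_r(y)\,e_k\otimes e$ and $f_r(y)\,e_k\otimes\int_0^{r^2}\nabla u(t,y)\,dt$, integrated over $B_\ell(x)$, yield the first two r.h.s.\ terms of \eqref{functioderivothertest}. The remaining sensitivity term $\int f_r(z)\,e_k\cdot a(z)\int_0^{r^2}\nabla\tfrac{\partial u(t,z)}{\partial a(y)}\,dt\,dz$ is rewritten by duality against $v^{r^2}_k$ defined in \eqref{dualequationlem7othertest}, producing, in the non-singular regime, the formula $\nabla v^{r^2}_k(0,y)\otimes e + \int_0^{r^2}\nabla u(t,y)\otimes\nabla v^{r^2}_k(t,y)\,dt$.

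Because $u$ is singular at $t=0$, the time integral must be split at $t=1$. On $[1,r^2]$ one directly recovers the bulk contribution $\int_{B_\ell(x)}\int_1^{r^2}|\nabla u||\nabla v^{r^2}|\,dt\,dy$, while the boundary contribution at $t=1$ produces the term $\int_{B_\ell(x)}|\nabla v^{r^2}(1,y)|(1+|\int_0^1\nabla u(s,y)\,ds|)\,dy$ and, for $z\notin B_\ell(x)$, the quantity $\mathcal{G}_{r,\ell}(x)$; the latter is handled verbatim as in Lemma \ref{functioderiv}, splitting according to whether $\ell$ exceeds the minimal radius $r_\star(0)$, which is why the definition \eqref{Grllem6} is identical to the one used there. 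For the singular part on $(0,1)$, I would avoid the dual formula and instead apply the localized energy estimates \eqref{LemE2}--\eqref{LemE3} directly to the equation for $\partial u/\partial a(y)$, which shifts the time boundary from $t=0$ to $t=1$ while absorbing the distributional initial datum $\nabla\cdot\delta_y\,e$ and the source $\nabla\cdot(\delta_y\nabla u)$.

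The main obstacle is the quantity $\mathcal{K}_{r,\ell}(x)$, which collects the $(0,1)$-contribution of the sensitivity term weighted against $f_r$ and requires fully exploiting the multiscale structure \eqref{assumeothertest}. Unlike $g_r$, the test function $f_r$ spreads over all scales $\sqrt{s}\in[1,r]$, and the inner factor $|z|\,g_{\sqrt{s}}(z)$ forces different treatments depending on whether $\ell\geq r$ or $\ell<r$. In the regime $\ell\geq r$, all relevant scales satisfy $\sqrt{s}\leq\ell$, and applying \eqref{LemE2} layer by layer in $s$ yields the single integral $\int_1^{r^2} s^{-1/2}\mathcal{T}_{x,\ell}(\eta_{\sqrt{s}})(0)\,ds$. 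In the regime $\ell<r$, scales $\sqrt{s}\gtrsim\ell$ must be dyadically decomposed, producing the near-field sum $\sum_{n=0}^{\lceil\log_2(3\ell)\rceil}2^n\mathcal{T}_{x,\ell}(\eta_{2^{n+1}})(0)$ (where the factor $2^n$ encodes the linear growth $|z|$ present in $\tilde f_r$), while the slowly-decaying tail $|f_r(z)|\lesssim (|z|+1)^{-(d-1)}\wedge r(|z|+1)^{-d}$ inherited from \eqref{assumesensiothertest} outside $B_{2\ell}$ is controlled through \eqref{LemE3} with the exponential localization factor, generating the last term of \eqref{Kothertest}. Once this scale-by-scale bookkeeping is performed, collecting all contributions gives precisely the estimate \eqref{functioderivothertest}.
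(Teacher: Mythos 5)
Your proposal follows the same strategy as the paper: the first two terms come from the direct derivatives, the third, fifth and sixth from the duality rewriting (as in Step 2 of Lemma \ref{functioderiv}), and $\mathcal{K}_{r,\ell}$ from the singular $(0,1)$-contribution weighted by $f_r$, handled by splitting $\ell\geq r$ versus $\ell<r$, using the multiscale bound \eqref{assumeothertest} in the first regime and the pointwise bound \eqref{derivothertestesti1prime} with a near-field dyadic decomposition plus a far-field Cauchy--Schwarz argument (via \eqref{LemE3}-type localization) in the second. The only minor imprecision is in the $\ell<r$ regime: the paper does not dyadically decompose the scales $\sqrt{s}$ but first collapses the $s$-integral into the pointwise bound $|f_r(y)|\lesssim(|y|+1)^{-(d-1)}\wedge r(|y|+1)^{-d}$ and then decomposes the spatial domain into annuli $\bb_{2^{n+1}}\backslash\bb_{2^n}$, which is where the factor $2^n$ in \eqref{Kothertest} actually comes from; your interpretation of $2^n$ as encoding the $|z|$-growth in $\tilde f_r$ is essentially the same computation seen from a different angle, so this does not affect the validity of the argument.
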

We finally state the following bound on $q_r(\cdot,\cdot)$, for $r\leq 1$. It is only needed for technical reasons since, in view of the application of Lemma \ref{cacciopo}, $r$ is allowed to be arbitrary close to $0$. 

\begin{lemma}\label{near0estiq}Let $q(\cdot,\cdot)$ be defined in \eqref{defq}. For all $r\in (0,1)$ and $x\in\mathbb{R}^d$, there exists a random variable $\mathcal{C}_\star(r,x)$ such that for all $\sqrt{T}\geq \frac{1}{2}$ we have
\begin{equation}
\vert q_r(T,x)\vert \leq (1+r^{-\frac{d}{2}}\log(\tfrac{\sqrt{T}}{r}))\mathcal{C}_{\star}(r,x),
\label{near0est4}
\end{equation}
with for all $\gamma>0$, 
\begin{equation}\label{rajoutfinale1}
\displaystyle\sup_{(r,x)\in\mathbb{R}^+\times\mathbb{R}^d}\bigg\langle \exp\Big(\frac{1}{C}\mathcal{C}^{2\eta_\gamma}_{\star}(r,x)\Big)\bigg\rangle\leq 2\quad \text{and}\quad \eta_\gamma=\frac{\beta\wedge d}{d}\mathds{1}_{\beta\neq d}+\frac{1}{1+\gamma}\mathds{1}_{\beta=d},
\end{equation}
for some constant $C<\infty$ depending on $d$, $\lambda$ and $\gamma$.
\end{lemma}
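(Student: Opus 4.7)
I plan to reduce the pointwise estimate to an $\LL^2$-type weighted average via Cauchy--Schwarz, then control the time-integrated gradient $\Phi(T,\cdot):=\int_0^T\nabla u(s,\cdot)\dd s$ by splitting it into a short-time piece on $[0,r^2]$ (handled by the localized energy estimate of Lemma \ref{nergyestideter}) and a long-time piece on $[r^2,T]$ (handled by a dyadic-in-time decomposition combined with the large-scale regularity estimate of Lemma \ref{ctrlav}). The random constant $\mathcal{C}_\star(r,x)$ will essentially be $1+r_*(x)^{d/2}$, whose stretched exponential moments follow from those of the minimal radius $r_*(x)$ proved in \cite{gloria2014regularity}.

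\textbf{Cauchy--Schwarz and short-time part.} Using $|a|\leq 1$ and $\|g_r\|_{\LL^1(\mathbb{R}^d)}\lesssim_d 1$, Cauchy--Schwarz yields
\begin{equation*}
|q_r(T,x)|^2 \lesssim_d 1+\int_{\mathbb{R}^d}g_r(x-y)|\Phi(T,y)|^2\dd y.
\end{equation*}
I split $\Phi=\Phi_0+\Phi_1$ with $\Phi_0:=\int_0^{r^2}\nabla u\dd s$ and $\Phi_1:=\int_{r^2}^T\nabla u\dd s$. The energy estimate \eqref{LemE1} with $R=r$ applied to $u$ (whose initial data satisfies $|a e|\leq 1$) gives $\int\eta_r(\tfrac{y-x}{c})|\Phi_0|^2\dd y\lesssim_{d,\lambda}1$, and since $g_r\lesssim \eta_r(\cdot/c)$ pointwise up to a universal constant, $\int g_r(x-y)|\Phi_0|^2\dd y\lesssim r^{-d}$.

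\textbf{Long-time dyadic decomposition.} I set $t_n:=r^2\,4^n$ for $n=0,\dots,N$ with $N:=\lceil \log_2(\sqrt{T}/r)\rceil$, so that $\Phi_1=\sum_{n=0}^{N}\int_{t_n\wedge T}^{t_{n+1}\wedge T}\nabla u\dd s$. Cauchy--Schwarz in $n$ produces a first factor $N\sim \log(\sqrt{T}/r)$, and within each block Cauchy--Schwarz in time gives $\big|\int_{t_n}^{t_{n+1}}\nabla u\big|^2\leq (t_{n+1}-t_n)\int_{t_n}^{t_{n+1}}|\nabla u|^2 \dd s$. The deterministic baseline $\fint_{\bb_{\sqrt{s}}(x)}|\nabla u(s,y)|^2\dd y\lesssim_{d,\lambda}s^{-2}$ is \eqref{LemE1} at scale $R=\sqrt{s}$, and Lemma \ref{ctrlav} upgrades it at the smaller scale $r\leq\sqrt{s}$ (valid for $s\geq r^2$) to $\int_{\bb_r(x)}|\nabla u(s,y)|^2\dd y\lesssim r_*(x)^d s^{-2}$. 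Combined with the identity $(t_{n+1}-t_n)(t_n^{-1}-t_{n+1}^{-1})\lesssim 1$, each block contributes $\lesssim r_*(x)^d$ uniformly in $n$; summing and multiplying by the first Cauchy--Schwarz factor yields $\int_{\bb_r(x)}|\Phi_1|^2\dd y\lesssim \log^2(\sqrt{T}/r)\,r_*(x)^d$.

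\textbf{Gaussian tail, conclusion, and main difficulty.} The super-exponential decay of $g_r$ off $\bb_r(x)$, namely $g_r|_{\bb_{2^kr}(x)\setminus\bb_{2^{k-1}r}(x)}\lesssim r^{-d}e^{-4^{k-1}}$, dominates any polynomial growth of $\int_{\bb_{2^kr}(x)}|\Phi|^2\dd y$ obtained by rerunning the same dyadic-in-time argument at spatial scale $2^kr$, and produces a summable tail of the same order as the main contribution. This gives $|q_r(T,x)|\lesssim (1+r^{-d/2}\log(\sqrt{T}/r))(1+r_*(x)^{d/2})$, so \eqref{near0est4} holds with $\mathcal{C}_\star(r,x):=C(1+r_*(x)^{d/2})$ for a universal $C$. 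The moment bound \eqref{rajoutfinale1} then reduces to the stretched exponential moments of $r_*(x)$ established in \cite{gloria2014regularity}, with exponent $\beta\wedge d$ (and the $\gamma$-adjusted exponent $d/(1+\gamma)$ in the borderline case $\beta=d$), which translates to the exponent $2\eta_\gamma$ on $r_*^{d/2}$, uniformly in $(r,x)$. The main technical difficulty is to keep the random constant $T$-independent: this is achieved by extracting the entire $T$-dependence into the deterministic factor $\log(\sqrt{T}/r)$ during the dyadic-in-time sum, while all stochastic input is concentrated in $r_*(x)$ and never coupled to a $T$-dependent weight.
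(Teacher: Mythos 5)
Your proof is correct, but it takes a genuinely different route from the paper's. The paper bounds $|q_r(T,x)|$ directly as an $\LL^1$-type quantity: it dominates $g_r$ by $\eta_r$, rewrites $\int_{\mathbb{R}^d}=\int_{\mathbb{R}^d}\fint_{\bb_r(z)}\dd z$, and applies Cauchy--Schwarz on each small ball $\bb_r(z)$ plus Minkowski in time (no dyadic blocks are needed: the estimate $\int_{\bb_1(z)}|\nabla u(s,\cdot)|^2\lesssim r_*^d(z)s^{-2}$ integrated as $\int_{r^2}^T s^{-1}\dd s$ already gives the $\log(\sqrt T/r)$ factor). The price is that the outer integral over centers $z$ drags $r_*$ at a continuum of points into the random constant, which becomes $1+r_*^{d/2}(0)+\int_{\mathbb{R}^d\setminus\bb_1(x)}\eta_1(y)r_*^{d/2}(ry+x)\dd y$; controlling its moments then requires a Minkowski-in-$\LL^p_{\langle\cdot\rangle}$ step. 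Your approach instead applies a single Cauchy--Schwarz to land on the weighted $\LL^2$ quantity $\int g_r(x-y)|\Phi|^2\dd y$ and handles the tail by dyadic spatial shells anchored at the fixed center $x$; because the large-scale regularity bound $\int_{\bb_{2^kr}(x)}|\nabla u(s,\cdot)|^2\lesssim (r_*(x)\vee 2^kr)^d\,s^{-2}$ involves only $r_*(x)$, the Gaussian tail factor $e^{-4^{k-1}}$ absorbs the $(2^kr)^d$ growth and you obtain the cleaner, strictly local constant $\mathcal{C}_\star(r,x)=C(1+r_*(x)^{d/2})$, whose moment bound is immediate. This is a nice simplification. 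Two small points worth fixing: (i) the short-time estimate actually gives $\int g_r(x-y)|\Phi_0|^2\dd y\lesssim 1$, not $r^{-d}$, since $\int\eta_r(\cdot/c)\dd y$ is $O(1)$; this is only an over-estimate, so harmless. (ii) Lemma \ref{ctrlav} is stated for times $\geq 1$, so the upgrade $\int_{\bb_r(x)}|\nabla u(s,\cdot)|^2\lesssim r_*^d(x)s^{-2}$ for $r^2\le s<1$ does not come from the lemma directly; it instead follows because $r\leq 1\leq r_*(x)$, so $\bb_r(x)\subset\bb_{r_*(x)}(x)$ and the localized energy estimate \eqref{LemE1} at scale $r_*(x)\geq\sqrt s$ already gives the bound. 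This is exactly how the paper proves \eqref{decayuproofsubop}, so the conclusion you rely on is correct, only the cited justification needs the extra sentence.
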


\section{Proofs}\label{proofsec}
We give in the section the all proofs of the results stated in the sections \ref{lemsec3}, \ref{lemsec1} and \ref{lemsec2}. For notational convenience, we shall assume that the results of Lemmas \ref{nergyestideter} and \ref{cacciopo} hold for the universal constant $c=1$. In the general case, it suffices to change the kernels $g_r$ and $\eta_r$ from line to line (by allowing a constant in the exponential). We also drop the dependance on $d$, $\lambda$ and $\beta$ in the inequalities.
\subsection{Proof of the deterministic results}
\subsubsection{Proof of Lemma \ref{nergyestideter}: Localized energy estimates} 
We only provide the arguments for \eqref{LemE2} and \eqref{LemE3}, the proof of \eqref{LemE1} can be found in \cite[Lemma 1]{gloria2015corrector}. Without loss of generality, we may assume that $x=0$.\newline
\newline
\textbf{Step 1. Proof of \eqref{LemE2}. }We set for all $t\in [0,1]$, $w(t,\cdot):=\int_{0}^t v(s,\cdot)\dd s$ and we note that $w$ is a weak solution of
$$
\left\{
    \begin{array}{ll}
        \partial_{\tau}w-\nabla\cdot a\nabla w=\nabla\cdot \int_{0}^{\tau} f(s,\cdot)\dd s &\text{ on $(0,1]\times \mathbb{R}^d$}, \\
        w(0)=0. & 
    \end{array}
\right.
$$
The idea of the proof is to use the estimate \eqref{LemE1} by expressing $w$ with help of the Duhamel formula. We denote by $\text{S}$ the semigroup associated to the operator $-\nabla\cdot a\nabla$, namely $(\text{S}(t))_{t\in\mathbb{R}^+}$ is a family of operators such that for all Schwartz distributions $\zeta$ on $\mathbb{R}^d$, $z:=\text{S}(\cdot)\zeta$ is the unique weak solution of
$$
\left\{
    \begin{array}{ll}
        \partial_{\tau}z-\nabla\cdot a\nabla z=0 &\text{ on $(0,1]\times \mathbb{R}^d$}, \\
        z(0)=\zeta.& 
    \end{array}
\right.
$$
We express $\nabla w(1,\cdot)$ with help of $\text{S}$ in form of the Duhamel formula, that is
$$\nabla w(1,\cdot)=\int_{0}^1 \nabla \left(\text{S}(1-t)\nabla\cdot \int_{0}^t f(s,\cdot)\dd s\right)\dd t.$$
Thus, we write for all $R\geq 1$, using the triangle inequality and $\int_{0}^t f(s,\cdot)\dd s=\int_{0}^1 f(s,\cdot)\dd s -\int_{t}^1 f(s,\cdot)\dd s$ in the last line
\begin{align}
\left(\int_{\mathbb{R}^d}\eta_{R}(y)\vert\nabla w(1,y)\vert^2\dd y\right)^{\frac{1}{2}}=&\left(\int_{\mathbb{R}^d}\eta_{R}(y)\left\vert \int_{0}^1 \nabla \left(\text{S}(1-t)\nabla\cdot \int_{0}^t f(s,y)\dd s\right)\dd t\right\vert^2\dd y\right)^{\frac{1}{2}}\nonumber\\
\leq &\left(\int_{\mathbb{R}^d}\eta_{R}(y)\left\vert \int_{0}^1 \nabla \left(\text{S}(1-t)\nabla\cdot \int_{0}^1 f(s,y)\dd s\right)\dd t\right\vert^2\dd y\right)^{\frac{1}{2}}\nonumber\\
&+\left(\int_{\mathbb{R}^d}\eta_{R}(y)\left\vert \int_{0}^1 \nabla \left(\text{S}(1-t)\nabla\cdot \int_{t}^1 f(s,y)\dd s\right)\dd t\right\vert^2\dd y\right)^{\frac{1}{2}}.\label{lemE10}
\end{align}
For the first r.h.s term of \eqref{lemE10}, we use \eqref{LemE1} for $T=1$ in form of 
$$\left(\int_{\mathbb{R}^d}\eta_{R}(y)\left\vert \int_{0}^1 \nabla \left(\text{S}(1-t)\nabla\cdot \int_{0}^1 f(s,y)\dd s\right)\dd t\right\vert^2\dd y\right)^{\frac{1}{2}}\lesssim\left(\int_{\mathbb{R}^d}\eta_{R}(y)\left\vert \int_{0}^1 f(s,y)\dd s\right\vert^2\dd y\right)^{\frac{1}{2}},$$
which gives the first r.h.s term of \eqref{LemE2}. For the second r.h.s term of \eqref{lemE10}, we use \eqref{LemE1} for $T=1-t$ with $t\in (0,1)$, this time in the pointwise way, combined with the Minkowski inequality in $\LL^2(\mathbb{R}^d,\eta_R\dd x)$ (exchanging the order of integration in the $y$ and $s$ variables)  to get
\begin{align*}
\bigg(\int_{\mathbb{R}^d}\eta_{R}(y)\left\vert \int_{0}^1 \nabla \left(\text{S}(1-t)\nabla\cdot \int_{t}^1 f(s,y)\dd s\right)\dd t\right\vert^2\dd y\bigg)^{\frac{1}{2}}&\leq \int_{0}^1\left(\int_{\mathbb{R}^d}\eta_{R}(y)\left\vert \nabla \left(\text{S}(1-t)\nabla\cdot \int_{t}^1 f(s,y)\dd s\right)\right\vert^2\dd y\right)^{\frac{1}{2}}\dd t\\
&\stackrel{\eqref{LemE1}}{\lesssim}\int_{0}^1\frac{1}{1-t}\left(\int_{\mathbb{R}^d}\eta_R(y)\left\vert \int_{t}^{1} f(s,y)\dd s\right\vert^2\right)^{\frac{1}{2}}\dd t\\
&\leq \int_{0}^{1}\frac{1}{1-t}\int_{t}^1 \left(\int_{\mathbb{R}^d}\eta_R(y)\vert f(s,y)\vert^2\dd y\right)^{\frac{1}{2}}\dd s\,\dd t,
\end{align*}
which gives the second r.h.s term of \eqref{LemE2}.\newline
\newline
\textbf{Step 2. Poof of \eqref{LemE3}}. Since, for all $(y,z)\in \bb_{\ell}\times\mathbb{R}^d\backslash \bb_{\ell}$, we have $e^{-\frac{\vert y-z\vert}{c\ell}}\leq e^{\frac{1}{c}-\frac{\vert z\vert}{c\ell}}$, we deduce from \eqref{LemE1}, \eqref{LemE2} (applied with $R=\ell$) and the fact that $f$ and $q$ are compactly supported in $\bb_{\ell}$:
\begin{align*}
e^{\frac{\vert z\vert}{2c\ell}}\int_{\bb_{\ell}(z)}\left\vert \int_{0}^1\nabla v(s,y)\dd s\right\vert^2\dd y\lesssim & \,e^{-\frac{\vert z\vert}{2c\ell}}\left(\int_{\bb_{\ell}}\vert q(y)\vert^2\dd y+\int_{\bb_{\ell}}\left\vert \int_{0}^1f(s,y)\dd s\right\vert^2\dd y\right)\nonumber\\
&+e^{-\frac{\vert z\vert}{2c\ell}}\left(\int_{0}^1\frac{1}{1-t}\int_{t}^1\left(\int_{\bb_{\ell}}\vert f(s,y)\vert^2\dd y\right)^{\frac{1}{2}}\dd s\,\dd t\right)^2,
\end{align*}
which yields \eqref{LemE3} by integrating over $\mathbb{R}^d\backslash \bb_{\ell}$.

\subsection{Proof of the large scale regularity results}
We provide the proofs of Lemmas \ref{ctrlav2} and \ref{ctrlav}. Our main tool here in the large-scale regularity theory for parabolic system recalled in Appendix \ref{reggech}.
\subsubsection{Proof of Lemma \ref{ctrlav2}: Pointwise estimates on the dual problem}
We prove Lemma \ref{ctrlav2} in two steps. The first step is devoted to prove \eqref{lem2} and we do it in two substeps. First, we treat the particular case where $f_r$ is compactly supported in the ball $\bb_r$ for some $r\geq 1$. We prove that \eqref{lem2} holds without the logarithmic correction. Second, we treat the general case by decomposing $\mathbb{R}^d$ into dyadic annuli $(\mathcal{B}_k)_{k\in\mathbb{N}}$, defined by $\mathcal{B}_k:=\bb_{2^{k}r}\backslash \bb_{2^{k-1} r}$ for $k\geq 1$ and $\mathcal{B}_0:= \bb_r$, and writing $f_r=\sum_{k=0}^{+\infty}f_r\chi_k$, where $(\chi_k)_{k\in\mathbb{N}}$ is a partition of unity according to the decomposition $(\mathcal{B}_k)_{k\in\mathbb{N}}$. We then apply the result of the compactly supported case for each $k\in\mathbb{N}$. The second step is devoted to prove \eqref{dualboundd2} and this is done by using the results of the first step. This extends Lemmas $2$, $3$ and $4$ of \cite{gloria2019quantitative} from the elliptic to the parabolic setting.\newline
\newline
\textbf{Step 1. Proof of \eqref{lem2}. }We split the proof into two substeps.\newline
\newline
\textbf{Substep 1.1.}
 We prove that under the assumptions 
\begin{equation}
\text{$f_r$ is supported in $\bb_r$$\quad$ and $\quad$$r^{d}\sup_{x\in\mathbb{R}^{d}}\vert f_r(x)\vert+r^{d+1}\sup_{x\in\mathbb{R}^d}\vert\nabla f_r(x)\vert\lesssim 1$},
\label{assumelem4step1}
\end{equation}
we have for all $x\in\mathbb{R}^{d}$ and $\sqrt{-t}\geq 2r_*(x)$
\begin{equation}
\left(\fint_{\bb_{r_*(x)}(x)}\vert \nabla v_r(s,y)\vert^2\dd y\right)^{\frac{1}{2}}\lesssim \frac{\left(\frac{r_*(0)}{r}\vee 1\right)^{\frac{d}{2}}}{(\vert x\vert+r)^d}.
\label{boundlem22}
\end{equation}
The estimate \eqref{boundlem22} will come from the following four relations and estimates:
\begin{enumerate}
\item For all $(t,x)\in\mathbb{R}_-\times\mathbb{R}^d$
\begin{equation}
\nabla v_r(t,x)=\int_{0}^{-t}\nabla w_r(s,x)\dd s,
\label{proofpointwise1}
\end{equation}
with $w_r$ is the weak solution of
\begin{equation}
\left\{
    \begin{array}{ll}
        \partial_{\tau} w_r-\nabla\cdot a\nabla w_r=0 & \text{ in $(0,+\infty)\times \mathbb{R}^d$},\\
        w_r(0)=-\nabla\cdot af_r\overline{e}. &
    \end{array}
\right.
\label{proofpointwise2}
\end{equation}
\item The plain energy estimate: for all $t\in\mathbb{R}_-$
\begin{equation}
\int_{\mathbb{R}^{d}}\vert\nabla v_r(t,x)\vert^2 \dd x\lesssim r^{-d}.
\label{fullenergylem2}
\end{equation}
\item The large-scale regularity estimate: for all $(t,x)\in\mathbb{R}_-\times\mathbb{R}^{d}$ such that $\vert x\vert \geq 4(r_*(x)\vee r)$
\begin{equation}
\fint_{\bb_{r_*(x)}(x)}\vert \nabla v_r(t,y)\vert^2\dd y\lesssim \frac{\left(\frac{r_*(0)}{r}\vee 1\right)^{d}}{(\vert x\vert+r)^{2d}}.
\label{boundlem23}
\end{equation}
\item The large-scale $\text{C}^{0,1}$ estimates: for all $x\in\mathbb{R}^d$, $\sqrt{-t}\geq 2r_*(x)$ and $r\geq r_*(x)$:
\begin{itemize}
\item For $\sqrt{-t}\leq r$,
\begin{equation}
\fint_{\bb_{r_*(x)}(x)}\vert\nabla v_r(t,y)\vert^2\dd y\lesssim \fint_{t}^{0}\fint_{\bb_{\sqrt{-t}}(x)}\vert\nabla v_r(s,y)\vert^2\dd y\, \dd s+r^{-2d}.
\label{proofpointwise3}
\end{equation}
\item For $\sqrt{-t}\geq r$,
\begin{equation}
\fint_{\bb_{r_*(x)}(x)}\vert\nabla v_r(t,y)\vert^2\dd y\lesssim \fint_{t}^{t+r^2}\fint_{\bb_{r}(x)}\vert\nabla v_r(s,y)\vert^2\dd y\, \dd s+r^{-2d}.
\label{proofpointwise4}
\end{equation}
\end{itemize}
\end{enumerate}
\textbf{Argument for \eqref{proofpointwise1}. }A direct computation shows that $(t,x)\in \mathbb{R}_-\times\mathbb{R}^d\mapsto \int_{0}^{-t} w_r(s,x)\dd s$ is a weak solution of \eqref{equationvlem2}.
Indeed, for every $\psi\in \cc^{\infty}_c((-\infty,0)\times \mathbb{R}^d)$, we first have by applying Fubini's theorem
$$-\int_{\mathbb{R}^d}\int_{-\infty}^{0}\partial_t\psi(t,x)\int_{0}^{-t} w_r(s,x)\dd s\, \dd t\, \dd x=-\int_{\mathbb{R}^d}\int_{0}^{+\infty}w_r(s,x)\int_{-\infty}^{-s}\partial_t\psi(t,x)\dd t\, \dd s\, \dd x.$$
Then, noticing that $\int_{-\infty}^{-s}\partial_t\psi(t,x)\dd t=-\partial_s\int_{-\infty}^{-s}\psi(t,x)\dd t$, we obtain from \eqref{proofpointwise2}
\begin{align*}
-\int_{\mathbb{R}^d}\int_{0}^{+\infty}w_r(s,x)\int_{-\infty}^{-s}\partial_t\psi(t,x)\dd t\, \dd s\, \dd x
=&\int_{\mathbb{R}^d}\int_{0}^{+\infty} w_r(s,x)\partial_s\int_{-\infty}^{-s}\psi(t,x)\dd t\, \dd s\, \dd x\\
\stackrel{\eqref{proofpointwise2}}{=}&- \int_{\mathbb{R}^d}\int_{-\infty}^{0} a(x)f_r(x)\overline{e}\cdot \nabla\psi(t,x)\dd t\,\dd x\\
&+\int_{\mathbb{R}^d}\int_{0}^{+\infty}\nabla w_r(s,x)\cdot a(x)\nabla\int_{-\infty}^{-s}\psi(t,x)\dd t\, \dd s\, \dd x.
\end{align*}
Finally, applying once more Fubini's theorem in the last integral yields
\begin{align*}
-\int_{\mathbb{R}^d}\int_{-\infty}^{0}\partial_t\psi(t,x)\int_{0}^{-t} w_r(s,x)\dd s\, \dd t\, \dd x=&-\int_{\mathbb{R}^d}\int_{-\infty}^{0} a(x)f_r(x)\overline{e}\cdot \nabla\psi(t,x)\dd t\,\dd x\\
&+\int_{\mathbb{R}^d}\int_{-\infty}^{0}\nabla \int_{0}^{-t}w_r(s,x)\dd s\cdot a(x)\nabla\psi(t,x)\dd t\, \dd x,
\end{align*}
which shows that $(t,x)\in \mathbb{R}_-\times\mathbb{R}^d\mapsto \int_{0}^{-t} w_r(s,x)\dd s$ is a weak solution of \eqref{equationvlem2}. Thus, by uniqueness, for all $t\in \mathbb{R}_-$, $v_r(t,\cdot)=\int_{0}^{-t} w_r(s,\cdot)\dd s$ and \eqref{proofpointwise1} follows.\newline
\newline
\textbf{Argument for \eqref{fullenergylem2}. }We have, by using the formula \eqref{proofpointwise1} combined with the localized energy estimate \eqref{LemE1} applied to the equation \eqref{proofpointwise2} and the support condition \eqref{assumelem4step1} of $f_r$, for all $(t,x)\in\mathbb{R}_-\times\mathbb{R}^d$ 
\begin{align}
\int_{\mathbb{R}^d}\eta_{\sqrt{-t}}(y-x)\vert \nabla v_r(t,y)\vert^2\dd y&\stackrel{\eqref{proofpointwise1}}{=}\int_{\mathbb{R}^d}\eta_{\sqrt{-t}}(y-x)\left\vert \int_{0}^{-t}\nabla w_r(t,y)\right\vert^2\dd y\nonumber\\
&\stackrel{\eqref{LemE1}}{\lesssim} \int_{\mathbb{R}^d}\eta_{\sqrt{-t}}(y-x)\vert f_r(y)\vert^2\dd y\nonumber\\
&\stackrel{\eqref{assumelem4step1}}{\lesssim} r^{-2d}\int_{\bb_r}\eta_{\sqrt{-t}}(y-x)\dd y,\label{proofpointwise12}
\end{align}
which gives \eqref{fullenergylem2} by integrating the estimate over $x\in\mathbb{R}^d$.\newline
\newline
\textbf{Argument for \eqref{boundlem23}. }We first prove by a duality argument that for all $R\geq 2(r_*(0)\vee r)$ and $t\in\mathbb{R}_-$
\begin{equation}
\int_{\mathbb{R}^{d}\backslash \bb_R}\vert \nabla v_r(t,y)\vert^2 \dd y\lesssim\left(\frac{r_*(0)}{r}\vee 1\right)^{d}R^{-d}.
\label{dualitylem2}
\end{equation}
Let $h\in \cc^{\infty}_c(\mathbb{R}^{d})$ supported in $\mathbb{R}^{d}\backslash \bb_R$. Let $t\in\mathbb{R}_-$, $s\in [0,-t]$ and $k^{s}$ be the weak solution of the backward parabolic system (corresponding to the dual system of \eqref{equationvlem2} with final time $s$),
\begin{equation}
\left\{
    \begin{array}{ll}
        \partial_{\tau}k^{s}+\nabla\cdot a^{*}\nabla k^{s}=0 & \text{ on $(-\infty,s)\times \mathbb{R}^d$},\\
        k^s(s)=\nabla\cdot h. &
    \end{array}
\right.
\label{equationwlem2}
\end{equation}
For all $\tau\in (-\infty,s)$, we have, by testing \eqref{equationwlem2} with $w_r$
$$\int_{\mathbb{R}^{d}}w_r(\tau,y)\partial_{\tau}k^{s}(\tau,y) \dd y-\int_{\mathbb{R}^{d}}\nabla k^s(\tau,y)\cdot a(y)\nabla w_r(\tau,y)\dd y=0,$$
and by testing \eqref{proofpointwise2} with $k^{s}$
$$\int_{\mathbb{R}^{d}}k^{s}(\tau,y)\partial_{\tau }w_r(\tau,y) \dd y+\int_{\mathbb{R}^{d}}\nabla k^s(\tau,y)\cdot a(y)\nabla w_r(\tau,y) \dd y=0. $$
By summing the two identities above, integrating in time over $\tau\in [0,s)$ and noticing that from the initial conditions of $w_r$ and $k^{s}$ we have
\begin{align*}
&\int_{\mathbb{R}^{d}}\int_{0}^{s} w_r(\tau,y)\partial_{\tau}k^{s}(\tau,y) \dd \tau\, \dd y+\int_{\mathbb{R}^{d}}\int_{0}^{s} k^{s}(\tau,y)\partial_{\tau }w_r(\tau,y)\dd \tau\, \dd y\\
&=\left[\int_{\mathbb{R}^d}w_r(\tau,y)k^{s}(\tau,y)\dd y\right]^{s}_{0}=-\int_{\mathbb{R}^d}\nabla k^{s}(0,y)\cdot a(y)f_r(y)\overline{e}\dd y-\int_{\mathbb{R}^d}\nabla w_r(s,y)\cdot h(y)\dd y,
\end{align*}
we get 
$$\int_{\mathbb{R}^d}\nabla w_r(t,y)\cdot h(y)\dd y=-\int_{\mathbb{R}^d}\nabla k^{s}(0,y)\cdot a(y)f_r(y)\overline{e}\,\dd y.$$ 
It follows by integrating over $s\in [0,-t]$, using the formula \eqref{proofpointwise1} combined with the Cauchy-Schwarz inequality and \eqref{assumelem4step1} that 
\begin{equation}
\left\vert \int_{\mathbb{R}^{d}}h(y)\cdot\nabla v_r(t,y)\dd y\right\vert \leq \left(\fint_{\bb_r}\left\vert\int_{0}^{-t}\nabla k^s(0,y)\dd s\right\vert^2\dd y\right)^{\frac{1}{2}}.
\label{lem2bound5}
\end{equation}
The r.h.s of \eqref{lem2bound5} is then dominated as follows. First, we set $\tilde{v}(t,\cdot):=\int_{0}^{-t}k^{s}(0,y)\dd s$ and by noticing that for all $s\in (0,-t)$, $k^s(0,\cdot)=k^{0}(-s,\cdot)$, we have 
\begin{equation*}
\left\{
    \begin{array}{ll}
        \partial_{\tau}\tilde{v}+\nabla\cdot a^*\nabla \tilde{v}=\nabla\cdot h & \text{ in $(-\infty,0)\times\mathbb{R}^d$}, \\
        \tilde{v}(0)=0. & 
    \end{array}
		\right.
\end{equation*}
Second, we denote by $\overline{v}$ the weak solution of 
\begin{equation}
\left\{
    \begin{array}{ll}
        \partial_{\tau}\overline{v}+\nabla\cdot a^*\nabla \overline{v}=\mathds{1}_{(-\infty,0)}\nabla\cdot h & \text{ in $\mathbb{R}^{d+1}$}, \\
        \overline{v}=0 & \text{ in $\mathbb{R}^+\times\mathbb{R}^d$}.
    \end{array}
		\right.
		\label{proofpointwise5}
\end{equation}
$\overline{v}$ is an extension of $\tilde{v}$ in the sense that $\nabla\tilde{v}(s,\cdot)=\nabla\overline{v}(s,\cdot)$ as long as $s\leq 0$. Now, since $h=0$ in $ \bb_R$, we have by using the estimate $\fint_{\bb_r}\lesssim (\frac{r_*(0)}{r}\vee 1)^{d}\fint_{\bb_{r_*(0)\vee r}}$, Lemma \ref{scott} and the large-scale $\text{C}^{0,1}$ estimate \eqref{meanvalue} (recalling that $R\geq 2(r_*(0)\vee r)$):
\begin{align}
\left(\fint_{\bb_r}\left\vert\int_{0}^{-t}\nabla k^s(0,y)\dd s\right\vert^2\dd y\right)^{\frac{1}{2}}&=\left(\fint_{\bb_r}\left\vert \nabla\overline{v}(t,y)\right\vert^2\dd y\right)^{\frac{1}{2}}\nonumber\\
&\lesssim\left(\frac{r_*(0)}{r}\vee 1\right)^{\frac{d}{2}}\left(\fint_{\bb_{r_*(0)\vee r}}\left\vert\nabla\overline{v}(t,y)\right\vert^2\dd y\right)^{\frac{1}{2}}\nonumber\\
&\lesssim \left(\frac{r_*(0)}{r}\vee 1\right)^{\frac{d}{2}}\left(\fint_{t}^{t+4(r_*(0)\vee r)^2}\fint_{\bb_{2(r_*(0)\vee r)}}\left\vert\nabla\overline{v}(s,y)\right\vert^2\dd y\, \dd s\right)^{\frac{1}{2}}\nonumber\\
&\stackrel{\eqref{meanvalue}}{\lesssim}\left(\frac{r_*(0)}{r}\vee 1\right)^{\frac{d}{2}}\left(\fint_{t}^{t+R^2}\fint_{\bb_R}\vert\nabla\overline{v}(s,y)\vert^2\dd y\, \dd s\right)^{\frac{1}{2}}.\label{proofpointwise6}
\end{align}
Now, using the plain energy estimate (for which the proof is identical as \eqref{fullenergylem2}): for all $s\in \mathbb{R}_-$
$$\int_{\mathbb{R}^d}\vert\nabla\tilde{v}(s,y)\vert^2\dd y\lesssim \int_{\mathbb{R}^d}\vert h(y)\vert^2\dd y,$$
and since $\overline{v}\equiv 0$ in $\mathbb{R}^+\times\mathbb{R}^d$, we get
\begin{itemize}
\item[(i)] for $\sqrt{-t}\geq R$
\begin{equation}
\fint_{t}^{t+R^2}\fint_{\bb_R}\vert\nabla\overline{v}(s,y)\vert^2\dd y\, \dd s=\fint_{t}^{t+R^2}\fint_{\bb_R}\vert\nabla\tilde{v}(s,y)\vert^2\dd y\, \dd s\lesssim R^{-d}\int_{\mathbb{R}^d}\vert h(y)\vert^2\dd y.
\label{proofpointwise7}
\end{equation}
\item[(ii)]for $R\geq \sqrt{-t}$
\begin{equation}
\fint_{t}^{t+R^2}\fint_{\bb_R}\vert\nabla\overline{v}(s,y)\vert^2\dd y\, \dd s=R^{-2}\int_{t}^0\fint_{\bb_R}\vert\nabla\tilde{v}(s,y)\vert^2\dd y\, \dd s\lesssim \left(\frac{\sqrt{-t}}{R}\right)^2R^{-d}\int_{\mathbb{R}^d}\vert h(y)\vert^2\dd y\lesssim R^{-d}\int_{\mathbb{R}^d}\vert h(y)\vert^2\dd y.
\label{proofpointwise8}
\end{equation}
\end{itemize}
The combination of \eqref{lem2bound5}, \eqref{proofpointwise6}, \eqref{proofpointwise7} and \eqref{proofpointwise8} yields 
\begin{equation*}
\left\vert \int_{\mathbb{R}^{d}}h(y)\cdot\nabla v_r(t,y)\dd y\right\vert\lesssim \left(\frac{r_*(0)}{r}\vee 1\right)^{\frac{d}{2}}R^{-\frac{d}{2}}\left(\int_{\mathbb{R}^d}\vert h(y)\vert^2\dd y\right)^{\frac{1}{2}},
\end{equation*}
which gives \eqref{dualitylem2} by the arbitrariness of $h$.
\newline
\newline
We now prove \eqref{boundlem23}. Let $R:=\frac{1}{2}\vert x\vert$ and assume that $R\geq 2(r_*(x)\vee r)$. Without loss of generality, we may assume that $R\geq 2r_*(0)$. Indeed, otherwise, we deduce from the $\frac{1}{8}$-Lipschitz property of $r_*(x)$ in form of 
$$r_*(0)\leq r_*(x)+\frac{\vert x\vert}{8}\quad \Rightarrow\quad r_*(x)\geq \frac{\vert x\vert}{4},$$
and 
$$r_*(0)\geq r_*(x)-\frac{\vert x\vert}{8}\quad \Rightarrow\quad \frac{3}{2}r_*(0)\geq r_*(x),$$
as well as \eqref{fullenergylem2} that
$$\fint_{\bb_{r_*(x)}(x)}\vert\nabla v_r(t,y)\vert^2\dd y\lesssim r^{-d}_*(x)\int_{\mathbb{R}^d}\vert\nabla v_r(t,y)\vert^2\dd y\lesssim \frac{r^d_*(0)}{(\vert x\vert+r)^d}.$$
Now, we observe that $\bb_R(x)\subset \mathbb{R}^{d}\backslash \bb_R$.
Indeed for all $y\in \bb_R(x)$, the triangle inequality yields
$$\vert y\vert\geq \left\vert \vert y-x\vert-\vert x\vert\right\vert\geq 2R-R=R,$$
so that $y\notin \bb_R$. We then argue once again by extension and we consider $\overline{v}_r$ the weak solution of
\begin{equation}
\left\{
    \begin{array}{ll}
        \partial_{\tau}\overline{v}_r+\nabla\cdot a\nabla\overline{v}_r=\mathds{1}_{(-\infty,0)}\nabla\cdot af_r\overline{e} & \text{in $\mathbb{R}^{d+1}$}, \\
        \overline{v}_r=0 & \text{ in $\mathbb{R}^+\times\mathbb{R}^d$},
    \end{array}
\right.
\label{proofpointwise9}
\end{equation}
for which $\overline{v}_r(s,\cdot)=v_r(s,\cdot)$ as long as $s\leq 0$. It then follows from Lemma \ref{scott} applied to the equation \eqref{proofpointwise9} and the large-scale $\text{C}^{0,1}$ estimate \eqref{meanvalue} (noticing that $\bb_R(x)\subset \mathbb{R}^{d}\backslash \bb_R$ and \eqref{assumelem4step1} implies $f_r\equiv 0$ on $\bb_R(x)$): for all $t\in \mathbb{R}_-$
\begin{align}
\fint_{\bb_{r_*(x)}(x)}\vert \nabla v_r(t,y)\vert^2\dd y=\fint_{\bb_{r_*(x)}(x)}\vert \nabla \overline{v}_r(t,y)\vert^2 \dd y
&\lesssim\fint_{t}^{t+4r^{2}_*(x)}\fint_{\bb_{2r_*(x)}(x)}\vert\nabla \overline{v}_r(s,y)\vert^2\dd s\, \dd y\nonumber\\
&\stackrel{\eqref{meanvalue}}{\lesssim} \fint_{t}^{t+R^2}\fint_{\bb_R(x)}\vert \nabla \overline{v}_r(s,y)\vert^2\dd s\, \dd y.\label{lem4esti1duality}
\end{align}
Now, since $\overline{v}_r(s,\cdot)=v_r(s,\cdot)$ as long as $s\leq 0$ and $\overline{v}_r(s,\cdot)\equiv 0$ for $s\geq 0$, we have from \eqref{dualitylem2}:
\begin{itemize}
\item[(i)]For $\sqrt{-t}\geq R$,
\begin{align}
\fint_{t}^{t+R^2}\fint_{\bb_R(x)}\vert \nabla \overline{v}_r(s,y)\vert^2\dd s\, \dd y =\fint_{t}^{t+R^2}\fint_{\bb_R(x)}\vert \nabla v_r(s,y)\vert^2\dd s\, \dd y
&\leq R^{-d} \fint_{t}^{t+R^2}\int_{\mathbb{R}^d\backslash\bb_R}\vert \nabla v_r(s,y)\vert^2\dd s\, \dd y\nonumber\\
&\stackrel{\eqref{dualitylem2}}{\lesssim}\left(\frac{r_*(0)}{r}\vee 1\right)^{d}R^{-2d}. 
\label{proofpointwise10}
\end{align}
\item[(ii)]For $R\geq \sqrt{-t}$,
\begin{align}
\fint_{t}^{t+R^2}\fint_{\bb_R(x)}\vert \nabla \overline{v}_r(s,y)\vert^2\dd s\, \dd y =R^{-2}\int_{t}^{0}\fint_{\bb_R(x)}\vert \nabla v_r(s,y)\vert^2\dd s\, \dd y
&\leq R^{-d}\left(\frac{\sqrt{-t}}{R}\right)^2 \fint_{t}^{0}\int_{\mathbb{R}^d\backslash\bb_R}\vert \nabla v_r(s,y)\vert^2\dd s\, \dd y\nonumber\\
&\stackrel{\eqref{dualitylem2}}{\lesssim}\left(\frac{r_*(0)}{r}\vee 1\right)^{d}R^{-2d}. 
\label{proofpointwise11}
\end{align}
\end{itemize}
The combination of \eqref{lem4esti1duality}, \eqref{proofpointwise10} and \eqref{proofpointwise11} concludes the argument for \eqref{boundlem23} since $\vert x\vert\geq 2r$ implies that $R=\frac{1}{2}\vert x\vert\geq \frac{1}{2}( \frac{1}{2}\vert x\vert+r)$. \newline
\newline
\textbf{Argument for \eqref{proofpointwise3} and \eqref{proofpointwise4}. }It follows directly from the  combination of Lemma \ref{scott}, the large-scale $\cc^{0,1}$ estimate \eqref{meanvalue2}, the Poincaré inequality in $\bb_{\rho}(x)$ and the assumption \eqref{assumelem4step1}: for $r\geq r_*(x)$
\begin{itemize}
\item[(i)]if $\sqrt{-t}\leq r$, we use \eqref{meanvalue2} up to the scale $\sqrt{-t}$ in form of
\begin{align*}
\fint_{\bb_{r_*(x)}(x)}\vert\nabla v_r(t,y)\vert^2 \dd y\lesssim&\fint_{t}^{t+4r^2_*(x)}\fint_{\bb_{2r_*(x)}(x)}\vert\nabla v_r(s,y)\vert^2 \dd s\,\dd y+\fint_{\bb_{2r_{*}(x)}(x)}\vert f_r(y)\vert^2\dd y\\
\stackrel{\eqref{meanvalue2},\eqref{assumelem4step1}}{\lesssim}& \fint_{t}^{0}\fint_{\bb_{\sqrt{-t}}(x)}\vert\nabla v_r(s,y)\vert^2\dd s\, \dd y\\
&+\sup_{r_*(x)\leq \rho\leq \sqrt{-t}}\left(\frac{\sqrt{-t}}{\rho}\right)^2\fint_{\bb_{\rho}(x)}\left\vert f_r(y)\overline{e}-\fint_{\bb_{\rho}(x)}f_r(z)\overline{e}\,\dd z\right\vert^2 \dd y+r^{-2d}\\
\stackrel{\eqref{assumelem4step1}}{\lesssim}&\fint_{t}^{0}\fint_{\bb_{\sqrt{-t}}(x)}\vert\nabla v_r(s,y)\vert^2\dd s\, \dd y+r^{-2d}\left(\frac{\sqrt{-t}}{r}\right)^2+r^{-2d}\\
\lesssim& \fint_{t}^{0}\fint_{\bb_{\sqrt{-t}}(x)}\vert\nabla v_r(s,y)\vert^2\dd s\, \dd y+r^{-2d},
\end{align*}
which gives \eqref{proofpointwise3}.
\item[(ii)]If $\sqrt{-t}\geq r$, we use \eqref{meanvalue2} up to the scale $r$ and we obtain the same way \eqref{proofpointwise4}.
\end{itemize}
\textbf{Argument for \eqref{boundlem22} from \eqref{fullenergylem2}, \eqref{boundlem23}, \eqref{proofpointwise3} and \eqref{proofpointwise4}. }The case $\vert x\vert\geq 4(r_*(x)\vee r)$ is done via \eqref{boundlem23}. It remains to treat the case $\vert x\vert\leq 4(r_*(x)\vee r)$ and we distinguish two sub-cases:
\begin{itemize}
\item[(1)] Assume that $r_*(x)\geq r$, which means that $\vert x\vert \leq 4r_*(x)$. We have by the $\frac{1}{8}$-Lipschitz continuity property of $r_*$
\begin{equation}
r_*(x)\gtrsim r_*(0),
\label{lipschitzlem22}
\end{equation}
and
\begin{equation}
r_*(0)\geq r_*(x)-\frac{1}{8}\vert x\vert\geq \frac{r}{3}+\frac{2r_*(x)}{3}-\frac{1}{8}\vert x\vert\gtrsim r+\vert x\vert.
\label{lipschitzlem2}
\end{equation}
Thus, from \eqref{fullenergylem2} we deduce that for all $t\in \mathbb{R}_-$
\begin{align*}
\fint_{\bb_{r_*(x)}(x)}\vert \nabla v_r(t,y)\vert^2 \dd y\lesssim r^{-d}_*(x)\int_{\mathbb{R}^{d}}\vert \nabla v_r(t,y)\vert^2\dd y&\stackrel{\eqref{lipschitzlem22}}{\lesssim}r^{-d}_*(0)\int_{\mathbb{R}^{d}}\vert \nabla v_r(t,y)\vert^2 \dd y\\
&\stackrel{\eqref{fullenergylem2},\eqref{lipschitzlem2}}{\lesssim}\frac{r^{d}_*(0)}{(\vert x\vert+r)^{2d}}.
\end{align*}
\item[(2)]Assume that $r_*(x)\leq r$ which means that $\vert x\vert\leq 4r$. For all $\sqrt{-t}\geq 2r_*(x)$, we have
\begin{itemize}
\item[(i)] If $\sqrt{-t}\leq r$, we use \eqref{proofpointwise3} and the plain energy estimate \eqref{proofpointwise12} to obtain
\begin{align*}
\fint_{\bb_{r_*(x)}(x)}\vert \nabla v_r(t,y)\vert^2 \dd y\stackrel{\eqref{proofpointwise3}}{\lesssim}\fint_{t}^{0}\fint_{\bb_{\sqrt{-t}}(x)} \vert v_r(s,y)\vert^2\dd y\, \dd s+r^{-2d}&\stackrel{\eqref{proofpointwise12}}{\lesssim} r^{-2d}\int_{\bb_{r}}\eta_{\sqrt{-t}}(y-x)\dd y+r^{-2d}\\
&\leq r^{-2d}\lesssim \frac{r^d_*(0)}{(\vert x\vert+r)^{2d}}.
\end{align*}
\item[(ii)] If $\sqrt{-t}\geq r$, we use \eqref{proofpointwise4} and the plain energy estimate \eqref{fullenergylem2} to get
$$\fint_{\bb_{r_*(x)}(x)}\vert \nabla v_r(t,y)\vert^2 \dd y\stackrel{\eqref{proofpointwise4}}{\lesssim}
\fint_{t}^{t+r^2}\fint_{\bb_r(x)}\vert\nabla v_r(s,y)\vert^2\dd y\, \dd s+r^{-2d}\stackrel{\eqref{fullenergylem2}}{\lesssim} r^{-2d}\lesssim \frac{r^d_*(0)}{(\vert x\vert+r)^{2d}}.$$
\end{itemize}
\end{itemize}
This concludes the proof of \eqref{boundlem22}.\newline
\newline
\textbf{Substep 1.2. }We prove \eqref{lem2} without the support condition \eqref{assumelem4step1} on $f_r$. We decompose the r.h.s of \eqref{equationvlem2} according to a family of dyadic annuli $(\mathcal{B}_k)_{k\in\mathbb{N}}$, defined by $\mathcal{B}_k:=\bb_{2^{k}r}\backslash \bb_{2^{k-1} r}$ for all $k\geq 1$ and $\mathcal{B}_0:=\bb_r$. Namely, we set for all $k\in\mathbb{N}$, $f_{r,k}:=f_r\chi_{k}$, where $(\chi_k)_{k\in\mathbb{N}}$ is a partition of unity according to the decomposition $(\mathcal{B}_k)_{k\in\mathbb{N}}$, and we denote by $v_{r,k}$ the weak solution of \eqref{equationvlem2} with r.h.s $\nabla\cdot a f_{r,k}\overline{e}$. By uniqueness, we have $\nabla v_r=\sum_{k=0}^{+\infty}\nabla v_{r,k}$. Hence, we get by the triangle inequality
$$\left(\fint_{\bb_{r_*(x)}(x)}\vert \nabla v_r(t,y)\vert^2 \dd y\right)^{\frac{1}{2}}\leq\sum_{k=0}^{+\infty}\left(\fint_{\bb_{r_*(x)}(x)}\vert \nabla v_{r,k}(t,y)\vert^2 \dd y\right)^{\frac{1}{2}}.$$
Thanks to \eqref{assumerhsdual}, $f_{r,k}$ satisfies \eqref{assumelem4step1} with radius $2^{k-1} r$, thus, by \eqref{boundlem22}, we have for all $k\geq 0$ and $\sqrt{-t}\geq 2r_*(x)$
$$\left(\fint_{\bb_{r_*(x)}(x)}\vert \nabla v_{r,k}(t,y)\vert^2 \dd y\right)^{\frac{1}{2}}\lesssim \frac{\left(\frac{r_*(0)}{2^kr}\vee 1\right)^{\frac{d}{2}}}{(\vert x\vert+2^{k-1}r)^{d}}.$$
We deduce, setting $\text{N}_r:=\left\lceil \log_2\left(1+\frac{\vert x\vert}{r}\right)\right\rceil$
\begin{align*}
\left(\fint_{\bb_{r_*(x)}(x)}\vert \nabla v_r(t,y)\vert^2 \dd y\right)^{\frac{1}{2}}\lesssim\sum_{k=0}^{+\infty}\frac{\left(\frac{r_*(0)}{2^{k-1}r}\vee 1\right)^{\frac{d}{2}}}{(\vert x\vert+2^kr)^{d}}\lesssim \left(\frac{r_*(0)}{r}\vee 1\right)^{\frac{d}{2}}\left((\vert x\vert+r)^{-d}\text{N}_r+r^{-d}\sum_{k=\text{N}_r+1}^{+\infty}2^{-kd}\right),
\end{align*}
which gives \eqref{lem2}.\newline
\newline
\textbf{Step 2. Proof of \eqref{dualboundd2}. }We use the same type of decomposition as in Substep $1.2$ : we have $\nabla v_r=\sum_{k=0}^{+\infty}\nabla v_{r,k}$ where this time, for all $k\geq 1$, $\mathcal{B}_k:=\bb_{2^{k}}\backslash \bb_{2^{k-1}}$ and $\mathcal{B}_0:=\bb_1$. We then split the proof into two steps.\newline
\newline
\textbf{Substep 2.1. }We argue in favor of the first alternative in \eqref{dualboundd2}, that is when the r.h.s is equal to $\frac{r^{\frac{d}{2}}_*(0)r\log(1+\vert x\vert)}{(\vert x\vert+1)^d}$. From the assumption \eqref{assumerhsdual2} used in form of $\vert f_r(x)\vert +(\vert x\vert+1)\vert\nabla f_r(x)\vert\lesssim \frac{r}{(\vert x\vert+1)^d}$, we note that $\frac{1}{r}f_{r,k}$ satisfies \eqref{assumelem4step1} with radius $2^k$. Thus by \eqref{boundlem22}, we have for all $k\geq 0$ and $\sqrt{-t}\geq 2r_*(x)$
$$\left(\fint_{\bb_{r_*(x)}(x)}\vert\nabla v_{r,k}(t,y)\vert^2\dd y\right)^{\frac{1}{2}}\lesssim \frac{r\left(\frac{r_*(0)}{2^k}\vee 1\right)^{\frac{d}{2}}}{(\vert x\vert+2^k)^d}.$$
We then conclude exactly as in Substep $1.2$.\newline
\newline
\textbf{Substep 2.2. }We argue in favor of the second alternative in \eqref{dualboundd2}, that is when the r.h.s is equal to $\frac{r^{\frac{d}{2}}_*(0)}{(\vert x\vert+1)^{d-1}}$. From the assumption \eqref{assumerhsdual2} used in form of $\vert f_r(x)\vert+(\vert x\vert+1)\vert\nabla f_r(x)\vert\lesssim \frac{1}{(\vert x\vert+1)^{d-1}}$, we note that $\frac{1}{2^{k}}f_{r,k}$ satisfies \eqref{assumelem4step1} with radius $2^k$. Thus by \eqref{boundlem22}, we have for all $k\geq 0$ and $\sqrt{-t}\geq 2r_*(x)$
$$\left(\fint_{\bb_{r_*(x)}(x)}\vert\nabla v_{r,k}(t,y)\vert^2\dd y\right)^{\frac{1}{2}}\lesssim \frac{2^k\left(\frac{r_*(0)}{2^k}\vee 1\right)^{\frac{d}{2}}}{(\vert x\vert+2^k)^d}.$$
We then conclude by the same decomposition as in Substep $1.2$ : setting $\text{N}:=\left\lceil \log_2(1+\vert x\vert)\right\rceil$
\begin{align*}
\left(\fint_{\bb_{r_*(x)}(x)}\vert\nabla v_{r}(t,y)\vert^2\dd y\right)^{\frac{1}{2}} \lesssim \sum_{k=0}^{+\infty}
\frac{2^k\left(\frac{r_*(0)}{2^k}\vee 1\right)^{\frac{d}{2}}}{(\vert x\vert+2^k)^d}\lesssim  r^{\frac{d}{2}}_*(0)\left((\vert x\vert+1)^{-d}\sum_{k=0}^{\text{N}}2^k+\sum_{k=\text{N}+1}^{+\infty} 2^{(1-d)k}\right),
\end{align*}
which concludes the proof since 
$$\sum_{k=0}^{\text{N}}2^k\lesssim \vert x\vert+1 \quad\text{and}\quad \sum_{k=\text{N}+1}^{+\infty} 2^{(1-d)k}\lesssim \frac{1}{(\vert x\vert+1)^{d-1}}.$$
\subsubsection{Proof of Lemma \ref{ctrlav}: Control of averages } 
We treat separately the two regimes: the non-generic case $R\leq r_*(x)$ and the generic case $R\geq r_*(x)$.
\begin{enumerate} 
\item We start with the non-generic case $R\leq r_*(x)$. We distinguish two sub-cases.
\begin{itemize}
\item[(i)] In the case where $\sqrt{\frac{T}{2}}\leq 2r_*(x)$, we have, using \eqref{assumelem5} and $R<\sqrt{T}$
$$\fint_{\bb_R(x)}\vert\nabla u(T,y)\vert^2\dd y\lesssim\bigg(\frac{\sqrt{T}}{r}\bigg)^{d}\fint_{\bb_{\sqrt{T}}(x)}\vert \nabla u(T,y)\vert^2\dd y\stackrel{\eqref{assumelem5}}{\lesssim} \left(\frac{r_*(x)}{R}\right)^dC(x,T)f(T)g(T).$$
\item[(ii)] In the case where $\sqrt{\frac{T}{2}}\geq 2r_*(x)$, we combine, by noticing that $\partial_{\tau} u-\nabla\cdot a\nabla u=0$ on $(\frac{T}{2},T)\times \bb_{\sqrt{\frac{T}{2}}}(x)$, Lemma \ref{scott} and the large-scale $\text{C}^{0,1}$ estimate \eqref{meanvalue} to obtain
\begin{align*}
\fint_{\bb_R(x)}\vert \nabla u(T,y)\vert^2\dd y \lesssim \left(\frac{r_*(x)}{R}\right)^d\fint_{\bb_{r_*(x)}(x)}\vert \nabla u(T,y)\vert^2\dd y&\lesssim\left(\frac{r_*(x)}{R}\right)^d\fint_{\cc_{2r_*(x)}(T,x)}\vert \nabla u(s,y)\vert^2\dd s\, \dd y\\
&\stackrel{\eqref{meanvalue}}{\lesssim}\left(\frac{r_*(x)}{R}\right)^d\fint_{\frac{T}{2}}^{T}\fint_{\bb_{\sqrt{\frac{T}{2}}}(x)}\vert \nabla u(s,y)\vert^2\dd s\, \dd y\\
&\lesssim \left(\frac{r_*(x)}{R}\right)^d\fint_{\frac{T}{2}}^{T}\fint_{\bb_{\sqrt{s}}(x)}\vert \nabla u(s,y)\vert^2\dd s\, \dd y\\
&\stackrel{\eqref{assumelem5}}{\lesssim} \left(\frac{r_*(x)}{R}\right)^d f(T)g(\tfrac{T}{2})\fint_{\frac{T}{2}}^{T}C(x,s)\dd s .
\end{align*}
\end{itemize}
\item We now consider the generic case $R\geq r_*(x)$ and, without loss of generality, we may assume that $\sqrt{T}>2\sqrt{2}R$ since otherwise $\fint_{\bb_{R}}\lesssim \fint_{\bb_{\sqrt{T}}}$ and the conclusion follows from \eqref{assumelem5}. We have $\partial_{\tau} u-\nabla\cdot a\nabla u=0$ on $(0,T)\times \bb_{\sqrt{T}}$ and $r_*(x)\leq 2R<\sqrt{\frac{T}{2}}$ so that from Lemma \ref{scott} and the large-scale $\text{C}^{0,1}$estimate \eqref{meanvalue} we deduce 
$$\fint_{\bb_R(x)} \vert\nabla u(T,y)\vert^2\dd y\lesssim\fint_{T-4R^2}^{T}\fint_{\bb_{2R}}\vert \nabla u(s,y)\vert^2\dd y\, \dd s\lesssim \fint_{\frac{T}{2}}^{T}\fint_{\bb_{\sqrt{T}}(x)}\vert \nabla u(s,y)\vert^2\dd y\, \dd s\lesssim C(x,T)f(T)g(\tfrac{T}{2}).$$
\end{enumerate}
\subsection{Proof of the suboptimal control of fluctuations of the time dependent flux }
We provide in this section the proofs of Lemmas \ref{functioderiv}, \ref{functioderiv2}, \ref{near0estiq} and Proposition \ref{Sensitivitysubop} of Section \ref{lemsec2}.
\subsubsection{Proof of Lemmas \ref{functioderiv} and \ref{functioderiv2}: Control of the functional derivatives}
We prove Lemma \ref{functioderiv} and Lemma \ref{functioderiv2} independently. For heuristic arguments, we refer to I) in Section \ref{stratsec}.
\begin{proof}[Proof of lemma \ref{functioderiv}.]
It is enough to prove \eqref{lem6} for the quantities $q_r(T)\cdot e_k$, for all $r>0$, $T\geq 1$ and $k\in \llbracket 1,d\rrbracket$. In particular, we only treat the case $k=1$, since the other contributions are controlled the same way. For notational convenience, we simply write $v^T$ for $v^T_1$ .\newline
\newline
Let $x\in\mathbb{R}^d$, $h\in (0,1)$, $T\geq 1$, $r>0$, $\ell\in [1,\infty)$ and $\delta a$ be compactly supported in $\bb_{\ell}(x)$ such that $\sup_{y\in \bb_{\ell}(x)}\vert \delta a(y)\vert\leq 1$. We compute the finite difference
\begin{align}
\delta^h q_r(T)\cdot e_1:=&\frac{q_r(a+h\delta a, T)\cdot e_1-q_r(a,T)\cdot e_1}{h}\nonumber\\
=&\int_{\mathbb{R}^d}  g_r(y)e_1\cdot \delta a(y)e\,\dd y+\int_{\mathbb{R}^d} g_r(y)e_1\cdot \delta a(y)\bigg(\int_{0}^T\nabla u(a+h\delta a,t,y)\dd t\bigg)\dd y\nonumber\\
&+\int_{0}^T\int_{\mathbb{R}^d} g_r(y)e_1\cdot a(y)\nabla \delta^h u(t,y)\dd y\,\dd t,\label{lem62}
\end{align}
where $\delta^h u(t,\cdot):=\frac{u(a+h\delta a,t,\cdot)-u(a,t,\cdot)}{h}$ is the weak solution of
\begin{equation}
\left\{
    \begin{array}{ll}
        \partial_{\tau}\delta^h u-\nabla\cdot a\nabla \delta^h u=\nabla\cdot \delta a\nabla u(a+h\delta a,\cdot,\cdot) & \text{on $(0,+\infty)\times \mathbb{R}^d$}, \\
       \delta^h u(0,\cdot)=\nabla\cdot \delta a(\cdot)e. & 
    \end{array}
\right.
\label{equationdeltaulem6}
\end{equation}
The first r.h.s term of \eqref{lem62} gives directly the first r.h.s terms of \eqref{lem6}. For the second r.h.s term of \eqref{lem62}, we easily derive from the localized energy estimates \eqref{LemE2} and \eqref{LemE3} applied to the equation \eqref{equationdeltaulem6} combined with \eqref{LemE1} with $a$ replaced by $a+h\delta a$ (which control the norm of the r.h.s of \eqref{equationdeltaulem6}) that 
$$\int_{\mathbb{R}^d} g_r(y)e_1\cdot \delta a(y)\bigg(\int_{0}^T\nabla u(a+h\delta a,t,y)\dd t\bigg)\dd y\underset{h\downarrow 0}{\rightarrow} \int_{\mathbb{R}^d} g_r(y)e_1\cdot \delta a(y)\bigg(\int_{0}^T\nabla u(t,y)\dd t\bigg)\dd y.$$
We thus obtain the second second r.h.s term of \eqref{lem6}. We now focus on the third r.h.s term of \eqref{lem62}.
 We split the time integral into the singular part $t\leq 1$ and the regular part $t\geq 1$
\begin{equation}
\int_{0}^T\int_{\mathbb{R}^d} g_r(y)e_1\cdot a(y)\nabla \delta^h u(t,y)\dd y\,\dd t=\int_{\mathbb{R}^d} g_r(y)e_1\cdot a(y)\bigg(\int_{0}^1\nabla \delta^h u(t,y)\dd t\bigg)\dd y+\int_{1}^T\int_{\mathbb{R}^d} g_r(y)e_1\cdot a(y)\nabla \delta^h u(t,y)\dd y\,\dd t.
\label{splitlem6}
\end{equation}
We now split the rest of the proof into two steps, treating the two r.h.s terms of \eqref{splitlem6} separately.\newline
\newline
\textbf{Step 1. First r.h.s term of \eqref{splitlem6}. }We prove that
\begin{equation}
\limsup_{h\downarrow 0}\bigg\vert\int_{\mathbb{R}^d} g_r(y)e_1\cdot a(y)\bigg(\int_{0}^1\nabla\delta^h u(t,y)\dd t\bigg)\dd y\bigg\vert\lesssim \mathds{1}_{\ell<\sqrt{T}}\,\ell^{\frac{d}{2}}\bigg(\int_{\mathbb{R}^d} e^{-\frac{\vert x-z\vert}{2c\ell}}\fint_{\bb_{\ell}(z)}\vert g_r(y)\vert^2\dd y\, \dd z\bigg)^{\frac{1}{2}}+\mathds{1}_{\ell\geq \sqrt{T}}\mathcal{T}_{x,\ell}(\eta_r)(0),
\label{near0lem71}
\end{equation}
where $\mathcal{T}_{x,\ell}(\eta_r)$ is defined in \eqref{functionallem6}. We argue differently between the two regimes $\ell<\sqrt{T}$ and $\ell\geq \sqrt{T}$. \newline
\newline
\textbf{Regime $\ell<\sqrt{T}$. }Using that $\int_{\mathbb{R}^d}=\int\fint_{\bb_{\ell}(z)}$ and by splitting $\mathbb{R}^d$ into $\bb_{\ell}(x)$ and $\mathbb{R}^d\backslash \bb_{\ell}(x)$, we have
\begin{align}
\bigg\vert\int_{\mathbb{R}^d} g_r(y)e_1\cdot a(y)\bigg(\int_{0}^1\nabla \delta^h u(t,y)\dd t\bigg) \dd y\bigg\vert\lesssim & \int_{\mathbb{R}^d}\fint_{\bb_{\ell}(z)}\bigg\vert\int_{0}^1\nabla \delta^h u(t,y)\dd t\bigg\vert g_r(y)\dd y\,\dd z\nonumber\\
=& \int_{\bb_{\ell}(x)}\fint_{B_{\ell}(z)}\bigg\vert\int_{0}^1\nabla \delta^h u(t,y)\dd t\bigg\vert g_r(y)\dd y\,\dd z& (=:I^{\ell}_1)\nonumber\\
&+\int_{\mathbb{R}^d\backslash \bb_{\ell}(x)}\fint_{\bb_{\ell}(z)}\bigg\vert\int_{0}^1\nabla \delta^h u(t,y)\dd t\bigg\vert g_r(y)\dd y\,\dd z.& (=:I^{\ell}_2)\label{lem64}
\end{align}
We now show that 
\begin{equation}
I^{\ell}_1+I^{\ell}_2\lesssim\ell^{\frac{d}{2}}\bigg(\int_{\mathbb{R}^d} e^{-\frac{\vert x-z\vert}{2c\ell}}\fint_{\bb_{\ell}(z)}\vert g_r(y)\vert^2\dd y\,\dd z\bigg)^{\frac{1}{2}}.
\label{I1I2}
\end{equation}
Since the arguments are similar we only give the details for $I^{\ell}_2$. By Cauchy-Schwarz's inequality, we have
\begin{equation}
I^{\ell}_2\leq \bigg(\int_{\mathbb{R}^d\backslash \bb_{\ell}(x)}e^{\frac{\vert x-z\vert}{2c\ell}}\fint_{\bb_{\ell}(z)}\bigg\vert\int_{0}^1\nabla\delta^h u(s,y)\dd s\bigg\vert^2\dd y\,\dd z\bigg)^{\frac{1}{2}}\bigg(\int_{\mathbb{R}^d\backslash \bb_{\ell}(x)}e^{-\frac{\vert x-z\vert}{2c\ell}}\fint_{\bb_{\ell}(z)}\vert g_r(y)\vert^2\dd y\,\dd z\bigg)^{\frac{1}{2}}.\label{arg1lem7}
\end{equation}
It remains to estimate the first r.h.s factor of \eqref{arg1lem7}. First, by the localized energy estimate \eqref{LemE3} applied to the equation \eqref{equationdeltaulem6} and $\bigg(\int_{\bb_{\ell}(x)}\vert\delta a(y) e\vert^2\dd y\bigg)^{\frac{1}{2}}\lesssim \ell^{\frac{d}{2}}$, we obtain
\begin{align}
\bigg(\int_{\mathbb{R}^d\backslash \bb_{\ell}(x)}e^{\frac{\vert x-z\vert}{2c\ell}}\fint_{\bb_{\ell}(z)}\bigg\vert\int_{0}^1\nabla\delta^h u(s,y)\dd s\bigg\vert^2\dd y\,\dd z\bigg)^{\frac{1}{2}}\lesssim &\, \ell^{\frac{d}{2}}+\bigg(\int_{\bb_{\ell}(x)}\bigg\vert\int_{0}^1\delta a(y)\nabla u(a+h\delta a,s,y)\dd s\bigg\vert^2\dd y\bigg)^{\frac{1}{2}}\nonumber\\
&+\int_{0}^1\frac{1}{1-t}\int_{t}^1\bigg(\int_{\bb_{\ell}(x)}\vert \delta a(y)\nabla u(a+h\delta a,s,y)\vert^2\dd y\bigg)^{\frac{1}{2}}\dd s\, \dd t.\label{locapertubu2}
\end{align}
Second, by the localized energy estimate \eqref{LemE1} applied to the equation \eqref{equationu} with $a$ replaced by $a+h\delta a$ and for $R=\ell\geq 1$, we obtain (since $\mathds{1}_{\bb_{\ell}(x)}(y)\lesssim e^{-\frac{\vert y-x\vert}{\ell}}$)
\begin{equation}
\bigg(\int_{\bb_{\ell}(x)}\bigg\vert\int_{0}^1\nabla u(a+h\delta a,t,y)\dd t\bigg\vert^2\dd y\bigg)^{\frac{1}{2}}\lesssim \ell^{\frac{d}{2}},
\label{energylem668}
\end{equation}
and 
\begin{equation}
\int_{0}^1\frac{1}{1-t}\int_{t}^1\bigg(\int_{\bb_{\ell}(x)}\vert\nabla u(a+h\delta a,s,y)\vert^2\dd y\bigg)^{\frac{1}{2}}\dd s\,\dd t\lesssim \ell^{\frac{d}{2}}\int_{0}^1\frac{1}{1-t}\int_{t}^1s^{-1}\dd s\, \dd t=\ell^{\frac{d}{2}}\int_{0}^1\frac{-\log(t)}{1-t}\dd t\lesssim \ell^{\frac{d}{2}}.
\label{locaperturbu}
\end{equation}
Finally the combination of \eqref{arg1lem7}, \eqref{locapertubu2}, \eqref{energylem668} and \eqref{locaperturbu} yields \eqref{I1I2}. It then follows from \eqref{lem64} that
\begin{equation}
\bigg\vert\int_{\mathbb{R}^d} g_r(y)e_1\cdot a(y)\bigg(\int_{0}^1\nabla \delta^h u(t,y)\dd t\bigg) \dd y\bigg\vert\lesssim\ell^{\frac{d}{2}}\bigg(\int_{\mathbb{R}^d} e^{-\frac{\vert x-z\vert}{2c\ell}}\fint_{\bb_{\ell}(z)}\vert g_r(y)\vert^2\dd y\,\dd z\bigg)^{\frac{1}{2}}.
\label{near0rhslem72}
\end{equation}
\textbf{Regime $\ell\geq\sqrt{T}$. }Using Cauchy-Schwarz's inequality and by dominating the Gaussian kernel $g_r$ by the exponential kernel $\eta_r$, we have
\begin{align}
\bigg\vert\int_{\mathbb{R}^d} g_r(y)e_1\cdot a(y)\bigg(\int_{0}^1\nabla \delta^h u(t,y)\dd t\bigg)\dd y\bigg\vert\leq&  \bigg(\int_{\mathbb{R}^d}\eta_r(y)\bigg\vert\int_{0}^1\nabla \delta^h u(t,y)\dd t\bigg\vert^2\dd y\bigg)^{\frac{1}{2}},\label{lem652prime}
\end{align}
with, by applying the localized energy estimates \eqref{LemE1} and \eqref{LemE2} to \eqref{equationdeltaulem6} for $R=r$
\begin{equation}\label{lem652}
\begin{aligned}
\bigg(\int_{\mathbb{R}^d}\eta_r(y)\bigg\vert\int_{0}^1\nabla \delta^h u(t,y)\dd t\bigg\vert^2\dd y\bigg)^{\frac{1}{2}}\lesssim&
\bigg(\int_{\mathbb{R}^d}\eta_r(y)\vert \delta a(y)\vert^2\dd y\bigg)^{\frac{1}{2}}+\bigg(\int_{\mathbb{R}^d}\eta_r(y)\vert \delta a(y)\vert^2\bigg\vert\int_{0}^1\nabla u(a+h\delta a,t,y)\dd t\bigg\vert^2\dd y\bigg)^{\frac{1}{2}}\\
&+\int_{0}^1\frac{1}{1-t}\int_{t}^1\bigg(\int_{\mathbb{R}^d}\eta_r(y)\vert \delta a(y)\vert^2\vert \nabla u(a+h\delta a,s,y)\vert^2\dd y\bigg)^{\frac{1}{2}}\dd s\, \dd t\bigg).
\end{aligned}
\end{equation}
Now, since $\delta a$ is supported in $\bb_{\ell}(x)$, the localized energy estimates \eqref{LemE2} and \eqref{LemE3} applied to the equation \eqref{equationdeltaulem6} with $R=\ell$ combined with \eqref{energylem668} and \eqref{locaperturbu} yield
\begin{equation}
\int_{\bb_{\ell}(x)}\bigg\vert\int_{0}^1\nabla u(a+h\delta a,t,y)-\nabla u(a,t,y)\dd t\bigg\vert^2\dd y+t^2\Big(\frac{\sqrt{t}}{\ell}\vee 1\Big)^{-d}\int_{\bb_{\ell}(x)}\vert\nabla u(a+h\delta a,t,y)-\nabla u(a,t,y)\vert^2\dd y\lesssim h^2\ell^{d},
\label{lem651}
\end{equation}
which allow us to pass to the limit when $h\downarrow 0$ in \eqref{lem652prime} and \eqref{lem652}, and obtain
$$\limsup_{h\downarrow 0}\bigg\vert\int_{\mathbb{R}^d} g_r(y)e_1\cdot a(y)\bigg(\int_{0}^1\nabla \delta^h u(t,y)\dd t\bigg)\dd y\bigg\vert\lesssim\mathcal{T}_{x,\ell}(\eta_r)(0).$$
This concludes the argument for \eqref{near0lem71}.\newline
\newline
\textbf{Step 2. Second r.h.s term of \eqref{splitlem6}. }We prove that
\begin{equation}\label{far0rhslem7}
\begin{aligned}
&\limsup_{h\downarrow 0}\bigg\vert \int_{1}^T\int_{\mathbb{R}^d} g_r(y)e_1\cdot a(y)\nabla\delta^h u(t,y)\dd y\, \dd t\bigg\vert\\
\lesssim &
\int_{\bb_{\ell}(x)}\int_{1}^T\vert\nabla u(t,y)\vert\vert\nabla v^T(t,y)\vert\dd t\,\dd y+\int_{\bb_{\ell}(x)}\vert \nabla v^T(1,y)\vert\bigg(1+\bigg\vert\int_{0}^{1}\nabla u(s,y)\dd s\bigg\vert\bigg)\dd y\\
&+\mathcal{G}_{r,\ell}(x)\mathds{1}_{\ell\geq \sqrt{T}}+\mathds{1}_{\ell<\sqrt{T}}\,\ell^{\frac{d}{2}}\bigg(\int_{\mathbb{R}^d} e^{-\frac{\vert x-z\vert}{2c\ell}}\fint_{\bb_{\ell}(z)}\vert\nabla v^T(1,y)\vert^2\dd y\, \dd z\bigg)^{\frac{1}{2}}.
\end{aligned}
\end{equation}
Recall that $v^T$ denotes the weak solution of the dual system associated with \eqref{equationdeltaulem6}, which reads
\begin{equation}
\left\{
    \begin{array}{ll}
        \partial_{\tau}v^T+\nabla\cdot a^*\nabla v^T=\nabla\cdot ag_re_1 & \text{ on $(-\infty,T)\times \mathbb{R}^d$}, \\
        v^T(T)= 0. & 
    \end{array}
\right.
\label{equationvTlem6}
\end{equation}
We reformulate the l.h.s of \eqref{far0rhslem7} with help of the dual system \eqref{equationvTlem6}. We have by testing the equation \eqref{equationvTlem6} with $\delta^h u$ and integrating in time 
\begin{equation}
\int_{1}^T\int_{\mathbb{R}^d}\delta^h u(t,y)\partial_{t}v^T(t,y)\dd y\,\dd t-\int_{1}^T\int_{\mathbb{R}^d}\nabla \delta^h u(t,y)\cdot a^*(y) \nabla v^T(t,y)\dd y\, \dd t=-\int_{1}^T\int_{\mathbb{R}^d} g_r(y)e_1\cdot a(y)\nabla\delta^h u(t,y)\dd y\, \dd t,
\label{testlem61}
\end{equation}
and also, by testing \eqref{equationdeltaulem6} with $v^T$
\begin{equation}
\int_{1}^T\int_{\mathbb{R}^d} v^T(t,y)\partial_{t}\delta^h u(t,y)\dd y\,\dd t+\int_{1}^T\int_{\mathbb{R}^d}\nabla v^T(t,y) \cdot a(y) \nabla \delta^h u(t,y) \dd y\, \dd t=-\int_{1}^T\int_{\mathbb{R}^d}\nabla v^T(t,y)\cdot \delta a(y)\nabla u(a+h\delta a,t,y)\dd y\, \dd t.
\label{testlem62}
\end{equation}
Consequently, by summing \eqref{testlem61} and \eqref{testlem62}, using an integration by part in time and the fact that $v^T(T,\cdot)\equiv 0$, we get 
\begin{equation}
\int_{1}^T\int_{\mathbb{R}^d} g_r(y)e_1\cdot a(y)\nabla\delta^h u(t,y)\dd y\, \dd t=\int_{\mathbb{R}^d}\delta^h u(1,y)v^T(1,y)\dd y-\int_{1}^T\int_{\mathbb{R}^d}\nabla v^T(t,y)\cdot \delta a(y)\nabla u(a+h\delta a,t,y)\dd y\, \dd t.
\label{lem63}
\end{equation}
Moreover, from \eqref{lem651} we can pass to the limit when $h\downarrow 0$ in the second r.h.s term of \eqref{lem63}, namely
$$\int_{1}^T\int_{\mathbb{R}^d}\nabla v^T(t,y)\cdot \delta a(y)\nabla u(a+h\delta a,t,y)\dd y\, \dd t\underset{h\downarrow 0}{\rightarrow}\int_{1}^T\int_{\mathbb{R}^d}\nabla v^T(t,y)\cdot \delta a(y)\nabla u(t,y)\dd y\, \dd t,$$
and obtain the first r.h.s term of \eqref{far0rhslem7}. It remains to control the first r.h.s term of \eqref{lem63}. To this aim, we integrate in time the equation \eqref{equationdeltaulem6} between $0$ and $1$:
$$\delta^hu(1,\cdot)-\nabla\cdot a\int_{0}^1\nabla\delta^h u(t,\cdot)\dd t=\nabla\cdot \delta a\int_{0}^1\nabla u(a+h\delta,t,\cdot)\dd t+\nabla\cdot \delta ae,$$
which provides by testing with $v^T(1,\cdot)$ 
\begin{equation}\label{int01dual}
\begin{aligned}
\int_{\mathbb{R}^d}\delta^h u(1,y)v^T(1,y)\dd y=&\int_{\mathbb{R}^d}\nabla v^T(1,y)\cdot\delta a(y)\bigg(\int_{0}^1\nabla u(a+h\delta a,t,y)\dd t\bigg)\dd y\\
&-\int_{\mathbb{R}^d}\nabla v^T(1,y)\cdot a(y)\bigg(\int_{0}^1\nabla \delta^h u(t,y)\dd t\bigg)\dd y-\int_{\mathbb{R}^d}\nabla v^T(1,y)\cdot \delta a(y)\,e\,\dd y.
\end{aligned}
\end{equation}
The first and the third r.h.s terms of \eqref{int01dual} combined with \eqref{lem651} give the second r.h.s term of \eqref{far0rhslem7}. The second r.h.s term of \eqref{int01dual} is then dominated in two ways, depending on the regime in $\ell$.\newline
\newline
\textbf{Regime $\ell\geq\sqrt{T}$. }For the generic case $\ell\geq r_*(0)$, we use the identity $\int=\int_{\mathbb{R}^d}\fint_{\bb_{\ell}(y)}$ and we split the integral into the two contributions $\int_{\mathbb{R}^d\backslash \bb_{4\ell}}$ and $\int_{\bb_{4\ell}}$ in form of
\begin{align*}
\bigg\vert \int_{\mathbb{R}^d}\nabla v^T(1,y)\cdot a(y)\bigg(\int_{0}^1\nabla \delta^h u(t,y)\dd t\bigg)\dd y\bigg\vert&\leq \int_{\mathbb{R}^d}\fint_{\bb_{\ell}(y)}\vert \nabla v^T(1,z)\vert \bigg\vert\int_{0}^1 \nabla\delta^h u(t,z)\dd t\bigg\vert\dd z\, \dd y \\
&=\bigg(\int_{\mathbb{R}^d\backslash \bb_{4\ell}}+\int_{\bb_{4\ell}}\bigg)\fint_{\bb_{\ell}(y)}\vert \nabla v^T(1,z)\vert \bigg\vert\int_{0}^1 \nabla\delta^h u(t,z)\dd t\bigg\vert\dd z\, \dd y.
\end{align*}
For the far-field contribution $\vert y\vert\geq 4\ell$, we use of Cauchy-Schwarz's inequality and the computations done in \eqref{lem652} as well as \eqref{lem651} to get
\begin{equation*}
\limsup_{h\downarrow 0} \int_{\mathbb{R}^d\backslash \bb_{4\ell}}\fint_{\bb_{\ell}(y)}\vert \nabla v^T(1,z)\vert \bigg\vert\int_{0}^1 \nabla\delta^h u(t,z)\dd t\bigg\vert\dd z\, \dd y\lesssim \int_{\mathbb{R}^d\backslash \bb_{4\ell}}\bigg(\fint_{\bb_{\ell}(y)}\vert\nabla v^T(1,z)\vert^2\dd z\bigg)^{\frac{1}{2}}\mathcal{T}_{x,\ell}(\eta_{\ell})(y)\dd y.
\label{derivothertestesti50}
\end{equation*}
For the near-field contribution $\vert y\vert<4\ell$, we first note that from the assumption $\ell>r_*(0)$ we have for all $y\in \mathbb{R}^d$
\begin{equation}
\bb_{r_*(y)}(y)\cap \bb_{5\ell}\neq \emptyset \quad\Rightarrow\quad y\in \bb_{7\ell}.
\label{proofsubopesti7}
\end{equation}
Indeed, from the $\frac{1}{8}$-Lipschitz regularity property of $r_*$, if there exists $z\in \bb_{r_*(y)}(y)\cap \bb_{5\ell}$ then $\vert y\vert\leq \vert y-z\vert+\vert z\vert\leq r_*(y)+5\ell\leq r_*(0)+\frac{\vert y\vert}{8}+5\ell\leq 6\ell+\frac{\vert y\vert}{8}$ and thus $\vert y\vert\leq \frac{16}{3}\ell\leq 7\ell$. 
Therefore, using the property \eqref{averager*ctrl} combined with Cauchy-Schwarz's inequality, we get
\begin{align*}
\int_{\bb_{4\ell}}\fint_{\bb_{\ell}(y)}\vert \nabla v^T(1,z)\vert \bigg\vert\int_{0}^1 \nabla\delta^h u(t,z)\dd t\bigg\vert\dd z\, \dd y&\leq \int_{\bb_{5\ell}}\vert \nabla v^T(1,z)\vert \bigg\vert\int_{0}^1 \nabla\delta^h u(t,z)\dd t\bigg\vert\dd z\\
&\stackrel{\eqref{averager*ctrl}}{\lesssim}\int_{\mathbb{R}^d}\fint_{\bb_{r_*(y)}(y)}\vert \nabla v^T(1,z)\vert \bigg\vert\int_{0}^1 \nabla\delta^h u(t,z)\dd t\bigg\vert\mathds{1}_{\bb_{5\ell}}(z)\dd z\, \dd y\\
&\stackrel{\eqref{proofsubopesti7}}{\leq}\int_{\bb_{7\ell}}\bigg(\fint_{\bb_{r_*(y)}(y)}\vert \nabla v^T(1,z)\vert^2\dd z\bigg)^{\frac{1}{2}} \bigg(\fint_{\bb_{r_*(y)}(y)}\bigg\vert\int_{0}^1 \nabla\delta^h u(t,z)\dd t\bigg\vert^2\dd z\bigg)^{\frac{1}{2}}\dd y,
\end{align*}
and we finally end up with, using \eqref{lem652} as well as \eqref{lem651}
$$\limsup_{h\downarrow 0}\int_{\bb_{4\ell}}\fint_{\bb_{\ell}(y)}\vert \nabla v^T(1,z)\vert \bigg\vert\int_{0}^1 \nabla\delta^h u(t,z)\dd t\bigg\vert\dd z\, \dd y\lesssim \int_{\bb_{7\ell}}\bigg(\fint_{\bb_{r_*(y)}(y)}\vert \nabla v^T(1,z)\vert^2\dd z\bigg)^{\frac{1}{2}} \mathcal{T}_{x,\ell}(\eta_{r_*(y)})(y)\dd y.$$
For the non-generic regime $\ell< r_*(0)$, we use the estimate \eqref{near0rhslem72} which clearly holds by replacing $g_r$ with $v^T(1,\cdot)$ and we bound $\ell^{\frac{d}{2}}$ by $r^{\frac{d}{2}}_*(0)$.\newline
\newline
\textbf{Regime $\ell<\sqrt{T}$. }We use the estimate \eqref{near0rhslem72} which clearly holds by replacing $g_r$ with $v^T(1,\cdot)$.
\end{proof}
\noindent We now turn to the proof of Lemma \ref{functioderiv2}.
\begin{proof}[Proof of Lemma \ref{functioderiv2}.]We keep the notations of the previous proof and we give only the argument for $q(r^2)\star f_r\cdot e_1$. First, equality \eqref{lem62} holds and take the form
\begin{equation}
\delta^h q(r^2)\star f_r\cdot e_1=\int_{\mathbb{R}^d}f_r(y)e_1\cdot \delta a(y) e\dd y+\int_{\mathbb{R}^d} f_r(y)e_1\cdot \delta a(y)\left(\int_{0}^{r^2}\nabla u(t,y)\dd t\right)\dd y+\int_{0}^{r^2}\int_{\mathbb{R}^d}f_r(y)e_1\cdot a(y)\nabla\delta^h u(t,y)\dd y\, \dd t.
\label{derivothertestesti1}
\end{equation}
The first two r.h.s terms of \eqref{derivothertestesti1} give directly the first two r.h.s terms of \eqref{functioderivothertest}, respectively. As in \eqref{splitlem6}, we make the decomposition
\begin{equation}
\int_{0}^{r^2}\int_{\mathbb{R}^d}f_r(y)e_1\cdot a(y)\nabla\delta^h u(t,y)\dd y\, \dd t=\int_{\mathbb{R}^d} f_r(y)e_1\cdot a(y)\left(\int_{0}^1\nabla\delta^h u(t,y)\dd t\right)\dd y+\int_{1}^{r^2}\int_{\mathbb{R}^d}f_r(y)e_1\cdot a(y)\nabla\delta^h u(t,y)\dd y\, \dd t.
\label{derivothertestesti2}
\end{equation}
\textbf{Control of the second r.h.s term of \eqref{derivothertestesti2}. }This term is controlled the same way as we did in Step $2$ of the proof of Lemma \ref{functioderiv} and provides the third, the fifth and the sixth r.h.s term of \eqref{functioderivothertest}.\newline
\newline
\textbf{Control of the first r.h.s term of \eqref{derivothertestesti2}. }It remains to argue that the first r.h.s term of \eqref{derivothertestesti2} is dominated by the fourth r.h.s term of \eqref{functioderivothertest}. We distinguish the two regimes $\ell\geq r$ and $\ell\leq r$.\newline
\newline
\textbf{Regime $\ell\geq r$. }We use the assumption \eqref{assumeothertest} in form of, for all $y\in\mathbb{R}^d$, $\vert f_r(y)\vert\lesssim \int_{1}^{r^2}s^{-\frac{1}{2}}g_{\sqrt{2s}}(y)\dd s$ and \eqref{lem652} applied with $r=\sqrt{s}$ to get
\begin{align*}
\limsup_{h\downarrow 0}\left\vert\int_{\mathbb{R}^d} f_r(y)e_1\cdot a(y)\left(\int_{0}^1\nabla\delta^h u(t,y)\dd t\right)\dd y\right\vert&\lesssim \limsup_{h\downarrow 0}\int_{1}^{r^2}s^{-\frac{1}{2}}\int_{\mathbb{R}^d}g_{\sqrt{2s}}(y)\left\vert\int_{0}^1\nabla\delta^h u(t,y)\dd t\right\vert\dd y\\
&\lesssim  \int_{1}^{r^2}s^{-\frac{1}{2}}\mathcal{T}_{x,\ell}(\eta_{\sqrt{s}})(0)\dd s.
\end{align*}
\textbf{Regime $\ell < r$. }We first note that from the assumption \eqref{assumeothertest} we have, for all $y\in\mathbb{R}^d$
\begin{equation}
\vert f_r(y)\vert\lesssim \vert y\vert \int_{1}^{r^2} s^{-1-\frac{d}{2}}(1+\frac{\vert y\vert^2}{s})^{-\frac{d}{2}-\frac{3}{2}} \dd s\lesssim \frac{r}{(\vert y\vert+1)^{d}}\wedge \frac{1}{(\vert y\vert+1)^{d-1}},
\label{derivothertestesti1prime}
\end{equation}
We now make use of the identity $\int_{\mathbb{R}^d}=\int_{\mathbb{R}^d}\fint_{\bb_{\ell}(y)}$ to get
$$\left\vert\int_{\mathbb{R}^d} f_r(y)e_1\cdot a(y)\left(\int_{0}^1\nabla\delta^h u(t,y)\dd t\right)\dd y\right\vert\lesssim \int_{\mathbb{R}^d}\fint_{\bb_{\ell}(y)}\vert f_r(z)\vert\left\vert\int_{0}^1\nabla\delta^h u(t,z)\dd t\right\vert\dd z\,\dd y.$$
We then split the integral into the far-field contribution $\vert y\vert\geq 2\ell$ and the near-field contribution $\vert y\vert<2\ell$. For the near-field contribution, we make use of a dyadic decomposition and \eqref{derivothertestesti1prime} in form of $\vert f_r(y)\vert\lesssim (\vert y\vert+1)^{-d+1}$ to get
\begin{align*}
\int_{\bb_{2\ell}}\fint_{\bb_{\ell}(y)}\vert f_r(z)\vert\left\vert\int_{0}^1\nabla\delta^h u(t,z)\dd t\right\vert\dd z\,\dd y&\lesssim \int_{\bb_{3\ell}}\vert f_r(y)\vert\left\vert\int_{0}^1\nabla\delta^h u(t,y)\dd t\right\vert\dd y\\
&\lesssim \int_{\bb_1}\left\vert\int_{0}^1\nabla\delta^h u(t,y)\dd t\right\vert\dd y+\sum_{n=0}^{\left\lceil \log_2(3\ell)\right\rceil}\int_{\bb_{2^{n+1}}\backslash \bb_{2^n}}\vert f_r(y)\vert\left\vert\int_{0}^1\nabla\delta^h u(t,y)\dd t\right\vert\dd y\\
&\stackrel{\eqref{derivothertestesti1prime}}{\lesssim} \int_{\bb_1}\left\vert\int_{0}^1\nabla\delta^h u(t,y)\dd t\right\vert\dd y+\sum_{n=0}^{\left\lceil \log_2(3\ell)\right\rceil}2^n\left(\fint_{\bb_{2^{n+1}}}\left\vert\int_{0}^1\nabla\delta^h u(t,y)\dd t\right\vert^2\dd y\right)^{\frac{1}{2}}
\end{align*}
which gives the first term in the second r.h.s term of \eqref{Kothertest} by passing to the $\limsup$ and combining \eqref{lem652}, applied both with $R=1$ and $R=2^{n+1}$, with \eqref{lem651}. For the far-field contribution, we make use of Cauchy-Schwarz's inequality in form of 
$$\int_{\mathbb{R}^d\backslash \bb_{2\ell}} \fint_{\bb_{\ell}(y)}\vert f_r(z)\vert\left\vert\int_{0}^1\nabla\delta^h u(t,z)\dd t\right\vert\dd z\,\dd y\lesssim \int_{\mathbb{R}^d\backslash \bb_{2\ell}} \left(\fint_{\bb_{\ell}(y)}\vert f_r(z)\vert^2\dd z\right)^{\frac{1}{2}}\left(\fint_{\bb_{\ell}(y)}\left\vert\int_{0}^1\nabla\delta^h u(t,z)\dd t\right\vert^2\dd z\right)^{\frac{1}{2}}\dd y,$$
and we get the second term in the second r.h.s term of \eqref{Kothertest} by passing to the $\limsup$ and combining \eqref{lem652}, applied with $R=\ell$, with \eqref{lem651}.

\end{proof}
\subsubsection{Proof of Lemma \ref{near0estiq}}
We split the proof into two steps. In the first step, we prove a deterministic bound on $q_r(T,x)$, using the energy estimates of Lemma \ref{nergyestideter} and the control of averages of $\nabla u$ deduced from Lemma \ref{ctrlav}. The deterministic bound will depend on a random variable built from an average of $r_*$. In the second step, we prove that the random constant has stretched exponential moments, using the moment bound \eqref{momentr*} on $r_*$.\newline
\newline
\textbf{Step 1. } Proof that for all $r\leq \min\{1,\sqrt{T}\}$ and $x\in\mathbb{R}^d$
\begin{equation}
\vert q_r(T,x)\vert \lesssim 1+r^{-\frac{d}{2}}\log(\tfrac{\sqrt{T}}{r})\left(1+r_*^{\frac{d}{2}}(0)+\int_{\mathbb{R}^d\backslash \bb_1(x)}\eta_1(y)r^{\frac{d}{2}}_*(ry+x)\dd y\right).
\label{near0esti1}
\end{equation}
Without loss of generality, we may assume that $x=0$. We use the definition \eqref{defq} of $q_r(T)$ followed by the triangle inequality, the domination of the Gaussian kernel $g_r$ by the exponential kernel $\eta_r$ and the continuous embedding $\LL^2(\mathbb{R}^d,\eta_r\dd x)\hookrightarrow \LL^1(\mathbb{R}^d,\eta_r \dd x)$ to obtain
\begin{align}
\vert q_r(T)\vert&\stackrel{\eqref{defq}}{\lesssim} \int_{\mathbb{R}^d}g_r(y)\dd y+\int_{\mathbb{R}^d}g_r(y)\left\vert \int_{0}^T\nabla u(s,y)\dd s\right\vert\dd y\nonumber\\
&\lesssim 1+\int_{\mathbb{R}^d}\eta_r(y)\left\vert \int_{0}^{r^2}\nabla u(s,y)\dd s\right\vert\dd y+\int_{\mathbb{R}^d}\eta_r(y)\left\vert \int_{r^2}^T\nabla u(s,y)\dd s\right\vert\dd y\nonumber\\
&\lesssim 1+\left(\int_{\mathbb{R}^d}\eta_r(y)\left\vert \int_{0}^{r^2}\nabla u(s,y)\dd s\right\vert^2\dd y\right)^{\frac{1}{2}}+\int_{\mathbb{R}^d}\eta_r(y)\left\vert \int_{r^2}^T\nabla u(s,y)\dd s\right\vert\dd y.\label{near0esti2}
\end{align}
For the first r.h.s term of \eqref{near0esti2}, we use the energy estimate \eqref{LemE1} in form of
$$\left(\int_{\mathbb{R}^d}\eta_r(y)\left\vert \int_{0}^{r^2}\nabla u(s,y)\dd s\right\vert^2\dd y\right)^{\frac{1}{2}}\lesssim 1.$$
For the second r.h.s term of \eqref{near0esti2}, we write with $\int_{\mathbb{R}^d}=\int_{\mathbb{R}^d}\fint_{\bb_r(x)}$ and Cauchy-Schwarz's inequality,
\begin{equation}
\int_{\mathbb{R}^d}\eta_r(y)\left\vert\int_{r^2}^T\nabla u(s,y)\dd s\right\vert\dd y\lesssim \int_{\mathbb{R}^d}\left(\fint_{\bb_r(x)}\eta^2_r(y)\dd y\right)^{\frac{1}{2}}\left(\fint_{\bb_r(x)}\left\vert \int_{r^2}^T\nabla u(s,y)\dd s\right\vert^2\dd y\right)^{\frac{1}{2}}\dd x.
\label{near0esti5}
\end{equation}
Using the energy estimate \eqref{LemE1} applied to the equation \eqref{equationu} in form of, for all $T>0$
$$\left(\fint_{\bb_{\sqrt{T}}(x)}\vert\nabla u(T,y)\vert^2\dd y\right)^{\frac{1}{2}}\lesssim T^{-1},$$
we deduce from Minkowski's inequality in $\LL^2(\bb_r(x))$ and Lemma \ref{ctrlav} applied with $R=1$, $f\equiv 1$ and $g:T\in\mathbb{R}^+\mapsto T^{-1}$, that, since $r\leq 1$
$$\left(\fint_{\bb_r(x)}\left\vert \int_{r^2}^T\nabla u(s,y)\dd s\right\vert^2\dd y\right)^{\frac{1}{2}}\lesssim r^{-\frac{d}{2}}\int_{r^2}^T\left(\int_{\bb_1(x)}\vert\nabla u(s,y)\vert^2\dd y\right)^{\frac{1}{2}}\dd s\lesssim r^{-\frac{d}{2}}r^{\frac{d}{2}}_*(x)\log(\tfrac{\sqrt{T}}{r}).$$
Consequently \eqref{near0esti5} turns into
\begin{equation*}
\int_{\mathbb{R}^d}\eta_r(y)\left\vert\int_{r^2}^T\nabla u(s,y)\dd s\right\vert\dd y\lesssim r^{-\frac{d}{2}}\log(\tfrac{\sqrt{T}}{r})\int_{\mathbb{R}^d}r^{\frac{d}{2}}_*(x)\left(\fint_{\bb_r(x)}\eta^2_r(y)\dd y\right)^{\frac{1}{2}}\dd x.
\end{equation*}
It remains to show that
\begin{equation}
\int_{\mathbb{R}^d}r^{\frac{d}{2}}_*(x)\left(\fint_{\bb_r(x)}\eta^2_r(y)\dd y\right)^{\frac{1}{2}}\dd x\lesssim 1+r_*^{\frac{d}{2}}(0)+\int_{\mathbb{R}^d\backslash \bb_1}\eta_1(x)r^{\frac{d}{2}}_*(rx)\dd x.
\label{near0esti3}
\end{equation}
For this, we decompose $\mathbb{R}^d$ into $\bb_r$ and $\mathbb{R}^d\backslash \bb_r$ to the effect of
\begin{equation}
\int_{\mathbb{R}^d}r^{\frac{d}{2}}_*(x)\left(\fint_{\bb_r(x)}\eta^2_r(y)\dd y\right)^{\frac{1}{2}}\dd x=\int_{\bb_r}r^{\frac{d}{2}}_*(x)\left(\fint_{\bb_r(x)}\eta^2_r(y)\dd y\right)^{\frac{1}{2}}\dd x+\int_{\mathbb{R}^d\backslash \bb_r}r^{\frac{d}{2}}_*(x)\left(\fint_{\bb_r(x)}\eta^2_r(y)\dd y\right)^{\frac{1}{2}}\dd x.
\label{near0esti6}
\end{equation}
The first r.h.s term of \eqref{near0esti6} is controlled by, using the $\frac{1}{8}$-Lipschitz regularity of $r_*$ in form of $\sup_{x\in \bb_r} r_*(x)\lesssim r_*(0)+r\lesssim r_*(0)+1$ and Cauchy-Schwarz's inequality
$$\int_{\bb_r}r^{\frac{d}{2}}_*(x)\left(\fint_{\bb_r(x)}\eta^2_r(y)\dd y\right)^{\frac{1}{2}}\dd x\lesssim (r^{\frac{d}{2}}_*(0)+1)r^{\frac{d}{2}}\left(\int_{\mathbb{R}^d}\eta^2_r(y)\dd y\right)^{\frac{1}{2}}\lesssim r^{\frac{d}{2}}_*(0)+1.$$
For the second r.h.s term of \eqref{near0esti6}, we note that for all $x\in \mathbb{R}^d\backslash \bb_r$ and $y\in \bb_r(x)$ we have $\eta_r(y)\lesssim \eta_r(x)$, so that
$$\int_{\mathbb{R}^d\backslash \bb_r}r^{\frac{d}{2}}_*(x)\left(\fint_{\bb_r(x)}\eta^2_r(y)\dd y\right)^{\frac{1}{2}}\dd x \lesssim \int_{\mathbb{R}^d\backslash \bb_r}r^{\frac{d}{2}}_*(x)\eta_r(x)\dd x=\int_{\mathbb{R}^d\backslash \bb_1}r^{\frac{d}{2}}_*(rx)\eta_1(x)\dd x.$$
This concludes the proof of \eqref{near0esti3} and the argument for \eqref{near0esti1}.\newline
\newline
\textbf{Step 2.} We prove \eqref{near0est4}. It remains to show that $\mathcal{C}_\star(r,x):=1+r^{\frac{d}{2}}_*(0)+\int_{\mathbb{R}^d\backslash \bb_1(x)}\eta_1(y)r^{\frac{d}{2}}_*(ry+x)\dd y$ has the desired stretched exponential moment \eqref{rajoutfinale1}. From the moment bound \eqref{momentr*} of $r_*$ and the equivalence between algebraic moments and exponential moments (see Lemma \ref{momentexp}), we have\footnote{In the case $\beta=d$, Lemma \ref{momentexp} gives $\langle(\tfrac{r^d_*(y)}{\log(r_*(y))})^p\rangle^\frac{1}{p}\lesssim p$. This implies the loss in $p$ in \eqref{momentr*lem1} that we choose to write in terms of powers of $p$ for simplicity.} for any $\gamma>0$
\begin{equation}
\left\langle r^{dp}_*(y)\right\rangle^{\frac{1}{p}}\lesssim_\gamma p^{\frac{d}{\beta \wedge d}}\mathds{1}_{\beta\neq d}+p^{1+\gamma}\mathds{1}_{d=\beta} \quad \text{for all $p\in [1,\infty)$\quad and\quad $y\in\mathbb{R}^d$}.
\label{momentr*lem1}
\end{equation}
Therefore,
\begin{align*}
\left\langle \left(1+r^{\frac{d}{2}}_*(0)+\int_{\mathbb{R}^d\backslash \bb_1(x)}\eta_1(y)r^{\frac{d}{2}}_*(ry+x)\dd y\right)^p\right\rangle^{\frac{1}{p}}&\lesssim 1+\left\langle r^{\frac{dp}{2}}_*(0)\right\rangle^{\frac{1}{p}}+\int_{\mathbb{R}^d\backslash \bb_1(x)}\eta_1(y)\left\langle r^{\frac{dp}{2}}_*(ry+x)\right\rangle^{\frac{1}{p}}\dd y\\
&\stackrel{\eqref{momentr*lem1}}{\lesssim_\gamma} 1+(p^{\frac{1}{2}\frac{d}{\beta \wedge d}}\mathds{1}_{\beta\neq d}+p^{\frac{1}{2}(1+\gamma)}\mathds{1}_{d=\beta})\left(1+\int_{\mathbb{R}^d}\eta_1(y)\dd y\right), 
\end{align*}
which gives \eqref{near0est4} by applying once again Lemma \ref{momentexp}.
\subsubsection{Proof of Proposition \ref{Sensitivitysubop}: Suboptimal fluctuation estimates. }
We split the proof into three steps. In the first two steps we control $\int_{\mathbb{R}^d}\vert\partial^{\text{fct}}_{x,\ell}q_r(T)\vert^2\dd x$, using the bound on $\partial^{\text{fct}}_{x,\ell}q_r(T)$ proved in Lemma \ref{functioderiv}, treating separately the two regimes $\ell<\sqrt{T}$ and $\ell\geq \sqrt{T}$. In the last step we deduce the desired moment bound \eqref{subopsensiesti} from the multiscale logarithmic Sobolev inequality, in form of \eqref{SGinegp1}, and the moment bound \eqref{momentr*} on $r_*$. We start with preliminary estimates.
\newline
\newline
\textbf{Step 0. Preliminary. }First, we will use several times the following sub-optimal deterministic decay in time of averages of $\nabla u$: 
\begin{equation}
\int_{\bb_{\ell}(x)}\vert\nabla u(t,y)\vert^2\dd y\lesssim\ell^d\left(\left(\frac{r_*(x)}{\ell}\vee 1\right)^d\mathds{1}_{\ell<\sqrt{t}}+\mathds{1}_{\ell\geq \sqrt{t}}\right)t^{-2}\quad \text{for all $t\in (0,T]$}.
\label{decayuproofsubop}
\end{equation}
This estimate is a direct  consequence of the combination of the localized energy estimate \eqref{LemE1} applied to \eqref{equationu} with $R=\sqrt{t}$ and Lemma \ref{ctrlav} applied with $f\equiv 1$, $g: t\in\mathbb{R}^+\mapsto t^{-2}$ and $R=\ell$. The estimate \eqref{decayuproofsubop} is our starting point, as the role that \eqref{strategyunifboundnablau} played in the heuristic arguments of Section \ref{stratsec}.

\medskip

\noindent Second, we will use several times the following large-scale regularity result: for all $t\in (-\infty,T]$ and $x\in\mathbb{R}^d$
\begin{equation}
\fint_{\bb_{r_*(x)}(x)}\vert\nabla v^T(t,y)\vert^2\dd y\lesssim r^d_*(0)\frac{\log^2(1+\frac{\vert x\vert}{r})}{(\vert x\vert+r)^{2d}}+r^{-d}\eta_{4r_*(x)}(x)+g^2_{2r}(x),
\label{proofsubopesti1}
\end{equation}
where we recall that $v^T$ is defined in \eqref{dualequationlem7}. This estimate is a consequence of the localized energy estimate \eqref{LemE1} and the large-scale regularity estimate \eqref{lem2}. Indeed, we first notice that $v^{T}=(\tilde{v}_k(\cdot-T,\cdot))_{k\in\llbracket 1,d\rrbracket}$ with $\tilde{v}_k$ the weak solution of \eqref{equationvlem2} for $\overline{e}=e_k$ and $f_r=g_r$. In addition, from the identity \eqref{proofpointwise1} we get 
\begin{equation}
\nabla v^T(t,x)=\int_{0}^{T-t}\nabla w(s,x)\dd s\quad \text{for all $(t,x)\in (-\infty,T]\times\mathbb{R}^d$},
\label{proofsubopesti2}
\end{equation}
with $w=(w_k)_{k\in\llbracket 1,d\rrbracket}$ and $w_k$ solves \eqref{proofpointwise2} with $f_r=g_r$ and $\overline{e}=e_k$.
Therefore, in the regime $\sqrt{T-t}\geq 2r_*(x)$, we use \eqref{lem2} (where we bound $(\frac{r_*(0)}{r}\vee 1)^{d}\leq 2r_*(0)$ since $r\geq 1$) in form of 
\begin{equation*}
\fint_{\bb_{r_*(x)}(x)}\vert\nabla v^T(t,y)\vert^2\dd y=\sum_{k=1}^{d}\fint_{\bb_{r_*(x)}(x)}\vert\nabla\tilde{v}_k(t-T,y)\vert^2\dd y\lesssim r^{d}_*(0)\frac{\log^2(1+\frac{\vert x\vert}{r})}{(\vert x\vert+r)^d}.
\end{equation*}
In the regime $2r_*(x)\geq \sqrt{T-t}$, we use the formula \eqref{proofsubopesti2} combined with the localized energy estimate \eqref{LemE1} applied with $R=2r_*(x)$ in form of 
$$\fint_{\bb_{r_*(x)}(x)}\vert\nabla v^T(t,y)\vert^2\dd y\lesssim \fint_{\bb_{2r_*(x)}(x)}\left\vert\int_{0}^{T-t}\nabla w(s,y)\dd s\right\vert^2\dd y\lesssim \int_{\mathbb{R}^d}\eta_{2r_*(x)}(x-y)g^2_r(y)\dd y.$$
Finally the r.h.s is dominated via
$$\int_{\mathbb{R}^d}\eta_{2r_*(x)}(x-y)g^2_r(y)\dd y=\int_{\bb_{\frac{\vert x\vert}{2}}(x)}\eta_{2r_*(x)}(x-y)g^2_r(y)\dd y+\int_{\mathbb{R}^d\backslash \bb_{\frac{\vert x\vert}{2}}(x)}\eta_{2r_*(x)}(x-y)g^2_r(y)\dd y,$$
with
$$\int_{\bb_{\frac{\vert x\vert}{2}}(x)}\eta_{2r_*(x)}(x-y)g^2_r(y)\dd y\lesssim g^2_{2r}(x)\int_{\mathbb{R}^d}\eta_{2r_*(x)}(y)\dd y\lesssim g^2_{2r}(x),$$
and 
$$\int_{\mathbb{R}^d\backslash \bb_{\frac{\vert x\vert}{2}}(x)}\eta_{2r_*(x)}(x-y)g^2_r(y)\dd y\lesssim r^{-d}\eta_{4r_*(x)}(x)\int_{\mathbb{R}^d} g_{\frac{1}{\sqrt{2}}r}(y)\dd y\lesssim r^{-d}\eta_{4r_*(x)}(x).$$
We now turn to the proof of \eqref{subopsensiesti}. In the following, $p\in [1,\infty)$ denotes an arbitrary stochastic integrability exponent.
\newline
\newline
\textbf{Step 1. Regime $\ell< \sqrt{T}$. }
Proof that for all $\ell\in [1,\sqrt{T})$ and $\gamma<1$
\begin{equation}
\int_{\mathbb{R}^d}\vert\partial_{x,\ell}^{\text{fct}}q_r(T)\vert^2\dd x\lesssim\ell^{2d}r^{-d}(1+\log^2(T)+\log^2(\tfrac{\sqrt{T}}{r}))\mathcal{C}_\star(r,\ell)\quad \text{with}\quad \sup_{r,\ell\geq 1}\langle \mathcal{C}^p_\star(r,\ell)\rangle^{\frac{1}{p}}\lesssim_\gamma p^{2\frac{d}{\beta\wedge d}}\mathds{1}_{\beta\neq d}+p^{2(1+\gamma)}\mathds{1}_{\beta=d}.
\label{step1subopsensi}
\end{equation}
%
%
This estimate corresponds to \eqref{stratllesst} in the heuristic arguments of Section \ref{stratsec}. We split this step into two parts. The first part is devoted to the control of the first four r.h.s terms of \eqref{lem6} and the second part is devoted to the control of the last term. \newline
\newline
\textbf{Substep 1.1. }Proof that for all $\ell\in [1,\sqrt{T})$
\begin{equation}
\int_{\mathbb{R}^d}\left(\int_{\bb_{\ell}(x)}\left\vert\int_{0}^T\nabla u(t,y)\dd t\right\vert g_r(y)\dd y\right)^2\dd x\lesssim \ell^{2d}r^{-d}\left(1+\log^2(\tfrac{\sqrt{T}}{r})\left(\int_{\mathbb{R}^d}r^d_*(rx)g^2_1(x)\dd x\right)^{\frac{1}{2}}\right),
\label{lem65}
\end{equation}
and
\begin{equation}\label{lem66}
\begin{aligned}
\ell^{-d}\int_{\mathbb{R}^d}\left(\int_{\bb_{\ell}(x)} g_r(y)\dd y\right)^2\dd x&+\ell^{-d}\int_{\mathbb{R}^d}\left(\int_{\bb_{\ell}(x)}\vert\nabla v^T(1,y)\vert\left(1+\int_{0}^1\nabla u(s,y)\dd s\right)\dd y\right)^2\dd x\\
&+\int_{\mathbb{R}^d}\int_{\mathbb{R}^d}e^{-\frac{\vert x-z\vert}{2c\ell}}\fint_{\bb_{\ell}(z)}\vert g_r(y)\vert^2+\vert \nabla v^T(1,y)\vert^2\dd y\, \dd z\, \dd x\lesssim \ell^{d}r^{-d}.
\end{aligned}
\end{equation}
The estimate of the second l.h.s term of \eqref{lem66} corresponds to \eqref{stratproofesti2} in the heuristic arguments of Section \ref{stratsec}\newline
\newline
\textbf{Argument for \eqref{lem65}.} Since $r\leq \sqrt{T}$, we split $\int_0^T$ into $\int_{0}^{r^2}+\int_{r^2}^T$ and we apply the triangle inequality followed by Jensen's inequality, the identity $\int_{\mathbb{R}^d}\fint_{\bb_{\ell}(x)}=\int_{\mathbb{R}^d}$ as well as Cauchy-Schwarz's inequality to the effect of 
\begin{equation}\label{lem67}
\begin{aligned}
\left(\int_{\mathbb{R}^d}\left(\int_{\bb_{\ell}(x)}\left\vert\int_{0}^T\nabla u(t,y)\dd t\right\vert g_r(y)\dd y\right)^2\dd x\right)^{\frac{1}{2}}\lesssim& \ell^d\left(\int_{\mathbb{R}^d}\left\vert\int_{0}^{r^2}\nabla u(t,y)\dd t\right\vert^2 g^2_r(y)\dd y\right)^{\frac{1}{2}}\\
&+\left(\int_{\mathbb{R}^d}\Big(\int_{\bb_{\ell}(x)}\left\vert\int_{r^2}^T\nabla u(t,y)\dd t\right\vert^2\dd y\Big)\Big(\int_{\bb_{\ell}(x)} g^2_r(y)\,\dd y\Big)\,\dd x\right)^{\frac{1}{2}}.
\end{aligned}
\end{equation}
For the first r.h.s term of \eqref{lem67}, we dominate the Gaussian kernel $g_r$ by the exponential kernel $\eta_r$ and we use the localized energy estimate \eqref{LemE1} applied to \eqref{equationu} in form of
\begin{equation}
\left(\int_{\mathbb{R}^d}\left\vert\int_{0}^{r^2}\nabla u(t,y)\dd t\right\vert^2 g^2_r(y)\dd y\right)^{\frac{1}{2}}\lesssim r^{-\frac{d}{2}}\left(\int_{\mathbb{R}^d} \eta_{r}(y)\left\vert\int_{0}^{r^2}\nabla u(t,y)\dd t\right\vert^2\dd y\right)^{\frac{1}{2}}\stackrel{\eqref{LemE1}}{\lesssim} r^{-\frac{d}{2}}.
\label{lowerenergy1}
\end{equation}
For the second r.h.s term of \eqref{lem67}, since $\ell<\sqrt{T}$, we have by Minkowski's inequality in $\LL^2(\bb_{\ell}(x))$ and the decay estimate \eqref{decayuproofsubop}
$$\int_{\bb_{\ell}(x)}\left\vert\int_{r^2}^T\nabla u(t,y)\dd t\right\vert^2\dd y\leq \left(\int_{r^2}^T\left(\int_{\bb_{\ell}(x)}\vert \nabla u(t,y)\vert^2\dd y\right)^{\frac{1}{2}}\dd t\right)^2\stackrel{\eqref{decayuproofsubop}}{\lesssim} (r_*(x)\vee \ell)^d\log^2(\tfrac{\sqrt{T}}{r}),$$
so that we finally get, using in the last line the Lipschitz property of $r_*$ in form of $\sup_{y\in \bb_{\ell}(x)}\left(\frac{r_*(y)}{\ell}\vee 1\right)^d\lesssim \left(\frac{r_*(x)}{\ell}\vee 1\right)^d$ combined with the identity $\int_{\mathbb{R}^d}\fint_{\bb_{\ell}(x)}\dd x=\int_{\mathbb{R}^d}$ and change of variables $x\mapsto \frac{x}{r}$
\begin{align*}
\left(\int_{\mathbb{R}^d}\int_{\bb_{\ell}(x)}\left\vert\int_{r^2}^T\nabla u(t,y)\dd t\right\vert^2\dd y\int_{\bb_{\ell}(x)} g^2_r(y)\dd y\,\dd x\right)^{\frac{1}{2}}&\lesssim \log(\tfrac{\sqrt{T}}{r})\left(\int_{\mathbb{R}^d}(r_*(x)\vee \ell)^d\int_{\bb_\ell(x)}g^2_r(y)\dd y\, \dd x\right)^{\frac{1}{2}}\\
&\lesssim\ell^{d}r^{-\frac{d}{2}}\log(\tfrac{\sqrt{T}}{r})\left(\int_{\mathbb{R}^d}\left(\frac{r_*(rx)}{\ell}\vee 1\right)^d g^2_1(x)\dd x\right)^{\frac{1}{2}}.
\end{align*}
Estimate \eqref{lem65} then follows in combination with the previous estimate, \eqref{lem67} and \eqref{lowerenergy1} as well as $\frac{r_*(rx)}{\ell}\vee 1\lesssim r_*(rx)$.\newline
\newline
\textbf{Argument for \eqref{lem66}.} On the one hand, we have from Cauchy-Schwarz's inequality, the identity $\int_{\mathbb{R}^d}\fint_{\bb_{\ell}(x)}=\int_{\mathbb{R}^d}$ and $\int_{\mathbb{R}^d} g^2_r(y)\dd y\lesssim r^{-d}$ :
\begin{equation}
\begin{aligned}
&\int_{\mathbb{R}^d}\left(\int_{\bb_{\ell}(x)} g_r(y)\dd y\right)^2+\left(\int_{\bb_{\ell}(x)}\vert\nabla v^T(1,y)\vert\left(1+\int_{0}^1\nabla u(s,y)\dd s\right)\dd y\right)^2\dd x\\
&\lesssim\ell^{2d}r^{-d}+\int_{\mathbb{R}^d}\bigg(\int_{\bb_{\ell}(x)}\vert\nabla v^T(1,y)\vert^2\dd y\bigg)\bigg(\int_{\bb_{\ell}(x)}1+\left\vert\int_{0}^1\nabla u(s,y)\dd s\right\vert^2\dd y\bigg)\dd x\label{proofsubopesti3}.
\end{aligned}
\end{equation}
Then, using the localized energy estimate \eqref{LemE1} applied to \eqref{equationu} with $R=\ell$ and the plain energy estimate $\int_{\mathbb{R}^d}\vert\nabla v^T(1,y)\vert^2\dd y\lesssim r^{-d}$ (the proof is identical as for \eqref{fullenergylem2}) we get
\begin{equation}
\int_{\mathbb{R}^d}\bigg(\int_{\bb_{\ell}(x)}\vert\nabla v^T(1,y)\vert^2\dd y\bigg)\bigg(\int_{\bb_{\ell}(x)}1+\left\vert\int_{0}^1\nabla u(s,y)\dd s\right\vert^2\dd y\bigg)\dd x\stackrel{\eqref{LemE1}}{\lesssim}\ell^d\int_{\mathbb{R}^d}\int_{\bb_{\ell}(x)}\vert\nabla v^T(1,y)\vert^2\dd y\, \dd x\lesssim \ell^{2d}r^{-d}.
\label{proofsubopesti4}
\end{equation}
On the other hand, by noticing that from Fubini-Tonelli's theorem, we have for all measurable functions $f : \mathbb{R}^d\rightarrow \mathbb{R}^+$
\begin{align*}
\int_{\mathbb{R}^d}\int_{\mathbb{R}^d}e^{-\frac{\vert x-z\vert}{2c\ell}}\fint_{\bb_{\ell}(z)} f(y)\dd y\, \dd z\, \dd x &= \int_{\mathbb{R}^d} f(y)\int_{\mathbb{R}^d}\ell^{-d}\mathds{1}_{\bb_{\ell}(y)}(z)\int_{\mathbb{R}^d} e^{-\frac{\vert x-z\vert}{2c\ell}}\dd x\, \dd z\,\dd y\lesssim \ell^{d}\int_{\mathbb{R}^d} f(y)\dd y,
\end{align*}
we get, recalling that $\int_{\mathbb{R}^d}\vert\nabla v^{T}(1,y)\vert^2\dd y\lesssim r^{-d}$,
\begin{equation}
\int_{\mathbb{R}^d}\int_{\mathbb{R}^d}e^{-\frac{\vert x-z\vert}{2c\ell}}\fint_{\bb_{\ell}(z)} g^2_r(y)+\vert \nabla v^T(1,y)\vert^2\dd y\, \dd z\, \dd x\lesssim \ell^{d}\int_{\mathbb{R}^d} g^2_r(y)+\vert \nabla v^T(1,y)\vert^2\dd y\lesssim \ell^{d}r^{-d}.
\label{proofsubopesti5}
\end{equation}
The combination of \eqref{proofsubopesti3}, \eqref{proofsubopesti4} and \eqref{proofsubopesti5} give the desired \eqref{lem66}.\newline
\newline
\textbf{Substep 1.2. }Proof that for all $\ell\in [1,\sqrt{T})$
\begin{align}
&\int_{\mathbb{R}^d}\left(\int_{1}^T\int_{\bb_\ell(x)}\vert \nabla u(t,y)\vert\vert \nabla v^T(t,y)\vert\dd y\, \dd t\right)^2\dd x\nonumber\\
&\lesssim \ell^{2d}r^{-d}\log^2(T)r^{d}_*(0)\int_{\mathbb{R}^d}r^d_*(rx)\frac{\log^2(\vert x\vert+1)}{(\vert x\vert+1)^{2d}}\dd x+\int_{\mathbb{R}^d}\frac{r^{2d}_*(x)}{(\vert x\vert+1)^{2d}}\dd x +\int_{\mathbb{R}^d}r^d_*(rx)g^2_2(x)\dd x.
\label{lem7step2}
\end{align}
For heuristic arguments which lead to \eqref{lem7step2}, we refer to \eqref{heuristic2} in Section \ref{stratsec}.

\medskip

\noindent Let $N:= \left\lceil \log_2(T)\right\rceil$. We start by decomposing the time interval $[1,T]$ into dyadic intervals $[2^j,2^{j+1}]$ for $j\in\llbracket 1,N-1\rrbracket$. By the triangle inequality, Cauchy-Schwarz's inequality and the decay estimate \eqref{decayuproofsubop}
\begin{align}
&\left(\int_{\mathbb{R}^d}\left(\int_{1}^T\int_{\bb_{\ell}(x)}\vert \nabla u(t,y)\vert\vert \nabla v^T(t,y)\vert\dd y\, \dd t\right)^2\dd x\right)^{\frac{1}{2}}\nonumber\\
&\leq \sum_{j=0}^{N-1}\left(\int_{\mathbb{R}^d}\left(\int_{2^j}^{2^{j+1}}\int_{\bb_{\ell}(x)}\vert \nabla u(t,y)\vert\vert \nabla v^T(T,y)\vert\dd y\, \dd t\right)^2\dd x\right)^{\frac{1}{2}}\nonumber\\
&\leq \sum_{j=0}^{N-1}\left(\int_{\mathbb{R}^d}\Big(\int_{2^j}^{2^{j+1}}\int_{\bb_{\ell}(x)}\vert \nabla u(t,y)\vert^2\dd y\, \dd t\Big)\Big(\int_{2^j}^{2^{j+1}}\int_{\bb_{\ell}(x)}\vert\nabla v^T(t,y)\vert^2\dd y\, \dd t\Big) \dd x\right)^{\frac{1}{2}}\label{lem71uselater}\\
&\stackrel{\eqref{decayuproofsubop}}{\lesssim} \sum_{j=0}^{N-1}2^{-\frac{j}{2}}\left(\int_{\mathbb{R}^d}(r_*(x)\vee\ell)^d\int_{2^j}^{2^{j+1}}\int_{\bb_{\ell}(x)}\vert \nabla v^T(t,y)\vert^2\dd y\, \dd t\right)^{\frac{1}{2}}.
\label{lem71}
\end{align}
In addition, by the $\frac{1}{8}$-Lipschitz property of $r_*$ in form of $r_*(x)\vee \ell\lesssim \inf_{y\in \bb_{\ell}(x)}r_*(y)\vee \ell$ and $\sup_{y\in \bb_{r*(x)}(x)}r_*(y)\vee \ell\lesssim r_*(x)\vee\ell$ as well as the identity $\int_{\mathbb{R}^d}\fint_{\bb_{\ell}(x)}\dd x=\int_{\mathbb{R}^d}$ and the property \eqref{averager*ctrl}, we have for all $j\in \llbracket 0,N-1\rrbracket$
\begin{align}
\left(\int_{\mathbb{R}^d}(r_*(x)\vee \ell)^d\int_{2^j}^{2^{j+1}}\int_{\bb_{\ell}(x)}\vert \nabla v^T(t,y)\vert^2\dd y\, \dd t\,\dd x\right)^{\frac{1}{2}}&\lesssim \ell^{\frac{d}{2}}\left(\int_{\mathbb{R}^d}\int_{2^j}^{2^{j+1}}(r_*(x)\vee \ell)^d\vert \nabla v^T(t,x)\vert^2\dd t\, \dd x\right)^{\frac{1}{2}}\nonumber\\
&\stackrel{\eqref{averager*ctrl}}{\lesssim}  \ell^{\frac{d}{2}}\left(\int_{\mathbb{R}^d}\int_{2^j}^{2^{j+1}}\fint_{\bb_{r_*(x)}(x)}(r_*(y)\vee \ell)^d\vert \nabla v^T(t,y)\vert^2\dd y\, \dd t\,\dd x\right)^{\frac{1}{2}}\nonumber\\
&\lesssim\ell^{\frac{d}{2}}\left(\int_{\mathbb{R}^d}(r_*(x)\vee \ell)^d\int_{2^j}^{2^{j+1}}\fint_{\bb_{r_*(x)}(x)}\vert \nabla v^T(t,y)\vert^2\dd y\, \dd t\,\dd x\right)^{\frac{1}{2}}.\label{fintvT}
\end{align}
Then, using the large-scale estimate \eqref{proofsubopesti1} we have
\begin{align}
\int_{2^j}^{2^{j+1}}\fint_{\bb_{r_*(x)}(x)}\vert \nabla v^T(s,y)\vert^2\dd y\,\dd s\lesssim 2^{j}\left(r^d_*(0)\frac{\log^2(1+\frac{\vert x\vert}{r})}{(\vert x\vert+r)^{2d}}+r^{-d}\eta_{4r_*(x)}(x)+g^2_{2r}(x)\right).\label{fintvT2}
\end{align}
Therefore, from \eqref{fintvT}, \eqref{fintvT2} and the change of variables $x\mapsto \frac{x}{r}$, we get
\begin{equation}\label{Lem789}
\begin{aligned}
&\left(\int_{\mathbb{R}^d}(r_*(x)\vee \ell)^d\int_{2^j}^{2^{j+1}}\int_{\bb_{\ell}(x)}\vert \nabla v^T(t,y)\vert^2\dd y\, \dd t\, \dd x\right)^{\frac{1}{2}}\\
&\lesssim 2^{\frac{j}{2}}\ell^{\frac{d}{2}}r^{-\frac{d}{2}}\left(r^{d}_*(0)\int_{\mathbb{R}^d}(r_*(rx)\vee \ell)^d\frac{\log^2(\vert x\vert+1)}{(\vert x\vert+1)^{2d}}\dd x+\int_{\mathbb{R}^d}(r_*(x)\vee \ell)^d\eta_{4r_*(x)}(x)\dd x+\int_{\mathbb{R}^d}(r_*(rx)\vee \ell)^d g^2_2(x)\dd x\right)^{\frac{1}{2}},
\end{aligned} 
\end{equation}
which gives the estimate \eqref{lem7step2} by plugging \eqref{Lem789} into \eqref{lem71} and using $\eta_{4r_*(x)}(x)\lesssim r^d_*(x)(\vert x\vert+1)^{-2d}$ as well as $\frac{r_*(x)}{\ell}\vee 1\lesssim r_*(x)$. This concludes the argument for the first item of \eqref{step1subopsensi}. Finally, we recall \eqref{momentr*lem1}:
\begin{equation}\label{equivmomentr*}
\langle r^{dp}_*(0)\rangle^{\frac{1}{p}}\lesssim_\gamma p^{\frac{d}{\beta \wedge d}}\mathds{1}_{\beta\neq d}+p^{1+\gamma}\mathds{1}_{\beta=d} \quad\text{for any $\gamma>0$.}
\end{equation}
Applying \eqref{equivmomentr*} directly on the random variables involved in \eqref{lem65} and \eqref{lem7step2} yields the second item of \eqref{step1subopsensi}.
\newline
\newline
\textbf{Step 2. Regime $\ell\geq \sqrt{T}$. } Proof that for all $\ell\in [\sqrt{T},\infty)$
\begin{equation}
\int_{\mathbb{R}^d}\vert\partial^{\text{fct}}_{x,\ell}q_r(T)\vert^2\dd x\lesssim\ell^d((r^{2d}_*(0)+\mathcal{D}_{\star,1}(r,\ell))\log^2(T)+\mathcal{D}_{\star,2}(r)\log^2(\tfrac{\sqrt{T}}{r})),
\label{step2subopsensi}
\end{equation}
with for any $\gamma>0$
\begin{equation}\label{Step2MomentBoundOfTheRC}
\langle\mathcal{D}^p_{\star,1}(r,\ell)\rangle^{\frac{1}{p}}\lesssim_\gamma (p^{2\frac{d+1}{\beta\wedge d}}\mathds{1}_{\beta\neq d}+p^{2(1+\frac{1}{d})(1+\gamma)}\mathds{1}_{\beta=d})(1+\log^4(\tfrac{\ell}{r}))\quad \text{and}\quad \sup_{r\geq 1}\langle \mathcal{D}^{p}_{\star,2}(r)\rangle^{\frac{1}{p}}\lesssim_\gamma p^{2\frac{d}{\beta\wedge d}}\mathds{1}_{\beta\neq d}+p^{2(1+\gamma)}\mathds{1}_{\beta=d}.
\end{equation}
This estimate corresponds to \eqref{UseRefHeuristicLater} in the heuristic arguments of Section \ref{stratsec}. We split this step into two parts. The first one is devoted to the control of the first four r.h.s terms of \eqref{lem6} and the second one is devoted to the control of the last r.h.s term.
\newline
\newline
\textbf{Substep 2.1.} Proof that for all $\ell\in [\sqrt{T},\infty)$
\begin{equation}
\int_{\mathbb{R}^d}\left(\int_{\bb_{\ell}(x)} g_r(y)\dd y\right)^2\dd x\lesssim \ell^d\quad\text{and}\quad\int_{\mathbb{R}^d}\left(\int_{\bb_{\ell}(x)}\left\vert\int_{0}^T\nabla u(t,y)\dd t\right\vert g_r(y)\dd y\right)^2\dd x\lesssim \ell^{d}\mathcal{D}_{\star,2}(r)\log^2(\tfrac{\sqrt{T}}{r}),
\label{substep211}
\end{equation}
as well as
\begin{equation}
\int_{\mathbb{R}^d}\left(\int_{\bb_{\ell}(x)}\vert\nabla v^T(1,y)\vert\left(1+\left\vert\int_{0}^1\nabla u(s,y)\dd s\right\vert\right)\dd y\right)^2\dd x+\int_{\mathbb{R}^d}\mathcal{G}^2_{r,\ell}(x)\dd x\lesssim \ell^d\mathcal{D}_{\star,1}(r,\ell),
\label{substep22}
\end{equation}
where we recall that $\mathcal{G}_{r,\ell}$ is defined in \eqref{Grllem6}. The estimate of the first l.h.s of \eqref{substep22} corresponds to \eqref{UseRefHeuristicLater:Eq2} in the heuristic arguments of Section \ref{stratsec}.\newline
\newline
\textbf{Argument for \eqref{substep211}. }For the first item, we use Young's inequality in form of
$$\int_{\mathbb{R}^d}\left(\int_{\bb_{\ell}(x)} g_r(y)\dd y\right)^2\dd x=\|g_r\star\mathds{1}_{\bb_{\ell}}\|^2_{\text{L}^2(\mathbb{R}^d)}\leq \|g_r\|^2_{\text{L}^1(\mathbb{R}^d)}\|\mathds{1}_{\bb_{\ell}}\|^2_{\text{L}^2(\mathbb{R}^d)}\lesssim \ell^d.$$
For the second item, we use for all $x\in\mathbb{R}^d$, $\int_{\bb_{\ell}(x)}g_r(y)\dd y\leq \int_{\mathbb{R}^d}g_r(y)\dd y\lesssim 1$, Cauchy-Schwarz's inequality applied in form of $(\int_{\bb_{\ell}(x)}f(y)g_r(y)\dd y)^2\leq\int_{\bb_{\ell}(x)}g_r(y)\dd y\int_{\bb_{\ell}(x)}\vert f(y)\vert^2 g_r(y)\dd y$ and the identity $\int_{\mathbb{R}^d}\fint_{\bb_{\ell}(x)}=\int_{\mathbb{R}^d}$, to get
$$\int_{\mathbb{R}^d}\left(\int_{\bb_{\ell}(x)}\left\vert\int_{0}^T\nabla u(t,y)\dd t\right\vert g_r(y)\dd y\right)^2\dd x\lesssim \int_{\mathbb{R}^d}\int_{\bb_{\ell}(x)}\left\vert\int_{0}^T\nabla u(t,y)\dd t\right\vert^2 g_r(y)\dd y\, \dd x\lesssim \ell^d\int_{\mathbb{R}^d}\left\vert\int_{0}^T\nabla u(t,y)\dd t\right\vert^2 g_r(y)\dd y.$$
Then, since $r\leq \sqrt{T}$, we split the integral $\int_{0}^T$ into $\int_{0}^{r^2}+\int_{r^2}^{T}$ and we use the localized energy estimate \eqref{LemE1} applied to \eqref{equationu} in form of $\int_{\mathbb{R}^d}\left\vert\int_{0}^{r^2}\nabla u(t,y)\dd t\right\vert^2g_r(y)\dd y\lesssim 1$, to obtain
\begin{align}
\int_{\mathbb{R}^d}\left\vert\int_{0}^T\nabla u(t,y)\dd t\right\vert^2 g_r(y)\dd y&\lesssim \int_{\mathbb{R}^d}\left\vert\int_{0}^{r^2}\nabla u(t,y)\dd t\right\vert^2 g_r(y)\dd y+\int_{\mathbb{R}^d}\left\vert\int_{r^2}^T\nabla u(t,y)\dd t\right\vert^2 g_r(y)\dd y\nonumber\\
&\stackrel{\eqref{LemE1}}{\lesssim}1+\int_{\mathbb{R}^d}\left\vert\int_{r^2}^T\nabla u(t,y)\dd t\right\vert^2 g_r(y)\dd y.
\label{othertestesti7}
\end{align}
Finally, using that $\int_{\mathbb{R}^d}=\int_{\mathbb{R}^d}\fint_{\bb_r(x)}\dd x$  and for all $x\in\mathbb{R}^d\backslash \bb_r$, $\sup_{y\in\bb_r(x)}g_r(y)\lesssim g_r(x)$, in form of
$$\int_{\mathbb{R}^d}\left\vert\int_{r^2}^T\nabla u(t,y)\dd t\right\vert^2 g_r(y)\dd y\lesssim \fint_{\bb_r}\fint_{\bb_r(x)}\left\vert\int_{r^2}^T\nabla u(t,y)\dd t\right\vert^2\dd y\, \dd x+\int_{\mathbb{R}^d\backslash \bb_r}g_r(x)\fint_{\bb_r(x)}\left\vert\int_{r^2}^T\nabla u(t,y)\dd t\right\vert^2\dd y\, \dd x,$$
and Minkowski's inequality in $\LL^2(\bb_r(x))$ as well as the decay estimate \eqref{decayuproofsubop} (where we use that, since $r\geq 1$, $\frac{r_*(x)}{r}\vee 1\leq r_*(x)$), we deduce
\begin{align}
\int_{\mathbb{R}^d}\left\vert\int_{r^2}^T\nabla u(t,y)\dd t\right\vert^2 g_r(y)\dd y\lesssim & \fint_{\bb_r}\left(\int_{r^2}^T\left(\fint_{\bb_r(x)}\vert\nabla u(t,y)\vert^2\dd y\right)^{\frac{1}{2}}\dd t\right)^2\dd x\nonumber\\
&+\int_{\mathbb{R}^d\backslash \bb_r}g_r(x)\left(\int_{r^2}^T\left(\fint_{\bb_r(x)}\vert\nabla u(t,y)\vert^2\dd y\right)^{\frac{1}{2}}\dd t\right)^2 \dd x\nonumber\\
\stackrel{\eqref{decayuproofsubop}}{\lesssim} &\left(\fint_{\bb_r} r^{d}_*(x)\dd x+\int_{\mathbb{R}^d\backslash \bb_1}r^d_*(rx)g_1(x)\dd x\right)\log^2(\tfrac{\sqrt{T}}{r}).
\label{othertestesti8}
\end{align}
Finally, applying \eqref{equivmomentr*} on the random variable involved in \eqref{othertestesti8} yields \eqref{substep211} with the second item in \eqref{Step2MomentBoundOfTheRC}.
\newline
\newline 
\textbf{Argument for \eqref{substep22}. } 
We start with the first l.h.s term. We distinguish between the generic case $\ell\geq r_*(0)$ and the non-generic case $\ell<r_*(0)$. \newline
\newline
\textbf{Regime $\ell\geq r_*(0)$. }We split the integral into the far-field contribution $\vert x\vert\geq 4\ell$ and the near-field contribution $\vert x\vert<4\ell$. For the far-field contribution, we write using Cauchy-Schwarz's inequality, the localized energy estimate \eqref{LemE1} applied to \eqref{equationu} with $R=\ell$ and $T=1$, the identity $\int_{\mathbb{R}^d\backslash \bb_{4\ell}}\int_{\bb_{\ell}(x)}\lesssim\ell^d \int_{\mathbb{R}^d\backslash \bb_{3\ell}}$ as well as the property \eqref{averager*ctrl}
\begin{align}
\int_{\mathbb{R}^d\backslash \bb_{4\ell}}\left(\int_{\bb_{\ell}(x)}\vert \nabla v^T(1,y)\vert\left\vert 1+\int_{0}^1\nabla u(s,y)\dd s\right\vert\dd y\right)^2\dd x&\leq \int_{\mathbb{R}^d\backslash \bb_{4\ell}}\Big(\int_{\bb_{\ell}(x)}\vert\nabla v^T(1,y)\vert^2\dd y\Big)\Big(\int_{\bb_{\ell}(x)}1+\left\vert\int_{0}^1\nabla u(s,y)\dd y\right\vert^2\Big)\dd x\nonumber\\
&\stackrel{\eqref{LemE1}}{\lesssim} \ell^{d}\int_{\mathbb{R}^d\backslash \bb_{4\ell}}\int_{\bb_{\ell}(x)}\vert\nabla v^T(1,y)\vert^2\dd y\label{proofsubopesti10}\\
&\lesssim \ell^{2d}\int_{\mathbb{R}^d\backslash \bb_{3\ell}}\vert\nabla v^T(1,y)\vert^2\dd y\nonumber\\
&\stackrel{\eqref{averager*ctrl}}{\lesssim} \ell^{2d}\int_{\mathbb{R}^d}\fint_{\bb_{r_*(x)}(x)}\vert\nabla v^T(1,y)\vert^2\mathds{1}_{\mathbb{R}^d\backslash \bb_{3\ell}}(y)\dd y\, \dd x.\label{proofsubopesti13}
\end{align}
By the $\frac{1}{8}$-Lipschitz continuity property of $r_*$ and the assumption $r_*(0)\leq \ell$ one has $\bb_{r_*(x)}(x)\cap \mathbb{R}^d\backslash \bb_{3\ell}\neq \emptyset$ $\Rightarrow$ $x\in \mathbb{R}^d\backslash \bb_{\ell}$. Indeed, by contradiction and recalling that $\ell\geq r_*(0)$, if $x\in\bb_{\ell}$ then $r_*(x)\leq r_*(0)+\frac{\vert x\vert}{8}\leq \frac{9}{8}\ell$ so that $\bb_{r_*(x)}(x)\subset \bb_{\ell+\frac{9}{8}\ell}\subset \bb_{\frac{17}{8}\ell}$ and consequently $\bb_{r_*(x)}(x)\cap \mathbb{R}^d\backslash \bb_{3\ell}=\emptyset$. Hence,
\begin{equation}
\int_{\mathbb{R}^d}\fint_{\bb_{r_*(x)}(x)}\vert\nabla v^T(1,y)\vert^2\mathds{1}_{\mathbb{R}^d\backslash \bb_{3\ell}}(y)\dd y\, \dd x\leq \int_{\mathbb{R}^d\backslash \bb_{\ell}}\fint_{\bb_{r_*(x)}(x)}\vert\nabla v^T(1,y)\vert^2\dd y\, \dd x,
\label{proofsubopesti12}
\end{equation}
and the above inequality, with help of \eqref{proofsubopesti1} reduces to 
\begin{align}
&\int_{\mathbb{R}^d\backslash \bb_{4\ell}}\left(\int_{\bb_{\ell}(x)}\vert \nabla v^T(1,y)\vert\left\vert 1+\int_{0}^1\nabla u(s,y)\dd s\right\vert\dd y\right)^2\dd x\nonumber\\
&\lesssim \ell^{2d}\int_{\mathbb{R}^d\backslash \bb_{\ell}}\fint_{\bb_{r_*(x)}(x)}\vert\nabla v^T(1,y)\vert^2\dd y\, \dd x\label{proofsubopesti14}\\
&\stackrel{\eqref{proofsubopesti1}}{\lesssim} \ell^{2d}\left(r^d_*(0)\int_{\mathbb{R}^d\backslash \bb_{\ell}}\frac{\log^2(1+\frac{\vert x\vert}{r})}{(\vert x\vert+r)^{2d}}\dd x +\int_{\mathbb{R}^d\backslash \bb_{\ell}}\eta_{4r_*(x)}(x)\dd x+\int_{\mathbb{R}^d\backslash \bb_{\ell}} g^2_{2r}(x)\dd x\right),\label{proofsubopesti15}
\end{align}
For the near-field contribution $\vert x\vert<4\ell$, using Minkowski's inequality in $\LL^2(\bb_{4\ell})$, Fubini-Tonnelli's theorem, the property \eqref{averager*ctrl} and the assumption $\ell\geq r_*(0)$ in form of \eqref{proofsubopesti7}, we write
\begin{align}
&\left(\int_{\bb_{4\ell}}\left(\int_{\bb_{\ell}(x)}\vert\nabla v^T(1,y)\left(1+\left\vert\int_{0}^1\nabla u(s,y)\dd s\right\vert\right)\dd y\right)^2\dd x\right)^{\frac{1}{2}}\nonumber\\
&\leq \int_{\mathbb{R}^d}\left(\int_{\bb_{4\ell}}\vert\nabla v^T(1,y)\vert^2\left(1+\left\vert\int_{0}^1\nabla u(s,y)\dd s\right\vert^2\right)\mathds{1}_{\bb_{\ell}(x)}(y)\dd x\right)^{\frac{1}{2}}\dd y\nonumber\\
&\lesssim \ell^{\frac{d}{2}}\int_{\bb_{5\ell}}\vert\nabla v^T(1,y)\vert\left(1+\left\vert\int_{0}^1\nabla u(s,y)\dd s\right\vert\right)\dd y\nonumber\\
&\stackrel{\eqref{averager*ctrl}}{\lesssim} \ell^{\frac{d}{2}}\int_{\mathbb{R}^d}\fint_{\bb_{r_*(x)}(x)}\vert\nabla v^T(1,y)\vert\left(1+\left\vert\int_{0}^1\nabla u(s,y)\dd s\right\vert\right)\mathds{1}_{\bb_{5\ell}}(y)\dd y\, \dd x\nonumber\\
&\stackrel{\eqref{proofsubopesti7}}{\lesssim} \ell^{\frac{d}{2}}\int_{\bb_{7\ell}}\fint_{\bb_{r_*(x)}(x)}\vert\nabla v^T(1,y)\vert\left(1+\left\vert\int_{0}^1\nabla u(s,y)\dd s\right\vert\right)\dd y\, \dd x.\label{proofsubopesti6}
\end{align}
Then, from the localized energy estimate \eqref{LemE1} applied to \eqref{equationu}  with $R=r_*(x)$ and $T=1$, we have 
$$\left(\fint_{\bb_{r_*(x)}(x)}1+\left\vert\int_{0}^{1}\nabla u(s,y)\dd s\right\vert^2\dd y\right)^{\frac{1}{2}}\lesssim 1,$$
consequently, by Cauchy-Schwarz's inequality and \eqref{proofsubopesti1}, \eqref{proofsubopesti6} turns into
\begin{align}
&\left(\int_{\bb_{4\ell}}\left(\int_{\bb_{\ell}(x)}\vert\nabla v^T(1,y)\left(1+\left\vert\int_{0}^1\nabla u(s,y)\dd s\right\vert\right)\dd y\right)^2\dd x\right)^{\frac{1}{2}}\nonumber\\
&\lesssim \ell^{\frac{d}{2}}\int_{\bb_{7\ell}}\left(\fint_{\bb_{r_*(x)}(x)}\vert\nabla v^T(1,y)\vert^2\dd y\right)^{\frac{1}{2}}\dd x\label{proofsubopesti11}\\
&\lesssim \ell^{\frac{d}{2}}\left(r^{\frac{d}{2}}_*(0)\int_{\bb_{7\ell}}\frac{\log(1+\frac{\vert x\vert}{r})}{(\vert x\vert+r)^d}\dd x+\int_{\bb_{7\ell}}\eta^{\frac{1}{2}}_{4r_*(x)}(x)\dd x+\int_{\bb_{7\ell}} g_r(y)\dd y\right).\label{ChangingWithMoment:Eq1}
\end{align}
\textbf{Regime $\ell<r_*(0)$. }We use the estimate \eqref{lem66} and we bound one $\ell^d$ by $r^d_*(0)$ and $r^{-d}$ by $1$.\newline
\newline
We now turn to the second l.h.s term of \eqref{substep22}. The first term in the definition \eqref{Grllem6} of $\mathcal{G}_{r,\ell}$ is bounded as follows. By definition of $\mathcal{T}_{x,\ell}(\eta_r)(0)$, that we recall here
$$\mathcal{T}_{x,\ell}(\eta_r)(0)=\left(\int_{\bb_{\ell}(x)}\eta_r(z)\left(1+\left\vert\int_{0}^1\nabla u(t,z)\dd t\right\vert^2\right)\dd z\right)^{\frac{1}{2}}+\int_{0}^1\frac{1}{1-t}\int_{t}^1\left(\int_{\bb_{\ell}(x)}\eta_r(z)\vert\nabla u(s,z)\vert^2\dd z\right)^{\frac{1}{2}}\dd s\, \dd t,$$
and the localized energy estimates \eqref{LemE1} as well as \eqref{LemE2} applied to \eqref{equationu}, Minkowski's inequality in $\LL^2(\mathbb{R}^d)$ (exchanging the order of integration in the $s$ and $x$ variables) and the estimate $\int_{\mathbb{R}^d}\int_{\bb_{\ell}(x)}\lesssim \ell^d\int_{\mathbb{R}^d}$, we have 
\begin{align}
\int_{\mathbb{R}^d} \mathcal{T}^2_{x,\ell}(\eta_r)(0)\dd x&\lesssim \ell^d \left(1+\int_{\mathbb{R}^d}\eta_r(y)\left\vert \int_{0}^1\nabla u(t,y)\dd t\right\vert^2\dd y+\left(\int_{0}^1\frac{1}{1-t}\int_{t}^1\left(\int_{\mathbb{R}^d}\eta_r(y)\vert\nabla u(s,y)\vert^2\dd y\right)^{\frac{1}{2}}\dd s\, \dd t\right)^2\right)\nonumber\\
&\stackrel{\eqref{LemE1},\eqref{LemE2}}{\lesssim}\ell^d\left(1+\left(\int_{0}^1\frac{-\log(t)}{1-t}\dd t\right)^2\right)\lesssim \ell^d.\label{othertestesti13}
\end{align}
For the other term in \eqref{Grllem6}, we can use previous estimates. To this aim, we need preliminary inequalities and we distinguish between the two regimes $\ell\geq r_*(0)$ and $\ell<r_*(0)$.\newline
\newline
\textbf{Regime $\ell\geq r_*(0)$. }For the far-field contribution $\vert x\vert\geq 4\ell$, we make use of Cauchy-Schwarz's inequality in form of, for all $x\in\mathbb{R}^d$
\begin{equation}
\left(\int_{\mathbb{R}^d\backslash \bb_{4\ell}}\left(\fint_{\bb_{\ell}(y)}\vert \nabla v^T(1,z)\vert^2\dd z\right)^{\frac{1}{2}}\mathcal{T}_{x,\ell}(\eta_{\ell})(y)\dd y\right)^2\leq \int_{\mathbb{R}^d\backslash \bb_{4\ell}}\mathcal{T}_{x,\ell}(\eta_{\ell})(y)\dd y\int_{\mathbb{R}^d\backslash \bb_{4\ell}}\fint_{\bb_{\ell}(y)}\vert \nabla v^T(1,z)\vert^2\dd z\,\mathcal{T}_{x,\ell}(\eta_{\ell})(y)\dd y.
\label{proofsubopesti8}
\end{equation}
Next, we have 
\begin{equation}
 \int_{\mathbb{R}^d}\mathcal{T}_{x,\ell}(\eta_{\ell})(y)\dd y\lesssim \ell^d.
\label{proofsubopesti9}
\end{equation}
Indeed, we first split the integral into two contributions
$$ \int_{\mathbb{R}^d}\mathcal{T}_{x,\ell}(\eta_{\ell})(y)\dd y= \int_{\bb_{2\ell}(x)}\mathcal{T}_{x,\ell}(\eta_{\ell})(y)\dd y+ \int_{\mathbb{R}^d\backslash \bb_{2\ell}(x)}\mathcal{T}_{x,\ell}(\eta_{\ell})(y)\dd y.$$
On the one hand, since from the localized energy estimate \eqref{LemE1} applied to \eqref{equationu} with $R=\ell$ we have $\sup_{y\in\mathbb{R}^d} \mathcal{T}_{x,\ell}(\eta_{\ell})(y)\lesssim 1+\int_{0}^1\frac{-\log(t)}{1-t}\dd t\lesssim 1$ (we bound the integral $\int_{\bb_{\ell}(x)}$ by $\int_{\mathbb{R}^d}$ in the definition \eqref{functionallem6} of $\mathcal{T}_{x,\ell}(\eta_{\ell})(y)$), we get
$$\int_{\bb_{2\ell}(x)}\mathcal{T}_{x,\ell}(\eta_{\ell})(y)\dd y\lesssim \ell^d.$$
On the other hand, for all $y\in \mathbb{R}^d\backslash \bb_{2\ell}(x)$ and $z\in \bb_{\ell}(x)$ we have $\vert z-y\vert\geq \vert y-x\vert-\ell\geq \frac{\vert y-x\vert}{2}$ and thus $\eta_{\ell}(z-y)\lesssim \eta_{2\ell}(y-x)$. Therefore, the localized energy estimate \eqref{LemE1} applied to \eqref{equationu} with $R=\ell$ yields
\begin{align*}
\mathcal{T}_{x,\ell}(\eta_{\ell})(y)&\lesssim \eta^{\frac{1}{2}}_{2\ell}(y-x)\left(\left(\int_{\bb_{\ell}(x)}1+\left\vert\int_{0}^1\nabla u(s,z)\dd s\right\vert^2\dd z\right)^{\frac{1}{2}}+\int_{0}^{1}\frac{1}{1-t}\int_{t}^1\left(\int_{\bb_{\ell}(x)}\vert\nabla u(s,z)\vert^2\dd z\right)^{\frac{1}{2}}\dd t\right)\\
&\lesssim \ell^{\frac{d}{2}}\eta^{\frac{1}{2}}_{2\ell}(y-x).
\end{align*}
Consequently,
$$\int_{\mathbb{R}^d\backslash \bb_{2\ell}(x)}\mathcal{T}_{x,\ell}(\eta_{\ell})(y)\dd y\lesssim \ell^{\frac{d}{2}}\int_{\mathbb{R}^d\backslash \bb_{2\ell}(x)}\eta^{\frac{1}{2}}_{2\ell}(y-x)\dd y\lesssim \ell^{d},$$
and this concludes the argument for \eqref{proofsubopesti9}. The combination of \eqref{proofsubopesti8}, \eqref{proofsubopesti9} and the estimate $\sup_{y\in\mathbb{R}^d}\int_{\mathbb{R}^d}\mathcal{T}_{x,\ell}(\eta_{\ell})(y)\dd x\lesssim \ell^d$ (which can be proved with the same computation as \eqref{proofsubopesti9} by exchanging the role of $x$ and $y$) leads to 
\begin{align*}
\int_{\mathbb{R}^d}\left(\int_{\mathbb{R}^d\backslash \bb_{4\ell}}\left(\fint_{\bb_{\ell}(y)}\vert \nabla v^T(1,z)\vert^2\dd z\right)^{\frac{1}{2}}\mathcal{T}_{x,\ell}(\eta_{\ell})(y)\dd y\right)^2\dd x&\stackrel{\eqref{proofsubopesti8},\eqref{proofsubopesti9}}{\lesssim}\ell^d\int_{\mathbb{R}^d}\int_{\mathbb{R}^d\backslash \bb_{4\ell}}\fint_{\bb_{\ell}(y)}\vert\nabla v^T(1,z)\vert^2\dd z\, \mathcal{T}_{x,\ell}(\eta_{\ell})(y)\dd y\, \dd x\\
&\lesssim \ell^{2d}\int_{\mathbb{R}^d\backslash \bb_{4\ell}}\fint_{\bb_{\ell}(y)}\vert\nabla v^T(1,z)\vert^2\dd z\, \dd y\, \dd x,
\end{align*}
and we then proceed as we did from \eqref{proofsubopesti10} to conclude. For the near-field contribution, we make use of Minkowski's inequality in $\LL^2(\mathbb{R}^d)$ and the estimate $\sup_{y\in\mathbb{R}^d}\int_{\mathbb{R}^d}\mathcal{T}^2_{x,\ell}(\eta_{r_*(y)})(y)\dd x\lesssim \ell^d$ (obtained the same way as \eqref{othertestesti13}) to obtain
\begin{align}
&\int_{\mathbb{R}^d}\bigg(\int_{\bb_{7\ell}}\Big(\fint_{\bb_{r_*(y)}(y)}\vert\nabla v^T(1,z)\vert^2\dd z\Big)^{\frac{1}{2}}\mathcal{T}_{x,\ell}(\eta_{r_*(y)})(y)\dd y\bigg)^2\dd x\nonumber\\
&\leq \bigg(\int_{\bb_{7\ell}}\Big(\fint_{\bb_{r_*(y)}(y)}\vert\nabla v^T(1,z)\vert^2\dd z\Big)^{\frac{1}{2}}\Big(\int_{\mathbb{R}^d}\mathcal{T}^2_{x,\ell}(\eta_{r_*(y)})(y)\dd x\Big)^{\frac{1}{2}}\dd y\bigg)^2\nonumber\\
&\lesssim\ell^d\bigg(\int_{\bb_{7\ell}}\Big(\fint_{\bb_{r_*(y)}(y)}\vert\nabla v^T(1,z)\vert^2\dd z\Big)^{\frac{1}{2}}\dd y\bigg)^2,
\label{proofth1othertest5prime}
\end{align}
and we then proceed as we did from \eqref{proofsubopesti11}.\newline
\newline
\textbf{Regime $\ell<r_*(0)$. }In this regime, we use the estimate \eqref{proofsubopesti5}.\newline
\newline
To conclude, the combination of \eqref{proofsubopesti15} (together with $\int_{\mathbb{R}^d\backslash \bb_{\ell}} g^2_{2r}(x)\dd x\lesssim \ell^{-d}$ and $\eta_{4r_*(x)}(x)\lesssim r^d_*(x)\vert x\vert^{-2d}$), \eqref{ChangingWithMoment:Eq1} (together with $\int_{\bb_{7\ell}} g_r(y)\dd y \lesssim 1$ and $\eta^{\frac{1}{2}}_{4r_*(x)}(x)\lesssim r^{\frac{d}{2}+1}_*(x)(\vert x\vert+1)^{-d-1}$), \eqref{lem66} and \eqref{proofsubopesti5} gives \eqref{substep22} with 
\begin{equation}\label{stosubopsensi2}
\begin{aligned}
\mathcal{D}_{\star,1}(r,\ell)= & r^d_*(0)+r^d_*(0)\ell^d\int_{\mathbb{R}^d\backslash \bb_{\ell}}\frac{\log^2(1+\frac{\vert x\vert}{r})}{(\vert x\vert+r)^{2d}}\dd x +\ell^{d}\int_{\mathbb{R}^d\backslash \bb_{\ell}} r^d_*(x)\vert x\vert^{-2d}\dd x\\
&+r^d_*(0)\left(\int_{\bb_{7\ell}}\frac{\log(1+\frac{\vert x\vert}{r})}{(\vert x\vert+r)^d}\dd x\right)^2+\left(\int_{\bb_{7\ell}}r^{\frac{d}{2}+1}_*(x)(\vert x\vert+1)^{-d-1}\dd x\right)^2+r^d_*(0)\int_{\mathbb{R}^d}\frac{r^d_*(x)}{(\vert x\vert+1)^{2d}}\dd x.
\end{aligned}
\end{equation}
Applying \eqref{equivmomentr*} on \eqref{stosubopsensi2} followed by a polar change of coordinates and the change of variable $\rho\mapsto \frac{\rho}{r}$ yields
\begin{align*}
\langle \mathcal{D}^p_{\star,1}(r,\ell)\rangle^{\frac{1}{p}}&\stackrel{\eqref{equivmomentr*}}{\lesssim_\gamma} (p^{2\frac{d+1}{\beta\wedge d}}\mathds{1}_{\beta\neq d}+p^{2(1+\frac{1}{d})(1+\gamma)}\mathds{1}_{\beta=d})\int_{1}^{+\infty}\ell^{-1-\beta}\left(\int_{0}^{7\ell}\frac{\log(1+\frac{\rho}{r})}{(\rho+r)^{d}}\rho^{d-1}\dd\rho+1\right)^2\dd\ell\nonumber\\
&\leq (p^{2\frac{d+1}{\beta\wedge d}}\mathds{1}_{\beta\neq d}+p^{2(1+\frac{1}{d})(1+\gamma)}\mathds{1}_{\beta=d})\log^2(1+7\,\tfrac{\ell}{r})\int_{1}^{+\infty}\ell^{-1-\beta}\left(\int_{0}^{7 \tfrac{\ell}{r}}(\rho+1)^{-d}\rho^{d-1}\dd\rho+1\right)^2\dd\ell.
\end{align*}
We then estimate the integral in the integrand of the r.h.s by
\begin{align*}
\int_{0}^{7\frac{\ell}{r}}(\rho+1)^{-d}\rho^{d-1}\dd\rho\lesssim\int_{0}^1 \rho^{d-1}\dd \rho+\int_{1}^{7\frac{\ell}{r}}\rho^{-1}\dd \rho\lesssim 1+\log(7\,\tfrac{\ell}{r}),
\end{align*} 
which yields the first item in \eqref{Step2MomentBoundOfTheRC}.
\newline
\newline
\textbf{Substep 2.2. }Proof that for $\ell\in [\sqrt{T}, \infty)$
\begin{equation}\label{substep220}
\int_{\mathbb{R}^d}\left(\int_{1}^T\int_{\bb_{\ell}(x)}\vert \nabla u(t,y)\vert\vert\nabla v^T(t,y)\vert\dd t\, \dd y\right)^2\dd x\lesssim\, \ell^d\log^2(T)(r^{2d}_*(0)+\mathcal{D}_{\star,1}(r,\ell)),
\end{equation}
where we recall that $\mathcal{D}_{\star,1}(r,\ell)$ is defined in \eqref{stosubopsensi2} and satisfies the first item of \eqref{Step2MomentBoundOfTheRC}. For heuristic arguments which lead to \eqref{substep220}, we refer to \eqref{strataway0lgtr} in Section \ref{stratsec}.

\medskip

\noindent For the proof of \eqref{substep220}, we argue as previously and we distinguish between the generic case $\ell\geq r_*(0)$ and the non-generic case $\ell<r_*(0)$. We mainly make use of previous ideas and estimates.\newline
\newline
\textbf{Regime $\ell\geq r_*(0)$. }As in Substep $1.2$, we decompose the time interval $[1,T]$ into dyadic intervals $[2^{j},2^{j+1}]$ for $j\in\llbracket 1,N-1\rrbracket$ and $N:=\left\lceil \log_2(T)\right\rceil$ and we write by the triangle inequality 
\begin{equation}
\int_{\mathbb{R}^d}\left(\int_{1}^T\int_{\bb_{\ell}(x)}\vert \nabla u(t,y)\vert\vert\nabla v^T(t,y)\vert\dd t\, \dd y\right)^2\dd x\leq\left(\sum_{j=0}^{N-1}\left(\int_{\mathbb{R}^d}\left(\int_{2^j}^{2^{j+1}}\int_{\bb_{\ell}(x)}\vert \nabla u(t,y)\vert\vert\nabla v^T(t,y)\vert\dd y\, \dd t\right)^2\dd x\right)^{\frac{1}{2}}\right)^2.
\label{substep221}
\end{equation}
We split the integral in the r.h.s of \eqref{substep221} into the far-field contribution $\vert x\vert\geq 4\ell$ and the near-field contribution $\vert x\vert< 4\ell$. For the far-field contribution, we write for all $j\in \llbracket 1,N-1\rrbracket$, using Cauchy-Schwarz's inequality, the decay estimate \eqref{decayuproofsubop} applied for $\ell\geq \sqrt{T}$, 
\begin{equation*}
\int_{\mathbb{R}^d\backslash \bb_{4\ell}}\left(\int_{2^j}^{2^{j+1}}\int_{\bb_{\ell}(x)}\vert \nabla u(t,y)\vert\vert\nabla v^T(t,y)\vert\dd y\, \dd t\right)^2\dd x\stackrel{\eqref{decayuproofsubop}}{\leq} \ell^d 2^{-j}\int_{2^j}^{2^{j+1}}\int_{\mathbb{R}^d\backslash \bb_{4\ell}}\int_{\bb_{\ell}(x)}\vert \nabla v^T(t,y)\vert^2\dd y\, \dd x\, \dd t.
\end{equation*}
We then argue as from \eqref{proofsubopesti10} to \eqref{proofsubopesti13}, \eqref{proofsubopesti12}, \eqref{proofsubopesti14} and \eqref{proofsubopesti15} (noticing that the evaluation at time $1$ plays no role in the estimates) and finally obtain
\begin{equation*}
\int_{\mathbb{R}^d\backslash \bb_{4\ell}}\left(\int_{2^j}^{2^{j+1}}\int_{\bb_{\ell}(x)}\vert \nabla u(t,y)\vert\vert\nabla v^T(t,y)\vert\dd y\, \dd t\right)^2\dd x\lesssim \ell^{2d}\left(r^d_*(0)\int_{\mathbb{R}^d\backslash\bb_{\ell}}\frac{\log^2(1+\frac{\vert x\vert}{r})}{(\vert x\vert+r)^{2d}}\dd x+\int_{\mathbb{R}^d\backslash\bb_{\ell}}r^d_*(x)\vert x\vert^{-2d}\dd x+\ell^{-d}\right),
\end{equation*}
and we conclude by summing over $j\in \llbracket 1,N-1\rrbracket$, which gives \eqref{substep220} for $\vert x\vert\geq 4\ell$. For the near-field contribution $\vert x\vert\leq 4\ell$ we write, using Minkowski's inequality in $\LL^2(\bb_{4\ell})$, Fubini-Tonnelli's theorem, the property \eqref{averager*ctrl} and the assumption $\ell\geq r_*(0)$ in form of \eqref{proofsubopesti7}
\begin{align}
\left(\int_{\bb_{4\ell}}\left(\int_{2^j}^{2^{j+1}}\int_{\bb_{\ell}(x)}\vert \nabla u(t,y)\vert\vert\nabla v^T(t,y)\vert\dd y\, \dd t\right)^2\dd x\right)^{\frac{1}{2}}&\leq \int_{2^j}^{2^{j+1}}\int_{\mathbb{R}^d}\left(\int_{\bb_{4\ell}}\vert \nabla u(t,y)\vert^2\vert\nabla v^T(t,y)\vert^2\mathds{1}_{\bb_{\ell}(x)}(y)\dd x\right)^{\frac{1}{2}}\dd y\,\dd t\nonumber\\
&\lesssim \ell^{\frac{d}{2}}\int_{\bb_{5\ell}}\int_{2^j}^{2^{j+1}}\vert \nabla u(t,y)\vert\vert\nabla v^T(t,y)\vert\dd t\, \dd y\nonumber\\
&\stackrel{\eqref{averager*ctrl}}{\lesssim} \ell^{\frac{d}{2}}\int_{\mathbb{R}^d}\fint_{\bb_{r_*(x)}(x)}\int_{2^j}^{2^{j+1}}\vert \nabla u(t,y)\vert\vert \nabla v^T(t,y)\vert\mathds{1}_{\bb_{5\ell}}(y)\dd t\, \dd y\, \dd x\nonumber\\
&\stackrel{\eqref{proofsubopesti7}}{\leq} \ell^{\frac{d}{2}}\int_{\bb_{7\ell}}\fint_{\bb_{r_*(x)}(x)}\int_{2^j}^{2^{j+1}}\vert \nabla u(t,y)\vert\vert \nabla v^T(t,y)\vert\dd t\, \dd y\, \dd x.\label{substep222}
\end{align}
Then, from the decay \eqref{decayuproofsubop}, we have
\begin{equation}
\left(\int_{2^j}^{2^{j+1}}\fint_{\bb_{r_*(x)}(x)}\vert \nabla u(t,y)\vert^2\dd y\, \dd t\right)^{\frac{1}{2}}\lesssim 2^{-\frac{j}{2}}.
\label{ctrlr*nablauL2}
\end{equation}
Thus, by Cauchy-Schwarz's inequality and \eqref{fintvT2}, \eqref{substep222} turns into
\begin{align*}
&\left(\int_{\bb_{4\ell}}\left(\int_{2^j}^{2^{j+1}}\int_{\bb_{\ell}(x)}\vert \nabla u(t,y)\vert\vert\nabla v^T(t,y)\vert\dd y\, \dd t\right)^2\dd x\right)^{\frac{1}{2}}\\
&\lesssim 2^{-\frac{j}{2}}\ell^{\frac{d}{2}}\int_{\bb_{7\ell}}\left(\int_{2^j}^{2^{j+1}}\fint_{\bb_{r_*(x)}(x)}\vert \nabla v^T(t,y)\vert^2\dd y\, \dd t\right)^{\frac{1}{2}}\dd x\\
&\stackrel{\eqref{fintvT2}}{\lesssim}\ell^{\frac{d}{2}}\left(r^{\frac{d}{2}}_*(0)\int_{\bb_{7\ell}}\frac{\log(1+\frac{\vert x\vert}{r})}{(\vert x\vert+r)^{d}}\dd x+\int_{\bb_{7\ell}}\eta^{\frac{1}{2}}_{4r_*(x)}(x)\dd x+\int_{\bb_{7\ell}}g_{2r}(x)\dd x\right),
\end{align*}
and we deduce \eqref{substep220} for $\vert x\vert\leq 4\ell$, since $\int_{\bb_{7\ell}} g_{2r}(x)\dd x\lesssim 1$, $\eta^{\frac{1}{2}}_{4r_*(x)}(x)\lesssim r^{\frac{d}{2}+1}_*(x)(\vert x\vert+1)^{-d-1}$  and by summing over $j\in \llbracket 0, N-1\rrbracket$.\newline
\newline
\textbf{Regime $\ell<r_*(0)$. }We use the estimate \eqref{lem7step2} which holds for $\ell\geq \sqrt{T}$ by removing $r^d_*(rx)$ and $r^d_*(x)$ in each integral in the r.h.s (by using \eqref{decayuproofsubop} for $\ell\geq \sqrt{T}$) and we estimate $\ell^d$ by $r^d_*(0)$ as well as $r^{-\frac{d}{2}}$ by $1$.\newline
\newline
\textbf{Step 3. Proof of \eqref{subopsensiesti} and conclusion. }We have from the multiscale logarithmic Sobolev inequality in form of \eqref{SGinegp1}:
\begin{equation}
\left\langle \vert q_r(T)-\left\langle q_r(T)\right\rangle\vert^p\right\rangle^{\frac{1}{p}}\lesssim \sqrt{p}\left\langle \left(\int_{1}^{+\infty}\ell^{-d} \pi(\ell)\int_{\mathbb{R}^d}\vert\partial^{\text{fct}}_{x,\ell} q_r(T)\vert^2\dd x\, \dd \ell\right)^{\frac{p}{2}}\right\rangle^{\frac{1}{p}}.
\label{LSIsubop1}
\end{equation}
We follow the heuristic arguments done in \eqref{RefHeuristicLater:Eq3} and \eqref{RefHeuristicLater:Eq4} of the Section \ref{stratsec}. We split the integral over $\ell$ into two parts.
\begin{itemize}
\item[(i)]In the regime $\ell<\sqrt{T}$ we use \eqref{step1subopsensi}:
\begin{align}
\left\langle \left(\int_{1}^{\sqrt{T}}\ell^{-d} \pi(\ell)\int_{\mathbb{R}^d}\vert\partial^{\text{fct}}_{x,\ell} q_r(T)\vert^2\dd x\, \dd \ell\right)^{\frac{p}{2}}\right\rangle^{\frac{1}{p}}&\stackrel{\eqref{step1subopsensi}}{\lesssim}r^{-\frac{d}{2}}(1+\log(T)+\log(\tfrac{\sqrt{T}}{r}))\left\langle \left(\int_{1}^{\sqrt{T}}\ell^d \pi(\ell)\mathcal{C}_\star(r,\ell)\dd \ell\right)^{\frac{p}{2}}\right\rangle^{\frac{1}{p}}\nonumber\\
&\stackrel{\eqref{assumeMSPC}}{\leq}r^{-\frac{d}{2}}(1+\log(T)+\log(\tfrac{\sqrt{T}}{r}))\left(\int_{1}^{\sqrt{T}}\ell^{d-1-\beta}\left\langle \mathcal{C}^{p}_\star(r,\ell)\right\rangle^{\frac{1}{p}}\dd \ell\right)^{\frac{1}{2}}\nonumber\\
&\stackrel{\eqref{step1subopsensi}}{\lesssim_\gamma} (p^{\frac{d}{\beta\wedge d}}\mathds{1}_{\beta\neq d}+p^{1+\gamma}\mathds{1}_{\beta=d})\,r^{-\frac{d}{2}}(1+\log(T)+\log(\tfrac{\sqrt{T}}{r}))\mu_{\beta}(T)\label{regimeLSI1},
\end{align}
by the definition \eqref{defmubeta} of $\mu_{\beta}(T)$.
\item[(ii)]In the regime $\ell\geq \sqrt{T}$ we use \eqref{equivmomentr*}, \eqref{step2subopsensi}, \eqref{Step2MomentBoundOfTheRC} and the change of variable $\ell\mapsto \frac{\ell}{\sqrt{T}}$ and the fact that $r\leq \sqrt{T}$ in the last line:
\begin{align}
&\left\langle \left(\int_{\sqrt{T}}^{+\infty}\ell^{-d} \pi(\ell)\int_{\mathbb{R}^d}\vert\partial^{\text{fct}}_{x,\ell} q_r(T)\vert^2\dd x\, \dd \ell\right)^{\frac{p}{2}}\right\rangle^{\frac{1}{p}}\nonumber\\
&\stackrel{\eqref{step2subopsensi},\eqref{assumeMSPC}}{\lesssim}\log(T)\left\langle \left(\int_{\sqrt{T}}^{+\infty}\ell^{-1-\beta}(r^{2d}_*(0)+\mathcal{D}_{\star,1}(r,\ell))\dd \ell\right)^{\frac{p}{2}}\right\rangle^{\frac{1}{p}}+\log(\tfrac{\sqrt{T}}{r})\left\langle \mathcal{D}^{\frac{p}{2}}_{\star,2}(r)\right\rangle^{\frac{1}{p}}\left(\int_{\sqrt{T}}^{+\infty} \ell^{-1-\beta}\dd \ell\right)^{\frac{1}{2}}\nonumber\\
&\lesssim T^{-\frac{\beta}{4}}\log(T)\left(\int_{1}^{+\infty}\ell^{-1-\beta}\left\langle (r^d_*(0)+\mathcal{D}_{\star,1}(r,\ell\sqrt{T}))^p\right\rangle^{\frac{1}{p}}\dd \ell\right)^{\frac{1}{2}}+\log(\tfrac{\sqrt{T}}{r})\left\langle \mathcal{D}^{\frac{p}{2}}_{\star,2}(r)\right\rangle^{\frac{1}{p}}\nonumber\\
&\stackrel{\eqref{Step2MomentBoundOfTheRC}, \eqref{equivmomentr*}}{\lesssim_\gamma} r^{-\frac{d}{2}}\mu_{\beta}(T)\Big((p^{\frac{d+1}{\beta\wedge d}}\mathds{1}_{\beta\neq d}+p^{(1+\frac{1}{d})(1+\gamma)}\mathds{1}_{\beta=d})\log(T)(1+\log^2(\tfrac{\sqrt{T}}{r}))+(p^{\frac{d}{\beta\wedge d}}\mathds{1}_{\beta\neq d}+p^{1+\gamma}\mathds{1}_{\beta=d})\log(\tfrac{\sqrt{T}}{r})\Big).\label{regimeLSI2}
\end{align}
\end{itemize}
The combination of \eqref{regimeLSI1} and \eqref{regimeLSI2} gives the desired bound \eqref{subopsensiesti}.
\subsection{Proof of the main results}
\subsubsection{Proof of Theorem \ref{semigroup}: Fluctuations of the time dependent flux }\label{sectionremovelog}
We only give the argument for the flux $q(T)$, the computations for $\phi(T)$ are done by a straightforward adaptation of the argument of this proof and the ones to prove Lemmas \ref{functioderiv} and \ref{functioderiv2}.\newline
\newline
Our first goal is to remove the $\log(T)$ correction in the r.h.s of \eqref{subopsensiesti}, which will lead to \eqref{Sensitilem3}. To this aim, we first use the $\LL^2$-$\LL^1$ type estimate of Lemma \ref{cacciopo}, which allow us to make the link between the $r^{-\frac{d}{2}}$ decay of the fluctuations of $(q_r(T,\cdot))_{r\leq \sqrt{T}}$ proved in Proposition \ref{Sensitivitysubop} and the decay in $T$ of moments of $(\int_{\mathbb{R}^d}\eta_R\vert\nabla u(T,\cdot)\vert^2)_{R\geq\sqrt{T}}$. This yields an improvement on the decay in $T$, see \eqref{decayunewth12}. With this new decay in hand,  we are able to obtain optimal estimates in scaling. The price to pay in this step is a small loss of stochastic integrability. Our second goal is to prove estimate \eqref{sensiothertest}. This does not require new ideas and this is done by dominating carefully the terms in the derivative \eqref{functioderivothertest} and by using some estimates already established in the proof of \eqref{Sensitilem3}. 
\begin{proof}[Proof of \eqref{Sensitilem3}.]
We split the proof into three steps. In the following, $p\in [1,\infty)$ denotes an arbitrary stochastic integrability exponent.\newline
\newline
\textbf{Step 1. Improvement of \eqref{decayuproofsubop}. }Proof that for all $x\in\mathbb{R}^d$, $T\geq 1$ and $\varepsilon\in (0,1)$
\begin{equation}
\int_{\bb_{\ell}(x)}\vert\nabla u(T,y)\vert^2\dd y\leq\ell^d\left(r^d_*(x)\mathds{1}_{\ell<\sqrt{T}}+\mathds{1}_{\ell\geq \sqrt{T}}\right)\mathcal{D}^{2\varepsilon}_{\star}(T,\ell,x)\eta^2_{\varepsilon,\beta}(T),
\label{decayunewth12}
\end{equation}
with
\begin{equation}
\eta_{\varepsilon,\beta}(T) = \left\{
    \begin{array}{ll}
        \log^{\varepsilon}(T)T^{-1-\varepsilon\frac{\beta}{4}} & \text{ if $\beta<d$}, \\
        \log^{\frac{3}{2}\varepsilon}(T)T^{-1-\varepsilon \frac{d}{4}} & \text{ if $\beta=d$},\\
			\log^{\varepsilon}(T)T^{-1-\varepsilon\frac{d}{4}} & \text{ if $\beta>d$},
   \end{array}
\right.
\label{defetaespsibeta}
\end{equation}
and for some stationary random field $\mathcal{D}_\star(T,\ell,\cdot)$ that satisfies for any $\gamma>0$ 
\begin{equation}
\sup_{(T,\ell,x)\in\mathbb{R}^+\times [1,\infty)\times \mathbb{R}^d}\left\langle \mathcal{D}^p_\star(T,\ell,x)\right\rangle^{\frac{1}{p}}\lesssim_\gamma p^{\alpha_\gamma},
\label{momentboundDstar}
\end{equation}
where $\alpha_\gamma$ as in Proposition \ref{Sensitivitysubop}. We have from Lemma \ref{cacciopo}, Minkowski's inequality in $\LL^p_{\left\langle\cdot\right\rangle}(\Omega)$ and the stationarity of $q_r$: for all $T\geq 4$ and $R\geq \sqrt{T}$
\begin{align}
\bigg\langle \Big(\int_{\mathbb{R}^d}\eta_{\sqrt{2}R}(y)\vert\nabla u(T,y)\vert^2\dd y\Big)^{\frac{p}{2}}\bigg\rangle^{\frac{1}{p}}&\lesssim\frac{1}{T}\left\langle \left(\fint_{\frac{T}{4}}^{\frac{T}{2}}\fint_{0}^{\sqrt{t}}\left(\frac{r}{\sqrt{t}}\right)^{\frac{d}{2}}\int_{\mathbb{R}^d}\eta_{2R}(y)\vert q_r(t,y)-\langle q_r(t,y)\rangle\vert\dd y\, \dd r\, \dd t\right)^{p}\right\rangle^{\frac{1}{p}}\nonumber\\
&\lesssim \frac{1}{T}\fint_{\frac{T}{4}}^{\frac{T}{2}}\fint_{0}^{\sqrt{t}}\left(\frac{r}{\sqrt{t}}\right)^{\frac{d}{2}}\left\langle \vert q_r(t)-\langle q_r(t)\rangle\vert^p\right\rangle^{\frac{1}{p}}\dd r\, \dd t.\label{th1estisensi1}
\end{align}
Then, we split the integral over $[0,\sqrt{t}]$ into the two contributions $r\leq 1$ and $1\leq r\leq \sqrt{t}$:
\begin{itemize}
\item[(i)]For $r\leq 1$ we use \eqref{near0est4} and the change of variable $r\mapsto \tfrac{\sqrt{T}}{r}$:
\begin{align}
\frac{1}{T}\fint_{\frac{T}{4}}^{\frac{T}{2}}\frac{1}{\sqrt{t}}\int_{0}^{1}\left(\frac{r}{\sqrt{t}}\right)^{\frac{d}{2}}\left\langle \vert q_r(t)-\langle q_r(t)\rangle\vert^p\right\rangle^{\frac{1}{p}}\dd r\, \dd t&\stackrel{\eqref{near0est4}}{\lesssim}p^{\frac{1}{\eta_\gamma}}T^{-1-\frac{d}{4}}\fint_{\frac{T}{4}}^{\frac{T}{2}}\frac{1}{\sqrt{t}}\int_{0}^{1}\left(r^{\frac{d}{2}}+\log(\tfrac{\sqrt{T}}{r})\right)\dd r\, \dd t\nonumber\\
&\lesssim p^{\frac{1}{\eta_\gamma}}T^{-1-\frac{d}{4}}\left(\int_{\sqrt{T}}^{+\infty}r^{-2}(1+\log(r))\dd r+\frac{1}{\sqrt{T}}\right)\nonumber\\
&\lesssim p^{\frac{1}{\eta_\gamma}}T^{-\frac{3}{2}-\frac{d}{4}}\log(T)\label{th1estisensi5}.
\end{align}
\item[(ii)]For $1\leq r\leq \sqrt{t}$ we use \eqref{subopsensiesti} and the change of variable $r\mapsto \tfrac{\sqrt{T}}{r}$: 
\begin{align}
\frac{1}{T}\fint_{\frac{T}{4}}^{\frac{T}{2}}\frac{1}{\sqrt{t}}\int_{\frac{1}{4}}^{\sqrt{t}}\left(\frac{r}{\sqrt{t}}\right)^{\frac{d}{2}}\left\langle \vert q_r(t)-\langle q_r(t)\rangle\vert^p\right\rangle^{\frac{1}{p}}\dd r\, \dd t&\stackrel{\eqref{subopsensiesti}}{\lesssim}p^{\alpha_\gamma}\log(T)T^{-1-\frac{d}{4}}\mu_{\beta}(T)\fint_{\frac{T}{4}}^{\frac{T}{2}}\frac{1}{\sqrt{t}}\int_{1}^{\sqrt{t}}\log^2(\tfrac{\sqrt{T}}{r})\dd r\, \dd t\nonumber\\
&\leq p^{\alpha_\gamma}\log(T)T^{-\frac{1}{2}}\eta_{\beta}(T)\int_{1}^{\sqrt{\frac{T}{2}}}r^{-2}\log^2(r)\dd r\nonumber\\
&\lesssim p^{\alpha_\gamma}\log(T)T^{-\frac{1}{2}}\eta_{\beta}(T)\label{th1estisensi6},
\end{align}
where $\eta_{\beta}$ is defined in \eqref{defetabeta}.
\end{itemize}
The combination of \eqref{th1estisensi1}, \eqref{th1estisensi5} and \eqref{th1estisensi6} yields that for all $x\in\mathbb{R}^d$ and $R\geq \sqrt{T}$
\begin{equation}
\fint_{\bb_{R}(x)}\vert \nabla u(T,y)\vert^2\dd y\lesssim\tilde{\mathcal{D}}^2_\star(T,R,x)\log^2(T)\eta^2_{\beta}(T),
\label{decaynablaulargescale}
\end{equation}
where $\tilde{\mathcal{D}}_\star(T,R,\cdot):=\frac{\fint_{\bb_R(\cdot)}\vert\nabla u(T,y)\vert^2\dd y}{\log^2(T)T^{-1}\eta^2_{\beta}(T)}$ has the moment bound \eqref{momentboundDstar}. This implies, from Lemma \ref{ctrlav} applied with $f: t\in\mathbb{R}_*^+\mapsto \log^2(t)$ and $g:t\in\mathbb{R}_*^+\mapsto t^{-2-\frac{d}{2}}$ that, for all $\ell<\sqrt{T}$
\begin{equation}
\int_{\bb_{\ell}(x)}\vert \nabla u(T,y)\vert^2\dd y\lesssim (r_*(x)\vee \ell)^d \Big(\tilde{\mathcal{D}}^2_\star(T,\sqrt{T},x)\vee \fint_{\frac{T}{2}}^{T}\tilde{\mathcal{D}}^2_\star(s,\sqrt{s},x)\dd s\Big)\log^2(T)T^{-1}\eta^2_{\beta}(T).
\label{decayunewth1}
\end{equation}
By interpolating between \eqref{decayuproofsubop} and the combination of \eqref{decaynablaulargescale} and \eqref{decayunewth1} as well as using that $\frac{r_*(x)}{\ell}\vee 1\leq 2r_*(x)$ in the last line, we finally obtain for all $\varepsilon\in (0,1)$
\begin{align*}
\int_{\bb_{\ell}(x)}\vert\nabla u(T,y)\vert^2\dd y&= \left(\int_{\bb_{\ell}(x)}\vert\nabla u(T,y)\vert^2\dd y\right)^{1-\varepsilon}\left(\int_{\bb_{\ell}(x)}\vert\nabla u(T,y)\vert^2\dd y\right)^{\varepsilon}\\
&\stackrel{\eqref{decayuproofsubop},\eqref{decayunewth1}}{\lesssim}\ell^d\left(\left(\frac{r_*(x)}{\ell}\vee 1\right)^{d}\mathds{1}_{\ell<\sqrt{T}}+\mathds{1}_{\ell\geq\sqrt{T}}\right)T^{-2(1-\varepsilon)}\mathcal{D}^{2\varepsilon}_{\star}(T,\ell,x)\log^{2\varepsilon} (T)T^{-\varepsilon}\eta^{2\varepsilon}_{\beta}(T)\\
&\leq 2\ell^d\left(r^d_*(x)\mathds{1}_{\ell<\sqrt{T}}+\mathds{1}_{\ell\geq\sqrt{T}}\right)\mathcal{D}^{2\varepsilon}_{\star}(T,\ell,x) \eta^2_{\varepsilon,\beta}(T),
\end{align*}
with $\eta_{\varepsilon, \beta}$ defined in \eqref{defetaespsibeta} and $\mathcal{D}_\star(T,\ell,x):=\max\left\{\tilde{\mathcal{D}}_{\star}(T,\ell,x),\tilde{\mathcal{D}}^2_\star(T,\sqrt{T},x)\vee \fint_{\frac{T}{2}}^{T}\tilde{\mathcal{D}}^2_\star(s,\sqrt{s},x)\dd s\right\}$ which satisfies the moment bound \eqref{momentboundDstar}. \newline
\newline
\textbf{Step 2. }Equipped with \eqref{decayunewth12}, we improve the estimates \eqref{step1subopsensi} and \eqref{step2subopsensi} for the control of $(x,\ell)\mapsto\int_{\mathbb{R}^d}\vert\partial^{\text{fct}}_{x,\ell} q_r(T)\vert^2\dd x$ (corresponding to the substeps $1$ and $2$ of the proof of Lemma \ref{Sensitivitysubop}). We split this step into two parts, one for the improvement of \eqref{step1subopsensi} and an other for \eqref{step2subopsensi}, treating separately the two regimes $\ell<\sqrt{T}$ and $\ell\geq \sqrt{T}$.\newline
\newline
\textbf{Substep 2.1. Improvement of \eqref{step1subopsensi}. }Proof that for all $\varepsilon>0$ and $\ell<\sqrt{T}$
\begin{equation}
\int_{\mathbb{R}^d}\vert\partial^{\text{fct}}_{x,\ell} q_r(T)\vert^2\dd x\leq \ell^{2d} r^{-d}\bigg(1+\mathcal{E}_{\star,\varepsilon}(r,\ell)+\Big(\sum_{j=0}^{+\infty}2^{\frac{j}{2}}\eta_{\varepsilon,\beta}(2^j)\mathcal{F}_{j,\star,\varepsilon}(r,\ell)\Big)^2\bigg),
\label{newsensith1}
\end{equation}
where
\begin{equation}
\mathcal{E}_{\star,\varepsilon}(r,\ell):=\int_{\mathbb{R}^d}r^d_*(rx) \left(\int_{\frac{1}{16}}^{+\infty}\mathcal{D}^{\varepsilon}_{\star}(t,\ell,rx)\eta_{\varepsilon,\beta}(t)\dd t\right)^2 \fint_{\bb_{\frac{\ell}{r}}(x)}g^2_1(y)\dd y\, \dd x,
\label{mathcalEnewestith1}
\end{equation}
and
\begin{equation}\label{mathcalFnewestith1}
\begin{aligned}
&\mathcal{F}_{j,\star,\varepsilon}(r,\ell)\\
&:=\bigg(\int_{\mathbb{R}^d}r^d_*(x)\Big(\int_{2^j}^{2^{j+1}} \mathcal{D}^{2\varepsilon}_\star(t,\ell,x)\dd t\Big)\\
&\times\bigg(\int_{\mathbb{R}^d}\Big(\frac{\mathds{1}_{\bb_\ell(x)}(y)}{\ell^d}+\frac{r^{d+\varepsilon}_*(x)\ell^{\varepsilon}}{\vert y-x\vert^{d+\varepsilon}}\mathds{1}_{\mathbb{R}^d\backslash \bb_{\ell}(x)}\Big)r^d_*(0)r^d\frac{\log^2(1+\frac{\vert y\vert}{r})}{(\vert y\vert +r)^{2d}}\dd y+\frac{r^d_*(y)}{(\vert y\vert+1)^{2d}}+r^d g^2_{2r}(y)\dd y\bigg)\dd x\bigg)^{\frac{1}{2}}.
\end{aligned}
\end{equation}
The estimate \eqref{lem66} is unchanged and gives the first contribution in \eqref{newsensith1}. We improve the estimates \eqref{lem65} and \eqref{lem7step2} (corresponding to the estimate of the second and last r.h.s term of \eqref{lem6}, respectively). On the one hand, noticing that from Minkwoski's inequality in $\LL^2(\mathbb{R}^d)$ and \eqref{decayunewth12} as well as $r\geq \frac{1}{2}$, we have for all $x\in\mathbb{R}^d$
$$\int_{\bb_{\ell}(x)}\left\vert\int_{r^2}^{T}\nabla u(t,y)\dd t\right\vert^2\dd y\leq \left(\int_{r^2}^T\left(\int_{\bb_{\ell}(x)}\vert\nabla u(t,y)\vert^2\dd y\right)^{\frac{1}{2}}\dd t\right)^2\stackrel{\eqref{decayunewth12}}{\lesssim} \ell^d r^d_*(x)\left(\int_{\frac{1}{16}}^{+\infty}\mathcal{D}^{\varepsilon}_\star(t,\ell,x)\eta_{\varepsilon,\beta}(t)\dd t\right)^2,$$
thus we deduce from \eqref{lem67} and \eqref{lowerenergy1} that
\begin{align}
\left(\int_{\mathbb{R}^d}\left(\int_{\bb_{\ell}(x)}\left\vert\int_{0}^T\nabla u(t,y)\dd t\right\vert g_r(y)\dd y\right)^2\dd x\right)^{\frac{1}{2}}&\stackrel{\eqref{lem67},\eqref{lowerenergy1}}\leq \ell^d r^{-\frac{d}{2}}+\left(\int_{\mathbb{R}^d}\Big(\int_{\bb_{\ell}(x)}\left\vert\int_{r^2}^T\nabla u(t,y)\dd t\right\vert^2\dd y\Big)\Big(\int_{\bb_{\ell}(x)} g^2_r(y)\dd y\Big) \dd x\right)^{\frac{1}{2}}\nonumber\\
&\stackrel{\eqref{decayunewth12}}{\lesssim} \ell^d r^{-\frac{d}{2}}+\ell^{\frac{d}{2}}\left(\int_{\mathbb{R}^d} r^d_*(x)\mathcal{J}_{\star,\varepsilon}(x)\int_{\bb_{\ell}(x)} g^2_r(y)\dd y\,\dd x\right)^{\frac{1}{2}},\label{matcalEnewesti1}
\end{align}
where 
$$\mathcal{J}_{\star,\varepsilon}(x):=\left(\int_{\frac{1}{16}}^{+\infty}\mathcal{D}^{\varepsilon}_\star(t,\ell,x)\eta_{\varepsilon,\beta}(t)\dd t\right)^2.$$
Using the change of variables $x\mapsto \frac{x}{r}$ in the last r.h.s term of \eqref{matcalEnewesti1}, this gives the term $\mathcal{E}_{\star,\varepsilon}(r,\ell)$ defined in \eqref{mathcalEnewestith1}.
On the other hand, noticing that, by monotonicity of $t\in\mathbb{R}^+_*\mapsto \eta_{\varepsilon,\beta}(t)$, for all $j\in\mathbb{N}$, we have
\begin{equation}
\int_{2^j}^{2^{j+1}}\mathcal{D}^{2\varepsilon}_{\star}(t,\ell,x)\eta^2_{\varepsilon,\beta}(t)\dd t\lesssim \eta^2_{\varepsilon,\beta}(2^j)\int_{2^j}^{2^{j+1}}\mathcal{D}^{2\varepsilon}_\star(t,\ell,x)\dd t,
\label{monotonocityeta}
\end{equation}
we deduce from \eqref{lem71uselater} and \eqref{decayunewth12}
\begin{equation}\label{th1estisensi2}
\begin{aligned}
&\left(\int_{\mathbb{R}^d}\left(\int_{1}^T\int_{\bb_{\ell}(x)}\vert \nabla u(t,y)\vert\vert \nabla v^T(t,y)\vert\dd y\,\dd t\right)^2\dd x\right)^{\frac{1}{2}}\\
&\lesssim\ell^{\frac{d}{2}}\sum_{j=0}^{+\infty}\eta_{\varepsilon,\beta}(2^j)\left(\int_{\mathbb{R}^d}r^d_*(x)\left(\int_{2^j}^{2^{j+1}}\mathcal{D}^{2\varepsilon}_\star(t,\ell,x)\dd t\right)\int_{2^j}^{2^{j+1}}\int_{\bb_{\ell}(x)}\vert \nabla v^T(t,y)\vert^2\dd y\, \dd t\,\dd x\right)^{\frac{1}{2}}.
\end{aligned}
\end{equation}
It remains to control the r.h.s integral of \eqref{th1estisensi2} by $2^{j}r^{-d}\mathcal{F}_{j,\star,\varepsilon}(r,\ell)$, where $\mathcal{F}_{j,\star,\varepsilon}(r,\ell)$ is defined in \eqref{mathcalFnewestith1}. To this aim, we note that, by the $\frac{1}{8}$-Lipschitz property of $r_*$, we have for all $(x,y)\in\mathbb{R}^{2d}$
\begin{equation}
\bb_{\ell}(x)\cap \bb_{r_*(y)}(y)\neq \emptyset\quad \Rightarrow\quad \vert y-x\vert\leq 3 (r_*(x)\vee \ell).
\label{th1capr*}
\end{equation}
Indeed if there exists $z\in \bb_{\ell}(x)\cap \bb_{r_*(y)}(y)$, we have by the triangle inequality
$$\vert y-x\vert\leq \vert y-z\vert+\vert z-x\vert\leq r_*(y)+\ell\leq r_*(x)+\frac{1}{8}\vert y-x\vert+\ell,$$
and thus \eqref{th1capr*} holds. Then, we use the property \eqref{averager*ctrl} and the decomposition, for all $x\in\mathbb{R}^d$, $\int_{\mathbb{R}^d}=\int_{\bb_{\ell}(x)}+\int_{\mathbb{R}^d\backslash \bb_{\ell}(x)}$, to obtain
\begin{align}
\int_{2^j}^{2^{j+1}}\int_{\bb_{\ell}(x)}\vert \nabla v^T(t,y)\vert^2\dd y\, \dd t\stackrel{\eqref{averager*ctrl}}{\lesssim}&\int_{2^j}^{2^{j+1}}\int_{\mathbb{R}^d}\fint_{\bb_{r_*(y)}(y)}\mathds{1}_{\bb_{\ell}(x)}(z)\vert\nabla v^T(t,z)\vert^2\dd z\, \dd y\, \dd t\nonumber\\
\leq &\int_{2^j}^{2^{j+1}}\int_{\bb_{\ell}(x)}\fint_{\bb_{r_*(y)}(y)}\vert\nabla v^T(t,z)\vert^2\dd z\, \dd y\, \dd t\nonumber\\
&+\int_{2^j}^{2^{j+1}}\int_{\mathbb{R}^d\backslash \bb_{\ell}(x)}\fint_{\bb_{r_*(y)}(y)}\mathds{1}_{\bb_{\ell}(x)}(z)\vert\nabla v^T(t,z)\vert^2\dd z\, \dd y\, \dd t.\label{proofth1esti1}
\end{align}
Next, we make use of \eqref{th1capr*} to bound the second r.h.s term of \eqref{proofth1esti1} with 
\begin{equation}\label{proofth1esti2}
\begin{aligned}
&\int_{2^j}^{2^{j+1}}\int_{\mathbb{R}^d\backslash \bb_{\ell}(x)}\fint_{\bb_{r_*(y)}(y)}\mathds{1}_{\bb_{\ell}(x)}(z)\vert\nabla v^T(t,z)\vert^2\dd z\, \dd y\, \dd t\\
&\lesssim 
(r_*(x)\vee \ell)^{d+\varepsilon}\int_{2^j}^{2^{j+1}}\int_{\mathbb{R}^d\backslash \bb_{\ell}(x)}\vert y-x\vert^{-d-\varepsilon}\fint_{\bb_{r_*(y)}(y)}\vert\nabla v^T(t,z)\vert^2\dd z\, \dd y\, \dd t.
\end{aligned}
\end{equation}
The combination of \eqref{proofth1esti1} and \eqref{proofth1esti2} (where we bound $(r_*(x)\vee \ell)^{d+\varepsilon}\lesssim \ell^{d+\varepsilon}r^{d+\varepsilon}_*(x)$) as well as \eqref{fintvT2} (where we bound $\eta_{4r_*(x)}(x)\lesssim r^d_*(x)(\vert x\vert+1)^{-2d}$) proves that the r.h.s integral of \eqref{th1estisensi2} is indeed bounded by $2^j r^{-d}\mathcal{F}_{j,\star,\varepsilon}(r,\ell)$ and this concludes the argument for \eqref{newsensith1}.\newline
\newline
\textbf{Substep 2.2. Improvement of \eqref{step2subopsensi}. }Proof that for all $\varepsilon>0$ and $\ell\geq \sqrt{T}$
\begin{equation}
\int_{\mathbb{R}^d}\vert \partial^{\text{fct}}_{x,\ell} q_r(T)\vert^2\dd x\lesssim\ell^d\bigg(\mathcal{D}_{\star,1}(r,\ell)+\mathcal{D}_{\star,2}(r)\log^2(\tfrac{\sqrt{T}}{r})+\Big(\sum_{j=0}^{+\infty}2^{\frac{j}{2}}\eta_{\varepsilon,\beta}(2^j)\mathcal{H}_{j,\star,\varepsilon}(r,\ell)\Big)^2\bigg),
\label{newsensith1largel}
\end{equation}
with $\mathcal{D}_{\star,1}(r,\ell)$ and $\mathcal{D}_{\star,2}(r)$ as in \eqref{Step2MomentBoundOfTheRC},
\begin{align}
\mathcal{H}_{j,\star,\varepsilon}(r,\ell)=&\ell^d\mathcal{H}_{j,\star,\varepsilon}(r,\ell,\mathbb{R}^d\backslash \bb_{4\ell})+r^d_*(0)\mathcal{H}_{j,\star,\varepsilon}(r,\ell,\mathbb{R}^d)\nonumber\\
&+\int_{\bb_{7\ell}}\left(\int_{2^j}^{2^{j+1}} \mathcal{D}^{2\varepsilon}_{\star}(t,\ell,x)\dd t\right)^{\frac{1}{2}}\left(r^{\frac{d}{2}}_*(0)\frac{\log(1+\frac{\vert x\vert}{r})}{(\vert x\vert +r)^d}+\frac{r^{\frac{d}{2}+1}_*(x)}{(\vert x\vert+1)^{d+1}}+g_{2r}(x)\right)\dd x,\label{cstmathcalH}
\end{align}
as well as for all open subsets $\mathcal{U}$ of $\mathbb{R}^d$
\begin{align*}
\mathcal{H}_{j,\star,\varepsilon}(r,\ell,\mathcal{U})=&\bigg(\int_{\mathcal{U}}\Big(\int_{2^j}^{2^{j+1}} \mathcal{D}^{2\varepsilon}_\star(t,\ell,x)\dd t\Big)\\
&\times\bigg(\int_{\mathbb{R}^d}\Big(\frac{\mathds{1}_{\bb_\ell(x)}(y)}{\ell^d}+\frac{r^{d+\varepsilon}_*(x)\ell^{\varepsilon}}{\vert y-x\vert^{d+\varepsilon}}\mathds{1}_{\mathbb{R}^d\backslash \bb_{\ell}(x)}\Big)r^d_*(0)r^d\frac{\log^2(1+\frac{\vert y\vert}{r})}{(\vert y\vert +r)^{2d}}\dd y+\frac{r^d_*(y)}{(\vert y\vert+1)^{2d}}+r^d g^2_{2r}(y)\dd y\bigg)\dd x\bigg)^{\frac{1}{2}}.
\end{align*}
The estimates \eqref{substep211} and \eqref{substep22} are unchanged and give respectively the $\ell^d\mathcal{D}_{\star,2}(r)\log^2(\tfrac{\sqrt{T}}{r})$ and $\ell^d\mathcal{D}_{\star,1}(r,\ell)$ contributions in the r.h.s of \eqref{newsensith1largel}. We improve the estimate \eqref{substep220}. We argue differently between the generic case $\ell\geq r_*(0)$ and the non-generic case $\ell<r_*(0)$.\newline
\newline
\textbf{Regime $\ell\geq r_*(0)$. }We have from \eqref{substep221}
$$\int_{\mathbb{R}^d}\left(\int_{1}^T \int_{\bb_{\ell}(x)}\vert\nabla u(t,y)\vert\vert\nabla v^T(t,y)\vert\dd y\, \dd t\right)^2\dd x\leq \left(\sum_{j=0}^{+\infty}\left(\int_{\mathbb{R}^d}\left(\int_{2^{j}}^{2^{j+1}}\int_{\bb_{\ell}(x)}\vert \nabla u(t,y)\vert\vert \nabla v^T(t,y)\vert\dd y\,\dd t\right)^2\dd x\right)^{\frac{1}{2}}\right)^2.$$
We then split the argument between the far-field regime $\vert x\vert\geq 4\ell$ and the near-field regime $\vert x\vert\leq 4\ell$. For the far-field regime, we use Cauchy-Schwarz's inequality combined with \eqref{decayunewth12} (applied for $\ell\geq\sqrt{T}$) and \eqref{monotonocityeta} to the effect of
\begin{align}
&\int_{\mathbb{R}^d\backslash \bb_{4\ell}}\bigg(\int_{2^{j}}^{2^{j+1}}\int_{B_{\ell}(x)}\vert \nabla u(t,y)\vert\vert \nabla v^T(t,y)\vert\dd y\,\dd t\bigg)^2\dd x\nonumber\\
&\leq \int_{\mathbb{R}^d}\bigg(\int_{2^{j}}^{2^{j+1}}\int_{\bb_{\ell}(x)}\vert\nabla u(t,y)\vert^2\dd y\, \dd t\bigg)\bigg(\int_{2^{j}}^{2^{j+1}}\int_{\bb_{\ell}(x)}\vert\nabla v^T(t,y)\vert^2\dd y\, \dd t\bigg)\dd x\nonumber\\
&\stackrel{\eqref{decayunewth12},\eqref{monotonocityeta}}{\lesssim}\ell^{d}\eta^2_{\varepsilon,\beta}(2^j)\int_{\mathbb{R}^d\backslash \bb_{4\ell}}\left(\int_{2^j}^{2^{j+1}}\mathcal{D}^{2\varepsilon}_{\star}(t,\ell,x)\dd t\right)\int_{2^{j}}^{2^{j+1}}\int_{\bb_{\ell}(x)}\vert\nabla v^T(t,y)\vert^2\dd y\, \dd t\, \dd x.\label{proofth1othertest5}
\end{align}
This then gives the first term of \eqref{cstmathcalH} by dominating $\int_{2^{j}}^{2^{j+1}}\int_{\bb_{\ell}(x)}\vert\nabla v^T(t,y)\vert^2\dd y\, \dd t$ using the arguments for \eqref{proofth1esti1} and \eqref{proofth1esti2}. For the near-field regime, we use \eqref{decayunewth12} and \eqref{monotonocityeta} in form of
\begin{equation}
\left(\int_{2^j}^{2^{j+1}}\fint_{\bb_{r_*(x)}(x)}\vert\nabla u(t,y)\vert^2\dd y\, \dd t\right)^{\frac{1}{2}}\stackrel{\eqref{decayunewth12}, \eqref{monotonocityeta}}{\lesssim} \eta_{\varepsilon,\beta}(2^j)\left(\int_{2^j}^{2^{j+1}}\mathcal{D}^{2\varepsilon}_{\star}(t,\ell,x)\dd t\right)^{\frac{1}{2}},
\label{monotoeta2}
\end{equation}
 which has the effect of, combined with \eqref{substep222} and Cauchy-Schwarz's inequality 
\begin{align}
&\left(\int_{\bb_{4\ell}}\left(\int_{2^j}^{2^{j+1}}\int_{\bb_{\ell}(x)}\vert\nabla u(t,y)\vert\vert\nabla v^T(t,y)\vert\dd y\, \dd t\right)^2\dd x\right)^{\frac{1}{2}}\nonumber\\
&\stackrel{\eqref{substep222}}{\lesssim}\ell^{\frac{d}{2}}\int_{\bb_{7\ell}}\fint_{\bb_{r_*(x)}(x)}\int_{2^j}^{2^{j+1}}\vert\nabla u(t,y)\vert\vert\nabla v^T(t,y)\vert\dd t\, \dd y\, \dd x\nonumber\\
&\stackrel{\eqref{monotoeta2}}{\lesssim} \ell^{\frac{d}{2}}\eta_{\varepsilon,\beta}(2^j)\int_{\bb_{7\ell}}\left(\int_{2^j}^{2^{j+1}}\mathcal{D}^{2\varepsilon}_{\star}(t,\ell,x)\dd t\right)^{\frac{1}{2}}\left(\fint_{\bb_{r_*(x)}(x)}\int_{2^j}^{2^{j+1}}\vert\nabla v^T(t,y)\vert\dd t\, \dd y\right)^{\frac{1}{2}}\dd x,\label{proofth1othertest6}
\end{align}
and yields the third term of \eqref{cstmathcalH} by using \eqref{fintvT2} (where we use $\eta_{4r_*(x)}(x)\lesssim r^d_*(x)(\vert x\vert+1)^{-2d}$).\newline
\newline
\textbf{Regime $\ell<r_*(0)$. }For the non-generic case $\ell<r_*(0)$, we use \eqref{th1estisensi2}, \eqref{proofth1esti1}, \eqref{proofth1esti2} and we bound one $\ell^d$ by $r^d_*(0)$ which gives the second term of \eqref{cstmathcalH}.
\newline
\newline
\textbf{Step 3. Proof of \eqref{momentboundlem3}. }We have from the multiscale logarithmic Sobolev inequality in form of \eqref{SGinegp1}, for all $p\in [1,\infty)$
\begin{equation}
\left\langle \vert q_r(T)-\left\langle q_r(T)\right\rangle\vert^p\right\rangle^{\frac{1}{p}}\lesssim \sqrt{p}\left\langle\left(\int_{1}^{+\infty}\ell^{-d}\pi(\ell)\int_{\mathbb{R}^d}\vert\partial^{\text{fct}}_{x,\ell}q_r(T)\vert^2\dd x\, \dd \ell\right)^{\frac{p}{2}}\right\rangle^{\frac{1}{p}}\leq \sqrt{p}(\mathcal{I}^1_{\sqrt{T}}+\mathcal{I}^2_{\sqrt{T}}),
\label{LSIop1}
\end{equation}
with
$$\mathcal{I}^1_{\sqrt{T}}:=\left\langle\left(\int_{1}^{\sqrt{T}}\ell^{-d}\pi(\ell)\int_{\mathbb{R}^d}\vert\partial^{\text{fct}}_{x,\ell}q_r(T)\vert^2\dd x\, \dd \ell\right)^{\frac{p}{2}}\right\rangle^{\frac{1}{p}}\quad \text{and}\quad \mathcal{I}^2_{\sqrt{T}}:=\left\langle\left(\int_{\sqrt{T}}^{+\infty}\ell^{-d}\pi(\ell)\int_{\mathbb{R}^d}\vert\partial^{\text{fct}}_{x,\ell}q_r(T)\vert^2\dd x\, \dd \ell\right)^{\frac{p}{2}}\right\rangle^{\frac{1}{p}}.$$
We then treat separately the two terms above.
\begin{itemize}
\item[(i)] In the regime $\ell<\sqrt{T}$ we use \eqref{newsensith1} combined with Minkowski's inequality in $\LL^p_{\left\langle\cdot\right\rangle}(\Omega)$:
\begin{align}
\mathcal{I}^1_{\sqrt{T}}&\stackrel{\eqref{newsensith1}}{\lesssim}  r^{-\frac{d}{2}}\left\langle \bigg(\int_{1}^{\sqrt{T}}\ell^d\pi(\ell)\bigg(1+\mathcal{E}_{\star,\varepsilon}(r,\ell)+\Big(\sum_{j=0}^{+\infty}2^{\frac{j}{2}}\eta_{\varepsilon,\beta}(2^j)\mathcal{F}_{j,\star,\varepsilon}(r,\ell)\Big)^2\bigg)\dd \ell\bigg)^{\frac{p}{2}}\right\rangle^{\frac{1}{p}}\nonumber\\
&\stackrel{\eqref{assumeMSPC}}{\leq}r^{-\frac{d}{2}}\bigg(\int_{1}^{\sqrt{T}}\ell^{d-1-\beta}\bigg(1+\left\langle \mathcal{E}^p_{\star,\varepsilon}(r,\ell)\right\rangle^{\frac{1}{p}}+\Big(\sum_{j=0}^{+\infty}2^{\frac{j}{2}}\eta_{\varepsilon,\beta}(2^j)\left\langle \mathcal{F}^{2p}_{j,\star,\varepsilon}(r,\ell)\right\rangle^{\frac{1}{2p}}\Big)^2\bigg)\dd\ell\bigg)^{\frac{1}{2}}\nonumber\\
&\lesssim r^{-\frac{d}{2}}\mu_{\beta}(T)\bigg(1+\sup_{\ell\geq 1}\left\langle \mathcal{E}^p_{\star,\varepsilon}(r,\ell)\right\rangle^{\frac{1}{2p}}+\sup_{\ell\geq 1}\sum_{j=0}^{+\infty}2^{\frac{j}{2}}\eta_{\varepsilon,\beta}(2^j)\left\langle \mathcal{F}^{2p}_{j,\star,\varepsilon}(r,\ell)\right\rangle^{\frac{1}{2p}}\bigg)\label{LSIop12},
\end{align}
by the definition \eqref{defmubeta} of $\mu_{\beta}$.
\item[(ii)]In the regime $\ell\geq \sqrt{T}$ we use \eqref{newsensith1largel}, Minkowski's inequality in $\LL^p_{\left\langle\cdot\right\rangle}(\Omega)$ and the fact that $r\leq \sqrt{T}$ in the last line:
\begin{align}
\mathcal{I}^2_{\sqrt{T}}\lesssim & \left\langle\bigg(\int_{\sqrt{T}}^{+\infty}\pi(\ell)\bigg(\mathcal{D}_{\star,1}(r,\ell)+\mathcal{D}_{\star,2}(r)\log^2(\tfrac{\sqrt{T}}{r})+\Big(\sum_{j=0}^{+\infty} 2^{\frac{j}{2}}\eta_{\varepsilon,\beta}(2^j)\mathcal{H}_{j,\star,\varepsilon}(r,\ell)\Big)^2\bigg)\dd\ell\bigg)^{\frac{p}{2}}\right\rangle^{\frac{1}{p}}\nonumber\\
\stackrel{\eqref{assumeMSPC}}{\lesssim}&\bigg(\int_{\sqrt{T}}^{+\infty}\ell^{-1-\beta}\bigg(\left\langle \mathcal{D}^{p}_{\star,1}(r,\ell)\right\rangle^{\frac{1}{p}}+\left\langle \mathcal{D}^{p}_{\star,2}(r)\right\rangle^{\frac{1}{p}}\log^{2}(\tfrac{\sqrt{T}}{r})+\Big(\sum_{j=0}^{+\infty}2^{\frac{j}{2}}\eta_{\varepsilon,\beta}(2^j)\left\langle \mathcal{H}^{2p}_{j,\star,\varepsilon}(r,\ell)\right\rangle^{\frac{1}{2p}}\Big)^2\bigg)\dd\ell\bigg)^{\frac{1}{2}}\nonumber\\
\lesssim &\, r^{-\frac{d}{2}}\mu_{\beta}(T)\bigg(\Big(\int_{1}^{+\infty}\ell^{-1-\beta}\left\langle \mathcal{D}^{p}_{\star,1}(r,\ell\sqrt{T}))\right\rangle^{\frac{1}{p}}\dd \ell\Big)^{\frac{1}{2}}+\log(\tfrac{\sqrt{T}}{r})\sup_{r\geq 1}\left\langle \mathcal{D}^{p}_{\star,2}(r)\right\rangle^{\frac{1}{2p}}\nonumber\\
&+\sup_{\ell\geq 1}\sum_{j=0}^{+\infty}2^{\frac{j}{2}}\eta_{\varepsilon,\beta}(2^j)\left\langle \mathcal{H}^{2p}_{j,\star,\varepsilon}(r,\ell)\right\rangle^{\frac{1}{2p}}\bigg)\label{LSIop13}.
\end{align}
\end{itemize}
It remains to show that, for all $\varepsilon\in (0,1)$
\begin{equation}
\sup_{\ell\geq 1}\left\langle \mathcal{E}^p_{\star,\varepsilon}(r,\ell)\right\rangle^{\frac{1}{2p}}+\sup_{\ell\geq 1}\sum_{j=0}^{+\infty}2^{\frac{j}{2}}\eta_{\varepsilon,\beta}(2^j)\left\langle \mathcal{F}^{2p}_{j,\star,\varepsilon}(r,\ell)\right\rangle^{\frac{1}{2p}}\lesssim p^{2\frac{d+1}{\beta\wedge d}+\varepsilon}(1+\log(\tfrac{\sqrt{T}}{r})),
\label{LSIop14}
\end{equation}
and 
\begin{equation}\label{LSIop15}
\begin{aligned}
&\left(\int_{1}^{+\infty}\ell^{-1-\beta}\left\langle \mathcal{D}^{p}_{\star,1}(r,\ell\sqrt{T}))\right\rangle^{\frac{1}{p}}\dd \ell\right)^{\frac{1}{2}}+\log(\tfrac{\sqrt{T}}{r})\sup_{r\geq 1}\left\langle \mathcal{D}^{p}_{\star,2}(r)\right\rangle^{\frac{1}{2p}}+\sup_{\ell\geq 1}\sum_{j=0}^{+\infty}2^{\frac{j}{2}}\eta_{\varepsilon,\beta}(2^j)\left\langle \mathcal{H}^{2p}_{j,\star,\varepsilon}(r,\ell)\right\rangle^{\frac{1}{2p}}\\
&\lesssim p^{2\frac{d+1}{\beta\wedge d}+\varepsilon}(1+\log^2(\tfrac{\sqrt{T}}{r})).
\end{aligned}
\end{equation}
This is a direct consequence of the combination of \eqref{momentboundDstar}, \eqref{equivmomentr*} and $\sum_{j=0}^{+\infty}2^j\eta_{\varepsilon,\beta}(2^j)\lesssim_{\varepsilon} 1$.
\end{proof}
We now turn to the proof of \eqref{sensiothertest}. The proof does not require new ideas and essentially uses estimates previously established.
\begin{proof}[Proof of \eqref{sensiothertest}.]We use the same notations as in the previous proof. We split the proof into two parts, treating separately the two regimes $\ell< r$ and $\ell\geq r$. We start with preliminary estimates.\newline
\newline
\textbf{Step 0. Preliminary.
}First, we will use several times the assumption \eqref{assumesensiothertest} in form of \eqref{derivothertestesti1prime}, that we restate here: 
\begin{equation}
\vert f_r(y)\vert\lesssim \frac{r}{(\vert y\vert+1)^{d}}\wedge \frac{1}{(\vert y\vert+1)^{d-1}}\quad \text{for all $y\in\mathbb{R}^d$}.
\label{derivothertestesti1primeprime}
\end{equation}
Second, we note that from \eqref{derivothertestesti1primeprime}, we have 
\begin{equation}
\int_{\mathbb{R}^d} \vert f_r(y)\vert^2\dd y\lesssim \int_{\bb_r}\frac{1}{(\vert y\vert+1)^{2(d-1)}}\dd y+r^2\int_{\mathbb{R}^d\backslash \bb_r}\frac{1}{(\vert y\vert+1)^{2d}}\dd y\lesssim \left\{
    \begin{array}{ll}
        1 & \text{ for $d\geq 3$}, \\
        \log(r+1) & \text{ for $d=2$,}
    \end{array}
\right.
\label{proofth1othertest12}
\end{equation}
and 
\begin{align}
\int_{\bb_{\ell}}\vert f_r(y)\vert\dd y&\lesssim \mathds{1}_{\ell<r}\int_{\bb_{\ell}}\frac{1}{(\vert y\vert+1)^{d-1}}\dd y+\mathds{1}_{\ell\geq r}\left(\int_{\bb_r}\frac{1}{(\vert y\vert+1)^{d-1}}\dd y+r\int_{\bb_{\ell}\backslash \bb_{r}}\frac{1}{(\vert y\vert+1)^d}\dd y\right)\nonumber\\
&\lesssim\ell\,\mathds{1}_{\ell<r}+r(1+\log(\tfrac{\ell}{r}+1))\mathds{1}_{\ell\geq r}.
\label{proofth1othertestesti55}
\end{align}
Therefore, by arguing the same way as for \eqref{proofsubopesti1}, using the system \eqref{dualequationlem7othertest} and the estimate \eqref{dualboundd2} instead of \eqref{lem2} (since from \eqref{derivothertestesti1primeprime}, \eqref{assumerhsdual2} holds) as well as the estimate \eqref{proofth1othertest12}, we have the following large-scale regularity estimate: for all $t\in (-\infty,r^2)$ and $x\in\mathbb{R}^d$
\begin{equation}
\fint_{\bb_{r_*(x)}(x)}\vert\nabla v^{r^2}(t,x)\vert^2\dd y\lesssim r^d_*(0)\frac{r^2\log^2(1+\vert x\vert)}{(\vert x\vert+1)^{2d}}\wedge \frac{1}{(\vert x\vert+1)^{2(d-1)}}+(1+\log(r+1)\mathds{1}_{d=2})\eta_{4r_*(x)}(x)+f^2_{2r}(x).
\label{proofth1othertest1}
\end{equation}
\textbf{Step 1. Regime $\ell<r$. }Proof that for all $\varepsilon>0$ and $\ell<r$
\begin{align}
\int_{\mathbb{R}^d}\vert\partial^{\text{fct}}_{x,\ell}q(r^2)\star f_r\vert^2\dd x\leq&\ell^d\bigg( r^d_*(0)(1+\log(r+1)\mathds{1}_{d=2})+\ell^2(1+\log(\frac{r}{\ell}+1)\mathds{1}_{d=2})+\mathcal{M}_{\star,1}(r,\ell)+\mathcal{M}_{\star,2}(r,\ell)\nonumber\\
&+\Big(\sum_{j=0}^{+\infty} 2^{\frac{j}{2}}\eta_{\varepsilon,\beta}(2^j)\mathcal{K}_{j,\star,\varepsilon}(r,\ell)\Big)^2\bigg),
\label{othertestesti1}
\end{align}
with 
\begin{equation}\label{proofth1othertest3}
\begin{aligned}
\mathcal{M}_{\star,1}(r,\ell):=&1+\mathcal{N}_{\star,\varepsilon}(r,\ell,1)+\bigg(\sum_{n=0}^{\left\lceil \log_2(3\ell)\right\rceil} 2^n\mathcal{N}^{\frac{1}{2}}_{\star,\varepsilon}(r,\ell,2^{n+1})\bigg)^{2}+\ell^d\sum_{n=\left\lfloor \log_2(\ell)\right\rfloor}^{\left\lceil \log_2(r)\right\rceil} 2^{-n(d-2)}\mathcal{N}_{\star,\varepsilon}(r,\ell,2^{n+1})\\
&+r^2\ell^d\sum_{n=\left\lfloor \log_2(r)\right\rfloor}^{+\infty}2^{-nd}\mathcal{N}_{\star,\varepsilon}(r,\ell,2^{n+1}),
\end{aligned}
\end{equation}
where for all $\rho>0$, $\mathcal{N}_{\star,\varepsilon}(r,\ell,\rho)=r^{d}_*(0)\left(\int_{1}^{r^2}\mathcal{D}^{\varepsilon}_{\star}(t,\rho,0)\eta_{\varepsilon,\beta}(t)\dd t\right)^2$ (where $\mathcal{D}_{\star}$ has the moment bound \eqref{momentboundDstar})  and 
\begin{align}
&\mathcal{M}_{\star,2}(r,\ell)\nonumber\\
&:=r^d_*(0)\ell^d\int_{\mathbb{R}^d\backslash \bb_{\ell}}\frac{r^2\log^2(1+\vert x\vert)}{(\vert x\vert+1)^{2d}}\wedge \frac{1}{(\vert x\vert+1)^{2(d-1)}}\dd x+(1+\log(r)\mathds{1}_{d=2})\ell^d\int_{\mathbb{R}^d\backslash \bb_{\ell}}\frac{r^d_*(x)}{(\vert x\vert+1)^{2d}}\dd x+\ell^d\int_{\mathbb{R}^d\backslash \bb_{\ell}} f^2_{2r}(x)\dd x\nonumber\\
&+r^d_*(0)\left(\int_{\bb_{7\ell}}\frac{r\log(1+\vert x\vert)}{(\vert x\vert+1)^{d}}\wedge \frac{1}{(\vert x\vert+1)^{d-1}}\dd x\right)^2+(1+\log(r)\mathds{1}_{d=2})\left(\int_{\bb_{7\ell}}\frac{r^{\frac{d}{2}+1}_*(x)}{(\vert x\vert+1)^{d+1}}\dd x\right)^2+\left(\int_{ \bb_{7\ell}} f_{2r}(x)\dd x\right)^2,
\label{proofth1othertest4}
\end{align}
as well as 
\begin{equation}\label{proofth1othertest7}
\begin{aligned}
&\mathcal{K}_{j,\star,\varepsilon}(r,\ell):=\mathcal{K}_{j,\star,\varepsilon}(r,\ell,\mathbb{R}^d\backslash \bb_{4\ell})+r^{\frac{d}{2}}_*(0)\mathcal{K}_{j,\star,\varepsilon}(r,\ell,\mathbb{R}^d)\\
&+\int_{\bb_{7\ell}}\left(\int_{2^j}^{2^{j+1}}\mathcal{D}^{2\varepsilon}_{\star}(t,\ell,x)\dd t\right)^{\frac{1}{2}}\left(r^{\frac{d}{2}}(0)\frac{r\log(1+\vert x\vert)}{(\vert x\vert+1)^d}\wedge \frac{1}{(\vert x\vert+1)^{d-1}}+(1+\log^{\frac{1}{2}}(r)\mathds{1}_{d=2})\frac{r^{\frac{d}{2}+1}_*(x)}{(\vert x\vert+1)^d}+ f_{2r}(x)\right)\dd x,
\end{aligned}
\end{equation}
with for all open set $\mathcal{U}$ of $\mathbb{R}^d$
\begin{align*}
&\mathcal{K}_{j,\star,\varepsilon}(r,\ell,\mathcal{U})\\
&:=\bigg(\int_{\mathcal{U}}\Big(\int_{2^j}^{2^{j+1}}\mathcal{D}^{2\varepsilon}_{\star}(t,\ell,x)\dd t\Big)^{\frac{1}{2}} \bigg(\int \Big(\frac{\mathds{1}_{\bb_{\ell}(x)}(y)}{\ell^d}+\frac{r_*^{d+\varepsilon}(x)\ell^{\varepsilon}}{\vert y-x\vert^{d+\varepsilon}}\mathds{1}_{\mathbb{R}^d\backslash \bb_{\ell}(x)}\Big)r^d_*(0)\frac{r^2\log^2(1+\vert y\vert)}{(\vert y\vert+1)^{2d}}\wedge \frac{1}{(\vert y\vert+1)^{2(d-1)}}\\
&+(1+\log(r)\mathds{1}_{d=2})\frac{r^d_*(y)}{(\vert y\vert+1)^{2d}}+ f^2_{2r}(y)\dd y\bigg)\dd x\bigg)^{\frac{1}{2}}.
\end{align*}
We split this step into two parts. The first part is devoted to the control of the first five r.h.s terms of \eqref{functioderivothertest}.\newline
\newline
\textbf{Substep 1.1. }Proof that for all $\ell\in [1,r)$
\begin{equation}
\int_{\mathbb{R}^d}\left(\int_{\bb_{\ell}(x)}\vert f_r(y)\vert\dd y\right)^2\dd x+\int_{\mathbb{R}^d}\left(\int_{\bb_{\ell}(x)}\left\vert\int_{0}^{r^2}\nabla u(t,y)\dd t\right\vert\vert f_r(y)\vert\dd y\right)^2\dd x\lesssim \ell^{d}\mathcal{M}_{\star,1}(r,\ell),
\label{othertestesti2}
\end{equation}
and 
\begin{equation}\label{othertestesti14}
\begin{aligned}
&\int_{\mathbb{R}^d}\mathcal{T}^2_{x,\ell}(\eta_1)(0)\dd x+\int_{\mathbb{R}^d}\left(\sum_{n=0}^{\left\lceil \log_2(3\ell)\right\rceil}2^n\mathcal{T}_{x,\ell}(\eta_{2^{n+1}})(0)\right)^2\dd x\\
&+\int_{\mathbb{R}^d}\left(\int_{\mathbb{R}^d\backslash \bb_{2\ell}}\left(\fint_{\bb_{\ell}(y)}\vert f_r(z)\vert^2\dd z\right)^{\frac{1}{2}}\mathcal{T}_{x,\ell}(\eta_\ell)(y)\dd y\right)^2\dd x\lesssim \ell^{d}(1+\ell^2(1+\log(\frac{r}{\ell}+1)\mathds{1}_{d=2})),
\end{aligned}
\end{equation}
as well as 
\begin{equation}
\int_{\mathbb{R}^d}\left(\int_{\bb_{\ell}(x)}\vert\nabla v^{r^2}(1,y)\vert\left(1+\left\vert\int_{0}^1\nabla u(s,y)\dd s\right\vert\right)\dd y\right)^2\dd x+\int_{\mathbb{R}^d}\mathcal{G}^2_{r,\ell}(x)\dd x\lesssim \ell^{d}(r^d_*(0)(1+\log(r+1)\mathds{1}_{d=2})+\mathcal{M}_{\star,2}(r,\ell)).
\label{othertestesti15}
\end{equation}
\textbf{Proof of \eqref{othertestesti2}. }We only give the argument for the second l.h.s term, the first one is dominated the same way. We split the argument into the far-field regime $\vert x\vert\geq 4\ell$ and the near-field regime $\vert x\vert<4\ell$. For the near-field regime, we make use of a dyadic decomposition and the estimate \eqref{derivothertestesti1primeprime} in form of $\vert f_r(y)\vert\lesssim (\vert y\vert+1)^{-d+1}$ to get
\begin{align*}
\int_{\bb_{2\ell}}\left(\int_{\bb_{\ell}(x)}\left\vert\int_{0}^{r^2}\nabla u(t,y)\dd t\right\vert\vert f_r(y)\vert \dd y\right)^{2}\dd x\lesssim&\, \ell^d\left(\int_{\bb_{3\ell}}\left\vert\int_{0}^{r^2}\nabla u(t,y)\dd t\right\vert\vert f_r(y)\vert \dd y\right)^{2}\\
\lesssim&\, \ell^d\bigg(\left(\int_{\bb_{1}}\left\vert\int_{0}^{r^2}\nabla u(t,y)\dd t\right\vert \dd y\right)^{2}\\
&+\left(\sum_{n=0}^{\left\lceil \log_2(3\ell)\right\rceil}\int_{\bb_{2^{n+1}}\backslash \bb_{2^n}}\left\vert\int_{0}^{r^2}\nabla u(t,y)\dd t\right\vert\vert f_r(y)\vert \dd y\right)^{2}\bigg)\\
\stackrel{\eqref{derivothertestesti1primeprime}}{\lesssim} &\ell^d\bigg(\left(\int_{\bb_{1}}\left\vert\int_{0}^{r^2}\nabla u(t,y)\dd t\right\vert \dd y\right)^{2}+\left(\sum_{n=0}^{\left\lceil \log_2(3\ell)\right\rceil}2^n\fint_{\bb_{2^{n+1}}}\left\vert\int_{0}^{r^2}\nabla u(t,y)\dd t\right\vert \dd y\right)^{2}\bigg),
\end{align*}
which gives the second  and third r.h.s terms of \eqref{proofth1othertest3} by applying the localized energy estimate \eqref{LemE1} to the equation \eqref{equationu} and the estimate \eqref{decayunewth12} (applied for both $\ell=1$ and $\ell=2^{n+1}$) in form of 
\begin{align*}
\left(\int_{\bb_{1}}\left\vert\int_{0}^{r^2}\nabla u(t,y)\dd t\right\vert \dd y\right)^{2}&\lesssim \left(\int_{\bb_{1}}\left\vert\int_{0}^{1}\nabla u(t,y)\dd t\right\vert \dd y\right)^{2}+\left(\int_{\bb_{1}}\left\vert\int_{1}^{r^2}\nabla u(t,y)\dd t\right\vert \dd y\right)^{2}\\
&\lesssim 1+r^{d}_*(0)\left(\int_{1}^{r^2}\mathcal{D}^{\varepsilon}_{\star}(t,1,0)\eta_{\varepsilon,\beta}(t)\dd t\right)^2,
\end{align*}
and for all $n\in\mathbb{N}$
\begin{equation}
\fint_{\bb_{2^{n+1}}}\left\vert\int_{0}^{r^2}\nabla u(t,y)\dd t\right\vert^2 \dd y\leq 1+r^{d}(0)\left(\int_{1}^{r^2}\mathcal{D}^{\varepsilon}_{\star}(t,2^{n+1},0)\eta_{\varepsilon,\beta}(t)\dd t\right)^2.
\label{proofth1othertestesti51}
\end{equation}
For the far-field contribution, we first make use of Jensen's inequality combined with the inequality $\int_{\mathbb{R}^d\backslash \bb_{2\ell}}\int_{\bb_{\ell}(x)}\dd x\lesssim \ell^d\int_{\mathbb{R}^d\backslash \bb_{\ell}}$ and the decomposition $\int_{\mathbb{R}^d\backslash \bb_{\ell}}=\int_{\bb_r\backslash \bb_{\ell}}+\int_{\mathbb{R}^d\backslash \bb_r}$ to obtain
\begin{align}
\int_{\mathbb{R}^d\backslash \bb_{2\ell}}\left(\int_{\bb_{\ell}(x)}\left\vert\int_{0}^{r^2}\nabla u(t,y)\dd t\right\vert\vert f_r(y)\vert\dd y\right)^2\dd x\lesssim & \ell^{2d}\int_{\mathbb{R}^d\backslash \bb_{\ell}}\left\vert\int_{0}^{r^2}\nabla u(t,y)\dd t\right\vert^2\vert f_r(y)\vert^2\dd y\nonumber\\
=&\ell^{2d}\bigg(\int_{\bb_r\backslash \bb_{\ell}}\left\vert\int_{0}^{r^2}\nabla u(t,y)\dd t\right\vert^2\vert f_r(y)\vert^2\dd y\nonumber\\
&+\int_{\mathbb{R}^d\backslash \bb_{r}}\left\vert\int_{0}^{r^2}\nabla u(t,y)\dd t\right\vert^2\vert f_r(y)\vert^2\dd y\bigg).\label{proofth1othertest50}
\end{align}
For the first r.h.s term of \eqref{proofth1othertest50}, we make use of a dyadic decomposition and the estimate \eqref{derivothertestesti1primeprime} in form of $\vert f_r(y)\vert\lesssim (\vert y\vert+1)^{-d+1}$ to get
\begin{align*}
\int_{\bb_r\backslash \bb_{\ell}}\left\vert\int_{0}^{r^2}\nabla u(t,y)\dd t\right\vert^2\vert f_r(y)\vert^2\dd y&=\sum_{\left\lfloor \log_2(\ell)\right\rfloor}^{\left\lceil \log_2(r)\right\rceil}\int_{\bb_{2^{n+1}}\backslash \bb_{2^n}}\left\vert\int_{0}^{r^2}\nabla u(t,y)\dd t\right\vert^2\vert f_r(y)\vert^2\dd y\\
&\stackrel{\eqref{derivothertestesti1primeprime}}{\lesssim} \sum_{\left\lfloor \log_2(\ell)\right\rfloor}^{\left\lceil \log_2(r)\right\rceil}2^{-n(d-2)}\fint_{\bb_{2^{n+1}}}\left\vert\int_{0}^{r^2}\nabla u(t,y)\dd t\right\vert^2\dd y,
\end{align*}
which gives the fourth r.h.s term of \eqref{proofth1othertest3} using \eqref{proofth1othertestesti51}. For the second r.h.s term of \eqref{proofth1othertest50}, we make use of a dyadic decomposition and \eqref{derivothertestesti1primeprime} in form of $\vert f_r(y)\vert\lesssim r(\vert y\vert+1)^{-d}$ to get
\begin{align*}
\int_{\mathbb{R}^d\backslash \bb_{r}}\left\vert\int_{0}^{r^2}\nabla u(t,y)\dd t\right\vert^2\vert f_r(y)\vert^2\dd y&=\sum_{n=\left\lfloor \log_2(r)\right\rfloor}^{+\infty} \int_{\bb_{2^{n+1}}\backslash \bb_{2^n}}\left\vert\int_{0}^{r^2}\nabla u(t,y)\dd t\right\vert^2\vert f_r(y)\vert^2\dd y\\
&\stackrel{\eqref{derivothertestesti1primeprime}}{\lesssim} r^2\sum_{n=\left\lfloor \log_2(r)\right\rfloor}^{+\infty} 2^{-nd}\fint_{\bb_{2^{n+1}}}\left\vert\int_{0}^{r^2}\nabla u(t,y)\dd t\right\vert^2\dd y,
\end{align*}
which finally gives fifth r.h.s of \eqref{proofth1othertest3} using once more \eqref{proofth1othertestesti51}.\newline
\newline
\textbf{Proof of \eqref{othertestesti14}. }The estimate of the first two l.h.s terms is an immediate consequence of Minkowski's inequality in $\LL^2(\mathbb{R}^d)$ and the estimate \eqref{othertestesti13} applied with $r=\sqrt{s}$, which provides the first r.h.s contribution in \eqref{othertestesti14}. For the third l.h.s term, we can use previous estimates. To this aim, we use Cauchy-Schwarz's inequality and \eqref{proofsubopesti9} as well as $\int_{\mathbb{R}^d}\mathcal{T}_{x,\ell}(\eta_{\ell})(y)\dd x\lesssim \ell^{d}$ (which may be obtained by changing the role of $x$ and $y$ in the proof of \eqref{proofsubopesti9}) combined with the inequality $\int_{\mathbb{R}^d\backslash \bb_{2\ell}}\fint_{\bb_{\ell}(y)}\dd y\lesssim \int_{\mathbb{R}^d\backslash \bb_{\ell}}$ to obtain
\begin{align*}
\int_{\mathbb{R}^d}\left(\int_{\mathbb{R}^d\backslash \bb_{2\ell}}\left(\fint_{\bb_{\ell}(y)}\vert f_r(z)\vert^2\dd z\right)^{\frac{1}{2}}\mathcal{T}_{x,\ell}(\eta_{\ell})(y)\dd y\right)^2\dd x&\stackrel{\eqref{proofsubopesti9}}{\lesssim}\ell^d\int_{\mathbb{R}^d}\int_{\mathbb{R}^d\backslash \bb_{2\ell}}\fint_{\bb_{\ell}(y)}\vert f_r(z)\vert^2\dd z\,\mathcal{T}_{x,\ell}(\eta_{\ell})(y)\dd y\,\dd x\\
&\lesssim\ell^{2d}\int_{\mathbb{R}^d\backslash\bb_{2\ell}}\fint_{\bb_{\ell}(y)}\vert f_r(z)\vert^2\dd z\, \dd y\\
&\lesssim\ell^{2d}\int_{\mathbb{R}^d\backslash \bb_{\ell}}\vert f_r(y)\vert^2\dd y.
\end{align*}
We then get \eqref{othertestesti14} using \eqref{derivothertestesti1primeprime} in form of 
\begin{align}
\int_{\mathbb{R}^d\backslash \bb_{\ell}}\vert f_r(y)\vert^2\dd y&\lesssim\int_{\bb_r\backslash \bb_{\ell}}(\vert y\vert+1)^{-2(d-1)}\dd y+r^2\int_{\mathbb{R}^d\backslash \bb_{r}}(\vert y\vert+1)^{-2d}\dd y\nonumber\\
&\lesssim \ell^{2-d}(1+\log(\tfrac{r}{\ell}+1)\mathds{1}_{d=2}),
\label{proofth1othertestesti52}
\end{align}
\textbf{Proof of \eqref{othertestesti15}. }We start with the first l.h.s term. We distinguish between the generic case $\ell\geq r_*(0)$ and the non-generic case $\ell<r_*(0)$.\newline
\newline
\textbf{Regime $\ell\geq r_*(0)$. }We split the integral into the far-field contribution $\vert x\vert\geq 4\ell$ and the near-field contribution $\vert x\vert<4\ell$. For the far-field contribution, we make use of the estimates \eqref{proofsubopesti14} (applied with $T=r^2$) combined with \eqref{proofth1othertest1} to obtain 
\begin{align*}
\int_{\mathbb{R}^d\backslash\bb_{4\ell}}\left(\int_{\bb_{\ell}(x)}\vert\nabla v^{r^2}(1,y)\vert\left(1+\left\vert\int_{0}^{1}\nabla u(s,y)\dd s\right\vert\right)\dd y\right)^2\dd x\lesssim &\, \ell^{2d}\bigg(r^d_*(0)\int_{\mathbb{R}^d\backslash\bb_{\ell}}\frac{r^2\log^2(1+\vert x\vert)}{(\vert x\vert+1)^{2d}}\wedge \frac{1}{(\vert x\vert+1)^{2(d-1)}}\dd x\\
&+(1+\log(r+1)\mathds{1}_{d=2})\int_{\mathbb{R}^d\backslash\bb_{\ell}}\eta_{4r_*(x)}(x)\dd x+\int_{\mathbb{R}^d\backslash\bb_{\ell}}f^2_{2r}(x)\dd x\bigg),
\end{align*}
which gives the first term in \eqref{proofth1othertest4} using that $\eta_{4r_*(x)}(x)\lesssim r^d_*(x)(\vert x\vert+1)^{-2d}$.
For the near-field contribution, we make use of the estimate \eqref{proofsubopesti11} combined with \eqref{proofth1othertest1} to obtain 
\begin{align*}
&\left(\int_{\bb_{4\ell}}\left(\int_{\bb_{\ell}(x)}\vert\nabla v^{r^2}(1,y)\vert\left(1+\left\vert\int_{0}^{1}\nabla u(s,y)\dd s\right\vert\right)\dd y\right)^2\dd x\right)^{\frac{1}{2}}\\
&\lesssim \ell^{\frac{d}{2}}\bigg(r^{\frac{d}{2}}_*(0)\int_{\bb_{7\ell}}\frac{r\log(1+\vert x\vert)}{(\vert x\vert+1)^{d}}\wedge \frac{1}{(\vert x\vert+1)^{d-1}}\dd x+(1+\log^{\frac{1}{2}}(r+1)\mathds{1}_{d=2})\int_{\bb_{7\ell}}\eta^{\frac{1}{2}}_{4r_*(x)}(x)\dd x+\int_{\bb_{7\ell}}f_{2r}(x)\dd x\bigg),
\end{align*}
which gives the second term of \eqref{proofth1othertest4} using that $\eta^{\frac{1}{2}}_{4r_*(x)}(x)\lesssim r^{\frac{d}{2}+1}_*(x)(\vert x\vert+1)^{-d-1}$.\newline
\newline
\textbf{Regime $\ell<r_*(0)$. }We use Cauchy-Schwarz's inequality, the identity $\int_{\mathbb{R}^d}\fint_{\bb_{\ell}(x)}=\int_{\mathbb{R}^d}$ and the localized energy estimate \eqref{LemE1} to get
\begin{align*}
\int_{\mathbb{R}^d}\left(\int_{\bb_{\ell}(x)}\vert\nabla v^{r^2}(1,y)\vert\left(1+\left\vert\int_{0}^{1}\nabla u(s,y)\dd s\right\vert\right)\dd y\right)^2\dd x&\lesssim \int_{\mathbb{R}^d}\int_{\bb_{\ell}(x)}\vert\nabla v^{r^2}(1,y)\vert^2\dd y\int_{\bb_{\ell}(x)}\left(1+\left\vert\int_{0}^{1}\nabla u(s,y)\dd s\right\vert^2\right)\dd y\,\dd x\\
&\lesssim \ell^{2d}\int_{\mathbb{R}^d}\vert\nabla v^{r^2}(1,y)\vert^2\dd y\lesssim \ell^d r^d_*(0)\int_{\mathbb{R}^d}\vert\nabla v^{r^2}(1,y)\vert^2\dd y,
\end{align*}
and we conclude with the plain energy estimate in the equation \eqref{dualequationlem7othertest} (for which a proof is identical as the one for \eqref{fullenergylem2}) combined with \eqref{proofth1othertest3} that
$$\int_{\mathbb{R}^d}\vert\nabla v^{r^2}(1,y)\vert^2\dd y\lesssim \int_{\mathbb{R}^d} \vert f_r(y)\vert^2\dd y\lesssim 1+\log(r+1)\mathds{1}_{d=2},$$
which gives the first contribution in \eqref{othertestesti1}. For the second l.h.s term of \eqref{othertestesti15}, we argue as in \eqref{proofsubopesti8} and \eqref{proofth1othertest5prime}.\newline
\newline
\textbf{Substep 1.2. }Proof that for all $\ell\in [1,r)$
\begin{equation}
\int_{\mathbb{R}^d}\left(\int_{\bb_{\ell}(x)}\int_{1}^{r^2}\vert\nabla u(t,y)\vert\vert\nabla v^{r^2}(t,y)\vert\dd t\, \dd y\right)^2\dd x\lesssim \ell^d\left(\sum_{j=0}^{+\infty} 2^{\frac{j}{2}}\eta_{\varepsilon,\beta}(2^j)\mathcal{K}_{j,\star,\varepsilon}(r,\ell)\right)^2.
\label{proofth1othertest9}
\end{equation}
We argue differently with the generic case $\ell\geq r_*(0)$ and the non-generic case $\ell<r_*(0)$ and we use several previous estimates.\newline
\newline
\textbf{Regime $\ell\geq r_*(0)$. }We split the argument between the far-field regime $\vert x\vert\geq 4\ell$ and the near-field regime $\vert x\vert<4\ell$. For the far-field contribution, we make use of \eqref{substep221}, \eqref{proofth1othertest5} and \eqref{proofth1esti1} as well as \eqref{proofth1esti2} combined with \eqref{proofth1othertest1} to obtain 
\begin{equation*}
\int_{\mathbb{R}^d\backslash \bb_{4\ell}}\left(\int_{\bb_{\ell}(x)}\int_{1}^{r^2}\vert\nabla u(t,y)\vert\vert\nabla v^{r^2}(t,y)\vert\dd t\, \dd y\right)^2\dd x\lesssim \left(\sum_{j=0}^{+\infty}2^{\frac{j}{2}}\eta_{\varepsilon,\beta}(2^j)\mathcal{K}_{j,\star,\varepsilon}(r,\ell,\mathbb{R}^d\backslash \bb_{4\ell})\right)^2.
\end{equation*}
For the near-field contribution, we make use of \eqref{proofth1othertest6} combined with \eqref{proofth1othertest1} which leads to
\begin{align*}
&\int_{\bb_{4\ell}}\left(\int_{\bb_{\ell}(x)}\int_{1}^{r^2}\vert\nabla u(t,y)\vert\vert\nabla v^{r^2}(t,y)\vert\dd t\, \dd y\right)^2\dd x\\
&\lesssim \left(\int_{\bb_{7\ell}}\left(\int_{2^j}^{2^{j+1}}\mathcal{D}^{2\varepsilon}_{\star}(t,\ell,x)\dd t\right)^{\frac{1}{2}}\left(r^{\frac{d}{2}}_*(0)\frac{r\log(1+\vert x\vert)}{(\vert x\vert+1)^d}\wedge \frac{1}{(\vert x\vert+1)^{d-1}}+(1+\log^{\frac{1}{2}}(r+1)\mathds{1}_{d=2})\frac{r^{\frac{d}{2}+1}_*(x)}{(\vert x\vert+1)^d}+ f_{2r}(x)\right)\dd x\right)^2,
\end{align*}
where we used that $\sum_{j=0}^{+\infty}2^{\frac{j}{2}}\eta_{\varepsilon,\beta}(2^j)<+\infty$.\newline
\newline
\textbf{Regime $\ell<r_*(0)$. }We make use of \eqref{th1estisensi2}, \eqref{proofth1esti1} and \eqref{proofth1esti2} combined with \eqref{proofth1othertest1} and we bound one $\ell^d$ by $r^d_*(0)$ to obtain 
$$\int_{\mathbb{R}^d}\left(\int_{\bb_{\ell}(x)}\int_{1}^{r^2}\vert\nabla u(t,y)\vert\vert\nabla v^{r^2}(t,y)\vert\dd t\, \dd y\right)^2\dd x\lesssim  r^d_*(0)\left(\sum_{j=0}^{+\infty}2^{\frac{j}{2}}\eta_{\varepsilon,\beta}(2^j)\mathcal{K}_{j,\star,\varepsilon}(r,\ell,\mathbb{R}^d)\right)^2.$$
\textbf{Step 2. Regime $\ell\geq r$. }Proof that for all $\varepsilon>0$ and $\ell\geq r$
\begin{equation}
\int_{\mathbb{R}^d}\vert\partial^{\text{fct}}_{x,\ell} q(r^2)\star f_r\vert^2\dd x\leq \ell^d\bigg(r^2+\mathcal{M}_{\star,3}(r)+\mathcal{M}_{\star,2}(r,\ell)+\Big(\sum_{j=0}^{+\infty} 2^{\frac{j}{2}}\eta_{\varepsilon,\beta}(2^j)\mathcal{K}_{j,\star,\varepsilon}(r,\ell)\Big)^2\bigg),
\label{proofth1othertest8}
\end{equation}
with 
$$\mathcal{M}_{\star,3}(r)=\bigg(\int_{1}^{r^2}s^{-\frac{1}{2}}(1+\log(\tfrac{r}{\sqrt{s}}))\Big(\fint_{\bb_{\sqrt{s}}}r^{d}_*(y)\dd y+\int_{\mathbb{R}^d\backslash \bb_1}r^d_*(y)(\sqrt{s}y)g_{\sqrt{2}}(y)\dd y\Big)^{\frac{1}{2}}\dd s\bigg)^2.$$
The estimates \eqref{othertestesti15} and \eqref{proofth1othertest9} are unchanged. We provide the arguments for the first two and the fourth r.h.s terms of \eqref{functioderivothertest} and we prove that 
\begin{equation}
\int_{\mathbb{R}^d}\left(\int_{1}^{r^2}s^{-\frac{1}{2}}\mathcal{T}_{x,\ell}(\eta_{\sqrt{s}})(0)\dd s\right)^2\dd x\lesssim \ell^d r^2.
\label{proofth1othertest11}
\end{equation}
and
\begin{equation}
\int_{\mathbb{R}^d}\left(\int_{\bb_{\ell}(x)}\vert f_r(y)\vert\dd y\right)^2\dd x+\int_{\mathbb{R}^d}\left(\int_{\bb_{\ell}(x)}\left\vert\int_{0}^{r^2}\nabla u(s,y)\dd s\right\vert\vert f_r(y)\vert\dd y\right)^2\dd x\lesssim \mathcal{M}_{\star,3}(r).
\label{proofth1othertest10}
\end{equation}
First, \eqref{proofth1othertest11} follows from Minkowski's inequality in $\LL^2(\mathbb{R}^d)$ and \eqref{othertestesti13} applied with $r=\sqrt{s}$ (noticing that the evaluation at $0$ plays no role in the estimate). Secondly, using Minkowski's inequality in $\LL^2(\mathbb{R}^d)$ and the assumption \eqref{assumesensiothertest}, we have 
\begin{align*}
&\int_{\mathbb{R}^d}\left(\int_{\bb_{\ell}(x)}\vert f_r(y)\vert\dd y\right)^2\dd x+\int_{\mathbb{R}^d}\left(\int_{\bb_{\ell}(x)}\left\vert\int_{0}^{r^2}\nabla u(s,y)\dd s\right\vert\vert f_r(y)\vert\dd y\right)^2\dd x\\
&\lesssim \ell^{d}\left(\int_{1}^{r^2} s^{-1}\int_{\mathbb{R}^d}\vert y\vert g_{\sqrt{s}}(y)\left(1+\left\vert\int_{0}^{r^2}\nabla u(t,y)\dd t\right\vert\right)\dd y\right)^2,
\end{align*}
which gives \eqref{proofth1othertest10} using that $\vert y\vert g_{\sqrt{s}}(y)\lesssim s^{\frac{1}{2}}g_{\sqrt{2s}}(y)$, the estimates \eqref{othertestesti7} and \eqref{othertestesti8} with $(T,r)$ replaced by $(r^2,s)$ 
\begin{equation}\label{othertestesti9}
\begin{aligned}
\int_{1}^{r^2}s^{-1}\int_{\mathbb{R}^d} \vert y\vert g_{\sqrt{s}}(y)\left\vert\int_{0}^{r^2}\nabla u(t,y)\dd t\right\vert\dd y&\lesssim \int_{1}^{r^2}s^{-\frac{1}{2}}\int_{\mathbb{R}^d}g_{\sqrt{2s}}(y)\left\vert\int_{0}^{r^2}\nabla u(t,y)\dd t\right\vert\dd y\\
&\lesssim\int_{1}^{r^2}s^{-\frac{1}{2}}(1+\log(\tfrac{r}{\sqrt{s}}))\left(\fint_{\bb_{\sqrt{s}}}r^{d}_*(y)\dd y+\int_{\mathbb{R}^d\backslash \bb_1}r^d_*(y)(\sqrt{s}y)g_{\sqrt{2}}(y)\dd y\right)^{\frac{1}{2}}\dd s.
\end{aligned}
\end{equation}
\newline
\newline
\textbf{Step 3. Proof of \eqref{sensiothertest}. }We have from the logarithm Sobolev inequality in form of 
\eqref{SGinegp1}, for all $p\in [1,\infty)$
\begin{equation*}
\left\langle \vert q(r^2)\star f_r-\left\langle q(r^2)\star f_r\right\rangle\vert^p\right\rangle^{\frac{1}{p}}\lesssim \sqrt{p}\left\langle \left(\int_{1}^{+\infty}\ell^{-d}\pi(\ell)\int_{\mathbb{R}^d}\vert\partial^{\text{fct}}_{x,\ell} q_r(r^2)\star f_r\vert^2\dd x\, \dd \ell\right)^{\frac{p}{2}}\right\rangle^{\frac{1}{p}}\leq
 \sqrt{p}(\mathcal{I}^1_r+\mathcal{I}^{2}_r),
\end{equation*}
with 
$$\mathcal{I}^1_r:=\left\langle \left(\int_{1}^{r}\ell^{-d}\pi(\ell)\int_{\mathbb{R}^d}\vert\partial^{\text{fct}}_{x,\ell} q_r(r^2)\star f_r\vert^2\dd x\, \dd \ell\right)^{\frac{p}{2}}\right\rangle^{\frac{1}{p}}\quad \text{and}\quad\mathcal{I}^2_r:=\left\langle \left(\int_{r}^{+\infty}\ell^{-d}\pi(\ell)\int_{\mathbb{R}^d}\vert\partial^{\text{fct}}_{x,\ell} q_r(r^2)\star f_r\vert^2\dd x\, \dd \ell\right)^{\frac{p}{2}}\right\rangle^{\frac{1}{p}}.$$
We then treat separately the two terms above. 
\begin{itemize}
\item[(i)]In the regime $\ell<r$ we use \eqref{othertestesti1} combined with Minkowski's inequality in $\LL^p_{\left\langle\cdot\right\rangle}(\Omega)$ and the moment bound \eqref{equivmomentr*} of $r_*$ (for $\gamma=\varepsilon$) as well as $\left(\int_{1}^{r}\ell^{1-\beta}(1+\log(\frac{r}{\ell}+1)\mathds{1}_{d=2})\dd \ell\right)^{\frac{1}{2}}\lesssim \chi_{d,\beta}(r)$ and $1+\log(r+1)\mathds{1}_{d=2}\leq \chi_{d,\beta}(r)$ (where we recall that $\chi_{d,\beta}(r)$ is defined in \eqref{defchibeta}):
\begin{align*}
\mathcal{I}^1_r\leq &\bigg\langle \bigg(\int_{1}^r \ell^{-1-\beta}\bigg(r^d_*(0)(1+\log^2(r+1)\mathds{1}_{d=2})+\ell^2(1+\log(\frac{r}{\ell}+1)\mathds{1}_{d=2})\\
&+\mathcal{M}_{\star,1}(r,\ell)+\mathcal{M}_{\star,2}(r,\ell)+\Big(\sum_{j=0}^{+\infty}2^{\frac{j}{2}}\eta_{\varepsilon,\beta}(2^j)\mathcal{K}_{j,\star,\varepsilon}(r,\ell)\Big)^2\bigg)\bigg)^{\frac{p}{2}}\bigg\rangle^{\frac{1}{p}}\\
&\lesssim p^{\frac{d}{\beta\wedge d}+\varepsilon}(1+\chi_{d,\beta}(r))+\bigg(\int_{1}^r\ell^{-1-\beta}\bigg(\left\langle \mathcal{M}^p_{\star,1}(r,\ell)\right\rangle^{\frac{1}{p}}+\left\langle \mathcal{M}^p_{\star,2}(r,\ell)\right\rangle^{\frac{1}{p}}+\sum_{j=0}^{+\infty} 2^{\frac{j}{2}}\eta_{\varepsilon,\beta}(2^j)\left\langle \mathcal{K}^p_{j,\star,\varepsilon}(r,\ell)\right\rangle^{\frac{1}{p}}\bigg)\dd \ell\bigg)^{\frac{1}{2}}.
\end{align*}
\item[(ii)]In the regime $\ell\geq r$ we use \eqref{proofth1othertest8} combined with the Minkowski inequality $\LL^p_{\left\langle\cdot\right\rangle}(\Omega)$:
\begin{align*}
\mathcal{I}^2_r&\leq \left\langle\bigg(\int_{r}^{+\infty}\ell^{-1-\beta}\bigg(r^2+\mathcal{M}_{\star,3}(r)+\mathcal{M}_{\star,2}(r,\ell)+\Big(\sum_{j=0}^{+\infty} 2^{\frac{j}{2}}\eta_{\varepsilon,\beta}(2^j)\mathcal{K}_{j,\star,\varepsilon}(r,\ell)\Big)^2\bigg)\dd \ell\bigg)^{\frac{p}{2}}\right\rangle^{\frac{1}{p}}\\
&\lesssim r^{1-\frac{\beta}{2}}+\bigg(\int_{r}^{+\infty}\ell^{-1-\beta}\bigg(\left\langle\mathcal{M}^p_{\star,3}(r,\ell)\right\rangle^{\frac{1}{p}}+\left\langle\mathcal{M}^p_{\star,2}(r,\ell)\right\rangle^{\frac{1}{p}}+\sum_{j=0}^{+\infty}2^{\frac{j}{2}}\eta_{\varepsilon,\beta}(2^{j})\left\langle \mathcal{K}^p_{j,\star,\varepsilon}(r,\ell)\right\rangle^{\frac{1}{p}}\bigg)\dd \ell\bigg)^{\frac{1}{2}}.
\end{align*}
\end{itemize}
It remains to prove that for all $\varepsilon\in (0,1)$
\begin{equation}
\int_{1}^r\ell^{-1-\beta}\bigg(\left\langle \mathcal{M}^p_{\star,1}(r,\ell)\right\rangle^{\frac{1}{p}}+\left\langle \mathcal{M}^p_{\star,2}(r,\ell)\right\rangle^{\frac{1}{p}}+\sum_{j=0}^{+\infty} 2^{\frac{j}{2}}\eta_{\varepsilon,\beta}(2^j)\left\langle \mathcal{K}^p_{j,\star,\varepsilon}(r,\ell)\right\rangle^{\frac{1}{p}}\bigg)\dd \ell\lesssim p^{\frac{d+2}{\beta\wedge d}+\varepsilon}\chi^2_{d,\beta}(r),
\label{proofth1othertestesti53}
\end{equation}
and 
\begin{equation}
\int_{r}^{+\infty}\ell^{-1-\beta}\bigg(\left\langle\mathcal{M}^p_{\star,3}(r,\ell)\right\rangle^{\frac{1}{p}}+\left\langle\mathcal{M}^p_{\star,2}(r,\ell)\right\rangle^{\frac{1}{p}}+\sum_{j=0}^{+\infty}2^{\frac{j}{2}}\eta_{\varepsilon,\beta}(2^{j})\left\langle \mathcal{K}^p_{j,\star,\varepsilon}(r,\ell)\right\rangle^{\frac{1}{p}}\bigg)\dd \ell\lesssim  p^{\frac{d+2}{\beta\wedge d}+\varepsilon}\chi^2_{d,\beta}(r).
\label{proofth1othertestesti54}
\end{equation}
We start with \eqref{proofth1othertestesti53}. First, using the moment bound \eqref{momentboundDstar} of $\mathcal{D}_{\star}$ and the definition of $\eta_{\varepsilon,\beta}$ in \eqref{defetaespsibeta} as well as \eqref{equivmomentr*} (choosing $\gamma=\varepsilon$), we have for all $\rho>0$, $\left\langle \mathcal{N}^p_{\star,\varepsilon}(r,\ell,\rho)\right\rangle^{\frac{1}{p}}\lesssim p^{\frac{d}{\beta\wedge d}+\varepsilon(\alpha_1+\frac{d}{\beta\wedge d})}$. Therefore, by making use of the triangle inequality, we get 
$$\int_{1}^{r}\ell^{-1-\beta}\left\langle \mathcal{M}^{p}_{\star,1}(r,\ell)\right\rangle^{\frac{1}{p}}\dd \ell \lesssim 1+p^{\frac{d}{\beta\wedge d}+\varepsilon(\alpha_1+\frac{d}{\beta\wedge d})}\int_{1}^{r}\ell^{-1-\beta}(1+\ell^2(1+\log(\frac{r}{\ell}+1)\mathds{1}_{d=2})+\ell^d r^{2-d})\dd \ell\lesssim p^{\frac{d}{\beta\wedge d}+\varepsilon(\alpha_1+\frac{d}{\beta\wedge d})}\chi^2_{d,\beta}(r).$$
Secondly, from the triangle inequality, the moment bounds \eqref{equivmomentr*} (again for $\gamma=\varepsilon$) on $r_*$ and by splitting the first integral in the r.h.s of \eqref{proofth1othertest4} in form of $\int_{\mathbb{R}^d\backslash \bb_{\ell}}=\int_{\bb_{r}\backslash \bb_{\ell}}+\int_{\mathbb{R}^d\backslash \bb_{r}}$ as well as \eqref{proofth1othertestesti52} and \eqref{proofth1othertestesti55}, we have for all $\ell<r$
\begin{align*}
\left\langle\mathcal{M}^p_{\star,2}(r,\ell)\right\rangle^{\frac{1}{p}}\lesssim & p^{\frac{d}{\beta\wedge d}(1+\varepsilon)}\bigg(\ell^d\int_{\bb_{r}\backslash \bb_{\ell}}\frac{1}{(\vert x\vert+1)^{2(d-1)}}\dd x+\ell^d\int_{\mathbb{R}^d\backslash \bb_r}\frac{r^2\log^2(1+\vert x\vert)}{(\vert x\vert+1)^{2d}}\wedge \frac{1}{(\vert x\vert+1)^{2(d-1)}}\dd x\bigg)\\
&+(1+\log^2(r+1)\mathds{1}_{d=2})\ell^d\int_{\mathbb{R}^d\backslash \bb_{\ell}}\frac{1}{(\vert x\vert+1)^{2d}}\dd x+\ell^d\int_{\mathbb{R}^d\backslash \bb_{\ell}}\vert f_{2r}(y)\vert^2\dd x+\left(\int_{\bb_{7\ell}}\frac{1}{(\vert x\vert+1)^{d-1}}\dd x\right)^2\\
&+(1+\log^2(r+1)\mathds{1}_{d=2})p^{\frac{2}{\beta\wedge d}(1+\varepsilon)}\left(\int_{\bb_{7\ell}}\frac{1}{(\vert x\vert+1)^{d+1}}\dd x\right)^2+\left(\int_{\bb_{7\ell}} f_{2r}(x)\dd x\right)^2\\
\stackrel{\eqref{proofth1othertestesti55},\eqref{proofth1othertestesti52}}{\lesssim}& p^{\frac{d}{\beta\wedge d}(1+\varepsilon)}\left(\ell^{2}(1+\log(\frac{r}{\ell}+1)\mathds{1}_{d=2})+(1+\ell^{d}r^{2-d}+p^{\frac{2}{\beta\wedge d}(1+\varepsilon)})(1+\log^2(r+1)\mathds{1}_{d=2})\right),
\end{align*}
which provides 
$$\int_{1}^{r}\left\langle\mathcal{M}^p_{\star,2}(r,\ell)\right\rangle^{\frac{1}{p}}\dd \ell\lesssim p^{\frac{d+2}{\beta\wedge d}(1+\varepsilon)}\chi^2_{d,\beta}(r).$$
Finally, using the same decomposition as before and in addition the moment bound \eqref{momentboundDstar} of $\mathcal{D}_{\star}$ we get (up to adjusting $\varepsilon$)
$$\int_{1}^{r}\sum_{j=0}^{+\infty} 2^{\frac{j}{2}}\eta_{\varepsilon,\beta}(2^j)\left\langle \mathcal{K}^p_{j,\star,\varepsilon}(r,\ell)\right\rangle^{\frac{1}{p}}\dd \ell\lesssim p^{\frac{d+2}{\beta\wedge d}+\varepsilon}\chi^2_{d,\beta}(r),$$
which concludes the proof of \eqref{proofth1othertestesti53}. We now turn to the proof of \eqref{proofth1othertestesti54}. First, using the moment bound \eqref{equivmomentr*} (for $\gamma=\varepsilon$) on $r_*$ we have 
$$\int_{r}^{+\infty}\ell^{-1-\beta}\left\langle \mathcal{M}^p_{\star,3}(r,\ell)\right\rangle^{\frac{1}{p}}\dd \ell\lesssim p^{
\frac{d}{\beta\wedge d}}r^{2-\beta}\lesssim  p^{
\frac{d}{\beta\wedge d}(1+\varepsilon)}\chi^2_{d,\beta}(r).$$
Secondly, using the triangle inequality
\begin{align}
&\left\langle \mathcal{M}^p_{\star,2}(r,\ell)\right\rangle^{\frac{1}{p}}\nonumber\\
\lesssim& p^{\frac{d}{\beta\wedge d}(1+\varepsilon)}\bigg(\ell^d\int_{\mathbb{R}^d\backslash \bb_{\ell}}\frac{r^2\log^2(1+\vert x\vert)}{(\vert x\vert+1)^{2d}}\wedge \frac{1}{(\vert x\vert+1)^{2(d-1)}}\dd x+(1+\log(r+1)\mathds{1}_{d=2})\ell^d\int_{\mathbb{R}^d\backslash \bb_{\ell}}\frac{1}{(\vert x\vert+1)^{2d}}\dd x+\ell^{d}\int_{\mathbb{R}^d\backslash \bb_{\ell}} \vert f_{2r}(x)\vert^2\dd x\nonumber\\
&+\left(\int_{\bb_{7\ell}}\frac{r\log(1+\vert x\vert)}{(\vert x\vert+1)^d}\wedge \frac{1}{(\vert x\vert+1)^{d-1}}\dd x\right)^2+(1+\log(r+1)\mathds{1}_{d=2})p^{\frac{2}{\beta\wedge d}(1+\varepsilon)}\left(\int_{\bb_{7\ell}}\frac{1}{(\vert x\vert+1)^{d+1}}\dd x\right)^2+\left(\int_{\bb_{7\ell}} \vert f_{2r}(x)\vert\dd x\right)^2\bigg)\nonumber\\
&\lesssim p^{\frac{d+2}{\beta\wedge d}(1+\varepsilon)}\left(\int_{\mathbb{R}^d\backslash \bb_{\ell}}\frac{r^2\log^2(1+\vert x\vert)}{(\vert x\vert+1)^{2d}}\wedge \frac{1}{(\vert x\vert+1)^{2(d-1)}}\dd x+\left(\int_{\bb_{7\ell}}\frac{r\log(1+\vert x\vert)}{(\vert x\vert+1)^d}\wedge \frac{1}{(\vert x\vert+1)^{d-1}}\dd x\right)^2+1+\log(r+1)\mathds{1}_{d=2}+r^2\right).\label{proofth1othertestesti56}
\end{align}
We then argue differently, depending on the regime of $\beta$ and $d$: 
\begin{itemize}
\item[(i)]For $\beta>2$, we use
\begin{align}
&\int_{\mathbb{R}^d\backslash \bb_{\ell}}\frac{r^2\log^2(1+\vert x\vert)}{(\vert x\vert+1)^{2d}}\wedge \frac{1}{(\vert x\vert+1)^{2(d-1)}}\dd x+\left(\int_{\bb_{7\ell}}\frac{r\log(1+\vert x\vert)}{(\vert x\vert+1)^d}\wedge \frac{1}{(\vert x\vert+1)^{d-1}}\dd x\right)^2\nonumber\\
&\leq r^2\int_{\mathbb{R}^d\backslash \bb_{\ell}}\frac{\log^2(1+\vert x\vert)}{(\vert x\vert+1)^{2d}}\dd x+r^2\left(\int_{\bb_{7\ell}}\frac{\log(1+\vert x\vert)}{(\vert x\vert+1)^d}\dd x\right)^2\nonumber\\
&\lesssim r^2(1+\log^2(\ell)),
\label{proofth1othertestesti57}
\end{align}
to deduce, combined with \eqref{proofth1othertestesti56}
\begin{align}
\int_{r}^{+\infty}\ell^{-1-\beta}\left\langle \mathcal{M}^p_{\star,2}(r,\ell)\right\rangle^{\frac{1}{p}}\dd \ell
&\lesssim p^{\frac{d+2}{\beta\wedge d}(1+\varepsilon)}\int_{r}^{+\infty}\ell^{-1-\beta}(r^2\log^2(\ell)+1+r^2+\log(r+1)\mathds{1}_{d=2})\dd \ell\nonumber\\
&\lesssim p^{\frac{d+2}{\beta\wedge d}(1+\varepsilon)}r^{-\beta}(r^2+1+\log(r+1)\mathds{1}_{d=2}+\log^2(r+1))\label{proofth1othertestesti58}\\
&\lesssim p^{\frac{d+2}{\beta\wedge d}(1+\varepsilon)},\nonumber
\end{align}
where we used in the last line that $\beta>2$.
\item[(ii)]For $\beta\leq 2$ and $d>2$, we use 
$$\int_{\mathbb{R}^d\backslash \bb_{\ell}}\frac{r^2\log^2(1+\vert x\vert)}{(\vert x\vert+1)^{2d}}\wedge \frac{1}{(\vert x\vert+1)^{2(d-1)}}\dd x+\left(\int_{\bb_{7\ell}}\frac{r\log(1+\vert x\vert)}{(\vert x\vert+1)^d}\wedge \frac{1}{(\vert x\vert+1)^{d-1}}\dd x\right)^2
\lesssim r^2\log^2(\ell)\wedge \ell^2,$$
which yields combined with \eqref{proofth1othertestesti56} and a dyadic decomposition
\begin{align*}
p^{-\frac{d+2}{\beta\wedge d}(1+\varepsilon)}\int_{r}^{+\infty}\ell^{-1-\beta}\left\langle \mathcal{M}^p_{\star,2}(r,\ell)\right\rangle^{\frac{1}{p}}\dd \ell\lesssim &\sum_{n=0}^{\left\lceil \log_2(r)\right\rceil}\int_{2^{n}r}^{2^{n+1}r}\ell^{-1-\beta}(\ell^2+1+r^2+\log(r+1)\mathds{1}_{d=2})\dd \ell\\
&+\sum_{n=\left\lceil \log_2(r)\right\rceil}^{+\infty}\int_{2^n r}^{2^{n+1}r}\ell^{-1-\beta}(r^2\log^2(\ell)+1+r^2+\log(r+1)\mathds{1}_{d=2})\dd \ell\\
\lesssim &\sum_{n=0}^{\left\lceil \log_2(r)\right\rceil}\left((2^n r)^{2-\beta}\mathds{1}_{\beta<2}+\mathds{1}_{\beta=2}+r^{-\beta}2^{-n\beta}(r^2+\log(r+1)\mathds{1}_{d=2})\right) \\
&+\sum_{n=\left\lceil \log_2(r)\right\rceil}^{+\infty} 2^{-n\beta}(r^{2-\beta}\log^2(2^{n+1}r)+1+r^{2-\beta}+\log(r+1)\mathds{1}_{d=2})\\
\lesssim& \chi_{\beta,d}(r).
\end{align*}
\item[(iii)]Finally, for $\beta\leq 2$ and $d=2$, we combine \eqref{proofth1othertestesti56}, \eqref{proofth1othertestesti57} and \eqref{proofth1othertestesti58} to obtain
\begin{align*}
\int_{r}^{+\infty}\ell^{-1-\beta}\left\langle \mathcal{M}^p_{\star,2}(r,\ell)\right\rangle^{\frac{1}{p}}\dd \ell\lesssim p^{\frac{d+2}{\beta\wedge d}(1+\varepsilon)}r^{2-\beta}(1+\log^2(r+1))+1+\log(r+1).
\end{align*}
\end{itemize}
To conclude, using the same decomposition as before and in addition the moment bound \eqref{momentboundDstar} of $\mathcal{D}_{\star}$ we get (up to adjusting $\varepsilon$)
$$\int_{r}^{+\infty}\sum_{j=0}^{+\infty}2^{\frac{j}{2}}\eta_{\varepsilon,\beta}(2^j)\left\langle \mathcal{K}^p_{j,\star,\varepsilon}(r,\ell)\right\rangle^{\frac{1}{p}}\lesssim p^{\frac{d+2}{\beta\wedge d}+\varepsilon}\chi^2_{d,\beta}(r).$$
\end{proof}

\subsubsection{Proof of Corollary \ref{decayu}: Decay of the semigroup. }
We apply Lemma \ref{cacciopo} and we make use of Minkowski's inequality in $\LL^p_{\left\langle\cdot\right\rangle}(\Omega)$ and the stationarity of $q_r(T,\cdot)$ to the effect of: for all $p\in [1,\infty)$, $T\geq 4$ and $R\geq \sqrt{T}$
\begin{align*}
\left\langle \left(\int_{\mathbb{R}^d}\eta_{\sqrt{2}R}(\tfrac{y}{c})\vert(u(T,y),\sqrt{T}\nabla u(T,y))\vert^2\dd y\right)^{\frac{p}{2}}\right\rangle^{\frac{1}{p}}&\lesssim\frac{1}{\sqrt{T}}\left\langle \left(\fint_{\frac{T}{4}}^{\frac{T}{2}}\fint_{0}^{\sqrt{t}}\left(\frac{r}{\sqrt{t}}\right)^{\frac{d}{2}}\int_{\mathbb{R}^d}\eta_{2R}(\tfrac{y}{c})\vert q_r(t,y)-\langle q_r(t,y)\rangle\vert\dd y\, \dd r\, \dd t\right)^{p}\right\rangle^{\frac{1}{p}}\\
&\lesssim \frac{1}{\sqrt{T}}\fint_{\frac{T}{4}}^{\frac{T}{2}}\fint_{0}^{\sqrt{t}}\left(\frac{r}{\sqrt{t}}\right)^{\frac{d}{2}}\left\langle \vert q_r(t)-\langle q_r(t)\rangle\vert^p\right\rangle^{\frac{1}{p}}\dd r\, \dd t.
\end{align*}
Then, we split the integral over $[0,\sqrt{t}]$ into the contributions $r\leq 1$ and $1\leq r\leq \sqrt{t}$:
\begin{itemize}
\item[(i)]For $r\leq 1$ we use \eqref{near0est4}: for any $\gamma>0$
\begin{align*}
\frac{1}{\sqrt{T}}\fint_{\frac{T}{4}}^{\frac{T}{2}}\frac{1}{\sqrt{t}}\int_{0}^{1}\left(\frac{r}{\sqrt{t}}\right)^{\frac{d}{2}}\left\langle \vert q_r(t)-\langle q_r(t)\rangle\vert^p\right\rangle^{\frac{1}{p}}\dd r\, \dd t&\stackrel{\eqref{near0est4}}{\lesssim}p^{\frac{1}{\eta_\gamma}}T^{-\frac{1}{2}-\frac{d}{4}}\fint_{\frac{T}{4}}^{\frac{T}{2}}\frac{1}{\sqrt{t}}\int_{0}^{1}\left(r^{\frac{d}{2}}+\log(\tfrac{\sqrt{T}}{r})\right)\dd r\, \dd t\nonumber\\
&\lesssim p^{\frac{1}{\eta_\gamma}}T^{-1-\frac{d}{4}}\log(T).
\end{align*}
\item[(ii)]For $1\leq r\leq \sqrt{t}$ we use Theorem \ref{semigroup} and the change of variable $r\mapsto \tfrac{\sqrt{T}}{r}$: for all $\alpha<\frac{1}{\frac{1}{2}+2\frac{d+1}{\beta\vee d}}$
\begin{align*}
\frac{1}{\sqrt{T}}\fint_{\frac{T}{4}}^{\frac{T}{2}}\frac{1}{\sqrt{t}}\int_{1}^{\sqrt{t}}\left(\frac{r}{\sqrt{t}}\right)^{\frac{d}{2}}\left\langle \vert q_r(t)-\langle q_r(t)\rangle\vert^p\right\rangle^{\frac{1}{p}}\dd r\, \dd t&\stackrel{\eqref{Sensitilem3}}{\lesssim}p^{\frac{1}{\alpha}}T^{-\frac{1}{2}-\frac{d}{4}}\mu_{\beta}(T)\fint_{\frac{T}{4}}^{\frac{T}{2}}\frac{1}{\sqrt{t}}\int_{1}^{\sqrt{t}}\left(1+\log^2(\tfrac{\sqrt{T}}{r})\right)\dd r\, \dd t\nonumber\\
&\leq p^{\frac{1}{\alpha}}\eta_{\beta}(T)\int_{1}^{\sqrt{\frac{T}{2}}}r^{-2}(1+\log^2(r))\dd r\nonumber\\
&\lesssim p^{\frac{1}{\alpha}}\eta_{\beta}(T),
\end{align*}
where $\eta_{\beta}$ is defined in \eqref{defetabeta}. This concludes the argument for \eqref{estisemigroup}.
\end{itemize}
The estimate \eqref{estip2} is a direct consequence of \eqref{estisemigroup} and the stationarity of $\nabla u$: for all $x\in\mathbb{R}^d$ and $T\geq 1$
$$\left\langle \vert\nabla u(T,x)\vert^2\right\rangle=\left\langle\int_{\mathbb{R}^d}\eta_{\sqrt{2}R}(\tfrac{y}{c})\vert\nabla u(T,y)\vert^2\dd y\right\rangle\stackrel{\eqref{estisemigroup}}{\lesssim}T^{-1}\eta^2_{\beta}(T)\left\langle \mathcal{C}^2_{\star,d,\lambda,\beta}\right\rangle\lesssim T^{-1}\eta^2_{\beta}(T).$$

\subsubsection{Proof of Corollary \ref{boundphi} : Bounds on the flux and gradient of correctors. }
We split the proof into two steps. The first one gives a rigorous proof of the formula \eqref{expliequationcor}. The second step prove \eqref{correctorbound2}.\newline
\newline
\textbf{Step 1. } We prove the two following integral formulas
\begin{equation}
\nabla \phi=\int_{0}^{+\infty}\nabla u(t,\cdot)\,\dd t,
\label{linkphiu}
\end{equation}
and for all $T\geq 1$
\begin{equation}
\nabla \phi_T=\int_{0}^{+\infty}e^{-\frac{t}{T}}\nabla u(t,\cdot)\,\dd t.
\label{linkphiumassive}
\end{equation}
We first note that the r.h.s of \eqref{linkphiu} is well defined as a random variable in $\LL^2_{\left\langle\cdot\right\rangle}(\Omega)$, since from \eqref{estip2} and $\eqref{LemE1}$ we have for all $x\in\mathbb{R}^d$
$$\left\langle\left\vert\int_{0}^{+\infty}\nabla u(t,x)\dd t\right\vert^2\right\rangle^{\frac{1}{2}}\leq \left\langle\left\vert\int_{0}^{1}\nabla u(t,x)\dd t\right\vert^2\right\rangle^{\frac{1}{2}}+\int_{1}^{+\infty}\left\langle\vert\nabla u(t,x)\vert^2\right\rangle^{\frac{1}{2}}\dd t\lesssim 1.$$
We only provide the argument for \eqref{linkphiu}, \eqref{linkphiumassive} follows the same way. To this aim, we prove that there exists a potential $\zeta\in \LL^2(\Omega\times\mathbb{R}^d)$, sub-linear at infinity, such that $\int_{0}^{+\infty} \nabla u(t,\cdot)\dd t=\nabla\zeta$ and solving $-\nabla\cdot a(\nabla\zeta+e)=0$ in the distributional sense on $\mathbb{R}^d$. By uniqueness of $\nabla\phi$ defined by \eqref{correctorequation2}, it will imply \eqref{linkphiu}.\newline
\newline
Let $\psi\in \cc^{\infty}_c(\mathbb{R}^d)$ be supported in $\bb_R$ for some $R>0$ and let $\sqrt{T}> R$. We have by testing \eqref{equationu} with $\psi$ and integrating in time from $0$ to $T$
\begin{equation}
\int_{\mathbb{R}^d} u(T,y)\psi(y)\dd y+\int_{\mathbb{R}^d}\nabla\psi(y)\cdot a(y)e\,\dd y +\int_{\mathbb{R}^d}\nabla\psi(y)\cdot a(y)\int_{0}^T \nabla u(s,y)\dd s\,\dd y=0.
\label{testweakcor3}
\end{equation}
We now check that each term of \eqref{testweakcor3} pass to the limit, almost surely, as $T\uparrow \infty$. For the first l.h.s term of \eqref{testweakcor3}, we use the triangle inequality followed by Poincaré's inequality, \eqref{estisemigroup} and \eqref{estip2}: 
\begin{align}
\left\langle\fint_{\bb_{R}}\vert u(T,y)\vert^2\dd y\right\rangle^{\frac{1}{2}}\leq & \left\langle\fint_{\bb_{R}}\left\vert u(T,y)-\fint_{\bb_R} u(T,z)\dd z\right\vert^2\dd y\right\rangle^{\frac{1}{2}}+\left\langle \left\vert \fint_{\bb_{\sqrt{T}}} u(T,y)\dd y\right\vert^2\right\rangle^{\frac{1}{2}}\nonumber\\
&+\left\langle \left\vert \fint_{\bb_R} u(T,y)\dd y-\fint_{\bb_{\sqrt{T}}} u(T,y)\dd y\right\vert^2\right\rangle^{\frac{1}{2}}\nonumber\\
\lesssim R&\left\langle \fint_{\bb_R}\vert\nabla u(T,y)\vert^2\dd y\right\rangle^{\frac{1}{2}}+\left\langle \fint_{\bb_{\sqrt{T}}} \vert u(T,y)\vert^2\dd y\right\rangle^{\frac{1}{2}}\nonumber\\
&+\left\langle \left\vert \fint_{\bb_R} u(T,y)\dd y-\fint_{\bb_{\sqrt{T}}} u(T,y)\dd y\right\vert^2\right\rangle^{\frac{1}{2}}\nonumber\\
\stackrel{\eqref{estisemigroup},\eqref{estip2}}{\lesssim}& (\frac{R}{\sqrt{T}}+1)\eta_{\beta}(T)+\left\langle \left\vert \fint_{\bb_R} u(T,y)\dd y-\fint_{\bb_{\sqrt{T}}} u(T,y)\dd y\right\vert^2\right\rangle^{\frac{1}{2}}.\label{quantiucor31}
\end{align}
From the fundamental calculus theorem, the stationarity of $\nabla u$ and the application of \eqref{estip2}, we also have 
\begin{align*}
\left\langle \left\vert \fint_{\bb_R} u(t,y)\dd y-\fint_{\bb_{\sqrt{T}}} u(t,y)\dd y\right\vert^2\right\rangle^{\frac{1}{2}}&=\left\langle \left\vert \int_{R}^{\sqrt{T}}\fint_{\bb_1}\nabla u(t,\tau z)\cdot z\dd z\, \dd \tau\right\vert^2\right\rangle^{\frac{1}{2}}\nonumber\\
&\stackrel{\eqref{estip2}}{\lesssim}(1-\frac{R}{\sqrt{T}})\eta_{\beta}(T)\leq \eta_{\beta}(T).
\end{align*}
Hence, since $R<\sqrt{T}$, \eqref{quantiucor31} turns into
$$\left\langle\fint_{\bb_R}\vert u(T,y)\vert^2\dd y\right\rangle^{\frac{1}{2}}\lesssim \eta_{\beta}(T),$$
and yields
\begin{align*}
\left\langle\left\vert\int_{\mathbb{R}^d}u(T,y)\psi(y)\dd y\right\vert^2\right\rangle^{\frac{1}{2}}&\leq R^{\frac{d}{2}}\|\psi\|_{\LL^{\infty}(\mathbb{R}^d)}\left\langle\fint_{\bb_R}\vert u(T,y)\vert^2\dd y\right\rangle^{\frac{1}{2}}\\
&\stackrel{\eqref{quantiucor31}}{\lesssim}  R^{\frac{d}{2}}\|\psi\|_{\LL^{\infty}(\mathbb{R}^d)}\eta_{\beta}(T).
\end{align*}
We have in particular, 
$$\int_{\mathbb{R}^d}u(T,y)\psi(y)\dd y\underset{T\uparrow \infty}{\rightarrow} 0\quad \text{almost surely}.$$
For the second l.h.s term of \eqref{testweakcor3}, we have directly using \eqref{estip2}: 
\begin{align*}
\left\langle\left\vert\int_{\mathbb{R}^d}\nabla\psi(y)\cdot a(y)\int_{T}^{+\infty} \nabla u(s,y)\dd s\,\dd y\right\vert^2\right\rangle^{\frac{1}{2}}&\leq \|\nabla\psi\|_{\LL^{\infty}(\mathbb{R}^d)}\int_{\bb_R}\int_{T}^{+\infty}\left\langle\vert\nabla u(s,y)\vert^2\right\rangle^{\frac{1}{2}}\dd s\, \dd y\\
&\stackrel{\eqref{estip2}}{\lesssim}R^{d} \|\nabla\psi\|_{\LL^{\infty}(\mathbb{R}^d)}T^{\frac{1}{2}}\eta_{\beta}(T),
\end{align*}
with $T^{\frac{1}{2}}\eta_{\beta}(T)\underset{T\uparrow \infty}{\rightarrow}0$, which yields 
$$\lim_{T\rightarrow +\infty}\int_{\mathbb{R}^d}\nabla\psi(y)\cdot a\int_{0}^T \nabla u(s,y)\dd s\,\dd y=\int_{\mathbb{R}^d}\nabla\psi(y)\cdot a\int_{0}^{+\infty} \nabla u(s,y)\dd s\,\dd y\quad \text{almost surely}.$$
To conclude, we can pass to the limit as $T\uparrow \infty$ in \eqref{testweakcor3} and obtain 
\begin{equation}
\int_{\mathbb{R}^d}\nabla\psi(y)\cdot a(y)e\, \dd y+\int_{\mathbb{R}^d}\nabla\psi(y)\cdot a(y)\int_{0}^{+\infty}\nabla u(s,y)\dd s\, \dd y=0\quad \text{almost surely}.
\label{coreceqcor2}
\end{equation}
Now, since $\int_{0}^{+\infty}\nabla u(t,\cdot)\dd t$ is curl free and belongs to $\LL^2(\Omega\times \mathbb{R}^d)$, there exists a potential $\zeta\in \LL^2(\Omega\times\mathbb{R}^d)$ such that $\int_{0}^{+\infty}\nabla u(t,\cdot)\dd t=\nabla \zeta$ and \eqref{coreceqcor2} takes the form
$$\int_{\mathbb{R}^d}\nabla\psi(y)\cdot a(y)\,e\dd y+\int_{\mathbb{R}^d}\nabla \psi(y)\cdot a(y)\nabla \zeta=0,$$
which means that $-\nabla\cdot a(\nabla\zeta+e)=0$ in the distributional sense on $\mathbb{R}^d$. Since $\int_{0}^{+\infty}\nabla u(t,\cdot)\dd t$ has finite second moment, it is well known that $\zeta$ own the sub-linear property. By the uniqueness of $\nabla\phi$ defined by \eqref{correctorequation2}, this concludes the argument for \eqref{linkphiu}.\newline
\newline
\textbf{Step 2. }We prove \eqref{correctorbound2} and we split the proof into three steps. For the rest of the proof, we fix $\alpha<\frac{1}{\frac{1}{2}+2\frac{d+1}{\beta\vee d}}$ and we let $p\in [1,\infty)$ be arbitrary.\newline
\newline
\textbf{Substep 2.1. }We start with the control of the flux and we only treat $\vert q_r-\left\langle q_r\right\rangle\vert$, the control of $\vert (q_T)_r-\left\langle (q_T)_r\right\rangle\vert$ is obtained the same way, using \eqref{linkphiumassive} instead of \eqref{linkphiu}. We use the triangle inequality combined with Theorem \ref{semigroup} with $T=r^2$ to get
\begin{align*}
\left\langle \vert q_r-\left\langle q_r\right\rangle\vert^p\right\rangle^{\frac{1}{p}}&\leq \left\langle \vert q_r-(q(r^2))_r\vert^p\right\rangle^{\frac{1}{p}}+\left\langle \vert (q(r^2))_r-\left\langle (q(r^2))_r\right\rangle \vert^p\right\rangle^{\frac{1}{p}}+\vert \left\langle q_r-(q(r^2))_r\right\rangle\vert\\
&\stackrel{\eqref{Sensitilem3}}{\lesssim}\left\langle \vert q_r-(q(r^2))_r\vert^p\right\rangle^{\frac{1}{p}}+\pi^{-\frac{1}{2}}_*(r)p^{\frac{1}{\alpha}}.
\end{align*}
It remains to control the first r.h.s term of the above inequality. To this aim, we write by dominating the Gaussian kernel $g_r$ by the exponential kernel $\eta_r$ and using Minkowski's inequality in $\LL^p_{\left\langle\cdot\right\rangle}(\Omega)$ as well as Jensen's inequality
\begin{align}
\left\langle \vert q_r-(q(r^2))_r\vert^p\right\rangle^{\frac{1}{p}}&\leq \left\langle \left(\int_{\mathbb{R}^d} g_r(x)\left\vert\int_{r^2}^{+\infty}\nabla u(t,x)\dd t\right\vert\right)^p\right\rangle^{\frac{1}{p}}\nonumber\\
&\leq \int_{r^2}^{+\infty}\left\langle \left(\int_{\mathbb{R}^d}g_r(x)\vert\nabla u(t,y)\vert\dd y\right)^p\right\rangle^{\frac{1}{p}}\dd t\nonumber\\
&\leq \int_{r^2}^{+\infty}\left\langle\left(\int_{\mathbb{R}^d}\eta_r(x)\vert\nabla u(t,x)\vert^2\dd x\right)^{\frac{p}{2}}\right\rangle^{\frac{1}{p}}\dd t.\label{qminusqr1}
\end{align}
It remains to control the space integral of the above inequality. We apply Lemma \ref{ctrlav} using \eqref{estisemigroup}, $f\equiv 1$ and $g: t\in\mathbb{R}^+\mapsto \eta_{\beta}(t)$  as well as the moment bound \eqref{momentr*} of $r_*$ to obtain, for all $r\geq 1$
\begin{equation}
\left\langle\left(\fint_{\bb_r}\vert \nabla u(t,x)\vert^2\dd x\right)^{\frac{p}{2}}\right\rangle^{\frac{1}{p}}\lesssim p^{\frac{1}{2}\frac{d}{\beta\wedge d}+\frac{1}{\alpha}}t^{-\frac{1}{2}}\eta_{\beta}(t).
\label{Lem2semigroup}
\end{equation}
Consequently, for all $t\geq r^2$
\begin{align}
\left\langle\left(\int_{\mathbb{R}^d}\eta_r(x)\vert\nabla u(t,x)\vert^2\dd x\right)^{\frac{p}{2}}\right\rangle^{\frac{1}{p}}&\leq \left\langle\left(\fint_{\bb_r}\vert\nabla u(t,x)\vert^2\dd x\right)^{\frac{p}{2}}\right\rangle^{\frac{1}{p}}+\left\langle\left(\int_{\mathbb{R}^d\backslash \bb_r}\eta_r(x)\vert\nabla u(t,x)\vert^2\dd x\right)^{\frac{p}{2}}\right\rangle^{\frac{1}{p}}\nonumber\\
&\stackrel{\eqref{Lem2semigroup}}{\lesssim}p^{\frac{1}{2}\frac{d}{\beta\wedge d}+\frac{1}{\alpha}}t^{-\frac{1}{2}}\eta_{\beta}(t)+\left\langle\left(\int_{\mathbb{R}^d\backslash \bb_r}\eta_r(x)\vert\nabla u(t,x)\vert^2\dd x\right)^{\frac{p}{2}}\right\rangle^{\frac{1}{p}}.\label{qminusqr2}
\end{align}
For the second r.h.s term of the previous estimate, we decompose $\mathbb{R}^d\backslash \bb_r$ into the family of annuli $(\bb_{(n+1)r}\backslash \bb_{nr})_{n\geq 1}$ to obtain, with \eqref{Lem2semigroup}
\begin{align}
\left\langle\left(\int_{\mathbb{R}^d\backslash \bb_r}\eta_r(x)\vert\nabla u(t,x)\vert^2\dd x\right)^{\frac{p}{2}}\right\rangle^{\frac{1}{p}}&\leq \left(\sum_{n=1}^{+\infty}\left\langle\left(\int_{\bb_{(n+1)r}\backslash \bb_{nr}}\eta_r(x)\vert\nabla u(t,x)\vert^2\dd x\right)^{\frac{p}{2}}\right\rangle^{\frac{2}{p}}\right)^{\frac{1}{2}}\nonumber\\
&\leq \left(\sum_{n=1}^{+\infty}e^{-n}n^{d}\left\langle\left(\fint_{\bb_{(n+1)r}}\vert \nabla u(t,x)\vert^2\dd x\right)^\frac{p}{2}\right\rangle^{\frac{2}{p}}\right)^{\frac{1}{2}}\nonumber\\
&\stackrel{\eqref{Lem2semigroup}}{\lesssim}p^{\frac{1}{2}\frac{d}{\beta\wedge d}+\frac{1}{\alpha}}t^{-\frac{1}{2}}\eta_{\beta}(t).\label{qminusqr3}
\end{align}
We conclude by plugging the two above inequalities into \eqref{qminusqr1} with the fact $\int_{r^2}^{+\infty}t^{-\frac{1}{2}}\eta_{\beta}(t)\dd t\lesssim \pi^{-\frac{1}{2}}_{\star}(r)$. The bound on $(q_{T})_r$ is obtained the same way since from \eqref{linkphiumassive} we have 
$$(q_{T})_r=\int_{0}^{+\infty}e^{-\frac{t}{T}}q_r(t,\cdot)\dd t.$$
\textbf{Substep 2.2. }We prove the control of $\vert\nabla\phi_r\vert$. We first notice that by integrating the equation \eqref{equationu} in time and using that, by stationarity, $\nabla\cdot \left\langle q\right\rangle=0$, we have 
$$u(t,\cdot)=\nabla\cdot (q(t,\cdot)-\left\langle q(t,\cdot)\right\rangle)\quad \text{for all $t\geq 0$}.$$
From the definition \eqref{defphitime}, we thus deduce that
\begin{equation}
(\nabla\phi(r^2))_r=\left(\nabla\left(\nabla\cdot\int_{0}^{r^2}(q(s,\cdot)-\left\langle q(s,\cdot)\right\rangle)\dd s\right)\right)_r.
\label{correctortimedepend}
\end{equation}
By noticing that, from the semigroup property $g_r=g_{\frac{1}{\sqrt{2}}r}\star g_{\frac{1}{\sqrt{2}}r}$ we have for all $f\in \text{H}^2_{\text{loc}}(\mathbb{R}^d)$
\begin{equation}
\vert(\nabla^2 f)_r\vert=\vert(\nabla^2 f_{\frac{1}{\sqrt{2}}r})_{\frac{1}{\sqrt{2}}r}\vert\lesssim \frac{1}{r^2}\vert f_{\frac{1}{\sqrt{2}}r}\vert_{\frac{1}{\sqrt{2}}r},
\label{boundconvolH2}
\end{equation}
we deduce, from \eqref{correctortimedepend}, the stationarity of $q_r$, \eqref{Sensitilem3} and \eqref{algeexpmoment} that
\begin{align}
\left\langle \vert(\nabla\phi(r^2))_r\vert^p\right\rangle^{\frac{1}{p}}&=\left\langle\left\vert \left(\nabla\left(\nabla\cdot\int_{0}^{r^2}(q(s,\cdot)-\left\langle q(s,\cdot)\right\rangle)\dd s\right)\right)_r\right\vert^p\right\rangle^{\frac{1}{p}}\nonumber\\
&\stackrel{\eqref{boundconvolH2}}{\lesssim}\frac{1}{r^2}\int_{0}^{r^2}\left\langle\left\vert q_{\frac{1}{\sqrt{2}}r}(s)-\left\langle q_{\frac{1}{\sqrt{2}}r}(s)\right\rangle\right\vert^p\right\rangle^{\frac{1}{p}}\dd s\nonumber\\
&\stackrel{\eqref{Sensitilem3},\eqref{algeexpmoment}}{\lesssim}p^{\frac{1}{\alpha}}r^{-2-\frac{d}{2}}\int_{0}^{r^2}\mu_{\beta}(s)(1+\log^2(\tfrac{\sqrt{s}}{r}))\dd s\nonumber\\
&\lesssim \pi^{-\frac{1}{2}}_{*}(r)p^{\frac{1}{\alpha}}.\label{boundtimedependphi12}
\end{align}
We finally deduce, from \eqref{linkphiu}, \eqref{qminusqr1}, \eqref{qminusqr2}, \eqref{qminusqr3} and \eqref{boundtimedependphi12}, that 
\begin{align*}
\left\langle\vert\nabla \phi_r\vert^p\right\rangle^{\frac{1}{p}}&\leq \left\langle \vert \nabla\phi_r-(\nabla\phi(r^2))_r\vert^p\right\rangle^{\frac{1}{p}}+\left\langle\vert (\nabla\phi(r^2))_r\vert^p\right\rangle^{\frac{1}{p}}\\
&\stackrel{\eqref{linkphiu},\eqref{boundtimedependphi12}}{\leq} \int_{r^2}^{+\infty}\left\langle\left(\int_{\mathbb{R}^d}\eta_r(x)\vert\nabla u(t,x)\vert^2\dd x\right)^{\frac{p}{2}}\right\rangle^{\frac{1}{p}}\dd t+\pi^{-\frac{1}{2}}_*(r)p^{\frac{1}{\alpha}}\\
&\stackrel{ \eqref{qminusqr1},\eqref{qminusqr2},\eqref{qminusqr3}}{\lesssim} \pi^{-\frac{1}{2}}_*(r)(p^{\frac{1}{2}\frac{d}{\beta\vee d}+\frac{1}{\alpha}}+p^{\frac{1}{\alpha}}).
\end{align*}
\textbf{Substep 2.3. }We prove the control on $\vert\nabla(\sigma_T)_r\vert$ and $\vert\nabla\sigma_r\vert$. Let $i,j,k\in\llbracket 1,d\rrbracket$. Using the equation \eqref{correctorequationsigmamassive}, we note that $(\sigma_{T,i,j,k})_r$ solves
$$\frac{1}{T}(\sigma_{T,i,j,k})_r-\Delta(\sigma_{T,i,j,k})_r=(e_k\cdot q_{e_i,T}-e_j\cdot q_{e_i,T})\star (\partial_j g_r-\partial_k g_r).$$
Therefore, we may express $\nabla(\sigma_{T,i,j,k})_r$ with help of the Green function $G_T$ of the massive Laplace operator $\frac{1}{T}-\Delta$ on $\mathbb{R}^d$
$$\nabla(\sigma_{T,i,j,k})_r=\nabla G_T\star ((e_k\cdot q_{e_i,T}-e_j\cdot q_{e_i,T})\star (\partial_j g_r-\partial_k g_r)).$$
Then, using that there exists a constant $C$ which depends on $d$ such that
$$G_T=C\int_{0}^{+\infty} e^{-\frac{s}{T}}g_{\sqrt{s}}\,\dd s,$$
and by noticing that from the stationarity of $(e_k\cdot q_{e_i,T}-e_j\cdot q_{e_i,T})$ we have
\begin{align*}
&\left\langle \int_{0}^{+\infty}\int_{\mathbb{R}^d}\int_{\mathbb{R}^d} e^{-\frac{s}{T}}\vert\nabla g_{\sqrt{s}}(x)\vert\vert e_k\cdot q_{e_i,T}(x-y)-e_j\cdot q_{e_i,T}(x-y)\vert\vert \partial_j g_r(y)-\partial_k g_r(y)\vert\dd y\, \dd x\, \dd s\right\rangle\\
&\lesssim\left\langle \vert e_k\cdot q_{e_i,T}-e_j\cdot q_{e_i,T}\vert\right\rangle \|\partial_j g_r-\partial_k g_r\|_{\LL^{1}(\mathbb{R}^d)}\int_{0}^{+\infty} e^{-\frac{s}{T}}\|\nabla g_{\sqrt{s}}\|_{L^{1}(\mathbb{R}^d)}\dd s\\
&\lesssim \left\langle \vert e_k\cdot q_{e_i,T}-e_j\cdot q_{e_i,T}\vert\right\rangle \|\partial_j g_r-\partial_k g_r\|_{\LL^{1}(\mathbb{R}^d)}\int_{0}^{+\infty} s^{-\frac{1}{2}}e^{-\frac{s}{T}}\dd s <\infty,
\end{align*}
we deduce from Fubini's theorem combined with the semigroup property $g_r\star g_{\sqrt{s}}=g_{\sqrt{s+r^{2}}}$ that, almost surely
\begin{align*}
\nabla(\sigma_{T,i,j,k})_r=&\int_{0}^{+\infty}e^{-\frac{s}{T}} (e_k\cdot q_{e_i,T}-e_j\cdot q_{e_i,T})\star (\nabla(\partial_j g_{\sqrt{s+r^2}}-\partial_k g_{\sqrt{s+r^2}}))\dd s\\
=&\int_{0}^{+\infty}e^{-\frac{s}{T}}(\nabla(\partial_j(e_k\cdot q_{e_i,T})-\partial_k(e_j\cdot q_{e_i,T})))_{\sqrt{s+r^2}}\, \dd s.
\end{align*}
Consequently, by making once again use of the stationarity of $q_{e_i,T}$ as well as \eqref{boundconvolH2} and \eqref{correctorbound2} proved for $(q_{e_i,T})_r$ in Substep $2.1$, we obtain
\begin{align}
\left\langle\vert\nabla(\sigma_{T,i,j,k})_r\vert^p\right\rangle^{\frac{1}{p}}&=
C\left\langle\left\vert\int_{0}^{+\infty} e^{-\frac{s}{T}}(\nabla(\partial_j(e_k\cdot q_{e_i,T})-\partial_k(e_j\cdot q_{e_i,T})))_{\sqrt{s+r^2}}\, \dd s\right\vert^p\right\rangle^{\frac{1}{p}}\nonumber\\
&\stackrel{\eqref{boundconvolH2}}{\lesssim}\int_{0}^{+\infty}\frac{1}{s+r^2}\left\langle\left\vert(q_{e_i,T})_{\frac{1}{\sqrt{2}}\sqrt{s+r^2}}-\left\langle (q_{e_i,T})_{\frac{1}{\sqrt{2}}\sqrt{s+r^2}}\right\rangle\right\vert^p\right\rangle^{\frac{1}{p}}\dd s\nonumber\\
&\lesssim p^{\frac{1}{2}+\frac{1}{\alpha}}\int_{0}^{+\infty}\frac{1}{s+r^2}\pi^{-\frac{1}{2}}_*(\sqrt{s+r^2})\dd s\nonumber\\
&\lesssim p^{\frac{1}{2}+\frac{1}{\alpha}}\pi^{-\frac{1}{2}}_*(r).\label{boudsigmafinalstep}
\end{align}
which concludes \eqref{correctorbound2} for $\vert\nabla(\sigma_T)_r\vert$ using Lemma \ref{momentexp}. The bound on $\vert\nabla\sigma_r\vert$ then follows from the fact that $\nabla\sigma_T$ tends to $\nabla\sigma$ in $\LL^2_{\left\langle\cdot\right\rangle}(\Omega)$ as $T\uparrow \infty$ (see for instance \cite[Theorem 1]{gloria2016reduction}), and thus also almost surely up to a subsequence, combined with \eqref{boudsigmafinalstep} and Fatou's lemma.  
\subsubsection{Proof of Corollary \ref{boundextendedcorrector} : Growth of the extended corrector $(\phi,\sigma)$}
We only give the arguments for $\phi$. For the bound on $\sigma$, we may rewrite averages $\int_{\mathbb{R}^d}\nabla\sigma_{i,j,k}(x)\cdot g(x)\dd x$ where $g$ is assumed to be a gradient field; i.e., $g=\nabla\theta$ for some potential $\theta$, using the second line of \eqref{correctorequationsigma2} to obtain
$$\int_{\mathbb{R}^d}\nabla\sigma_{i,j,k}(x)\cdot g(x)\dd x=\int_{\mathbb{R}^d}q(x)\cdot \text{S}g(x)\dd x,$$
with $\text{S}=:e_j\otimes e_k-e_k\otimes e_j$ and $q=((q_{e_i})_j)_{i,j}$. Since the averaging field $\text{S}g$ inherits the decay properties of $g$, we then conclude using Theorem \ref{semigroup} for $q$ and the arguments for $\phi$.\newline
\newline 
Let $\alpha<\frac{1}{\frac{1}{2}+\frac{\frac{5}{2}d+2}{\beta\vee d}}$ and $p\in [1,\infty)$. On the one hand, by the triangle inequality combined with Poincar\'e's inequality in $\LL^2(\mathbb{R}^d,g_1\dd x)$ and the stationarity property of $\nabla\phi$, we have for all $x\in\mathbb{R}^d$
\begin{equation}
\left\langle(\vert\phi-\phi_1(0)\vert^2)^{\frac{p}{2}}_1(x)\right\rangle^{\frac{1}{p}}\lesssim \left\langle\vert\phi_1(x)-\phi_1(0)\vert^p\right\rangle^{\frac{1}{p}}+\left\langle(\vert\nabla\phi\vert^2)^{\frac{p}{2}}_1\right\rangle^{\frac{1}{p}}.
\label{growthcorrectoresti1}
\end{equation}
Then, using the formula \eqref{linkphiu}, the energy estimate \eqref{LemE1} applied to the equation \eqref{equationu} in form of $\int_{\mathbb{R}^d}\left\vert\int_{0}^1\nabla u(t,x)\dd t\right\vert^2 g_1(x)\dd x\lesssim 1$, Minkowski's inequality in $\LL^p(\Omega,\LL^2(\mathbb{R}^d,g_1\dd x))$ and the estimates \eqref{qminusqr2} as well as \eqref{qminusqr3} applied with $r=1$ (after dominating the Gaussian kernel $g_1$ by the exponential kernel $\eta_1$), we have
\begin{align*}
\left\langle(\vert\nabla\phi\vert^2)^{\frac{p}{2}}_1\right\rangle^{\frac{1}{p}}&\stackrel{\eqref{linkphiu}}{=}\left\langle \left(\int_{\mathbb{R}^d}\left\vert\int_{0}^{+\infty}\nabla u(t,x)\dd t\right\vert^2 g_1(x)\dd x\right)^{\frac{p}{2}}\right\rangle^{\frac{1}{p}}\\
&\leq \left\langle\left(\int_{\mathbb{R}^d}\left\vert\int_{0}^1\nabla u(t,x)\dd t\right\vert^2 g_1(x)\dd x\right)^{\frac{p}{2}}\right\rangle^{\frac{1}{p}}+\int_{1}^{+\infty}\left\langle\left(\int_{\mathbb{R}^d}\vert\nabla u(t,x)\vert^2 g_1(x)\dd x\right)^{\frac{p}{2}}\right\rangle^{\frac{1}{p}}\dd t\\
&\stackrel{\eqref{qminusqr2},\eqref{qminusqr3}}{\lesssim} 1+p^{\frac{1}{\alpha}}.
\end{align*}
Therefore, the estimate \eqref{growthcorrectoresti1} turns into
\begin{equation}
\left\langle(\vert\phi-\phi_1(0)\vert^2)^{\frac{p}{2}}_1(x)\right\rangle^{\frac{1}{p}}\lesssim \left\langle\vert\phi_1(x)-\phi_1(0)\vert^p\right\rangle^{\frac{1}{p}}+p^{\frac{1}{\alpha}}+1.
\label{growthcorrectoresti2}
\end{equation}
On the other hand, setting $R=\vert x\vert\geq 1$, we have by the triangle inequality 
\begin{equation}
\left\langle\vert\phi_1(x)-\phi_1(0)\vert^p\right\rangle^{\frac{1}{p}}\leq \left\langle\vert\phi_R(x)-\phi_1(x)\vert^p\right\rangle^{\frac{1}{p}}+\left\langle\vert\phi_R(x)-\phi_R(0)\vert^p\right\rangle^{\frac{1}{p}}+\left\langle\vert\phi_R(0)-\phi_1(0)\vert^p\right\rangle^{\frac{1}{p}}.
\label{growthcorrectoresti3}
\end{equation}
The second r.h.s term of \eqref{growthcorrectoresti3} is estimated via the fundamental calculus theorem combined with Minkowski's inequality in $\LL^p_{\left\langle\cdot\right\rangle}(\Omega)$, the stationary property of $\nabla\phi$ and \eqref{correctorbound2} 
\begin{equation}
\left\langle\vert\phi_R(x)-\phi_R(0)\vert^p\right\rangle^{\frac{1}{p}}=\vert x\vert\left\langle\left(\int_{0}^1\frac{x}{\vert x\vert}\cdot \left(\int_{\mathbb{R}^d}\nabla\phi(y+\tau x)g_R(y)\dd y\right)\dd \tau\right)^p\right\rangle^{\frac{1}{p}}\stackrel{\eqref{correctorbound2}}{\lesssim} p^{\frac{1}{\alpha}}\vert x\vert\pi^{-\frac{1}{2}}_\star(R).
\label{growthcorrectoresti4}
\end{equation}
By stationarity, the first and the third r.h.s term of \eqref{growthcorrectoresti3} are estimated the same way and we bound the third term in two different ways, depending on the regimes in $\beta$ and $d$: 
\begin{itemize}
\item[(i)]We consider the regimes $\beta<2$, $\beta=d=2$ and $\beta>2,\, d>2$. Our main tool here are the moment bounds on the gradients of correctors \eqref{correctorbound2}. We write by $\phi_\tau(0)=\int_{\mathbb{R}^d}\phi(\tau x)g_1(x)\dd x$ and the fundamental calculus theorem
\begin{equation}
\phi_{R}(0)-\phi_{1}(0)=\int_{1}^R \partial_\tau \phi_{\tau}(0)\dd \tau=\int_{1}^R\int_{\mathbb{R}^d}\nabla\phi(\tau x)\cdot g_{1}(x)x\,\dd x\, \dd \tau=\int_{1}^R\int_{\mathbb{R}^d}\nabla\phi(x)\cdot g_{\tau}(x)\frac{x}{\tau}\dd x\, \dd \tau.
\label{growthcorrectoresti5}
\end{equation}
Then, by noticing that from the semigroup property of Gaussian field in form of $g_{\tau}=g_{\frac{\tau}{\sqrt{2}}}\star g_{\frac{\tau}{\sqrt{2}}}$, writing $\frac{x}{\tau}=\frac{y}{\tau}+\frac{x-y}{\tau}$ and applying Fubini's theorem, we have for all $\tau\in [1,R]$
\begin{align*}
\int_{\mathbb{R}^d}\nabla\phi(x)\cdot g_{\tau}(x)\frac{x}{\tau}\dd x&=\int_{\mathbb{R}^d}\int_{\mathbb{R}^d}\nabla\phi(x)\cdot \frac{x}{\tau} g_{\frac{\tau}{\sqrt{2}}}(y)g_{\frac{\tau}{\sqrt{2}}}(x-y)\dd y\, \dd x\\
&=\int_{\mathbb{R}^d}\int_{\mathbb{R}^d}\nabla\phi(x)\cdot\frac{y}{\tau}g_{\frac{\tau}{\sqrt{2}}}(y)g_{\frac{\tau}{\sqrt{2}}}(x-y)\dd y\, \dd x+\int_{\mathbb{R}^d}\int_{\mathbb{R}^d}\nabla\phi(x)\cdot\frac{x-y}{\tau}g_{\frac{\tau}{\sqrt{2}}}(y)g_{\frac{\tau}{\sqrt{2}}}(x-y)\dd y\, \dd x\\
&=2\int_{\mathbb{R}^d}\frac{y}{\tau}g_{\frac{\tau}{\sqrt{2}}}(y)\cdot\int_{\mathbb{R}^d}\nabla\phi(x) g_{\frac{\tau}{\sqrt{2}}}(x-y)\dd x\, \dd y\\
&=2\int_{\mathbb{R}^d}\frac{y}{\tau}g_{\frac{\tau}{\sqrt{2}}}(y)\cdot\nabla\phi_{\frac{\tau}{\sqrt{2}}}(-y)\dd y.
\end{align*}
We deduce from Minkowski's inequality in $\LL^p_{\left\langle\cdot\right\rangle}(\Omega)$, the stationarity property of $\nabla\phi$, \eqref{growthcorrectoresti5} and \eqref{correctorbound2}
\begin{equation}
\left\langle\vert \phi_{R}(0)-\phi_1(0)\vert^p\right\rangle^{\frac{1}{p}}\lesssim \int_{1}^R \left\langle\vert\nabla\phi_{\frac{\tau}{\sqrt{2}}}\vert^p\right\rangle^{\frac{1}{p}}\dd \tau\stackrel{\eqref{correctorbound2}}{\lesssim}p^{\frac{1}{\alpha}}\int_{1}^R \pi^{-\frac{1}{2}}_\star(\tfrac{\tau}{\sqrt{2}})\dd \tau\lesssim p^{\frac{1}{\alpha}}\xi_{d,\beta}(R),
\label{growthcorrectoresti6}
\end{equation}
where we recall that $\xi_{d,\beta}$ is defined in \eqref{defxidbeta}. The combination of \eqref{growthcorrectoresti2}, \eqref{growthcorrectoresti3}, \eqref{growthcorrectoresti4}, \eqref{growthcorrectoresti6} and Lemma \ref{momentexp} gives the desired estimate \eqref{growthcorrectormainresult}.
\item[(ii)]We consider the regimes $\beta=2,\, d>2$ and $\beta>2,\, d=2$. Our main tools here are the fluctuation estimate \eqref{sensiothertest} and the decay \eqref{estisemigroup} of $\nabla u$. We claim that
\begin{equation}
\phi_{R}(0)-\phi_{1}(0)=-\int_{\mathbb{R}^d}\nabla H(x)\cdot \nabla\phi(x)\dd x\quad \text{with}\quad H:= \int_{1}^{R^2}g_{\sqrt{\tau}}(\cdot)\dd \tau.
\label{growthcorrectoresti7}
\end{equation}
Indeed, using that for all $\tau>0$, $\partial_{\tau} g_{\sqrt{\tau}}=\Delta g_{\sqrt{\tau}}$, we have 
\begin{equation*}
\phi_{R}(0)-\phi_{1}(0)=\int_{\mathbb{R}^d}(g_R(x)-g_1(x))\phi(x)\dd x=\int_{\mathbb{R}^d}\int_{1}^{R^2}\partial_{\tau} g_{\sqrt{\tau}}(x)\dd \tau\, \phi(x)\dd x=\int_{\mathbb{R}^d}\int_1^{R^2}\Delta g_{\sqrt{\tau}}(x)\dd \tau\, \phi(x)\dd x.
\end{equation*}
Thus \eqref{growthcorrectoresti7} follows from an integration by parts (which is justified by the sub-linearity property of the corrector $\phi$). Now, using the formula \eqref{linkphiu}, we get
\begin{align}
\phi_{R}(0)-\phi_1(0)&=-\int_{\mathbb{R}^d}\nabla H(x)\cdot \Big(\int_{0}^{+\infty}\nabla u(s,x)\dd s\Big)\, \dd x\nonumber\\
&=-\int_{\mathbb{R}^d}\nabla H(x)\cdot \nabla\phi(R^2,x)\dd x-\int_{\mathbb{R}^d}\nabla H(x)\cdot \int_{R^2}^{+\infty}\nabla u(s,x)\dd s\, \dd x,\label{growthcorrectoresti8}
\end{align}
where we recall that the time dependant corrector $\phi(\cdot,\cdot)$ is defined in \eqref{defphitime}. For the first r.h.s term of \eqref{growthcorrectoresti8}, we note that $\nabla H$ satisfies the assumption \eqref{assumesensiothertest}, therefore from Theorem \ref{semigroup} we have for all $p\in [1,\infty)$
\begin{equation*}
\left\langle\left\vert\int_{\mathbb{R}^d}\nabla H(x)\cdot \nabla\phi(R^2,x)\dd x\right\vert^p\right\rangle^{\frac{1}{p}}\lesssim p^{\frac{1}{\alpha}}\log^{\frac{1}{2}}(R+2).
\label{growthcorrectoresti9}
\end{equation*}
For the second r.h.s term of \eqref{growthcorrectoresti8}, we make use of the combination of \eqref{Lem2semigroup}, \eqref{qminusqr2} and \eqref{qminusqr3} as well as the following bound on $\nabla H$: for all $x\in\mathbb{R}^d$
\begin{align*}
\vert \nabla H(x)\vert\lesssim \int_{1}^{R^2}\vert x\vert\tau^{-\frac{d}{2}-1}e^{-\frac{\vert x\vert}{\tau}}\dd \tau &\leq \vert x\vert e^{-\frac{\vert x\vert^2}{2R^2}}\int_{1}^{R^2}\tau^{-\frac{d}{2}-1}e^{-\frac{\vert x\vert^2}{2\tau}}\dd \tau\\
&\lesssim (\vert x\vert+1)^{1-d} e^{-\frac{\vert x\vert^2}{2R^2}}\lesssim R\,g_{2R}(x),
\end{align*}
to obtain for all $p\in [1,\infty)$
\begin{align*}
\left\langle\left\vert \int_{\mathbb{R}^d}\nabla H(x)\cdot \int_{R^2}^{+\infty}\nabla u(s,x)\dd s\, \dd x\right\vert^p\right\rangle^{\frac{1}{p}}\leq & R\int_{R^2}^{+\infty}\left\langle\left\vert\int_{\mathbb{R}^d}g_{2R}(x) \vert \nabla u(s,y)\vert^2\right\vert^{\frac{p}{2}}\right\rangle^{\frac{1}{p}}\\
&\lesssim p^{\frac{1}{\alpha}} R\int_{R^2}^{+\infty}s^{-\frac{1}{2}}\eta_{\beta}(s)\dd s\lesssim p^{\frac{1}{\alpha}}.
\end{align*}
\end{itemize}
\subsubsection{Proof of Corollary \ref{approxcorrector}: Sub-systematic error} We split the proof into two steps.\newline
\newline
\textbf{Step 1. } Proof of \eqref{subsysteT}. Using the two representation formulas \eqref{linkphiu} and \eqref{linkphiumassive}, we have for all $n>\frac{\beta\wedge d}{4}$
\begin{equation}
\left\langle\vert\nabla\phi^n_{e_i,T}-\nabla\phi_{e_i}\vert^2\right\rangle^{\frac{1}{2}}=\left\langle\left\vert\int_{0}^{+\infty}(1-\text{exp}_n(\tau,T))\nabla u(\tau)\dd \tau\right\vert^2\right\rangle^{\frac{1}{2}},
\label{intsubsyste}
\end{equation}
where $(\text{exp}_n(\cdot,T))_{n\in\mathbb{N}}$ is the Richardson extrapolation of $\text{exp}_1(\cdot,T):=e^{-\frac{\cdot}{T}}$. Note that the extrapolation has the effect that for all $\tau\geq 0$
\begin{equation}
\vert1-\text{exp}_n(\tau,T)\vert\lesssim_n \left(\frac{\tau}{T}\right)^n\wedge 1 \text{ and } \left\vert\frac{\partial}{\partial\tau}\text{exp}_n(\tau,T)\right\vert\lesssim_n \frac{1}{T}\left(\frac{\tau}{T}\right)^{n-1}.
\label{richardson}
\end{equation}
We then split the integral \eqref{intsubsyste} into three contributions. We start by the contribution on the interval $(0,1)$. We write by an integration by parts
$$\int_{0}^{1}(1-\text{exp}_n(\tau,T))\nabla u(\tau)\dd \tau=\int_{0}^1\frac{\partial}{\partial \tau}\text{exp}_n(\tau,T)\int_{0}^{\tau}\nabla u(t)\dd t\, \dd\tau+(1-\text{exp}_n(1,T))\int_{0}^1\nabla u(\tau)\dd \tau.$$
Thus, by Minkowski's inequality in $\LL^2_{\left\langle\cdot\right\rangle}(\Omega)$ and the stationarity of $\nabla u$, we get
\begin{align*}
\left\langle\left\vert\int_0^1 (1-\text{exp}_n(\tau,T))\nabla u(\tau)\dd \tau\right\vert^2\right\rangle^{\frac{1}{2}}\lesssim & \int_{0}^1\left\vert\frac{\partial}{\partial\tau} \text{exp}_n(\tau,T)\right\vert^2\dd \tau\int_{0}^1\left\langle\int_{\mathbb{R}^d}\eta_{\sqrt{\tau}}(x)\left\vert\int_{0}^{\tau}\nabla u(t,x)\dd t\right\vert^2\dd x\right\rangle^{\frac{1}{2}}\dd \tau\\
&+\vert 1-\text{exp}_n(1,T)\vert^2\left\langle\int_{\mathbb{R}^d}\eta_1(x)\left\vert\int_{0}^1\nabla u(t,x)\dd t\right\vert^2\dd x\right\rangle^{\frac{1}{2}}.
\end{align*}
Hence, using the localized energy estimate \eqref{LemE1} combined with \eqref{richardson}, we arrive at
$$\left\langle\left\vert\int_0^1 (1-\text{exp}_n(\tau,T))\nabla u(\tau)\dd \tau\right\vert^2\right\rangle^{\frac{1}{2}}\lesssim_n T^{-n},$$
which is of higher order than the r.h.s of \eqref{subsysteT}. We now turn to the contributions on the intervals $(1,T)$ and $(T,\infty)$, for which the estimate of the decay of the semigroup \eqref{estip2} combined with \eqref{richardson} yield
\begin{align*}
\left\langle\left\vert\int_{1}^{+\infty}(1-\text{exp}_n(\tau,T))\nabla u(\tau)\dd\tau\right\vert^2\right\rangle^{\frac{1}{2}}&\lesssim\int_{1}^T\left(\frac{\tau}{T}\right)^n\tau^{-1-\frac{d}{4}}\dd \tau+\int_{T}^{+\infty}\tau^{-1-\frac{d}{4}}\dd \tau\\
&\lesssim T^{\frac{1}{2}}\eta_{\beta}(T).
\end{align*}
This concludes the proof of \eqref{subsysteT}.\newline
\newline
\textbf{Step 2. }Proof of \eqref{subsystehom}. This estimate is a direct consequence of \eqref{subsysteT}. Indeed by the definition \eqref{defabarnT} of $\overline{a}^n_T$, we have
$$e_j\cdot(\overline{a}^n_T-a_{\text{hom}})e_i=\left\langle(\nabla\phi^{*,n}_{e_j,T}-\nabla\phi^{*}_{e_j})\cdot a(\nabla\phi^{n}_{e_i,T}+e_i)\right\rangle-\left\langle(\nabla\phi^*_{e_i}+e_j)\cdot a(\nabla\phi_{e_i}-\nabla\phi^{n}_{e_i,T})\right\rangle.$$
Since we have
$$\left\langle(\nabla\phi^{*}_{e,j}+e_j)\cdot a(\nabla\phi_{e_i}-\nabla\phi^{n}_{e_i,T})\right\rangle=\left\langle(\nabla\phi_{e_i}-\nabla\phi^n_{e_i,T})\cdot a^*(\nabla\phi^{*}_{e_j}+e_j)\right\rangle,$$
the weak formulation of the corrector equation \eqref{correctorequation2} for both $\phi^*_{e_i}$ and $\phi_{e_i}$ yields
$$\left\langle(\nabla\phi_{e_i}-\nabla\phi^n_{e_i,T})\cdot a^*(\nabla\phi^*_{e_j}+e_j)\right\rangle=\left\langle(\nabla\phi^*_{e_j}-\nabla\phi^{*,n}_{e_j,T})\cdot a(\nabla\phi_{e_i}+e_i)\right\rangle,$$
and we conclude that
$$\vert e_j\cdot(\overline{a}^n_{T}-a_{\text{hom}})e_i\vert=\left\vert\left\langle(\nabla\phi^{*,n}_{e_j,T}-\nabla\phi^*_{e_j})\cdot a(\nabla \phi^n_{e_i,T}-\nabla\phi_{e_i})\right\rangle\right\vert\leq \left\langle\vert\nabla\phi^{*,n}_{e_j,T}-\nabla\phi^*_{e_j}\vert^2\right\rangle^{\frac{1}{2}}\left\langle\vert\nabla \phi^n_{e_i,T}-\nabla\phi_{e_i}\vert^2\right\rangle^{\frac{1}{2}},$$
so that the claim follows from \eqref{subsysteT}, used for both $a^*$ and $a$.

\subsubsection{Proof of Corollary \ref{spectralreso}: Spectral resolution }
Let $0<\mu\leq 1$. The starting point is the use of the spectral theorem which allow us to rewrite the definition of $(\phi^{n}_{e,\mu^{-1}})_{n\in\mathbb{N}}$, given in Corollary \ref{approxcorrector}, in the form, for all $n\in\mathbb{N}$
$$\phi^n_{e,T}=g_n(\mathcal{L},\mu^{-1})\Theta,$$
where $g_0 : \zeta\in (0,+\infty)\mapsto \frac{1}{\zeta}$, $g_1 : \zeta\in (0,+\infty)\mapsto (\zeta+\mu)^{-1}$, and $(g_n)_{n\in\mathbb{N}}$ is the Richardson extrapolation of $g_1$ with respect to $\mu^{-1}$. Then, by the spectral theorem, we have for all $n\in\mathbb{N}$
\begin{align}
\left\langle\nabla (\phi^n_{e,\mu^{-1}}-\phi_e)\cdot a\nabla (\phi^n_{e,\mu^{-1}}-\phi_e)\right\rangle &=\left\langle(\phi^n_{e,\mu^{-1}}-\phi_e)\mathcal{L}(\phi^n_{e,\mu^{-1}}-\phi_e)\right\rangle\nonumber\\
&=\left\langle\int_{0}^{+\infty}\zeta(g_n(\zeta,\mu^{-1})-g_0(\zeta))^2\dd\nu_{\Theta}(\zeta)\right\rangle.\label{decompospectralth}
\end{align}
On the one hand, for $n>\frac{\beta\wedge d}{4}$, Corollary \ref{approxcorrector} yields
\begin{equation}
\left\langle\nabla (\phi^n_{e,\mu^{-1}}-\phi_e)\cdot a\nabla (\phi^n_{e,\mu^{-1}}-\phi_e)\right\rangle\lesssim \mu^{-1}\eta^2_{\beta}(\mu^{-1}).
\label{applycor3}
\end{equation}
On the other hand, by induction on $n$ (see for instance \cite[Proof of Lemma 2.5]{gloria2016quantitative}) we have for all $n\in \mathbb{N}$ and $\zeta\in (0,+\infty)$
$$\vert g_n(\zeta,\mu^{-1})-g_0(\zeta)\vert\gtrsim \frac{\mu^{n}}{\zeta(\zeta+\mu)^n},$$
which we use in the form of, for all $\zeta\leq \mu$
\begin{equation}
\zeta(g_n(\zeta,\mu^{-1})-g_0(\zeta))^2\gtrsim \frac{\mu^{2n}}{\zeta(\zeta+\mu)^{2n}}.
\label{induction}
\end{equation}
The combination of \eqref{decompospectralth}, \eqref{applycor3} and \eqref{induction} applied for some $n>\frac{\beta\wedge d}{4}$ gives
\begin{align*}
\left\langle \int_{0}^{\mu}\dd \nu_{\Theta}(\zeta)\right\rangle&\lesssim \mu\left\langle\int_{0}^{\mu}\frac{\mu^{2n}}{\zeta(\zeta+\mu)^{2n}} \dd \nu_{\Theta}(\zeta)\right\rangle\\
&\stackrel{\eqref{induction}}{\lesssim} \mu\left\langle\int_{0}^{\mu}\zeta(g_n(\zeta,\mu^{-1})-g_0(\zeta))^2 \dd \nu_{\Theta}(\zeta)\right\rangle\\
&\stackrel{\eqref{decompospectralth},\eqref{applycor3}}{\lesssim}\eta^2_{\beta}(\mu^{-1}).
\end{align*}

\appendix

\section{Probabilistic tools}
\label{LSIapp}
The following proposition shows that the multiscale logarithmic Sobolev inequality \eqref{SGinegp} gives a control of moments. For a reference, see \cite[Proposition 1.10]{duerinckx2017weighted2}.
\begin{proposition}\label{SGp} Assume that the ensemble $\left\langle\cdot\right\rangle$ satisfies the multi-scale logarithm Sobolev inequality \eqref{SGinegp}. For all $p\in [1,\infty)$ and $F\in L^p_{\left\langle\cdot\right\rangle}(\Omega)$
\begin{equation}
\left\langle \vert F-\left\langle F\right\rangle\vert^p\right\rangle^{\frac{1}{p}}\lesssim_d \sqrt{p}\left\langle\left(\int_{1}^{+\infty}\ell^{-d}\pi(\ell)\int_{\mathbb{R}^d}\vert \partial^{\text{fct}}_{x,\ell} F\vert^2\dd x\,\dd \ell\right)^{\frac{p}{2}}\right\rangle^{\frac{1}{p}}.
\label{SGinegp1}
\end{equation}
\end{proposition}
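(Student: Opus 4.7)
The plan is to derive \eqref{SGinegp1} from \eqref{SGinegp} by a variant of the Aida--Stroock/Herbst argument for passing from a log-Sobolev inequality to $\LL^p$ moment bounds. By truncation and mollification in the coefficient field, I may assume $F$ is smooth and bounded, and by replacing $F$ with $F-\langle F\rangle$ I may assume $F$ is centered. Write
\[
\mathcal{I}(F):=\int_{1}^{+\infty}\ell^{-d}\pi(\ell)\int_{\mathbb{R}^d}\vert\partial^{\text{fct}}_{x,\ell}F\vert^2\,\dd x\,\dd \ell,
\]
so that \eqref{SGinegp} reads $\text{Ent}(G^2)\leq\rho^{-1}\langle \mathcal{I}(G)\rangle$ for every $G\in \LL^2_{\langle\cdot\rangle}(\Omega)$.

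First, I apply \eqref{SGinegp} to the test random variable $G_p:=\text{sgn}(F)\vert F\vert^{p/2}$ for an even integer $p\geq 2$. The definition \eqref{functioderidef} of the multiscale functional derivative together with the chain rule yields (up to an approximation to justify differentiating the power) the pointwise bound $\vert\partial^{\text{fct}}_{x,\ell}G_p\vert\leq \tfrac{p}{2}\vert F\vert^{p/2-1}\vert\partial^{\text{fct}}_{x,\ell}F\vert$, whence $\mathcal{I}(G_p)\leq \tfrac{p^2}{4}\vert F\vert^{p-2}\mathcal{I}(F)$. Plugging this into \eqref{SGinegp} produces
\[
\langle \vert F\vert^p\log\vert F\vert^p\rangle-\langle\vert F\vert^p\rangle\log\langle\vert F\vert^p\rangle\leq \frac{p^2}{4\rho}\langle\vert F\vert^{p-2}\mathcal{I}(F)\rangle,
\]
and by Hölder's inequality the right-hand side is controlled by $\tfrac{p^2}{4\rho}\langle\vert F\vert^p\rangle^{(p-2)/p}\langle \mathcal{I}(F)^{p/2}\rangle^{2/p}$.

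Second, I run the Aida--Stroock moment bootstrap. Setting $\Phi(p):=\log\langle\vert F\vert^p\rangle$, the entropy on the left equals $p\Phi'(p)\langle\vert F\vert^p\rangle-\langle\vert F\vert^p\rangle\Phi(p)\cdot 0$ (that is, exactly $p\langle\vert F\vert^p\rangle\Phi'(p)-\Phi(p)\langle\vert F\vert^p\rangle+\Phi(p)\langle\vert F\vert^p\rangle$ after rearrangement), giving the differential inequality
\[
p\Phi'(p)-\Phi(p)\leq \frac{p^2}{4\rho}\langle\vert F\vert^p\rangle^{-2/p}\langle \mathcal{I}(F)^{p/2}\rangle^{2/p}.
\]
Using the base case $p=2$ (the Poincaré inequality $\langle F^2\rangle\leq \rho^{-1}\langle \mathcal{I}(F)\rangle$, which follows from \eqref{SGinegp} by linearization) and integrating in $p$ produces $\langle\vert F\vert^p\rangle^{1/p}\lesssim\sqrt{p}\,\langle\mathcal{I}(F)^{p/2}\rangle^{1/p}$, which is \eqref{SGinegp1}. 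For non-integer or odd $p$, the conclusion is extended by interpolation between consecutive even integers.

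The main obstacle is the randomness of the carré-du-champ $\mathcal{I}(F)$: the classical Herbst argument directly produces subgaussian concentration only under a deterministic bound on the Dirichlet form, so one must insert Hölder's inequality to decouple moments of $\vert F\vert$ from moments of $\mathcal{I}(F)$ and then carry this splitting through the differential bootstrap to extract the optimal $\sqrt{p}$ scaling. A secondary technical point is the chain rule for $\partial^{\text{fct}}_{x,\ell}$, which is defined in \eqref{functioderidef} as a supremum over admissible perturbations rather than as a linear Gâteaux derivative; the chain-rule inequality $\vert\partial^{\text{fct}}_{x,\ell}(\Psi\circ F)\vert\leq \|\Psi'\|_\infty\vert\partial^{\text{fct}}_{x,\ell}F\vert$ must therefore be verified directly from the definition, which is routine but not formally automatic.
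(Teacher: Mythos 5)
The paper does not prove Proposition~\ref{SGp}; it cites \cite[Proposition 1.10]{duerinckx2017weighted2}, and your argument follows the same standard Aida--Stroock entropy-method route used there, so the approach is correct. Three minor points deserve correction or clarification. First, in your closing remark you state the chain rule as $\vert\partial^{\text{fct}}_{x,\ell}(\Psi\circ F)\vert\leq\|\Psi'\|_\infty\,\vert\partial^{\text{fct}}_{x,\ell}F\vert$, but this is vacuous for $\Psi(t)=\text{sgn}(t)\vert t\vert^{p/2}$ with $p>2$ since $\Psi'$ is then unbounded; the bound you actually use, $\vert\partial^{\text{fct}}_{x,\ell}G_p\vert\leq\tfrac{p}{2}\vert F\vert^{p/2-1}\vert\partial^{\text{fct}}_{x,\ell}F\vert$, requires the pointwise chain rule $\vert\partial^{\text{fct}}_{x,\ell}(\Psi\circ F)\vert\leq\vert\Psi'(F)\vert\,\vert\partial^{\text{fct}}_{x,\ell}F\vert$, which does follow from \eqref{functioderidef} because the supremum there runs over both $\delta a$ and $-\delta a$ and hence produces the modulus of the Gâteaux directional derivative, but it should be stated and justified as such, with a regularization of $t\mapsto\vert t\vert^{p/2}$ near $t=0$ when $p<4$. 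Second, the restriction to even integers $p$ and the final interpolation are unnecessary: the test variable $G_p$ and the entropy of $G_p^2=\vert F\vert^p$ make sense for every real $p\geq 2$, the differential inequality integrates directly over $[2,p]$ after using Jensen's inequality to bound $\langle\mathcal{I}(F)^{q/2}\rangle^{2/q}$ by its value at $q=p$, and $p\in[1,2)$ follows from $p=2$ by Jensen again. Third, the parenthetical rewriting of the entropy in terms of $\Phi(p)=\log\langle\vert F\vert^p\rangle$ is garbled as written; what is intended and correct is $\text{Ent}(\vert F\vert^p)=p\Phi'(p)\langle\vert F\vert^p\rangle-\Phi(p)\langle\vert F\vert^p\rangle$, and dividing by $\langle\vert F\vert^p\rangle$ gives your displayed differential inequality, which then integrates to the claimed $\sqrt{p}$ bound once one uses the Poincaré base case and $\langle\mathcal{I}(F)\rangle\leq\langle\mathcal{I}(F)^{p/2}\rangle^{2/p}$.
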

The following standard lemma gives the link between algebraic moment and exponential moment for non-negative random variables. The short proof is included for completeness.
\begin{lemma}\label{momentexp}Let $X : \Omega\rightarrow \mathbb{R}^+$ a non-negative random variable. We have the following equivalence:
\begin{equation}
\exists C_1>0 \text{ such that } \left\langle\exp\left(\frac{1}{C_1}X\right)\right\rangle\leq 2 \Leftrightarrow \exists C_2>0 \text{ such that } \forall q\geq 1, \left\langle X^q\right\rangle^{\frac{1}{q}}\leq q C_2.
\label{algeexpmoment}
\end{equation}
\end{lemma}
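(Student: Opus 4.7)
The strategy is to compare the exponential moment with its Taylor series and use Stirling's formula to convert between the moment growth rate $q$ and factorials. I would prove the two implications independently.

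For the forward direction $(\Rightarrow)$, I would exploit the elementary lower bound $\exp(t)\geq t^q/q!$ valid for all $t\geq 0$ and $q\in\mathbb{N}$. Applied to $t=X/C_1$ and taking expectations, this gives
\begin{equation*}
2\geq \left\langle \exp\!\left(\tfrac{X}{C_1}\right)\right\rangle \geq \frac{\langle X^q\rangle}{q!\,C_1^q},
\end{equation*}
hence $\langle X^q\rangle^{1/q}\leq C_1\,(2\,q!)^{1/q}$. Stirling's formula $q!\leq e\,q^{q+1/2}e^{-q}$ yields $(q!)^{1/q}\leq e\,q\,q^{1/(2q)}/e\lesssim q$, and since $2^{1/q}\leq 2$, we obtain the desired bound $\langle X^q\rangle^{1/q}\leq C_2\,q$ for a suitable $C_2$ depending only on $C_1$.

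For the reverse direction $(\Leftarrow)$, the plan is to expand the exponential in a power series and bound each term using the hypothesis. By monotone convergence,
\begin{equation*}
\left\langle \exp\!\left(\tfrac{X}{C_1}\right)\right\rangle = \sum_{q=0}^{+\infty}\frac{\langle X^q\rangle}{q!\,C_1^q}\leq 1+\sum_{q=1}^{+\infty}\frac{(q C_2)^q}{q!\,C_1^q}.
\end{equation*}
Using Stirling in the form $q!\geq (q/e)^q$ gives $(qC_2)^q/q!\leq (eC_2)^q$, so choosing $C_1$ large enough (e.g. $C_1=2eC_2$) reduces the series to a geometric one summing to at most $1$, which concludes $\langle \exp(X/C_1)\rangle\leq 2$.

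There is no real obstacle here; the whole argument reduces to Stirling's formula combined with the Taylor expansion of the exponential. The only point requiring mild attention is the handling of constants in the forward direction, where one needs to check that $(q!)^{1/q}$ grows linearly in $q$ in order to absorb it into the claimed bound $q\,C_2$.
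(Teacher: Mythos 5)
Your proof is correct and follows essentially the same route as the paper: Taylor expansion of the exponential combined with Stirling's formula in both directions. The only cosmetic difference is that the paper simply asserts the existence of a $C_1$ making the series at most $2$ in the reverse direction, whereas you supply the explicit choice $C_1=2eC_2$ via $q!\geq(q/e)^q$; both are fine.
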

\begin{proof}Let us suppose that there exists $C_2>0$ such that for all $q\geq 1$, $\left\langle X^q\right\rangle^{\frac{1}{q}}\leq q C_2$. We have, for all $C_1>0$
$$\left\langle\exp\left(\frac{1}{C_1}X\right)\right\rangle=\left\langle\sum_{n=0}^{+\infty}\frac{X^n}{n!C_1^n}\right\rangle\leq \sum_{n=0}^{+\infty}\frac{\left(\frac{C_2}{C_1}n\right)^n}{n!},$$
we then choose $C_1$ such that $\sum_{n=0}^{+\infty}\frac{\left(\frac{C_2}{C_1}n\right)^n}{n!}\leq 2$. Let us now suppose that there exists $C_1>0$ such that $\left\langle\exp\left(\frac{1}{C_1}X\right)\right\rangle\leq 2$. This implies that for all $q\geq 1$, $\left\langle X^q\right\rangle\leq  C^q_1 q!$. Since, from the Stirling formula, $q!\leq C q^q$ for some $C>0$, we have for all $q\in\mathbb{N}$, $\left\langle X^q\right\rangle^{\frac{1}{q}}\leq C C_1 q$.
\end{proof}

\section{Large-scale regularity theory for parabolic system}\label{reggech}
In this section we recall the regularity theory for random parabolic operator of the form $\partial_{\tau}-\nabla\cdot a\nabla$ developed in the papers \cite{bella2017liouville,armstrong2018quantitative} and draw some useful consequences. Here, we assume that $a$ does not depend on time. However, the theory also holds with time dependent coefficients, using a time dependent minimal radius $r_*$ different from the one defined in Theorem \ref{regpara} but this is not needed in this paper. 

\medskip

\noindent The general idea of large-scale regularity is to make use of the nice regularity theory that enjoy the homogenized operator $\partial_{\tau}-\nabla\cdot a_{\text{hom}}\nabla$.  Indeed, the proximity of the two resolvent of the operators $\partial_{\tau}-\nabla\cdot a\nabla$ and $\partial_{\tau}-\nabla\cdot a_{\text{hom}}\nabla$ provided by homogenization allows to infer an improvement of regularity for $\partial_{\tau}-\nabla \cdot a\nabla$ on large-scales, say, scale much larger than the correlation length (quantitatively characterized by the random variable $r_*$ in Theorem \ref{regpara}).  In other words, on large-scales, the heterogeneous linear parabolic operator $\partial_{\tau}-\nabla\cdot a\nabla$ "inherits" a suitable version of the regularity theory for the homogenized linear parabolic operator $\partial_{\tau}-\nabla\cdot a_{\text{hom}}\nabla$.

\medskip

\noindent We start by recalling the excess decay property, which can be found  in \cite[Proposition 2]{bella2017liouville} and the moment bound on $r_*$ which can be found in \cite{gloria2014regularity}. We then prove large-scale $\cc^{0,1}$ estimates, following the arguments of \cite{gloria2014regularity}.
\begin{theorem}[Excess decay]\label{regpara}There exists a $\frac{1}{8}$-Lipschitz stationary random field $r_*: \Omega\times\mathbb{R}^d\rightarrow \mathbb{R}^+$ for which there exists a constant $C<+\infty$ such that for all $x\in\mathbb{R}^d$
\begin{equation}
\left\langle\exp\left(\frac{1}{C}\pi_*(r_*(x))\right)\right\rangle\leq 2,
\label{momentr*}
\end{equation}
with for all $r\geq 1$
$$
\pi_*(r)=\left\{
    \begin{array}{ll}
        r^{\beta} & \text{ if $\beta<d$}, \\
        r^d\log^{-1}(r) & \text{ if $\beta=d$},\\
				r^d & \text{ if $\beta>d$}. 
    \end{array}
\right.
$$
In addition, for all distributional solution of 
$$\partial_{\tau}u-\nabla\cdot a\nabla u=0 \text{\quad in $C_R$ for $R\geq r_*$},$$
we have for all $ r\in [r_*,R]$ and $\alpha\in (0,1)$
\begin{equation}
\text{Exc}(\nabla u,r)\lesssim_{d,\lambda,\alpha}\left(\frac{r}{R}\right)^{2\alpha}\text{Exc}(\nabla u, R),
\label{regpara2}
\end{equation}
with $\text{Exc}(\nabla u, r):=\inf_{\xi\in\mathbb{R}^d}\fint_{C_r}\vert \nabla u(t,y)-\xi-\nabla\phi_{\xi}(y)\vert^2\dd y$.
\end{theorem}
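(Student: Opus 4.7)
The plan is to follow the general framework of \cite{bella2017liouville}, combined with the moment bounds on the minimal radius established in \cite{gloria2014regularity,gloria2019quantitative}, adapted to the multiscale logarithmic Sobolev inequality setting \eqref{SGinegp}. I would define the random field $r_*(x)$ as the smallest scale $r \geq 1$ above which the extended corrector $(\phi,\sigma)$ is sufficiently sublinear around $x$; concretely,
\begin{equation*}
r_*(x) := \inf\bigg\{r \geq 1 : \forall R \geq r,\ \tfrac{1}{R^2}\fint_{\bb_R(x)}\Big|(\phi,\sigma)(y) - \fint_{\bb_R(x)}(\phi,\sigma)(z)\,\dd z\Big|^2\,\dd y \leq \delta\bigg\},
\end{equation*}
for a small constant $\delta > 0$ depending only on $d$ and $\lambda$. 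Stationarity is inherited from that of $(\nabla\phi,\nabla\sigma)$, and the $\tfrac{1}{8}$-Lipschitz property follows from the set inclusion $\bb_{R-|x-y|/8}(y) \subset \bb_{R}(x)$ (for $|x-y|$ small compared to $R$) combined with the monotonicity built into the infimum.

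For the excess decay \eqref{regpara2}, I would carry out a one-step improvement and iterate on dyadic scales. Given a weak solution $u$ of $\partial_\tau u - \nabla \cdot a \nabla u = 0$ in $\cc_R$ with $R \geq r_*(x)$, one introduces an approximating solution $\bar u$ of the constant-coefficient homogenized parabolic equation $\partial_\tau \bar u - \nabla \cdot a_{\text{hom}} \nabla \bar u = 0$ on a slightly smaller cylinder whose affine part captures the homogenized behaviour of $\nabla u$, and uses a two-scale expansion involving $\phi_{e_i}$ and the flux corrector $\sigma$ to obtain a quantitative comparison of the form
\begin{equation*}
\fint_{\cc_{R/2}(x)}\Big|\nabla u - \nabla \bar u - \sum_{i=1}^d (\partial_i \bar u)\, \nabla \phi_{e_i}\Big|^2 \lesssim \tfrac{1}{R^2}\fint_{\bb_R(x)}\Big|(\phi,\sigma) - \fint_{\bb_R(x)}(\phi,\sigma)\Big|^2 \cdot \fint_{\cc_R(x)}|\nabla u|^2.
\end{equation*}
Since $\bar u$ satisfies the interior $\text{C}^{1,\alpha}$ estimate for constant-coefficient parabolic equations, $\text{Exc}(\nabla\bar u,r) \lesssim (r/R)^{2\alpha}\text{Exc}(\nabla \bar u,R)$, which transfers to $\nabla u$ modulo an error of order $\delta\,\text{Exc}(\nabla u,R)$ coming from the above two-scale comparison. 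Choosing $\delta$ small enough (fixed once and for all depending on $d,\lambda,\alpha$) and iterating the improvement by a Campanato-type argument across the dyadic scales in $[r_*(x),R]$ gives the claimed excess decay.

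For the moment bound \eqref{momentr*}, the key observation is that $\{r_*(x) > R\}$ implies the existence of some scale $R' \geq R$ for which
\begin{equation*}
\tfrac{1}{R'^2}\fint_{\bb_{R'}(x)}\Big|(\phi,\sigma)-\fint_{\bb_{R'}(x)}(\phi,\sigma)\Big|^2 \geq \delta.
\end{equation*}
One then applies a direct sensitivity-based corrector estimate, of the type carried out in \cite[Theorem 1]{gloria2019quantitative}, which controls moments of $\tfrac{1}{R'}(\fint_{\bb_{R'}}|(\phi,\sigma)-\overline{(\phi,\sigma)}|^2)^{1/2}$ by $\pi_*^{-1/2}(R')$ directly from the multiscale LSI \eqref{SGinegp} with weight $\pi(\ell)=|\gamma'(\ell)|$, without invoking large-scale regularity. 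Combined with Markov's inequality summed along a dyadic sequence of scales $R'=2^k R$, this yields the tail $\langle \mathds{1}_{r_*(x)>R}\rangle \lesssim \exp(-\tfrac{1}{C}\pi_*(R))$, which is equivalent to \eqref{momentr*} through Lemma \ref{momentexp}.

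The main obstacle is the potential circularity: the sharpest corrector bounds in Corollary \ref{boundextendedcorrector} themselves rely on stretched exponential moments of $r_*$, so they cannot be used to derive \eqref{momentr*}. The resolution, following \cite{gloria2014regularity,gloria2019quantitative}, is a bootstrap: one first proves a purely deterministic-in-scale estimate of $\fint_{\bb_R}|\partial^{\text{fct}}_{\cdot,\ell}\nabla\phi|^2$ via plain energy estimates on the linearized corrector equation, plugs this into the multiscale LSI \eqref{SGinegp} to obtain corrector fluctuation bounds with sub-optimal but stretched-exponential integrability, and only then deduces \eqref{momentr*} as above. The adaptation of these arguments from the specific functional inequalities of \cite{gloria2019quantitative} to the present multiscale LSI with algebraic weight \eqref{assumeMSPC} is routine and only affects the exponent $\pi_*(r)$ through the parameter $\beta$, yielding the three regimes displayed in the statement.
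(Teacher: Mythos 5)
The paper does not actually prove Theorem~\ref{regpara}: it is explicitly recalled from the literature, with the excess decay \eqref{regpara2} cited from \cite[Proposition 2]{bella2017liouville} and the moment bound \eqref{momentr*} from \cite{gloria2014regularity} (see the opening of Appendix~\ref{reggech}). Your proposal is a faithful high-level reconstruction of the standard strategy in those references --- defining $r_*$ through sublinearity of the extended corrector $(\phi,\sigma)$, proving excess decay by a two-scale expansion comparison with the $a_{\text{hom}}$-caloric approximation and a Campanato iteration, and deriving the stretched-exponential moments of $r_*$ from suboptimal corrector fluctuation estimates obtained by plugging plain energy sensitivity bounds into the multiscale LSI --- and you correctly flag and resolve the circularity issue by the bootstrap. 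The only caveats are minor and delegated to the cited works: the precise numerical value $\tfrac{1}{8}$ of the Lipschitz constant and, more importantly, the precise form of $\pi_*(r)$ in the three regimes (in particular the logarithmic correction at $\beta = d$) require the quantitative bookkeeping of \cite{gloria2014regularity,gloria2019quantitative} rather than the broad-strokes Markov/dyadic argument you sketch, but these are computations you explicitly defer to those references rather than gaps in your reasoning.
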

A direct consequence of the excess decay property of Theorem \ref{regpara} is the following large-scale $\cc^{0,1}$ estimates, in the spirit of \cite{gloria2014regularity}, stated in the parabolic setting.
\begin{cor}[Large-scale $\cc^{0,1}$ estimates]\label{schauder}Consider the random field $r_*$ defined in Theorem \ref{regpara} and for all $(s,x)\in\mathbb{R}^{d+1}$, $u$ be the weak solution of 
$$\partial_{\tau} u-\nabla\cdot (a(\nabla u+g))=\nabla\cdot h\text{\quad in $\cc_{R}(s,x)$ for $R\geq r_*$},$$
with $(g,h)\in L^2_{\text{loc}}(\mathbb{R}^d)$. We have for all $r\in [r_*(x),R]$ and $\alpha>0$
\begin{equation}
\fint_{\cc_r(s,x)}\vert\nabla u(t,y)\vert^2\dd t\, \dd y\lesssim_{d,\lambda,\alpha}\fint_{\cc_R(s,x)}\vert \nabla u(t,y)\vert^2\dd t\, \dd y+\sup_{\rho\in [r_*,R]}\left(\frac{R}{\rho}\right)^{2\alpha}\fint_{\cc_\rho(s,x)}\left(\left\vert h-\fint_{\cc_\rho(s,x)} h\right\vert^2+\left\vert g-\fint_{\cc_\rho(s,x)}g\right\vert^2\right).
\label{meanvalue2}
\end{equation}
In particular, if $g\equiv h\equiv 0$, we have the following mean value property for $a$-caloric functions: for all $ r\in [r_*(x),R]$
\begin{equation}
\fint_{\cc_r(s,x)}\vert \nabla u(t,y)\vert^2\dd t\, \dd y\lesssim_{d,\lambda}\fint_{\cc_R(s,x)}\vert \nabla u(t,y)\vert^2\dd t\, \dd y. 
\label{meanvalue}
\end{equation}
\end{cor}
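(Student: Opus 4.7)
The plan is to prove Corollary \ref{schauder} by a standard Campanato iteration built on the excess decay of Theorem \ref{regpara}, following the elliptic template of \cite{gloria2014regularity} adapted to the parabolic setting. Without loss of generality, take $(s,x)=(0,0)$. I will first establish the mean value property \eqref{meanvalue} in the $a$-caloric case $g\equiv h\equiv 0$, and then derive \eqref{meanvalue2} by a perturbative comparison argument.

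For \eqref{meanvalue}, fix $r\in[r_*,R]$ and let $\xi_r\in\mathbb{R}^d$ be a minimizer in the definition of $\mathrm{Exc}(\nabla u,r)$. The triangle inequality yields
\[
\fint_{\cc_r}|\nabla u|^2 \lesssim \mathrm{Exc}(\nabla u,r) + \fint_{\cc_r}|\xi_r+\nabla\phi_{\xi_r}|^2.
\]
The key structural property built into $r_*$ (implicit in Theorem \ref{regpara} and explicitly used in \cite{gloria2014regularity}) is that at every scale $\rho\geq r_*$ one has $\fint_{\cc_\rho}|\xi+\nabla\phi_\xi|^2\lesssim |\xi|^2$ and, conversely, $|\xi|^2\lesssim\fint_{\cc_\rho}|\xi+\nabla\phi_\xi|^2$, uniformly in $\xi\in\mathbb{R}^d$. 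Combining these with the excess decay \eqref{regpara2} and the triangle inequality across dyadic scales between $r$ and $R$, the minimizers $\xi_\rho$ form a Cauchy sequence whose $L^2$-norm is controlled geometrically:
\[
|\xi_r|^2 \lesssim \mathrm{Exc}(\nabla u,R) + \fint_{\cc_R}|\nabla u|^2.
\]
Together with $\mathrm{Exc}(\nabla u,r)\leq \mathrm{Exc}(\nabla u,R)\lesssim\fint_{\cc_R}|\nabla u|^2$ (the former by \eqref{regpara2} and $r\geq r_*$, the latter by the trivial choice $\xi=0$), this yields \eqref{meanvalue}.

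For \eqref{meanvalue2}, fix a scale $\rho\in[r_*,R]$ and set $\bar g_\rho=\fint_{\cc_\rho}g$, $\bar h_\rho=\fint_{\cc_\rho}h$. The constant $\bar g_\rho$ can be absorbed into the solution by shifting $u$ by the corrector $\phi_{\bar g_\rho}$: the modified function $\tilde u=u-\phi_{\bar g_\rho}$ still solves a parabolic equation with the same leading operator, source divergence $\nabla\cdot h$, and perturbation $g-\bar g_\rho$, because $\nabla\cdot(a(\nabla\phi_{\bar g_\rho}+\bar g_\rho))=0$. Next, introduce the $a$-caloric replacement $\bar u$ of $\tilde u$ in $\cc_\rho$, namely the weak solution of $\partial_\tau\bar u-\nabla\cdot a\nabla\bar u=0$ in $\cc_\rho$ agreeing with $\tilde u$ on the parabolic boundary. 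The difference $w=\tilde u-\bar u$ has vanishing lateral and initial data and satisfies $\partial_\tau w-\nabla\cdot a\nabla w=\nabla\cdot(a(g-\bar g_\rho)+(h-\bar h_\rho))$; the localized energy estimates of Lemma \ref{nergyestideter} (up to the usual rescaling) yield
\[
\fint_{\cc_\rho}|\nabla w|^2 \lesssim \fint_{\cc_\rho}\bigl(|g-\bar g_\rho|^2+|h-\bar h_\rho|^2\bigr).
\]
Applying \eqref{meanvalue} to the $a$-caloric function $\bar u$ and reassembling $\nabla u=\nabla\tilde u+\nabla\phi_{\bar g_\rho}=\nabla\bar u+\nabla w+\nabla\phi_{\bar g_\rho}$ (with the corrector term controlled by $|\bar g_\rho|^2\lesssim\fint_{\cc_\rho}|g|^2$ thanks to the $r_*$-property above), I obtain for every $r\in[r_*,\rho]$
\[
\fint_{\cc_r}|\nabla u|^2 \lesssim \fint_{\cc_\rho}|\nabla u|^2 + \fint_{\cc_\rho}\bigl(|g-\bar g_\rho|^2+|h-\bar h_\rho|^2\bigr).
\]
Finally, a one-step Campanato iteration combining this two-scale inequality with the $(r/R)^{2\alpha}$ decay factor from \eqref{regpara2} produces the geometric series in $\rho\in[r_*,R]$, and absorbing the $\alpha$-decay factors into the supremum gives \eqref{meanvalue2}.

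The main obstacle will be the control of the minimizers $\xi_r$ across scales in the $a$-caloric step: one must exploit both directions of the equivalence $|\xi|^2\simeq\fint_{\cc_\rho}|\xi+\nabla\phi_\xi|^2$ (valid for $\rho\geq r_*$) and the summability of the excess decay to pass from $\mathrm{Exc}(\nabla u,r)$ to a control of $\fint_{\cc_r}|\nabla u|^2$ by $\fint_{\cc_R}|\nabla u|^2$. All remaining ingredients — energy estimates, corrector-shift, dyadic iteration — are routine parabolic adaptations of the elliptic arguments of \cite{gloria2014regularity}.
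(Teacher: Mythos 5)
Your proposal inverts the paper's order and relies on a perturbation-of-\eqref{meanvalue} route that the paper deliberately avoids. The paper proves \eqref{meanvalue2} first, directly, by running the whole Campanato scheme on the \emph{modified excess} $\text{Exc}(\nabla u+g,\rho)=\inf_\xi\fint_{\cc_\rho}\vert\nabla u+g-\xi-\nabla\phi_\xi\vert^2$: the auxiliary problem \eqref{edpw} is arranged so that $u-w+\bar g_\rho\cdot x$ is $a$-caloric (with $\bar g_\rho:=\fint_{\cc_\rho}g$), so the constant part of $g$ is absorbed into the competitor slope $\xi$ rather than into a separate corrector shift, and the two-scale inequality \eqref{exc1}, the Campanato iteration \eqref{ScaseRfinite}, and the control of the minimizers \eqref{dependanceenrho}--\eqref{fstep2.2} are all stated for $\nabla u+g$. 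Estimate \eqref{meanvalue} then drops out as the special case $g\equiv h\equiv 0$; it is not a building block.

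The inversion is not cosmetic; your bootstrap from \eqref{meanvalue} to \eqref{meanvalue2} has a concrete gap. Reassembling $\nabla u=\nabla\bar u+\nabla w+\nabla\phi_{\bar g_\rho}$ produces the term $\fint_{\cc_r}\vert\nabla\phi_{\bar g_\rho}\vert^2\lesssim\vert\bar g_\rho\vert^2$, and $\vert\bar g_\rho\vert^2$ is controlled by $\fint_{\cc_\rho}\vert g\vert^2$ but \emph{not} by the oscillation $\fint_{\cc_\rho}\vert g-\bar g_\rho\vert^2$, which is the only $g$-dependent quantity permitted on the right-hand side of \eqref{meanvalue2}; the bound $\vert\bar g_\rho\vert^2\lesssim\fint_{\cc_\rho}\vert g\vert^2$ that you invoke introduces a term that is likewise not on the allowed right-hand side. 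Iterating the shift over dyadic scales telescopes the increments $\bar g_{\rho_{n+1}}-\bar g_{\rho_n}$ into oscillation, but the endpoint $\vert\bar g_R\vert^2$ of the telescope remains uncontrolled --- this is precisely the term the paper sidesteps by letting $\xi_\rho$, rather than $\phi_{\bar g_\rho}$, soak up $\bar g_\rho$. Two lesser points: your $a$-caloric step uses both directions of the equivalence $\vert\xi\vert^2\simeq\fint_{\cc_\rho}\vert\xi+\nabla\phi_\xi\vert^2$ for $\rho\geq r_*$, whereas only the lower bound \eqref{meanvaluephi} is displayed; the upper bound is true (it is part of the characterisation of $r_*$ in \cite{gloria2014regularity}) but must be made explicit. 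And your closing ``one-step Campanato iteration'' hides the real work: passing from a two-scale inequality to the supremum form in \eqref{meanvalue2} requires the dyadic summation and the Cauchy-sequence estimate on the minimizers $\xi_\rho$ carried out in Substep 2.2, which you only gesture at.
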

\begin{proof}[Proof]
Without loss of generality, we may assume that $(s,x)=(0,0)$. We split the proof into two steps.\newline
\newline
\textbf{Step 1.} Proof of
\begin{equation}
\sup_{r\in[r_*,R]} \frac{1}{r^{2\alpha}}\text{Exc}(\nabla u+g,r)\lesssim_{d,\lambda,\alpha} \frac{1}{R^{2\alpha}}\text{Exc}(\nabla u+g,R)+\sup_{r\in[r_*,R]}\frac{1}{r^{2\alpha}}\fint_{\cc_r}\bigg(\vert h-\fint_{\cc_r} h\vert^2+\vert g-\fint_{\cc_{r}}g\vert^2\bigg),
\label{ScaseRfinite}
\end{equation}
and if $R=+\infty$
\begin{equation}
\sup_{r\geq r_*} \frac{1}{r^{2\alpha}}Exc(\nabla u+g,r)\lesssim_{d,\lambda,\alpha} \sup_{r\geq r_*}\frac{1}{r^{2\alpha}}\fint_{\cc_{\rho}}\bigg(\vert h-\fint_{\cc_{\rho}} h\vert^2+\vert g-\fint_{\cc_{\rho}}g\vert^2\bigg).
\label{ScaseRinfinite}
\end{equation}
Let $\alpha'=\frac{1+\alpha}{2}$ and $r_*\leq r\leq \rho\leq R$. We prove that
\begin{equation}
\text{Exc}(\nabla u+g,r)\leq C_1\bigg(\Big(\frac{r}{\rho}\Big)^{2\alpha'}\text{Exc}(\nabla u+g,\rho)+\Big(\frac{\rho}{r}\Big)^{d+2}\fint_{\cc_{\rho}}\Big(\vert h-\fint_{\cc_{\rho}} h\vert^2+\vert g-\fint_{\cc_{\rho}}g\vert^2\Big)\bigg),
\label{exc1}
\end{equation}
with some constant $C_1$ depending on $\lambda$ and $d$.\newline
\newline
Set $\xi:=\fint_{\cc_{\rho}}g$ and let $w\in \LL^2((-\rho^2,\rho^2),\text{H}_0^1(B_{\rho}))\cap \text{H}^1((-\rho^2,\rho^2),\text{H}^{-1}(B_{\rho}))$ be the weak  solution of 
\begin{equation}
 \left\{
    \begin{array}{ll}
       \partial_t w+\nabla \cdot a\nabla w=\nabla\cdot(a(g-\xi) + h) & \text{ in } \cc_{\rho}, \\
        w=0 \text{ on } \partial_{p} \cc_{\rho},& 
    \end{array}
\right.
\label{edpw}
\end{equation}
where $\partial_p \cc_{\rho}=(\partial \bb_{\rho}\times (-\rho^2,0))\cup \bb_{\rho}\times \{ 0\}$. Then, because $(t,x)\in\mathbb{R}^{d+1}\mapsto u(t,x)-w(t,x)+\xi\cdot x$ is a $a$-caloric function in $\cc_{\rho}$, we have by Theorem \ref{regpara} for the exponent $\alpha'$
\begin{equation}
\text{Exc}(\nabla u-\nabla w+\xi,r)\lesssim \Big(\frac{r}{\rho}\Big)^{2\alpha'}\text{Exc}(\nabla u-\nabla w+\xi,\rho).
\label{excesdapp}
\end{equation}
In addition, we have the following energy estimate 
\begin{equation}
\int_{\cc_{\rho}}\vert \nabla w\vert^2\lesssim \int_{\cc_{\rho}}\left\vert h-\fint_{\cc_{\rho}}h\right\vert^2+\int_{\cc_{\rho}}\left\vert g-\fint_{\cc_{\rho}}g\right\vert^2.
\label{energyest}
\end{equation}
Indeed, by testing \eqref{edpw} by $w$ itself, we get
$$-\int_{\cc_{\rho}}w\partial_\tau w+\int_{\cc_{\rho}}\nabla w\cdot a\nabla w = \int_{\cc_{\rho}}\left(h-\fint_{\cc_{\rho}}h+\nabla w\cdot a(g-\xi)\right).$$
Since
$$-\int_{\cc_{\rho}}w\partial_\tau w=-\int_{-\rho^2}^{0} \frac{d}{dt}\|w(t,\cdot)\|^2_{\LL^2(B_{\rho})}dt=\|w(-\rho^2,\cdot)\|^2_{\LL^2(B_{\rho})}\geq 0,$$
this yields
$$\int_{\cc_{\rho}}\nabla w\cdot a\nabla w \leq \int_{\cc_{\rho}}\nabla w\cdot \left(h-\fint_{\cc_{\rho}}h+a(g-\xi)\right).$$
By uniform ellipticity assumption \eqref{elliptic} on $a$, \eqref{energyest} follows. The combination of \eqref{excesdapp}, \eqref{energyest} and the triangle inequality yields \eqref{exc1}. Now, we conclude by a Campanato iteration. Setting $0<\theta=\frac{r}{\rho}\leq 1$, we rewrite \eqref{exc1} as
$$\text{Exc}(\nabla u+g,\theta \rho)\leq C_1\left(\theta^{2\alpha'}\text{Exc}(\nabla u+g,\rho)+\theta^{-d-2}\fint_{\cc_{\rho}}\left(\vert h-\fint_{\cc_{\rho}} h\vert^2+\vert g-\fint_{\cc_{\rho}}g\vert^2\right)\right).$$
We divide by $(\theta\rho)^{2\alpha}$ and take the supremum over $\rho\in [\frac{r_*}{\theta},R]$ :
\begin{align}
\sup_{r\in [r_*,\theta R]}\frac{1}{r^{2\alpha}}\text{Exc}(\nabla u+g,r)\leq & C_1\Big(\theta^{2(\alpha'-\alpha)}\sup_{r\in [r_*, R]}\frac{1}{r^{2\alpha}}\text{Exc}(\nabla u+g,r)&\nonumber\\
&+\theta^{-d-2-2\alpha}\sup_{r\in [r_*, R]}\frac{1}{r^{2\alpha}}\fint_{\cc_{r}}\left(\vert h-\fint_{\cc_r} h\vert^2+\vert g-\fint_{\cc_{r}}g\vert^2\right)\Big).&\label{appendixA1}
\end{align}
We now choose $\theta = \theta(d,\lambda,\alpha)\leq 1$ so small that $C_1\theta^{2(\alpha'-\alpha)}\leq \frac{1}{2}$. By using
$$\sup_{r\in [r_*, R]}\frac{1}{r^{2\alpha}}\text{Exc}(\nabla u+g,r)\leq \sup_{r\in [\theta R, R]}\frac{1}{r^{2\alpha}}\text{Exc}(\nabla u+g,r)+\sup_{r\in [r_*, \theta R]}\frac{1}{r^{2\alpha}}\text{Exc}(\nabla u+g,r), $$
we may absorb the second r.h.s term of the previous inequality into the l.h.s of \eqref{appendixA1}, which yields
$$
\sup_{r\in [r_*,\theta R]}\frac{1}{r^{2\alpha}}\text{Exc}(\nabla u+g,r)\lesssim  \sup_{r\in [\theta R, R]}\frac{1}{r^{2\alpha}}\text{Exc}(\nabla u+g,r)+\sup_{r\in [r_*, R]}\frac{1}{r^{2\alpha}}\fint_{\cc_{r}}\left(\vert h-\fint_{\cc_r} h\vert^2+\vert g-\fint_{\cc_{r}}g\vert^2\right).$$
Since
$$\sup_{r\in [\theta R, R]}\frac{1}{r^{2\alpha}}\text{Exc}(\nabla u+g,r)\lesssim \frac{1}{R^{2\alpha}}\sup_{r\in[\theta R, R]}\frac{R^{d+2}}{r^{d+2}}\text{Exc}(\nabla u+g,R)\lesssim \frac{1}{R^{2\alpha}}\text{Exc}(\nabla u+g,R),$$
this yields \eqref{ScaseRfinite} in the case $R<+\infty$. In the case $R=\infty$ we obtain \eqref{ScaseRinfinite} in the limit $R\rightarrow +\infty$ by the square integrability of $\nabla u+g$ on $\mathbb{R}^{d+1}$, in form of
 $$\limsup_{R\rightarrow +\infty}\text{Exc}(\nabla u+g, R)\leq \limsup_{R\rightarrow +\infty}\fint_{\cc_{R}}\vert \nabla u+g\vert^2=0.$$
\textbf{ Step 2. }Proof of \eqref{meanvalue2}. We split this step into two parts.\newline
\newline
\textbf{ Substep 2.1. } Proof that for all $\rho>0$, there exists a unique $\xi_{\rho}\in\mathbb{R}^d$ such that 
\begin{equation}
\text{Exc}(\nabla u+g,\rho)=\fint_{\cc_{\rho}}\vert \nabla u+g-(\xi_{\rho}+\nabla\phi_{\xi_{\rho}})\vert^2,
\label{excatteint}
\end{equation}
and for all $r_*\leq r\leq R$ 
\begin{equation}
\vert \xi_r-\xi_R\vert^2\lesssim \sup_{\rho\in [r,R]}\bigg(\frac{R}{\rho}\bigg)^{2\alpha}\text{Exc}(\nabla u+g,\rho)+\sup_{\rho\in[r_*,R]}\left(\frac{R}{\rho}\right)^{2\alpha}\fint_{\cc_{\rho}}\left(\left\vert h-\fint_{\cc_{\rho}}h\right\vert^2+\left\vert g-\fint_{\cc_{\rho}}g\right\vert^2\right).
\label{dependanceenrho}
\end{equation}
We start by proving \eqref{excatteint}. Fix $\rho>0$ and define
$$f : \xi\in\mathbb{R}^d \longmapsto \fint_{\cc_{\rho}} \vert \nabla u+g-(\xi+\nabla \phi_{\xi})\vert^2.$$
$f$ is a continuous function and the mean value property of $\phi$, namely for all $R\geq r_*$: 
\begin{equation}
\fint_{\bb_{R}}\vert\nabla\phi_{\xi}+\xi\vert^2\geq \frac{1}{2}\vert\xi\vert^2,
\label{meanvaluephi}
\end{equation}
shows that $f$ is coercive. Consequently, $\xi_{\rho}$ in \eqref{excatteint} exists. On the other hand, $\xi_{\rho}$ is unique. Indeed, suppose that \eqref{excatteint} is satisfied for two vectors $\xi_1$ and $\xi_2$. We have 
$$\text{Exc}(\nabla u+g,\rho)=\fint_{\cc_{\rho}}\vert \nabla u+g-(\xi_1+\nabla\phi_{\xi_1})\vert^2=\fint_{\cc_{\rho}}\vert \nabla u+g-(\xi_2+\nabla\phi_{\xi_2})\vert^2,$$
and in particular 
$$2\text{Exc}(\nabla u+g,\rho)=\fint_{\cc_{\rho}}\bigg(\vert \nabla u+g-(\xi_1+\nabla\phi_{\xi_1})\vert^2+\vert \nabla u+g-(\xi_2+\nabla\phi_{\xi_2})\vert^2\bigg).$$
The parallelogram identity yields
$$2\text{Exc}(\nabla u+g,\rho)=\fint_{\cc_{\rho}}\frac{1}{2}\vert \xi_1-\xi_2 +\nabla\phi_{\xi_1-\xi_2}\vert^2+2\vert \nabla u+g-(\frac{\xi_1+\xi_2}{2}+\nabla\phi_{\frac{\xi_1+\xi_2}{2}})\vert^2.$$
We infer that
$$\text{Exc}(\nabla u+g,\rho)\geq\fint_{\cc_{\rho}}\frac{1}{4}\vert \xi_1-\xi_2+\nabla \phi_{\xi_1-\xi_2}\vert^2+\text{Exc}(\nabla u+g,\rho),$$
and so 
$$\fint_{\cc_{\rho}}\vert \xi_1-\xi_2+\nabla\phi_{\xi_1-\xi_2}\vert^2=0,$$
which gives $\xi_1=\xi_2$ using the estimate \eqref{meanvaluephi}.\newline
\newline
We turn to the proof of \eqref{dependanceenrho}. It is enough to prove that 
\begin{equation}
\forall r\leq R\leq 2r,\,\vert \xi_{r}-\xi_{R}\vert^2\lesssim \text{Exc}(\nabla u+g,R).
\label{dependanceenrho2}
\end{equation}
Indeed, we argue by a dyadic decomposition. Let $N\in\mathbb{N}$ be such that $2^{-(N+1)}R<r<2^{-N}R$. By \eqref{dependanceenrho2}, we have for all $n\in\{0...,N-1\}$
$$\vert \xi_r-\xi_{2^{-N}R}\vert^2\lesssim \text{Exc}(\nabla u+g,2^{-N}R)\quad \text{and}\quad \vert \xi_{2^{-(n+1)}R}-\xi_{2^{-n}R}\vert^2\lesssim \text{Exc}(\nabla u+g, 2^{-n}R).$$
Thus, by the triangle inequality followed by the excess decay \eqref{regpara2} and the fact that $\sum_{n=0}^{+\infty}2^{-n\alpha}<+\infty$, we have
\begin{align}
\vert \xi_{r}-\xi_{R}\vert^2 &\lesssim \bigg(\sum_{n=0}^{N}\sqrt{\text{Exc}(\nabla u+g,2^{-n}R)}\bigg)^2&\nonumber\\
&\stackrel{\eqref{ScaseRfinite}}{\lesssim} \left(\sum_{n=0}^{N}2^{-n\alpha}\left(\sqrt{\text{Exc}(\nabla u+g,R)}+\left(\sup_{\rho\in[r_*,R]}\left(\frac{R}{\rho}\right)^{2\alpha}\fint_{\cc_{\rho}}\left\vert h-\fint_{\cc_{\rho}}h\right\vert^2+\left\vert g-\fint_{\cc_{\rho}}g\right\vert^2\right)^{\frac{1}{2}}\right)\right)^2 &\nonumber\\
&\lesssim \text{Exc}(\nabla u+g,R)+\sup_{\rho\in[r_*,R]}\left(\frac{R}{\rho}\right)^{2\alpha}\fint_{\cc_{\rho}}\left(\left\vert h-\fint_{\cc_{\rho}}h\right\vert^2+\left\vert g-\fint_{\cc_{\rho}}g\right\vert^2\right)&\nonumber\\
&\lesssim \sup_{\rho\in [r,R]}(\frac{R}{\rho})^{2\alpha}\text{Exc}(\nabla u+g,\rho)+\sup_{\rho\in[r_*,R]}\left(\frac{R}{\rho}\right)^{2\alpha}\fint_{\cc_{\rho}}\left(\left\vert h-\fint_{\cc_{\rho}}h\right\vert^2+\left\vert g-\fint_{\cc_{\rho}}g\right\vert^2\right). &\nonumber
\end{align}
We now turn to the argument for \eqref{dependanceenrho2}. By \eqref{meanvaluephi} we have
$$\vert \xi_{r}-\xi_R\vert^2 \lesssim \fint_{\cc_r}\vert (\xi_{\rho}-\xi_{R})+\nabla \phi_{\xi_{r}-\xi_{R}}\vert^2,$$
which, by linearity of $\xi\mapsto \phi_{\xi}$, we may rewrite as 
$$\vert \xi_r-\xi_R\vert^2\lesssim\fint_{\cc_{r}}\vert (\xi_{\rho}+\nabla \phi_{\xi_{r}})-(\xi_{R}+\nabla\phi_{\xi_{R}})\vert^2,$$
so that, by the triangle inequality in $\LL^2(\cc_{\rho})$ and using that $r\leq R\leq 2r$, we obtain
$$\vert \xi_r-\xi_R\vert^2\lesssim\fint_{\cc_{r}}\vert \nabla u-(\xi_{r}+\nabla\phi_{\xi_{r}})\vert^2+\fint_{\cc_{R}}\vert \nabla u-(\xi_{R}+\nabla\phi_{R})\vert^2.$$
By definition of $\text{Exc}$ and using once more that $r\leq R\leq 2r$, this turns as desired into 
$$\vert \xi_r-\xi_R\vert^2\lesssim \text{Exc}(\nabla u+g,r)+\text{Exc}(\nabla u+g,R)\lesssim \text{Exc}(\nabla u+g,R).$$
\textbf{ Substep 2.2. }We prove \eqref{meanvalue2}. The starting point is \eqref{ScaseRfinite} in the more general form : for all $r\geq r_*$
\begin{align}
\sup_{\rho\in [r,R]}\bigg(\frac{R}{\rho}\bigg)^{2\alpha}\text{Exc}(\nabla u+g,\rho)\lesssim & \text{Exc}(\nabla u+g,R)+\sup_{\rho\in [r,R]}\bigg(\frac{R}{\rho}\bigg)^{2\alpha}\fint_{\cc_{\rho}}\left(\vert g-\fint_{\cc_{\rho}}g\vert^2+\vert h-\fint_{\cc_{\rho}}h\vert^2\right).&\label{ScaseRfinite2}
\end{align}
The estimates \eqref{dependanceenrho} and \eqref{ScaseRfinite2} combined with the triangle inequality yield
\begin{align}
\vert \xi_r\vert^2+\text{Exc}(\nabla u+g,r)\lesssim& \vert \xi_R\vert^2+\text{Exc}(\nabla u+g,R)+\sup_{\rho\in [r_*,R]}(\frac{R}{\rho})^{2\alpha}\fint_{\cc_{\rho}}\left(\vert g-\fint_{\cc_{\rho}}g\vert^2+\vert h-\fint_{\cc_{\rho}}h\vert^2\right).&\label{fstep2.2}
\end{align}
Using the triangle inequality and the definition of the excess in the form of
$$\fint_{\cc_r}\vert \nabla u+g\vert^2\lesssim \vert\xi_r\vert^2+\text{Exc}(\nabla u+g, r),$$
and
$$\vert \xi_{R}\vert^2+\text{Exc}(\nabla u+g,R)\lesssim \fint_{\cc_{R}}\vert \nabla u+g\vert^2,$$
we may finally pass from \eqref{fstep2.2} to \eqref{meanvalue2}.
\end{proof}
We finally recall the following property of average of $r_*$. The proof can be found in \cite[Estimation (139)]{gloria2014regularity}.
\begin{lemma} For all measurable function $f: \mathbb{R}^d\rightarrow \mathbb{R}^+$ there exists two constants $c$ and $C$ which depends only on the dimension $d$ such that 
\begin{equation}
c\int_{\mathbb{R}^d}\fint_{B_{r_*(x)}(x)} f(y)\dd y\, \dd x\leq \int_{\mathbb{R}^d} f(x)\dd x\leq C\int_{\mathbb{R}^d}\fint_{B_{r_*(x)}(x)} f(y)\dd y\, \dd x.
\label{averager*ctrl}
\end{equation}
\end{lemma}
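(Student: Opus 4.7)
The plan is to reduce the claim, via Fubini--Tonelli, to proving that the quantity
\[
K(y):=\int_{\mathbb{R}^d}\frac{\mathds{1}_{B_{r_*(x)}(x)}(y)}{|B_{r_*(x)}|}\,\dd x
\]
is bounded above and below by dimensional constants uniformly in $y\in\mathbb{R}^d$. Indeed, by Fubini--Tonelli applied to the non-negative integrand $\frac{f(y)\mathds{1}_{B_{r_*(x)}(x)}(y)}{|B_{r_*(x)}|}$,
\[
\int_{\mathbb{R}^d}\fint_{B_{r_*(x)}(x)} f(y)\,\dd y\,\dd x=\int_{\mathbb{R}^d} f(y)K(y)\,\dd y,
\]
so once $c\leq K(y)\leq C$ is established the double inequality \eqref{averager*ctrl} follows by integrating against $f$.

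The bounds on $K(y)$ are the heart of the matter and rely entirely on the $\tfrac{1}{8}$-Lipschitz property of $r_*$ asserted in Theorem \ref{regpara}. First, I would prove the upper bound: if $\mathds{1}_{B_{r_*(x)}(x)}(y)=1$ then $|x-y|<r_*(x)$, and Lipschitz continuity yields $r_*(x)\leq r_*(y)+\tfrac{1}{8}|x-y|\leq r_*(y)+\tfrac{1}{8}r_*(x)$, hence $r_*(x)\leq\tfrac{8}{7}r_*(y)$, which in particular forces $x\in B_{(8/7)r_*(y)}(y)$. Symmetrically $r_*(x)\geq\tfrac{8}{9}r_*(y)$, so $|B_{r_*(x)}|\geq|B_{(8/9)r_*(y)}|$. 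Combining,
\[
K(y)\leq\int_{B_{(8/7)r_*(y)}(y)}\frac{\dd x}{|B_{(8/9)r_*(y)}|}=\left(\tfrac{9}{7}\right)^{d}=:C.
\]

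For the lower bound, I would show that whenever $|x-y|<\tfrac{8}{9}r_*(y)$ one has $y\in B_{r_*(x)}(x)$: indeed, the Lipschitz property gives $r_*(x)\geq r_*(y)-\tfrac{1}{8}|x-y|$, and $|x-y|<\tfrac{8}{9}r_*(y)$ then implies $|x-y|<r_*(x)$. On this ball one also has $r_*(x)\leq r_*(y)+\tfrac{1}{8}|x-y|\leq\tfrac{10}{9}r_*(y)$, hence $|B_{r_*(x)}|\leq|B_{(10/9)r_*(y)}|$. Thus
\[
K(y)\geq\int_{B_{(8/9)r_*(y)}(y)}\frac{\dd x}{|B_{(10/9)r_*(y)}|}=\left(\tfrac{4}{5}\right)^{d}=:c.
\]

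No step is a serious obstacle here: the entire proof is a short Fubini argument combined with two applications of the $\tfrac{1}{8}$-Lipschitz bound on $r_*$, and the constants $c=(4/5)^d$ and $C=(9/7)^d$ depend only on $d$ (and implicitly on the Lipschitz constant $\tfrac{1}{8}$, which is itself universal via Theorem \ref{regpara}). The only thing to be a bit careful about is writing Fubini correctly when $f$ is only assumed measurable and non-negative, which is precisely the setting where Tonelli applies without any integrability hypothesis.
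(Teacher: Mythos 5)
Your argument is correct and is the natural Fubini--Tonelli route; the paper itself gives no proof (it cites \cite[Estimation (139)]{gloria2014regularity}), so there is nothing in the paper to compare against, but your approach is exactly what one expects.

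One small point of bookkeeping: your bounds $\bigl(\tfrac{4}{5}\bigr)^d \leq K(y) \leq \bigl(\tfrac{9}{7}\bigr)^d$ give
\[
\Bigl(\tfrac{4}{5}\Bigr)^d\int_{\mathbb{R}^d}f \ \leq\ \int_{\mathbb{R}^d}\fint_{B_{r_*(x)}(x)} f \ \leq\ \Bigl(\tfrac{9}{7}\Bigr)^d\int_{\mathbb{R}^d}f,
\]
which is the stated inequality with $c=(7/9)^d$ and $C=(5/4)^d$ (the reciprocals of your $C$ and $c$, respectively). It is worth saying this explicitly rather than asserting that the double inequality ``follows by integrating against $f$'', since a reader may otherwise be confused about which constant plays which role. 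The Lipschitz computations themselves are correct, and the use of Tonelli (rather than Fubini) for the measurable non-negative integrand is exactly the right level of care.
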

\section{Caccioppoli's inequality}
We state here Caccioppoli's inequality for parabolic system. For a proof, see for instance \cite[Lemma 2]{bella2017liouville}.
\begin{lemma}[Caccioppoli estimate]\label{caccioppotech}There exists a constant $C$ depending on $\lambda$ such that for every $\rho\leq R$ and weak solution $u$ of 
$$\partial_{\tau}u-\nabla\cdot a\nabla u=0 \text{ in $\cc_R$,}$$
we have
$$\int_{\cc_{\rho}}\vert\nabla u(t,x)\vert^2\dd t\,\dd x\leq \frac{C}{(R-\rho)^2}\int_{\cc_{R}\backslash \cc_{\rho}}\left\vert u(t,x)-\fint_{\cc_{R}} u(s,y)\dd s\,\dd y\right\vert^2\dd t\,\dd x,$$
recalling that $C_R=(-R^2,0)\times \bb_R$.
\end{lemma}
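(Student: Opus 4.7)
The plan is to implement the standard parabolic Caccioppoli argument, with the improvement that both sides of the inequality can be localized away from the inner cylinder. First I would choose a product cutoff $\eta(t,x)=\eta_1(t)\eta_2(x)$ with $0\leq \eta\leq 1$, $\eta\equiv 1$ on $\cc_\rho$ and $\eta$ supported in $\cc_R$, satisfying
\begin{equation*}
|\nabla\eta_2|\lesssim \frac{1}{R-\rho},\qquad 0\leq \partial_\tau\eta_1\lesssim \frac{1}{R^2-\rho^2}\leq \frac{1}{(R-\rho)^2},\qquad \eta_1(-R^2)=0,\ \eta_1\equiv 1\ \text{on}\ (-\rho^2,0),
\end{equation*}
so that $\nabla\eta$ and $\partial_\tau\eta$ are both supported in $\cc_R\setminus\cc_\rho$. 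Setting $\bar u:=\fint_{\cc_R}u$, I would then test the equation $\partial_\tau u-\nabla\cdot a\nabla u=0$ against $\psi=\eta^2(u-\bar u)$.

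The parabolic term is handled by integration by parts in time: since $\partial_\tau u\cdot \eta^2(u-\bar u)=\tfrac12\eta^2\partial_\tau[(u-\bar u)^2]$, integrating over $\cc_R$ and using $\eta_1(-R^2)=0$ produces a non-negative boundary contribution at $t=0$ plus $-\tfrac12\int \partial_\tau(\eta^2)(u-\bar u)^2$. Discarding the good boundary term and using the weak formulation then yields
\begin{equation*}
\int_{\cc_R}\eta^2\nabla u\cdot a\nabla u\ \leq\ -2\int_{\cc_R}\eta(u-\bar u)\nabla\eta\cdot a\nabla u\ +\ \tfrac12\int_{\cc_R}\partial_\tau(\eta^2)(u-\bar u)^2.
\end{equation*}
By the ellipticity bounds \eqref{elliptic} on $a$ and Young's inequality, the cross term is absorbed into $\tfrac{\lambda}{2}\int \eta^2|\nabla u|^2$ at the price of $C_\lambda\int|\nabla\eta|^2(u-\bar u)^2$. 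The left-hand side bounds $\lambda\int_{\cc_\rho}|\nabla u|^2$ from above (since $\eta\equiv 1$ on $\cc_\rho$), while on the right we use the pointwise estimates $|\nabla\eta|^2+|\partial_\tau(\eta^2)|\lesssim (R-\rho)^{-2}$ together with the crucial fact that both $\nabla\eta$ and $\partial_\tau\eta$ vanish on $\cc_\rho$. This gives the stated bound with the integral on the right taken over $\cc_R\setminus \cc_\rho$.

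The only technical subtlety is that $\psi=\eta^2(u-\bar u)$ is not a priori an admissible test function in $\cc^\infty_c((t_1,t_2)\times \mathcal U)$ because of its time-dependence and the fact that $u$ is only in $\HH^1_{\text{par}}$; this is the step I expect to require the most care. I would handle it in the standard way, by replacing $u$ in $\psi$ with its Steklov averages in time, obtaining a version of the inequality valid for the mollified solution, and then passing to the limit in the Steklov parameter using $\partial_\tau u\in \LL^2((t_1,t_2),\HH^{-1}(\mathcal U))$ and $u\in \LL^2((t_1,t_2),\HH^1(\mathcal U))$. No new idea is needed beyond this; the bound on $\int_{\cc_\rho}|\nabla u|^2$ then follows directly from the energy identity above.
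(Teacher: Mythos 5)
The paper provides no proof of this lemma; it simply defers to \cite[Lemma 2]{bella2017liouville}. Your argument is the standard parabolic Caccioppoli derivation, correctly executed, and it does capture both refinements in the statement: the product cutoff $\eta=\eta_1(t)\eta_2(x)$ with $\eta_1(-R^2)=0$ and $\eta_1\equiv 1$ on $(-\rho^2,0)$ ensures $\nabla\eta$ and $\partial_\tau\eta$ vanish on $\cc_\rho$, localizing the right-hand side to $\cc_R\setminus\cc_\rho$, and the elementary inequality $(R-\rho)^2\leq R^2-\rho^2$ keeps $|\partial_\tau\eta_1|\lesssim (R-\rho)^{-2}$; the Young absorption via \eqref{elliptic} and the Steklov-average justification of the test function $\eta^2(u-\bar u)$ are routine and correctly flagged.
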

\section{Proof of Theorem \ref{semigroup} under a functional inequality with oscillation}\label{proofoscillation}
We fix $T\geq 1$, $1\leq r\leq \sqrt{T}$ and the unit vector $e\in\mathbb{R}^d$. We only give the argument for \eqref{Sensitilem3}, \eqref{sensiothertest} is obtained combining the ideas of this section and the proof of Section \ref{sectionremovelog}. We make for simplicity the two additional assumptions:
\begin{itemize}
\item[(i)] $u$ is real-valued and $a$ is symmetric. We recall that this implies 
\begin{equation}
\|\nabla u(t,\cdot)\|_{\LL^{\infty}(\mathbb{R}^d)}\lesssim t^{-1},
\label{osc1}
\end{equation}
see \cite[Lemma 9.2]{armstrong2019quantitative}.
\item[(ii)]The coefficient field $a$ takes the form, for some $\chi\in \cc^{\infty}_c(\mathbb{R}^d)$ supported in $\bb_1$,
$$a:=\chi\star \tilde{a},$$
with a field $\tilde{a} : \mathbb{R}^d\rightarrow \mathbb{R}^{d\times d}$ which takes value into the set of uniformly elliptic and bounded matrices and with a probability law which satisfies the logarithm Sobolev inequality with oscillation \eqref{LSIoscillation}. In this setting, $\nabla\cdot ae\in \LL^{\infty}(\mathbb{R}^d)$ and 
\begin{equation*}
\|\nabla\cdot ae\|_{\LL^{\infty}(\mathbb{R}^d)}\lesssim_{\chi} 1.
\label{osc2}
\end{equation*}
We recall that this implies the following energy estimate: for all $R\geq 1$ and $z\in\mathbb{R}^d$
\begin{equation}
\int_{0}^1\fint_{\bb_R(z)}\vert\nabla u(t,x)\vert^2\dd x\, \dd t\lesssim_{\chi} 1,
\label{osc3}
\end{equation}
\end{itemize}
where a proof can be found in \cite[Lemma 2]{gloria2015corrector}. The first step is to estimate the derivative $\partial^{\text{osc}}_{x,\ell}q_r(T)$. We claim that for all $(x,\ell)\in\mathbb{R}^d\times [1,\infty)$
\begin{equation}
\vert \partial^{\text{osc}}_{x,\ell}q_r(T)\vert^2\lesssim_{\chi}(\ell+1)^{2d}\left((1+\log^2(T))\fint_{\bb_{\ell+1}(x)}g^2_r(y)\dd y+\fint_{\bb_{\ell+1}(x)}\vert\nabla v^T(0,y)\vert^2\dd y+\log(T)\int_{1}^T t^{-1}\fint_{\bb_{\ell+1}(x)}\vert\nabla v^T(t,y)\vert^2\dd y\, \dd t\right),
\label{osc4}
\end{equation}
with $v^T$ defined in \eqref{strategydualpb}.\newline
\newline
We fix $(x,\ell)\in\mathbb{R}^d\times [1,\infty)$ and we consider $\tilde{a}'$ and $\tilde{a}''$ such that $\tilde{a}'=\tilde{a}''=\tilde{a}$ on $\mathbb{R}^d\backslash \bb_{\ell}(x)$. We then set $a':=\chi\star \tilde{a}'$, $a'':=\chi\star \tilde{a}''$ and note that since $\chi$ is supported in $\bb_1$,
\begin{equation}
a'=a''=a \text{ on $\mathbb{R}^d\backslash \bb_{\ell+1}(x)$}.
\label{osc5}
\end{equation}
Using the notation $\delta u:=u(a',\cdot)-u(a'',\cdot)$, we have 
\begin{align}
q_r(a',T)-q_r(a'',T)=&\int_{\mathbb{R}^d}g_r(y)(a'(y)-a''(y))e\,\dd y+\int_{\mathbb{R}^d}g_r(y)a'(y)\int_{0}^T\nabla u(a',t,y)\dd t\,\dd y-\int_{\mathbb{R}^d}g_r(y)a''(y)\int_{0}^T\nabla u(a'',t,y)\dd t\, \dd y\nonumber\\
=&\int_{\mathbb{R}^d}g_r(y)(a'(y)-a''(y))e\,\dd y+\int_{\mathbb{R}^d}g_r(y)(a'(y)-a(y))\int_{0}^T\nabla u(a',t,y)\dd t\, \dd y\nonumber\\
&+\int_{\mathbb{R}^d}g_r(y)(a''(y)-a(y))\int_{0}^T\nabla u(a'',t,y)\dd t\, \dd y+\int_{\mathbb{R}^d}g_r(y)a(y)\int_{0}^T\nabla\delta u(t,y)\dd t\, \dd y.\label{osc6}
\end{align}
On the one hand, using \eqref{osc5} and Jensen's inequality, the first r.h.s term of \eqref{osc6} is dominated by 
$$\left\vert\int_{\mathbb{R}^d}g_r(y)(a'(y)-a''(y))e\,\dd y\right\vert^2\stackrel{\eqref{osc6}}{=}\left\vert\int_{\bb_{\ell+1}(x)}g_r(y)(a'(y)-a''(y))e\,\dd y\right\vert^2\lesssim_{\chi}(\ell+1)^{2d}\fint_{\bb_{\ell+1}(x)} g^2_r(y)\dd y,$$
which contributes to the first r.h.s term of
\eqref{osc4}. On the other hand, the second and third r.h.s term of \eqref{osc6} are treated the same way (we estimate below the term with $a'$) using \eqref{osc5}, \eqref{osc3} (with $a$ replaced by $a'$), \eqref{osc1}, Cauchy-Schwarz's and Jensen's inequality
\begin{align*}
\left\vert\int_{\mathbb{R}^d}g_r(y)(a'(y)-a(y))\int_{0}^T\nabla u(a',t,y)\dd t\, \dd y\right\vert^2
\stackrel{\eqref{osc5}}{\lesssim_{\chi}}&\left(\int_{\bb_{\ell+1}(x)}g_r(y)\left\vert\int_{0}^T\nabla u(a',t,y)\dd t\right\vert\dd y\right)^2\\
\lesssim& \left(\int_{\bb_{\ell+1}(x)}g_r(y)\left\vert\int_{0}^1\nabla u(a',t,y)\dd t\right\vert\dd y\right)^2\\
&+\left(\int_{\bb_{\ell+1}(x)}g_r(y)\left\vert\int_{1}^T\nabla u(a',t,y)\dd t\right\vert\dd y\right)^2\\
\stackrel{\eqref{osc1}}{\lesssim}& (\ell+1)^{2d}\int_{0}^1\fint_{\bb_{\ell+1}(x)}\vert\nabla u(a',t,y)\vert^2\dd y\, \dd t\fint_{\bb_{\ell+1}(x)} g^2_r(y)\dd y\\
&+\log^2(T)(\ell+1)^d\fint_{\bb_{\ell+1}(x)}g^2_r(y)\dd y\\
\stackrel{\eqref{osc3}}{\lesssim_{\chi}}&(\ell+1)^{2d}(1+\log^2(T))\fint_{\bb_{\ell+1}(x)}g^2_r(y)\dd y,
\end{align*}
which contributes to the first r.h.s term of \eqref{osc4}. It remains to control the fourth r.h.s term of \eqref{osc6}. To this aim, we first write the equation solved by $\delta u$, which we deduce from \eqref{equationu}
\begin{equation}
\left\{
    \begin{array}{ll}
        \partial_{\tau}\delta u-\nabla\cdot a\nabla\delta  u=\nabla\cdot (a-a'')\nabla u(a'',\cdot)-\nabla\cdot (a-a')\nabla u(a',\cdot) & \text{in $(0,+\infty)\times\mathbb{R}^d$}, \\
   \delta u(0)=\nabla\cdot (a'-a'')e. & 
    \end{array}
\right.
\label{osc7}
\end{equation}
Thus, by testing \eqref{strategydualpb} with $\delta u$ and \eqref{osc7} with $v^T$, we deduce that
\begin{align}
\int_{\mathbb{R}^d}g_r(y)a(y)\int_{0}^T\nabla\delta u(t,y)\dd t\, \dd y=&\int_{\mathbb{R}^d}\nabla v^T(0,y)\cdot (a'(y)-a''(y))e\,\dd y+\int_{\mathbb{R}^d}\int_{0}^T(a(y)-a''(y))\nabla u(a'',t,y)\cdot \nabla v^T(t,y)\dd t\, \dd y\nonumber\\
&-\int_{\mathbb{R}^d}\int_{0}^T(a(y)-a'(y))\nabla u(a',t,y)\cdot \nabla v^T(t,y)\dd t\, \dd y.\label{osc8}
\end{align}
The first r.h.s term of \eqref{osc8} is dominated with \eqref{osc5} and gives the second r.h.s term of \eqref{osc4}. The second and third r.h.s term of \eqref{osc8} are dominated the same way (we estimate below the term with $a'$) using \eqref{osc1}, \eqref{osc3}, \eqref{osc5}, \eqref{strat_energyvT} (applied with $r=\ell$ and $z=x$), Cauchy-Schwarz's and Jensen's inequality
\begin{align*}
\left\vert\int_{\mathbb{R}^d}\int_{0}^T(a(y)-a'(y))\nabla u(a',t,y)\cdot \nabla v^T(t,y)\dd t\, \dd y\right\vert^2\stackrel{\eqref{osc5}}{\lesssim_{\chi}}&\left(\int_{\bb_{\ell+1}(x)}\int_{0}^T\vert\nabla u(a',t,y)\vert\vert \nabla v^T(t,y)\vert\dd t\, \dd y\right)^2\\
\stackrel{\eqref{osc1}}{\lesssim}& (\ell+1)^{2d}\int_{0}^1\fint_{\bb_{\ell+1}(x)}\vert \nabla u(a',t,y)\vert^2\dd y\, \dd t\int_{0}^1\fint_{\bb_{\ell+1}(x)}\vert \nabla v^T(t,y)\vert^2\dd y\, \dd t\\
&+\left(\int_{1}^T t^{-1}\int_{B_{\ell+1}(x)}\vert\nabla v^T(t,y)\vert\dd y\,\dd t\right)^2\\
\stackrel{\eqref{osc3}, \eqref{strat_energyvT}}{\lesssim_{\chi}}&(\ell+1)^{2d}\left(\fint_{\bb_{\ell+1}(x)}g^2_r(y)\dd y+\log(T)\int_{1}^{T}t^{-1}\fint_{\bb_{\ell+1}(x)}\vert \nabla v^T(t,y)\vert^2\dd y\, \dd t\right),
\end{align*}
which contributes to the first and third r.h.s term of \eqref{osc4} and concludes the proof.\newline
\newline
We now control the entropy of $q_r(T)$ by applying \eqref{LSIoscillation}, using \eqref{osc4}, the identity $\int_{\mathbb{R}^d}\fint_{\bb_{\ell+1}(x)} \dd x=\int_{\mathbb{R}^d}$, $\int_{\mathbb{R}^d} g^2_r(y)\dd y\lesssim r^{-d}$ and the plain energy estimate $\int_{\mathbb{R}^d}\vert\nabla v^T(t,y)\vert^2\dd y\lesssim \int_{\mathbb{R}^d}g^2_r(y)\dd y$,
\begin{align*}
\text{Ent}(q_r(T))\lesssim_{\chi}&\int_{1}^{+\infty}\ell^{-d}e^{-\frac{1}{C}\ell^{\beta}}(\ell+1)^d(1+\log^2(T))\int_{\mathbb{R}^d}\fint_{\bb_{\ell+1}(x)}g^2_r(y)\dd y\, \dd x\, \dd \ell\\
&+\int_{1}^{+\infty}\ell^{-d}e^{-\frac{1}{C}\ell^{\beta}}(\ell+1)^d\int_{\mathbb{R}^d}\fint_{\bb_{\ell+1}(x)}\vert\nabla v^T(0,y)\vert^2\dd y\, \dd x\, \dd \ell\\
&+\log(T)\int_{1}^{+\infty}\ell^{-d}e^{-\frac{1}{C}\ell^{\beta}}(\ell+1)^d\int_{1}^T t^{-1}\int_{\mathbb{R}^d}\fint_{\bb_{\ell+1}(x)}\vert \nabla v^T(t,y)\vert^2\dd y\, \dd x\, \dd t\, \dd \ell\\
\lesssim &\, r^{-d}(1+\log^2(T))\int_{1}^{+\infty}\ell^d e^{-\frac{1}{C}\ell^{\beta}}\dd \ell\lesssim_{\beta,C} r^{-d}(1+\log^2(T)).
\end{align*}
To conclude, the $\log^2(T)$ correction may be removed following the argument of Subsection \ref{sectionremovelog}, and the control of the entropy yields control of higher moments and provide stretched exponential moments. 
\newline
\newline
\textbf{Acknowledgements.} I would like to warmly thank my PhD advisor Antoine Gloria for suggesting this problem to me and for many
suggestions he made for improving this paper.

\bibliographystyle{plain}
\bibliography{references.bib}

\begin{thebibliography}{10}

\bibitem{abdulle2020analytical}
Assyr Abdulle, Doghonay Arjmand, and Edoardo Paganoni.
\newblock Analytical and numerical study of a modified cell problem for the
  numerical homogenization of multiscale random fields.
\newblock {\em arXiv preprint arXiv:2007.10828}, 2020.

\bibitem{armstrong2018quantitative}
Scott Armstrong, Alexandre Bordas, and Jean-Christophe Mourrat.
\newblock Quantitative stochastic homogenization and regularity theory of
  parabolic equations.
\newblock {\em Analysis \& PDE}, 11(8):1945--2014, 2018.

\bibitem{armstrong2016mesoscopic}
Scott Armstrong, Tuomo Kuusi, and Jean-Christophe Mourrat.
\newblock Mesoscopic higher regularity and subadditivity in elliptic
  homogenization.
\newblock {\em Communications in Mathematical Physics}, 347(2):315--361, 2016.

\bibitem{armstrong2017additive}
Scott Armstrong, Tuomo Kuusi, and Jean-Christophe Mourrat.
\newblock The additive structure of elliptic homogenization.
\newblock {\em Inventiones mathematicae}, 208(3):999--1154, 2017.

\bibitem{armstrong2019quantitative}
Scott Armstrong, Tuomo Kuusi, and Jean-Christophe Mourrat.
\newblock {\em Quantitative stochastic homogenization and large-scale
  regularity}, volume 352.
\newblock Springer, 2019.

\bibitem{armstrong2016quantitative}
Scott Armstrong and Charles~K Smart.
\newblock Quantitative stochastic homogenization of convex integral
  functionals.
\newblock In {\em Annales scientifiques de l'Ecole normale sup{\'e}rieure},
  volume~49, pages 423--481. Societe Mathematique de France, 2016.

\bibitem{armstrong2016lipschitz}
Scott~N Armstrong and Jean-Christophe Mourrat.
\newblock Lipschitz regularity for elliptic equations with random coefficients.
\newblock {\em Archive for Rational Mechanics and Analysis}, 219(1):255--348,
  2016.

\bibitem{avellanedaLp}
Marco Avellaneda and Fang~Hua Lin.
\newblock Lp bounds on singular integrals in homogenization.
\newblock {\em Communications on pure and applied mathematics},
  44(8-9):897--910, 1991.

\bibitem{bella2017liouville}
Peter Bella, Alberto Chiarini, and Benjamin Fehrman.
\newblock A liouville theorem for stationary and ergodic ensembles of parabolic
  systems.
\newblock {\em Probability Theory and Related Fields}, 173(3-4):759--812, 2019.

\bibitem{duerinckx2017weighted2}
Mitia Duerinckx and Antoine Gloria.
\newblock Multiscale functional inequalities in probability: concentration
  properties.
\newblock {\em ALEA, Lat. Am. J. Probab. Math. Stat., in press}, 2019.

\bibitem{duerinckx2017weighted}
Mitia Duerinckx and Antoine Gloria.
\newblock Multiscale functional inequalities in probability: Constructive
  approach.
\newblock {\em Annales Henri Lebesgue}, 3:825--872, 2020.

\bibitem{fischer2017sublinear}
Julian Fischer and Felix Otto.
\newblock Sublinear growth of the corrector in stochastic homogenization:
  optimal stochastic estimates for slowly decaying correlations.
\newblock {\em Stochastics and Partial Differential Equations: Analysis and
  Computations}, 5(2):220--255, 2017.

\bibitem{friedman2008partial}
Avner Friedman.
\newblock {\em Partial differential equations of parabolic type}.
\newblock Courier Dover Publications, 2008.

\bibitem{gloria2016reduction}
Antoine Gloria and Zakaria Habibi.
\newblock Reduction in the resonance error in numerical homogenization ii:
  Correctors and extrapolation.
\newblock {\em Foundations of computational mathematics}, 16(1):217--296, 2016.

\bibitem{gloria2014optimal}
Antoine Gloria, Stefan Neukamm, and Felix Otto.
\newblock An optimal quantitative two-scale expansion in stochastic
  homogenization of discrete elliptic equations.
\newblock {\em ESAIM: Mathematical Modelling and Numerical Analysis},
  48(2):325--346, 2014.

\bibitem{gloria2015quantification}
Antoine Gloria, Stefan Neukamm, and Felix Otto.
\newblock Quantification of ergodicity in stochastic homogenization: optimal
  bounds via spectral gap on glauber dynamics.
\newblock {\em Inventiones mathematicae}, 199(2):455--515, 2015.

\bibitem{gloria2019quantitative}
Antoine Gloria, Stefan Neukamm, and Felix Otto.
\newblock Quantitative estimates in stochastic homogenization for correlated
  coefficient fields.
\newblock {\em Analysis \& PDE, in press}, 2020.

\bibitem{gloria2014regularity}
Antoine Gloria, Stefan Neukamm, and Felix Otto.
\newblock A regularity theory for random elliptic operators.
\newblock {\em Milan Journal of Mathematics}, 88(1):99--170, 2020.

\bibitem{gloria2016quantitative}
Antoine Gloria and James Nolen.
\newblock A quantitative central limit theorem for the effective conductance on
  the discrete torus.
\newblock {\em Communications on Pure and Applied Mathematics},
  69(12):2304--2348, 2016.

\bibitem{gloria2012optimal}
Antoine Gloria and Felix Otto.
\newblock An optimal error estimate in stochastic homogenization of discrete
  elliptic equations.
\newblock {\em The annals of applied probability}, volume 22:1--28, 2012.

\bibitem{gloria2015corrector}
Antoine Gloria and Felix Otto.
\newblock The corrector in stochastic homogenization: optimal rates, stochastic
  integrability, and fluctuations.
\newblock {\em arXiv preprint arXiv:1510.08290}, 2015.

\bibitem{gloria2017quantitative}
Antoine Gloria and Felix Otto.
\newblock Quantitative results on the corrector equation in stochastic
  homogenization.
\newblock {\em Journal of the European Mathematical Society},
  19(11):3489--3548, 2017.

\bibitem{gloria2011optimal}
Antoine Gloria, Felix Otto, et~al.
\newblock An optimal variance estimate in stochastic homogenization of discrete
  elliptic equations.
\newblock {\em The annals of probability}, 39(3):779--856, 2011.

\bibitem{josien2020annealed}
Marc Josien and Felix Otto.
\newblock The annealed calderon-zygmund estimate as convenient tool in
  quantitative stochastic homogenization.
\newblock {\em arXiv preprint arXiv:2005.08811}, 2020.

\bibitem{kozlov1979averaging}
Sergei~Mikhailovich Kozlov.
\newblock Averaging of random operators.
\newblock {\em Matematicheskii Sbornik}, 151(2):188--202, 1979.

\bibitem{lu2021optimal}
Jianfeng Lu, Felix Otto, and Lihan Wang.
\newblock Optimal artificial boundary conditions based on second-order
  correctors for three dimensional random elliptic media.
\newblock {\em arXiv preprint arXiv:2109.01616}, 2021.

\bibitem{papanicolaou1979boundary}
G.C Papanicolaou and S.R.S Varadhan.
\newblock Boundary value problems with rapidly oscillating random coefficients.
\newblock In {\em Colloquia Math. Soc., Janos Bolyai}, volume~27, pages
  853--873, 1979.

\end{thebibliography}


\begin{thebibliography}{000}

\bibitem{abdulle2020analytical}
Abdulle, Assyr and Arjmand, Doghonay and Paganoni, Edoardo.
\newblock Analytical and numerical study of a modified cell problem for the numerical homogenization of multiscale random fields.
\newblock In {\em arXiv preprint arXiv:2007.10828, (2020)}.

\bibitem{avellanedaLp}
Avellaneda, Marco and Lin, Fang Hua.
\newblock $L^p$ bounds on singular integrals in homogenization.
\newblock In {\em Communications on pure and applied mathematics, (1991)},
volume~44 ,
  pages 897--910. Wiley Online Library.

\bibitem{fischer2017sublinear}
Fischer, Julian and Otto, Felix.
\newblock Sublinear growth of the corrector in stochastic homogenization: optimal stochastic estimates for slowly decaying correlations.
\newblock In {\em Stochastics and Partial Differential Equations: Analysis and Computations, (2017)},
volume~5 ,
  pages 220--255. Springer.

\bibitem{armstrong2016quantitative}
Armstrong, Scott and Smart, Charles K.
\newblock Quantitative stochastic homogenization of convex integral functionals.
\newblock In {\em Annales scientifiques de l'Ecole normale sup{\'e}rieure, (2016)},
volume~49 ,
  pages 423--481. Societe Mathematique de France.
	
	\bibitem{armstrong2016lipschitz}
Armstrong, Scott N and Mourrat, Jean-Christophe.
\newblock Lipschitz regularity for elliptic equations with random coefficients.
\newblock In {\em Archive for Rational Mechanics and Analysis, (2016)},
volume~219 ,
  pages 255--348. Springer.
	
		\bibitem{gloria2014optimal}
Gloria, Antoine and Neukamm, Stefan and Otto, Felix.
\newblock An optimal quantitative two-scale expansion in stochastic homogenization of discrete elliptic equations.
\newblock In {\em ESAIM: Mathematical Modelling and Numerical Analysis, (2014)},
volume~48 ,
  pages 325--346. EDP Sciences.
	
	\bibitem{armstrong2018quantitative}
Armstrong, Scott and Bordas, Alexandre and Mourrat, Jean-Christophe.
\newblock Quantitative stochastic homogenization and regularity theory of parabolic equations.
\newblock In {\em Analysis \& PDE, (2018)},
volume~11 ,
  pages 1945--2014. Mathematical Sciences Publishers.

\bibitem{gloria2016quantitative}
Gloria, Antoine and Nolen, James.
\newblock A quantitative central limit theorem for the effective conductance on the discrete torus.
\newblock In {\em Communications on Pure and Applied Mathematics, (2016)},
volume~69 ,
  pages 2304--2348. Wiley Online Library.

\bibitem{armstrong2019quantitative}
Armstrong, Scott and Kuusi, Tuomo and Mourrat, Jean-Christophe.
\newblock Quantitative stochastic homogenization and large-scale regularity.
\newblock volume~352 (2019), Springer.

\bibitem{gloria2015corrector}
Gloria, Antoine and Otto, Felix.
\newblock The corrector in stochastic homogenization: optimal rates, stochastic integrability, and fluctuations.
\newblock In {\em arXiv preprint arXiv:1510.08290, (2016)}.

\bibitem{bella2017liouville}
Bella, Peter and Chiarini, Alberto and Fehrman, Benjamin.
\newblock A Liouville theorem for stationary and ergodic ensembles of parabolic systems.
\newblock In {\em Probability Theory and Related Fields, (2019)},
volume~173 ,
  pages 759--812. Springer.
	
	\bibitem{gloria2017quantitative}
Gloria, Antoine and Otto, Felix.
\newblock Quantitative results on the corrector equation in stochastic homogenization.
\newblock In {\em Journal of the European Mathematical Society, (2017)},
volume~19 ,
  pages 3489--3548. 

	\bibitem{gloria2014regularity}
Gloria, Antoine and Neukamm, Stefan and Otto, Felix.\newblock A regularity theory for random elliptic operators.
\newblock In {\em Milan Journal of Mathematics, (2020)},
volume~88 ,
  pages 99--170. Springer. 
	
		\bibitem{duerinckx2017weighted2}
Duerinckx, Mitia and Gloria, Antoine.
\newblock Multiscale functional inequalities in probability: concentration properties.
\newblock In {\em ALEA, Lat. Am. J. Probab. Math. Stat., in press, (2019)}.

	\bibitem{gloria2019quantitative}
Gloria, Antoine and Neukamm, Stefan and Otto, Felix.\newblock Quantitative estimates in stochastic homogenization for correlated coefficient fields.
\newblock In {\em Analysis \& PDE, in press, (2020)}.

\bibitem{kozlov1979averaging}
Kozlov, Sergei Mikhailovich.
\newblock Averaging of random operators.
\newblock In {\em Matematicheskii Sbornik, (1979)},
volume~151 ,
  pages 188--202. Russian Academy of Sciences, Steklov Mathematical Institute of Russian.  

\bibitem{papanicolaou1979boundary}
Papanicolaou, G.C and Varadhan, S.R.S.
\newblock Boundary value problems with rapidly oscillating random coefficients.
\newblock In {\em Colloquia Math. Soc., Janos Bolyai, (1979)},
volume~27 ,
  pages 853--873. 
	
	\bibitem{duerinckx2017weighted}
Duerinckx, Mitia and Gloria, Antoine.
\newblock Multiscale functional inequalities in probability: Constructive approach.
\newblock In {\em Annales Henri Lebesgue, (2020)},
volume~3 ,
  pages 825--872. ENS Rennes.  
	
	\bibitem{bella2015quantitative}
Bella, Peter and Giunti, Arianna and Otto, Felix.
\newblock Quantitative stochastic homogenization: local control of homogenization error through corrector.
\newblock In {\em arXiv preprint arXiv:1504.02487, (2015)}.

\bibitem{ben2017moment}
Ben-Artzi, Jonathan and Marahrens, Daniel and Neukamm, Stefan.
\newblock Moment bounds on the corrector of stochastic homogenization of non-symmetric elliptic finite difference equations.
\newblock In {\em Communications in Partial Differential Equations, (2017)},
volume~42 ,
  pages 179--234. Taylor \& Francis.

\bibitem{conlon2000homogenization}
Conlon, Joseph and Naddaf, Ali and others.
\newblock On homogenization of elliptic equations with random coefficients.
\newblock In {\em Electronic Journal of Probability, (2000)},
volume~5.
  The Institute of Mathematical Statistics and the Bernoulli Society.
	
	\bibitem{gloria2012spectral}
Gloria, Antoine and Mourrat, Jean-Christophe.
\newblock Spectral measure and approximation of homogenized coefficients.
\newblock In {\em Probability theory and related fields, (2012)},
volume~154 ,
  pages 287--326.Springer.

\bibitem{gloria2011optimal}
Gloria, Antoine and Otto, Felix.
\newblock An optimal variance estimate in stochastic homogenization of discrete elliptic equations.
\newblock In {\em The annals of probability, (2011)},
volume~39 ,
  pages 779--856. Institute of Mathematical Statistics.

\bibitem{gloria2012optimal}
Gloria, Antoine and Otto, Felix.
\newblock An optimal error estimate in stochastic homogenization of discrete elliptic equations.
\newblock In {\em The annals of applied probability, (2012)},
volume~22 ,
  pages 1--28. JSTOR.
	
	\bibitem{armstrong2016mesoscopic}
Armstrong, Scott and Kuusi, Tuomo and Mourrat, Jean-Christophe.
\newblock Mesoscopic higher regularity and subadditivity in elliptic homogenization.
\newblock In {\em Communications in Mathematical Physics, (2016)},
volume~22 ,
  pages 315--361. Springer.
	
	\bibitem{armstrong2017additive}
Armstrong, Scott and Kuusi, Tuomo and Mourrat, Jean-Christophe.
\newblock The additive structure of elliptic homogenization.
\newblock In {\em Inventiones mathematicae, (2017)},
volume~208 ,
  pages 999--1154. Springer.
	
	\bibitem{gloria2015quantification}
Gloria, Antoine and Neukamm, Stefan and Otto, Felix.
\newblock Quantification of ergodicity in stochastic homogenization: optimal bounds via spectral gap on Glauber dynamics.
\newblock In {\em Inventiones mathematicae, (2015)},
volume~199 ,
  pages 455--515. Springer.
  
  \end{thebibliography}

\end{document}